\documentclass[microtype,12pt]{article} 
\usepackage{custarticle}

\chead{Introduction to Gauge Theory}

\date{23rd October 2019}

\title{Introduction to Gauge Theory}
\author{Andriy Haydys}

\begin{document}

\maketitle
	\begin{center}
	\small
		\begin{minipage}[c]{14cm}
			This is lecture notes for a course given at the PCMI Summer School ``Quantum Field Theory and Manifold Invariants'' (July 1 -- July 5, 2019). 
			I describe basics of gauge-theoretic approach to construction of invariants of manifolds. 
			The main example considered here is the Seiberg--Witten gauge theory.  
			However, I tried to present the material in a form, which is  suitable for other gauge-theoretic invariants too. 
		\end{minipage} 	
	\end{center}

\tableofcontents

\section{Introduction}
\label{Sect_Intro}

Gauge theory by now is a vast subject with many connections in geometry, analysis, and physics. 
In these notes I focus on  gauge theory as it is used in the construction of manifolds invariants, other uses of gauge theory remain beyond the scope of these notes. 

The basic scheme of construction invariants of manifolds via gauge theory is quite simple. 
To be more concrete, let me describe some details.
Thus, let $\cG$ be a Lie group acting on a manifold $\cC$. 
The common convention, which I will follow, is that $\cG$ acts \emph{on the right}, although this is clearly nonessential. 
The quotient $\cB:=\cC/\cG$ may fail to be a nice space\footnote{Ideally, one wishes the quotient to be a smooth manifold.} in a few ways, for example due to the presence of points with nontrivial stabilizers.
Denote by $\cC_{irr}\subset \cC$ the subspace consisting of all points with the trivial stabilizer.   
Then $\cG$ acts on $\cC_{irr}$ and the quotient $\cB_{irr}:=\cC_{irr}/\cG$ is better behaved.
Let me assume that $\cB_{irr}$ is in fact a manifold.

Pick a $\cG$--representation $V$ and a smooth $\cG$--equivariant map $F\colon \cC\to V$, where the equivariancy means the following:
\begin{equation*}
	F(a\cdot g) = g^{-1}\cdot F(a).
\end{equation*}  
Here $V$ is thought of as a left $\cG$--module.

More to the point, assuming that $0\in V$ is a regular value of  $F$, we obtain a submanifold $\cM_{irr}:=F^{-1}(0)\cap \cC_{irr}/\cG\subset \cB_{irr}$. 
Furthermore, assume also $d:=\dim\cM_{irr}<\infty$. 
$\cM_{irr}$ will be referred to as the `moduli space', although the terminology may seem odd at this moment.  
If $\cM_{irr}$ is compact and oriented, it has the fundamental class $[\cM_{irr}]\in H_d(\cB_{irr};\, \Z)$. 
In particular, for any cohomology class $\eta\in H^d(\cB_{irr};\,\Z)$ we obtain an integer
\begin{equation*}
	\bigl \langle [\cM_{irr}],\, \eta \bigl \rangle = \int_{\cM_{irr}}\eta,
\end{equation*}
where $\eta$ is thought of as a closed form of degree $d$ on $\cB_{irr}$. 
This is the `invariant' we are interested in. 

One way to construct cohomology classes on $\cB_{irr}$ is as follows. 
Assume there is a normal subgroup $\cG_0\subset \cG$ so that $\rG:=\cG/\cG_0$ is a Lie group. 
Then the `framed moduli space' $\hat \cM_{irr}:= F^{-1}(0)\cap \cC_{irr}/\cG_0$ is equipped with an action of $\cG/\cG_0 = \rG$ such that $\hat \cM_{irr}/\rG = \cM_{irr}$. 
In other words, $\hat\cM_{irr}$ can be viewed as a principal $\rG$ bundle, whose characteristic classes yield the cohomology classes we are after.
More details on this is provided in Section~\ref{Sect_ChernWeil}. 

While this scheme is clearly very general, the details in each particular case may differ to some extend. 
The aim of these notes is to explain this basic scheme in some details rather than variations enforced by a concrete setup.
As an illustration I consider the Seiberg--Witten theory in dimension four, where this scheme works particularly well.  
Due to the tight timeframe of the lectures this remains essentially the only example of a gauge--theoretic problem considered in these notes. 

\medskip

At present a number of monographs in gauge theory is available, for example~\cite{Morgan96_SWequations, Moore:96, DonaldsonKronheimer:90, FreedUhlenbeck91_InstantonsFourMflds, Donaldson:02_FloerHomologyGps, KronheimerMrowka07_MonopolesAndThreeMflds} just to name a few.
All of them are however pretty much specialized to a concrete setting or problem while many features are common to virtually any theory, which is based on counting solutions of non-linear elliptic PDEs, even not necesserily of gauge--theoretic origin. 
While the presentation here does not differ substantially from the treatment of~\cite{Morgan96_SWequations, DonaldsonKronheimer:90}, I tried to keep separate general principles from peculiar features of concrete problems.
Also, I hopefully streamlined somewhat some arguments, for example the proof of the compactness for the Seiberg--Witten moduli space. 
 Although I did not cover any other examples of gauge--theoretic invariants except the Seiberg--Witten invariant, the reader will be hopefully well prepared to read advanced texts on other enumerative invariants such as~\cite{DonaldsonKronheimer:90} on his own.

The reader may ask why should we actually care about gauge theories beyond the Seiberg--Witten one, since most of the results  obtained by gauge--theoretic means  can be reproduced with the help of the Seiberg--Witten gauge theory. 
One answer is that the Seiberg--Witten theory does not generalize to higher dimensions, whereas for example the anti--self--duality equations do generalize and are currently a topic of intense research. 
Even in low dimensions physics suggests that gauge theories based on non-abelian groups such as $\SL(2,\C)$ or more traditional $\SU(2)$ may provide insights that the Seiberg--Witten theory is not capable of.
For example, the Kapustin--Witten equations play a central r\^ole in Witten's approach to the construction of the Jones polynomial.

\medskip

These notes are organized as follows.
In introductory Sections~\ref{Sec_BundlesConnections}\,--\,\ref{Sec_DiracOp} I describe those properties of the `points' in $\cC$ that are important for the intended applications. 
Sections~\ref{Sec_LinEllOp} and~\ref{Sect_FredholmMaps} are devoted to analytic tools used in the studies of gauge--theoretic moduli spaces. 
This can be viewed as studies of properties of $F$ from a (somewhat) abstract point of view. 
The concepts and theorems considered in this part are not really specific to gauge theory, the same toolbox is used in virtually any enumerative problem based on elliptic PDEs.
The last section is devoted to the Seiberg--Witten theory.

I tried to keep these notes self-contained where this did not require long and technical detours. 
I hope this will make the notes available to early graduate students. 
Part of such course is necessarily certain tools from PDEs. 
I tried to state clearly the facts that play a r\^ole  in the main part of these notes, however I did not intend  to give a detailed description or complete proofs of those.
The reader may wish to consult a more specialized literature on this topic, for example relevant chapters of~\cite{Evans10_PDEs, Wells80_AnalOnCxMflds}.

\section{Bundles and connections}
\label{Sec_BundlesConnections}

\subsection{Vector bundles}

\subsubsection{Basic notions}
Roughly speaking, a vector bundle is just a family of vector spaces parametrized by points of a manifold (or, more generally, of a topological space). 

More formally, the notion of a vector bundle is defined as follows. 
\begin{defn}
	\label{Defn_VB}
	Choose a non-negative integer $k$. 
	A real smooth vector bundle of rank $k$ is a triple $(\pi, E, M)$ such that the following holds:
	\begin{enumerate}[(i)]
		\item $E$ and $M$ are smooth manifolds, $\pi\colon E\to M$ is a smooth submersion (the differential is surjective at each point);
		\item For each $m\in M$ \emph{the fiber} $E_m:=\pi^{-1}(m)$ has the structure of a vector space and $E_m\cong\R^k$;
		\item For each $m\in M$ there is a neighborhood $U\ni m$ and a smooth map $\psi_U$ such that the following diagram	
		\begin{center}
			\includegraphics[width=0.4\textwidth]{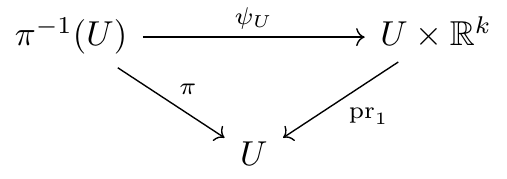}
		\end{center}
	commutes.
	 Moreover, $\psi_U$ is a fiberwise linear isomorphism. 
	\end{enumerate}
\end{defn}

The following terminology is commonly used: $E$ is the total space, $M$ is the base, $\pi$ is the projection, and $\psi_U$ is the local trivialization (over $U$). 

\begin{ex}$\phantom{a}$
	\begin{enumerate}[(a)]
		\item The product bundle: $M\times \R^k$;
		\item The tangent bundle $TM$ of any smooth manifold $M$.
	\end{enumerate}
\end{ex}

Let $E$ and $F$ be two vector bundles over a common base $M$.
A \emph{homomorphism} between $E$ and $F$ is a smooth map $\varphi\colon E\to F$ such that the diagram
\begin{center}
	\includegraphics[width=0.28\textwidth]{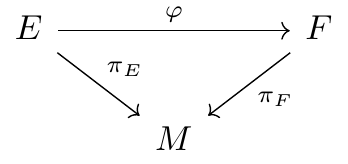}
\end{center}
commutes and $\varphi$ is a fiberwise linear map.

Two bundles $E$ and $F$ are said to be isomorphic, if there is a homomorphism $\varphi$, which is fiberwise an isomorphism.

A bundle $E$ is said to be \emph{trivial}, if $E$ is isomorphic to the product bundle.

\subsubsection{Operations on vector bundles}

Let $E$ and $F$ be two vector bundles over a common base $M$. 
Then we can construct new bundles $E^*,\, \Lambda^pE,\ E\oplus F,\ E\otimes F,$ and  $\Hom(E,\, F)$ as follows:
\begin{enumerate}
	\item[($\ast$)] $(E^*)_m = (E_m)^*$;
	\item[($\Lambda$)] $(\Lambda^p E)_m = \Lambda^p(E_m)$; 
	\item[($\oplus$)] $(E\oplus F)_m:=E_m\oplus F_m$;
	\item[($\otimes$)] $(E\otimes F)_m:= E_m\otimes F_m$;
	\item[(Hom)] $\Hom(E,\, F)_m:=\Hom(E_m,\, F_m)$.
\end{enumerate} 

\medskip

If $f\colon M'\to M$ is a smooth map, we can define the \emph{pull-back} of $E\to M$ via
\[
(f^*E)_{m'}:=E_{f(m')}.
\]
For example, if $M'$ is an open subset of $M$ and $\iota$ is the inclusion, then $E|_{M'}:=\iota^*E$ is just the restriction of $E$ to $M'$. 

The reader should check that the families of vector spaces defined above satisfy the properties required by Definition~\ref{Defn_VB}.

\begin{exercise}
	Prove that $E^*\otimes F$ is isomorphic to $\Hom(E, F)$.
\end{exercise}

\begin{exercise}
	\label{Exerc_TS2nontriv}
	Prove that the tangent bundle of the 2-sphere is non-trivial. (Hint: Apply the hairy ball theorem). 
\end{exercise}

\subsubsection{Sections}
\label{Sec_Sections}

\begin{defn}
	A smooth map $s\colon M\to E$ is called a section, if $\pi\comp s = \id_M$. 
\end{defn}

In other words, a section assigns to each point $m\in M$ a vector $s(m)\in E_m$ such that  $s(m)$ depends smoothly on $m$.

Sections of the tangent bundle $TM$ are called vector fields. 
Sections of $\Lambda^p T^*M$ are called differential $p$-forms. 

\begin{ex}
	 If $f$ is a smooth function on $M$, then the differential $df$ is a 1-form on $M$. More generally, given functions $f_1,\dots, f_p$ we can also construct a $p$-form $\om:= df_1\wedge\dots \wedge  df_p$.  
\end{ex}

\begin{exercise}
	\label{Exerc_LocTrivSections}
	Let $E\to M$ be a vector bundle of rank $k$ and $U\subset M$ be an open subset.
	Prove that $E$ is trivial over $U$ if and only if there are $k$ sections $e=(e_1,\dots, e_k),\ e_j\in\Gamma(U;\, E)$, such that $e(m)$ is a basis of $E_m$ for each $m\in U$. More precisely, given $e$ show that $\psi_U$ can be constructed according to the formula
	\begin{equation*}
		\psi_U^{-1}\colon U\times\R^k\longrightarrow E|_{U}, \qquad (m, x)\mapsto e(m)\cdot x.
	\end{equation*}
	In fact this establishes a one-to-one correspondence between  $k$-tuples of pointwise linearly independent  sections and local trivializations of $E$.
\end{exercise}

We denote by $\Gamma(E)=\Gamma(M;\, E)$ the space of all smooth sections of $E$. 
Clearly, $\Gamma(E)$ is a vector space, where the addition and multiplication with a scalar are defined pointwise. 
In fact, $\Gamma(E)$ is a $C^\infty(M)$-module. 

\medskip

Given a local trivialization $e$ over $U$ (cf.~Exercise~\ref{Exerc_LocTrivSections}) and a section $s$, we can write
\begin{equation*}
	s(m)=\sum_{j=1}^k \sigma_j(m)e_j(m)
\end{equation*}
for some functions $\sigma_j\colon U\to\R$. 
Thus, locally any section of a vector bundle can be thought of as a map $\sigma\colon U\to \R^k$.

It is important to notice that $\sigma$ depends on the choice of a local trivialization. Indeed, if $e'$ is another local trivialization of $E$ over $U'$, then there is a map 
\begin{equation}
	\label{Eq_LocFrames}
	g\colon U\cap U'\longrightarrow \GL_n(\R)\quad\text{such that}\quad e = e'\cdot g.
\end{equation}
If $\sigma'\colon U'\to \R^k$ is a local representation of $s$ with respect to $e'$, we have
\begin{equation*}
	s = e'\sigma' = eg^{-1}\sigma' = e\sigma\qquad\implies\qquad \sigma' = g\sigma.
\end{equation*}

\subsubsection{Covariant derivatives}

The reader surely knows from the basic analysis course that the notion of the derivative is very useful.
It is natural to ask whether there is a way to differentiate sections of bundles too.

To answer this question, recall the definition of the derivative of a function $f\colon M\to \R$. 
Namely, choose a smooth curve $\gamma\colon (-\e,\e)\to M$ and denote $m:=\gamma(0), \ \mathrm v:=\dot\gamma(0)\in T_mM$. 
Then
\begin{equation}
	\label{Eq_AuxDerivative}
	df(\mathrm v)=\lim_{t\to 0}\frac {f(\gamma(t)) - f(m)}{t}.
\end{equation}
Trying to replace $f$ by a section $s$ of a vector bundle, we immediately run into a problem, namely the difference $s(\gamma(t)) - s(m)$ is ill-defined in general since these two vectors may lie in different vector spaces.

Hence, instead of trying to mimic~\eqref{Eq_AuxDerivative} we will define the derivatives of sections axiomatically, namely asking that the most basic property of the derivative---the Leibnitz rule---holds.

\begin{defn}
	\label{Defn_CovDer}
	Let $E\to M$ be a vector bundle. 
	\emph{A covariant derivative} is an $\R$-linear map $\nabla\colon \Gamma(E)\to \Gamma(T^*M\otimes E)$ such that 
	\begin{equation}
		\label{Eq_LeibnizRule}
		\nabla (fs) = df\otimes s + f\nabla s
	\end{equation}
holds for all $f\in C^\infty(M)$ and all $s\in \Gamma(E)$.
\end{defn} 

\begin{example}
	Let $M\subset \R^N$ be an embedded submanifold. 
	Then the tangent bundle $TM$ is naturally a subbundle of the product bundle $\underline\R^N:= M\times \R^N$. 
	In particular, any section $s$ of $TM$ can be regarded as a map $M\to \R^N$. 
	With this at hand we can define a connection on $TM$ as follows
	\begin{equation*}
		\nabla s:= \mathrm{pr}(ds),
	\end{equation*}
	where $\mathrm{pr}$ is the orthogonal projection onto $TM$. 
	A straightforward computation shows that this satisfies the Leibniz rule, i.e., $\nabla$ is a connection indeed. 
\end{example}

\begin{thm}
	\label{Thm_ConnIsAnAffSpace}
	For any vector bundle $E\to M$ the space of all connections $\cA(E)$ is an affine space modelled on $\Om^1(\End E)=\Gamma\bigl (T^*M\otimes\End(E)\bigr )$.  
\end{thm}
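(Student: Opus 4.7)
The plan is to prove two things: first, that $\cA(E)$ is non-empty, and second, that once we have one connection, any other is obtained by adding an $\End(E)$-valued 1-form, and conversely any such addition is again a connection. The affine structure then drops out.

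For non-emptiness I would use a partition of unity argument. Cover $M$ by open sets $U_\alpha$ over which $E$ trivializes, with local frames $e^\alpha=(e^\alpha_1,\dots,e^\alpha_k)$ as in Exercise~\ref{Exerc_LocTrivSections}. On each $U_\alpha$ the trivialization gives a flat \emph{local} connection $\nabla^\alpha$ defined by $\nabla^\alpha\bigl(\sum_j\sigma_j e^\alpha_j\bigr)=\sum_j d\sigma_j\otimes e^\alpha_j$; a direct calculation shows it satisfies the Leibniz rule. Picking a subordinate partition of unity $\{\rho_\alpha\}$, set
\begin{equation*}
	\nabla := \sum_\alpha \rho_\alpha\,\nabla^\alpha.
\end{equation*}
The key point is that a convex combination of connections is again a connection, because
\begin{equation*}
	\nabla(fs)=\sum_\alpha\rho_\alpha\bigl(df\otimes s + f\nabla^\alpha s\bigr)=\Bigl(\sum_\alpha\rho_\alpha\Bigr) df\otimes s + f\sum_\alpha\rho_\alpha\nabla^\alpha s = df\otimes s+f\nabla s,
\end{equation*}
where I used $\sum_\alpha\rho_\alpha = 1$. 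Thus $\nabla\in\cA(E)$.

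For the affine structure, suppose $\nabla_1,\nabla_2\in\cA(E)$ and set $A:=\nabla_1-\nabla_2$. Then $A\colon\Gamma(E)\to\Gamma(T^*M\otimes E)$ is $\R$-linear, and
\begin{equation*}
	A(fs) = df\otimes s + f\nabla_1 s - df\otimes s - f\nabla_2 s = f\,A(s).
\end{equation*}
So $A$ is $C^\infty(M)$-linear, and by the standard tensoriality principle it arises from a section of $T^*M\otimes\End(E)$, i.e.\ $A\in\Omega^1(\End E)$. Conversely, if $\nabla\in\cA(E)$ and $A\in\Omega^1(\End E)$, then $\nabla+A$ is $\R$-linear and
\begin{equation*}
	(\nabla+A)(fs) = df\otimes s + f\nabla s + fA(s) = df\otimes s + f(\nabla+A)(s),
\end{equation*}
since $A$ is fiberwise linear. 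Hence $\nabla+A\in\cA(E)$. Combining these two observations, the map $\cA(E)\times\Omega^1(\End E)\to\cA(E)$, $(\nabla,A)\mapsto\nabla+A$, gives a free and transitive action of $\Omega^1(\End E)$, which is the desired affine structure.

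The only genuinely non-trivial step is non-emptiness, which requires paracompactness of $M$ (implicit in the use of partitions of unity) and the observation that convexity preserves the Leibniz rule. The tensoriality statement used in the second step is standard but worth stating: a map $\Gamma(E)\to\Gamma(F)$ of $C^\infty(M)$-modules arises from a bundle homomorphism $E\to F$, and this is what turns the $C^\infty$-linearity of $\nabla_1-\nabla_2$ into a bona fide element of $\Omega^1(\End E)$.
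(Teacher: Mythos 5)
Your proof is correct and follows essentially the same strategy as the paper: construct local flat connections from trivializations, patch them with a partition of unity using convexity of $\cA(E)$, and then use $C^\infty(M)$-linearity of the difference of two connections together with the tensoriality lemma (Lemma~\ref{Lem_Tensorial}) to identify the affine structure. The only cosmetic difference is that you spell out the partition-of-unity computation explicitly, where the paper defers it to a reference.
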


To be somewhat more concrete, the above theorem consists of the following statements:
\begin{enumerate}[(a), itemsep=-3pt]
	\item\label{It_ConnNonempty}
	$\cA(E)$ is non-empty.
	\item \label{It_AuxDiffConnections}For any two connections $\nabla$ and $\hat\nabla$ the difference $\nabla - \hat\nabla$ is a 1-form with values in $\End(E)$;
	\item 
	\label{It_AuxNablaPlusA}
	For any $\nabla\in\cA(E)$ and any $a\in \Om^1(\End E)$ the following 
	\begin{equation*}
		(\nabla +a) s:= \nabla s + as
	\end{equation*}
is a connection. 
\end{enumerate}

For the proof of Theorem~\ref{Thm_ConnIsAnAffSpace} we need the following elementary lemma, whose proof is left as an exercise.
\begin{lem}
	\label{Lem_Tensorial}
	Let $A\colon\Gamma(E)\to\Om^p(F)$ be an $\R$-linear map, which is also $C^\infty(M)$-linear, i.e., 
	\[
	A(fs) = fA(s)\qquad \forall f\in C^\infty(M)\qandq  \forall s\in\Gamma(E).
	\]
	Then there exists $a\in\Om^p\bigl (  \Hom(E, F) \bigr )$ such that $A (s) = a\cdot s$.\qed
\end{lem}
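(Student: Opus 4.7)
The plan is the standard two-step argument: \emph{localize} $A$, then extract a bundle homomorphism by evaluating at points. The key point is that $C^\infty(M)$-linearity forces $A$ to be a purely pointwise operation, even though a priori the definition only says $A$ commutes with multiplication by scalar functions.

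First I would show \emph{locality}: if $s\in\Gamma(E)$ vanishes on an open set $U$, then $A(s)$ vanishes on $U$. Given $m\in U$, pick a bump function $\chi\in C^\infty(M)$ with $\chi(m)=1$ and $\supp\chi\subset U$. Then $\chi s\equiv 0$ identically (it is zero on $U$ by hypothesis and zero outside $U$ because $\chi$ is), so $0 = A(\chi s) = \chi A(s)$ by $C^\infty$-linearity; evaluating at $m$ gives $A(s)(m)=0$. Next I would upgrade this to \emph{pointwise} vanishing: if $s(m)=0$, then $A(s)(m)=0$. Choose a local frame $e_1,\dots,e_k$ of $E$ over a neighborhood $U$ of $m$ (Exercise~\ref{Exerc_LocTrivSections}) and write $s|_U = \sum_j \sigma_j e_j$ with $\sigma_j(m)=0$. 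Using a bump function $\chi$ supported in $U$ with $\chi\equiv 1$ on a smaller neighborhood $V\ni m$, extend $\chi e_j$ and $\chi\sigma_j$ by zero to global sections $\tilde e_j\in\Gamma(E)$ and global functions $\tilde\sigma_j\in C^\infty(M)$. Then $s-\sum_j \tilde\sigma_j \tilde e_j$ vanishes on $V$, so by locality
\begin{equation*}
	A(s)(m) = \sum_{j=1}^k \tilde\sigma_j(m)\, A(\tilde e_j)(m) = 0,
\end{equation*}
since $\tilde\sigma_j(m)=\sigma_j(m)=0$.

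With pointwise vanishing in hand, I would define $a\in\Om^p(\Hom(E,F))$ fiberwise by
\begin{equation*}
	a_m(v) := A(s)(m) \in \Lambda^p T_m^*M\otimes F_m,
\end{equation*}
for any $s\in\Gamma(E)$ with $s(m)=v$; the pointwise vanishing result makes this independent of the choice of $s$, and linearity in $v$ follows from $\R$-linearity of $A$ together with the possibility of extending a linear combination of vectors in $E_m$ by a linear combination of sections. Smoothness of $a$ is immediate in a local frame: if $s=\sum\sigma_j e_j$ on $U$, then $A(s) = \sum\sigma_j A(e_j)$ on $U$ by locality and $C^\infty$-linearity, so in the frame the matrix of $a$ has entries read off from the smooth forms $A(e_j)\in\Om^p(F)|_U$. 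By construction $A(s)=a\cdot s$ for every section $s$.

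The only real subtlety—and hence the main obstacle—is the passage from locality to pointwise vanishing, because $e_j$ and $\sigma_j$ are only locally defined and one cannot literally apply $A$ to them. The bump-function extension trick above circumvents this; everything else is a routine verification.
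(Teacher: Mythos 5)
The paper leaves this lemma as an exercise, so there is no in-text proof to compare against; your argument is the standard tensoriality proof and it is correct. The two-step reduction (locality via a bump function, then pointwise vanishing via a local frame extended by a cutoff) is exactly the right route, and you correctly identify the one subtlety — that $e_j$ and $\sigma_j$ are only locally defined, so one must extend $\chi e_j$ and $\chi\sigma_j$ by zero before applying $A$ — and handle it cleanly; the well-definedness, linearity, and smoothness of $a$ then follow as you say.
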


\begin{proof}[Proof of Theorem~\ref{Thm_ConnIsAnAffSpace}]
	Notice first that $\cA(E)$ is convex, i.e., for any $\nabla,\hat\nabla\in\cA(E)$ and any $t\in [0,1]$ the following $t\nabla + (1-t)\hat\nabla$ is also a connection.

	If $\psi_U$ is a local trivialization of $E$ over $U$, then we can define a connection $\nabla_U$ on $E|_U$ by declaring 
	\begin{equation*}
		\nabla_U s:= \psi_U^{-1}\,d(\psi_U(s)).
	\end{equation*}
	Using a partition of unity and the convexity property, a collection of these local covariant derivatives can be sewed into a global covariant derivative just like in the proof of the existence of  Riemannian metrics on manifolds, cf.~\cite{BardenThomas03_IntroDiffMflds}*{Thm.\,3.3.7}.
	This proves~\ref{It_ConnNonempty}.

	By~\eqref{Eq_LeibnizRule}, the difference $\nabla - \hat\nabla$ is $C^\infty(M)$--linear. 
	Hence, \ref{It_AuxDiffConnections} follows by Lemma~\ref{Lem_Tensorial}.
	
	\medskip
	
	The remaining step, namely~\ref{It_AuxNablaPlusA}, is straightforward.
	This finishes the proof of this theorem.
\end{proof}

While Theorem~\ref{Thm_ConnIsAnAffSpace} answers the question of the existence of connections, the reader may wish to have a more direct way to put his hands on a connection.
One way to do this is as follows.

Let $e$ be a local trivialization.
Since $e$ is a pointwise base we can write
\begin{equation}
	\label{Eq_Conn1Form}
	\nabla e = e\cdot A,
\end{equation}
where $A=A(\nabla, e)$ is a $k\times k$-matrix, whose entries are $1$-forms defined on $U$. 
$A$ is called the connection matrix of $\nabla$ with respect to $e$. 

If $\sigma$ is a local representation of a section $s$, then
\begin{equation*}
	\nabla s = \nabla (e\sigma)=\nabla(e)\sigma + e\otimes d\sigma = e\bigl ( A\sigma + d\sigma\bigr ). 
\end{equation*}
Hence, it is common to say that locally
\begin{equation*}
	\nabla = d + A,
\end{equation*} 
which means that $d\sigma + A\sigma$ is a local representation of $\nabla s$.
In particular, $\nabla$ is uniquely determined by its connection matrix over $U$ and any $A\in\Om^1(U; \gl_k(\R))$ appears as a connection matrix of some connection (cf. Theorem~\ref{Thm_ConnIsAnAffSpace}). 

Just like $\sigma$, the connection matrix also depends on the choice of $e$. 
If $e' = eg$, we have
\begin{equation*}
	\nabla e' = \nabla(eg) = (\nabla e)g + e\otimes dg = e\bigl (Ag + dg\bigr ) = e' \bigl (g^{-1}Ag + g^{-1}dg\bigr ).
\end{equation*}
Hence, the connection matrix $A'$ of $\nabla$ with respect to $e'$ can be expressed as follows:
\begin{equation}
	\label{Eq_ConnMatrGauge}
	A' = g^{-1}Ag + g^{-1}dg.
\end{equation}

\subsubsection{The curvature}

While Definition~\ref{Defn_CovDer} yields a way to differentiate sections, some properties very well known from multivariable analysis are \emph{not} preserved.
One of the most important cases is that covariant derivatives with respect to two  variables do not need to commute.
The failure of the commutativity of partial covariant derivatives is closely related to the notion of curvature, which is described next.  

\medskip

Denote $\Om^p(E):=\Gamma\bigl (\Lambda^pT^*M\otimes E \bigr)$. 
We can extend the covariant derivative $\nabla$ to a map $d_\nabla\colon \Om^p(E)\to \Om^{p+1}(E)$ as follows. 
If $s\in\Gamma(E)$ and $\om\in\Om^p(M)$ we declare
\begin{equation*}
	d_\nabla (\om\otimes s) := d\om \otimes s + (-1)^p\om\wedge \nabla s,
\end{equation*} 
which yields a unique $\R$-linear map $d_\nabla$ defined on all of  $\Om^p(E)$.
This satisfies the following variant of the Leibniz rule
\begin{equation*}
	d_\nabla(\a\wedge\om) = d\a\wedge \om + (-1)^{q}\a\wedge d_\nabla\om,\qquad\text{for all } \a \in \Om^q (M)\ \text{ and } \om\in \Om^p(M;\, E).
\end{equation*}
Thus, we obtain a sequence
\begin{equation*}
	0\to\Om^0(E)\xrightarrow{\ d_\nabla = \nabla\ }\Om^1(E)\xrightarrow{\ d_\nabla\ }\dots\to \Om^n(E)\to 0,
\end{equation*}
where $n=\dim M$.
However, unlike in the case of the de Rham differential, the above does not need to be a complex.

\begin{proposition}
	\label{Prop_Curvature}
	There is a 2-form $F_\nabla$ with values in $\End E$ such that
	\begin{equation}
	\label{Eq_CurvForm}
		d_\nabla\comp d_\nabla = F_\nabla.
	\end{equation}
\end{proposition}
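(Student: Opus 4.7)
The plan is to show that the composition $d_\nabla\comp d_\nabla$ is $C^\infty(M)$-linear on $\Gamma(E)$, so that Lemma~\ref{Lem_Tensorial} (applied with $p=2$ and $F=E$) produces the desired 2-form $F_\nabla\in\Om^2(\End E)$ satisfying $d_\nabla^2 s = F_\nabla\cdot s$ on sections. I would then extend this identity from $\Om^0(E)$ to all of $\Om^p(E)$ by means of the graded Leibniz rule already recorded in the text.

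First I would take $s\in \Gamma(E)$ and $f\in C^\infty(M)$ and compute:
\begin{equation*}
d_\nabla\bigl (d_\nabla(fs)\bigr ) = d_\nabla\bigl (df\otimes s + f\,\nabla s\bigr ) = d(df)\otimes s - df\wedge\nabla s + df\wedge \nabla s + f\, d_\nabla(\nabla s),
\end{equation*}
where the two middle terms come from the definition of $d_\nabla$ on $\Om^1(E)$ (note the sign $(-1)^1$ in $d_\nabla(df\otimes s)$) and the identity $d^2 f = 0$ kills the first term. The cross-terms cancel, leaving $d_\nabla^2(fs) = f\, d_\nabla^2 s$. Since $d_\nabla^2$ is evidently $\R$-linear, Lemma~\ref{Lem_Tensorial} then yields a unique $F_\nabla\in\Om^2\bigl (\Hom(E,E)\bigr ) = \Om^2(\End E)$ with $d_\nabla^2 s = F_\nabla \cdot s$ for all $s\in\Gamma(E)$.

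To finish, I would verify that the same formula holds on $\Om^p(E)$ for arbitrary $p\geq 0$. Using the extended Leibniz rule on a decomposable element $\om\otimes s$ with $\om\in\Om^p(M)$ and $s\in\Gamma(E)$, one computes
\begin{equation*}
d_\nabla^2(\om\otimes s) = (-1)^{p+1} d\om\wedge\nabla s + (-1)^p d\om\wedge \nabla s + \om\wedge d_\nabla^2 s = \om\wedge (F_\nabla\cdot s),
\end{equation*}
which, because $F_\nabla$ has even degree, is exactly $F_\nabla\cdot(\om\otimes s)$ under the natural action of $\End(E)$-valued forms on $E$-valued forms via wedge product. By $\R$-linearity and the fact that every element of $\Om^p(E)$ is a finite sum of such decomposables, the identity~\eqref{Eq_CurvForm} follows on all of $\Om^p(E)$.

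The only mildly subtle point is the sign bookkeeping in the last step; the conceptual heart of the argument is the single-line cancellation that shows $d_\nabla^2$ is $C^\infty(M)$-linear, after which everything is formal and Lemma~\ref{Lem_Tensorial} does the real work of producing $F_\nabla$.
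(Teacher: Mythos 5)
Your proof is correct and follows essentially the same route as the paper: establish $C^\infty(M)$-linearity of $d_\nabla\comp\nabla$ on $\Om^0(E)$, invoke Lemma~\ref{Lem_Tensorial} to extract $F_\nabla$, and then extend the identity to $\Om^p(E)$ for $p>0$ by the graded Leibniz rule on decomposables. The only difference is that you write out the Leibniz cancellation that the paper summarizes as ``applying the Leibniz rule twice.''
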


The above equality means that for any $\om\in\Om^p(E)$ we have $d_\nabla\bigl ( d_\nabla(\om)\bigr) = F_\nabla\wedge\om$, where the right hand side of the letter equality is a combination of the wedge-product and the natural contraction $\End(E)\otimes E\to E$.

\begin{proof}[Proof of Proposition~\ref{Prop_Curvature}.]
	We prove the proposition for $p=0$ first. 
	Applying the Leibniz rule twice, we see that $d_\nabla\comp \nabla\colon\Om^0(E)\to \Om^2(E)$ is $C^\infty(M)$--linear. 
	Hence, by \autoref{Lem_Tensorial} we obtain a 2-form $F_\nabla$ such that 
	\begin{equation*}
		d_\nabla (\nabla s) = F_\nabla s
	\end{equation*}
	holds for any $s\in\Om^0(E)$.
	
	It remains to consider the case $p>0$. 
	For any $\eta\in\Om^p(M)$ and $s\in\Gamma(E)$ we have
	\begin{align*}
		d_\nabla\bigl ( d_\nabla\(\eta\otimes s\) \bigr ) &= 
		d_\nabla\bigl ( d\eta\otimes s + (-1)^p\eta\wedge\nabla s \bigr ) \\
		&= 0 + (-1)^{p+1}d\eta\wedge \nabla s + (-1)^p d\eta\wedge \nabla s + (-1)^{2p}\eta\wedge d_\nabla \bigl( \nabla s \bigr)\\
		&= \eta\wedge F_\nabla s\\
		&=F_\nabla\wedge (\eta\otimes s).
	\end{align*}
	Here the last equality holds because $F_\nabla$ is a differential form of even degree.
\end{proof}

\begin{defn}
	The 2-form $F_\nabla$ defined by~\eqref{Eq_CurvForm} is called the curvature form of $\nabla$. 
\end{defn}

Our next aim is to clarify somewhat the meaning of the curvature form. 
If $\mathrm v$ is a tangent vector at some point $m\in M$, we call
\begin{equation*}
	\nabla_{\mathrm v}\, s:= \imath_{\mathrm v}\nabla s
\end{equation*}  
the covariant derivative of $s$ in the direction of $\mathrm v$. 

Choose local coordinates $(x_1,\dots, x_n)$ and denote by 
$\del_i=\frac \del{\del x_i}$ the tangent vector of the curve
\begin{equation*}
\gamma(t) = (x_1^0,\dots, x_{i-1}^0,\; t,\; x_{i+1}^0,\dots, x_n^0).
\end{equation*}
We may call
\begin{equation*}
	\nabla_i\, s:=\nabla_{\del_i}\; s
\end{equation*}
``partial covariant derivatives''.
With these notations at hand we have the expression
\begin{equation*}
	\nabla s = \sum_{i=1}^n dx_i\otimes\nabla_i\, s,
\end{equation*}
which is just a form of the familiar expression for the differential of a function: $df=\sum \tfrac {\del f}{\del x_i}dx_i$.

Furthermore, we have
\begin{align*}
	d_\nabla( \nabla s ) 
	&=-\sum_{i=1}^n dx_i\wedge \nabla \bigl ( \nabla_i\, s \bigr)
	=-\sum_{i,j=1}^n dx_i\wedge dx_j \otimes \nabla_j(\nabla_i s)\\
	&=\sum_{i,j=1}^n dx_i\wedge dx_j \otimes \nabla_i(\nabla_j s).
\end{align*}
From this we conclude 
\begin{equation*}
	F_\nabla\bigl (\del_i,\,\del_j \bigr) =  \nabla_i(\nabla_j s) -  \nabla_j(\nabla_i s),
\end{equation*}
i.e., the curvature form measures the failure of partial covariant derivatives to commute as mentioned at the beginning of this section.

\medskip

Sometimes it is useful to have an  expression of  the curvature form in a local frame. 
Thus, pick a local frame $e$ of $E$ and let $A$ be the connection form of $\nabla$ with respect to $e$. 
If $\sigma$ is a local representation of some $s\in\Gamma(E)$, then we have
\begin{align*}
	d_\nabla\bigl ( \nabla (e\sigma) \bigr) &= d_\nabla \bigl ( e(d\sigma + A\sigma)  \bigr)
	= \nabla e\wedge (d\sigma + A\sigma)  + e (dA\,\sigma - A\wedge d\sigma)\\
	&=e \Bigl ( A\wedge d\sigma + A\wedge A\,\sigma + dA\,\sigma -A\wedge d\sigma \Bigr)\\
	&=e\bigl ( dA + A\wedge A \bigr)\, \sigma.
\end{align*} 
Hence, we conclude that locally
\begin{equation}
	\label{Eq_CurvLoc}
	F_\nabla = dA + A\wedge A.
\end{equation}
In particular, the curvature form is a first order non-linear operator in terms of the connection form.  

\begin{remark}
	It turns out to be useful to think of $A$ as a 1--form with values in the Lie algebra $\gl_k(\R)=\End(\R^k)$. 
	From this perspective it is more suitable to write~\eqref{Eq_CurvLoc} in the following form
	\begin{equation}
		\label{Eq_CurvLocLieBrackets}
		F_\nabla = dA + \frac 12 [A\wedge A],
	\end{equation}
	where the last term is a combination of the wedge product and the Lie brackets. 
\end{remark}

If $e'$ is another local frame such that $e = e'\cdot g$, then using~\eqref{Eq_ConnMatrGauge} one can show that
\begin{equation*}
	F_\nabla' = dA' + A'\wedge A' = g^{-1} F_\nabla\, g . 
\end{equation*}
The reader is strongly encouraged to check the details of this computation.

Local expression~\eqref{Eq_CurvLoc} implies immediately the following.
\begin{proposition}
	If $a\in \Om^1(\End E)$, then the curvature forms of $\nabla$ and $\nabla +a$ are related by the equality:
	\begin{equation*}
		\pushQED{\qed}
		F_{\nabla + a} = F_\nabla + d_\nabla a + a\wedge a.
		 \qedhere\popQED
	\end{equation*}
\end{proposition}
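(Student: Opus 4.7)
The plan is to reduce the identity to a calculation in a local trivialization, using the local curvature formula \eqref{Eq_CurvLoc} together with the fact that both sides of the claimed equation are globally defined sections of $\Lambda^2 T^*M \otimes \End E$.

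First, I would fix a local frame $e$ of $E$ over some $U$, with $A = A(\nabla, e)$ the connection matrix of $\nabla$, and let $\alpha \in \Om^1(U; \gl_k(\R))$ denote the matrix of 1-forms representing $a$ in this frame. A quick check using $\nabla = d + A$ locally shows that the connection matrix of $\nabla + a$ with respect to $e$ is $A + \alpha$: indeed, if $\sigma$ is the local representation of $s$, then $(\nabla + a)s$ has local representation $d\sigma + A\sigma + \alpha\sigma = d\sigma + (A + \alpha)\sigma$.

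Applying \eqref{Eq_CurvLoc} to $\nabla + a$ and expanding then yields
\begin{align*}
	F_{\nabla + a} &= d(A + \alpha) + (A + \alpha) \wedge (A + \alpha) \\
	&= \bigl(dA + A\wedge A\bigr) + \bigl(d\alpha + A\wedge\alpha + \alpha\wedge A\bigr) + \alpha\wedge\alpha.
\end{align*}
The first bracket is $F_\nabla$ locally, and the last summand is the local expression for $a\wedge a$ (where the wedge is combined with the composition map $\End E \otimes \End E \to \End E$). It remains only to identify the middle bracket with the local expression for $d_\nabla a$.

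The main point that needs checking is this last identification. The exterior covariant derivative $d_\nabla$ acts on $\Om^1(\End E)$ via the connection on $\End E$ induced by $\nabla$, which in the frame $e$ is given by $d + [A,\cdot]$ on $\End E$. Applying the graded Leibniz rule defining $d_\nabla$ and noting that $\alpha$ has form degree one, a direct computation gives the local expression $d_\nabla a = d\alpha + A\wedge\alpha + \alpha\wedge A$; the apparent commutator becomes a sum because the factor $(-1)^1$ coming from the form degree of $\alpha$ flips the sign of the $\alpha\wedge A$ term. This is the only sign verification in the argument, and once it is in place the global identity follows from the local one since both sides are manifestly globally defined.
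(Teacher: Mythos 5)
Your proof is correct and takes exactly the route the paper intends: the paper states the proposition with the remark that it ``follows immediately'' from the local expression $F_\nabla = dA + A\wedge A$, and you are simply carrying out the expansion $d(A+\alpha) + (A+\alpha)\wedge(A+\alpha)$ and matching terms. The one step worth checking carefully---that the graded commutator $d\alpha + A\wedge\alpha + \alpha\wedge A$ is the local expression for $d_\nabla a$ when $a$ is a $1$-form valued in $\End E$---you identify and verify correctly, including the sign flip coming from the odd form degree of $\alpha$.
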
 

\subsubsection{The gauge group}

Pick a vector bundle $E$ and consider
\begin{equation*}
	\cG=\cG(E):=\Bigl\{  g\in \Gamma\bigl (\End(E)\bigr ) \mid \forall m\in M\ g(m)\in\GL(E_m)\Bigr \},
\end{equation*} 
which is endowed with the $C^\infty$--topology. 
If $E$ is endowed with an extra structure, for example an orientation or a scalar product, we also require that gauge transformations respect this structure.

Clearly, $\cG$ is a topological group, where the group operations are defined pointwise. 
$\cG$ is called the group of gauge transformations of $E$ or simply \emph{the gauge group}.

If $\nabla$ is a connection on $E$ and $g\in\cG$, we can define another connection as follows
\begin{equation*}
	\nabla^g s:= g^{-1}\nabla (gs).
\end{equation*}
This yields a right action of $\cG$ on $\cA(E)$.

\begin{defn}
	Two connections are called gauge equivalent, if there is a gauge transformation that transforms one of the connections into the other one. 
\end{defn}

Pick a local trivialization $e$ of $E$ over an open subset $U$. 
Let $A=A(e,\nabla)\in \Om^1(U;\gl_k(\R))$ be a local representation of $\nabla$, i.e., $\nabla e= e\cdot A$. 
Let us compute the local representation of $\nabla^g$. 
Notice first, that using $e$ we may think of $g$ as a map $U\to \GL_k(\R)$, i.e., $g(e) = e\cdot g$. 
Then 
\begin{equation*}
	\nabla^g e = g^{-1}\bigl(\nabla (e\cdot g)\bigr ) = g^{-1}\bigl(e\cdot Ag +e\cdot dg\bigr ) = g^{-1}(e)\cdot (Ag+ dg) = e\cdot \bigl ( g^{-1}Ag + g^{-1}dg\bigr). 
\end{equation*} 
We conclude 
\begin{equation*}
	A(\nabla^g, e) = g^{-1}Ag + g^{-1}dg = A(\nabla,\; e\cdot g),
\end{equation*}
where the last equality follows by~\eqref{Eq_ConnMatrGauge}.
Thus,~\eqref{Eq_ConnMatrGauge} expresses both the change of the connection matrix under the change of local trivializations and the action of the gauge group.

\subsection{Principal bundles}

The computations we met in the previous sections on the dependence of our objects on the choice of a local frame become formidable quite soon. 
The notion of a principal bundle is useful in dealing with this and turns out to have other advantages as we will see below. 
The idea is to consider all possible frames at once rather than choosing local trivializations when needed.

\subsubsection{The frame bundle and the structure group}

Let $G$ be a Lie group. 

\begin{defn}
	\label{Defn_PrincipalBundle}
	A principal bundle with the structure group $G$ is a triple $(P,M,\pi)$, where 
	\begin{enumerate}[(i),itemsep=-3pt]
		\item $P$ and $M$ are smooth manifolds and $\pi\colon P\to M$ is a surjective submersion;
		\item $G$ acts on $P$ on the right such that $\pi(p\cdot g)=\pi(p)$ for all $p\in P$ and all $g\in G$; 
		\item $G$ acts freely and transitively on each fiber $\pi^{-1}(m)$;
		\item \label{It_LocTrivPrincBundle}
		 For each $m\in M$ there is a neighborhood $U\ni m$ and a map $\psi_U$ such that the diagram
		 \begin{center}
		 	\includegraphics[width=0.35\textwidth]{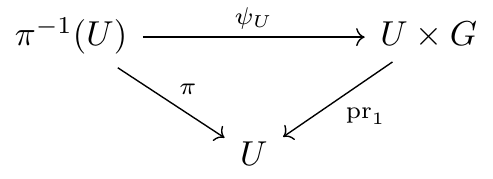}
		 \end{center}
		commutes. Moreover, $\psi_U$ is $G$--equivariant  $\psi_U(p\cdot g) = \psi_U(p)\cdot g$, where $G$ acts on $U\times G$ by the multiplication on the right on the second factor. 
	\end{enumerate}
\end{defn}

A fundamental example of a principal bundle is the frame bundle of a vector bundle $E\to M$.
We take a moment to describe the construction in some detail.

Thus, for a fixed $m\in M$ let $\Fr(E_m)$ denote the set of all bases of $E_m$. 
The group $\GL_k(\R)$ acts freely and transitively on $\Fr(E_m)$ so that we can in fact identify $\Fr(E_m)$ with $\GL_k(\R)$ even though in a non-canonical way. 
In any case, $\Fr(E_m)$ can be viewed as an open subset of $\R^{k^2}$.

Consider 
\begin{equation*}
	\Fr(E):=\bigsqcup_{m\in M} \Fr(E_m).
\end{equation*}
Clearly, there is a well-defined projection $\pi\colon \Fr(E)\to M$ determined uniquely by the property: $\pi(p)=m$ if and only if $p\in \Fr(E_m)$.

We introduce a smooth structure on $\Fr(E)$ as follows. 
Pick a chart $U$ on $M$. 
By shrinking $U$ if necessary we can assume that there is a local frame $e$ of $E$ defined on $U$.  
Then we have the bijective map
\begin{equation}
	\Psi_U\colon U\times\GL_k(\R)\longrightarrow \pi^{-1}(U),\qquad (m, h)\mapsto e(m)\cdot h.
\end{equation}
Using this we can think of $\pi^{-1}(U)$ as an open subset of an Euclidean space so that we can declare $\pi^{-1}(U)$ to be a chart on $\Fr(E)$ with an obvious choice of coordinates. 

Let $U'$ be another chart on $M$ such that there is a local frame $e'$ defined on $U'$. 
A straightforward computation yields
\begin{equation*}
	\Psi_{U'}^{-1}\comp\Psi_U(m, h) = \bigl (m,\, g(m)h\bigr ),
\end{equation*}
where $g$ is defined by~\eqref{Eq_LocFrames}.
This implies that the transition maps between $\pi^{-1}(U)$ and $\pi^{-1}(U')$ are smooth, i.e., we have constructed a smooth atlas on $\Fr (U)$. 
The rest of the properties required in the definition of the principal bundle are clear from the construction.

\begin{exercise} Show that local triviality of a principal bundle, i.e., Property~\ref{It_LocTrivPrincBundle} of Definition~\ref{Defn_PrincipalBundle}, is equivalent to the existence of local sections.
		More precisely, if $P$ admits a trivialization over $U$, then there is a section of $P|_U$ and conversely, if $P|_U$ admits a section, then $P|_U$ is also trivializable over $U$. 
		In particular, show that the frame bundle of $TS^2$ does not admit any global sections.
\end{exercise}

\medskip

Often vector bundles come equipped with an extra structure, for example orientations of each fiber and/or scalar product on each fiber. 
In the language of principal bundles this corresponds to the notion of a $G$-structure.

\begin{defn}
	Let $G$ be a Lie subgroup of $\GL_k(\R)$. 
	A $G$-structure on $E$ is a $G$--subbundle $P$ of the frame bundle. 
	In this case $G$ is called the structure group of $E$. 
\end{defn}   

To illustrate this notion, let us consider the following example. 
Assume $E$ is an Euclidean vector bundle, which means that each fiber $E_m$ is equipped with an Eucliden scalar product $\langle \cdot, \cdot \rangle_m$, which depends smoothly on $m$. 
Here the dependence is said to be smooth if  for any two smooth sections $s_1$ and $s_2$ the function $\langle s_1, s_2\rangle$ is also smooth.

It is natural to consider the subset 
\begin{equation*}
	\O(E):=\bigl \{  e\in\Fr(E)\mid e \text{ is orthonormal}\, \bigr \}.
\end{equation*}
The restriction of $\pi$ yields a surjective map $\O(E)\to M$, which is still denoted by $\pi$. 
If $e$ is any local frame of $E$ over an open subset $U$, the Gram-Schmidt orthogonalization process shows that there is also a smooth pointwise orthonormal frame $e_{O}$ defined on $U$.
Just like in the case of $\Fr(E)$ we can cover $\O(E)$ by open subsets $\pi^{-1}(U)$ such that 
\begin{equation*}
	\Psi_U\colon U\times \O(k)\to \pi^{-1}(U),\qquad (m, h)\mapsto e_O(m)\cdot h
\end{equation*}
is a bijection. 
While $\O(k)$ is not an open subset of an Euclidean space, it is a manifold, and therefore we can cover $\O(k)$---hence, also $\pi^{-1}(U)$---by a collection of charts. The same argument as in the case of the frame bundle shows that the transition functions are smooth so that $\O(E)$ is a principal $\O(k)$--bundle.   
  
We see that an Euclidean structure on $E$ determines an $\O(k)$--structure on $E$.
Conversely, an $\O(k)$-structure $P\subset \Fr(E)$ determines an Euclidean structure on $E$. 
Indeed, pick any  $p\in P_m$ and any two vectors $\mathrm v_1,\, \mathrm v_2\in E_m$. 
Since $p$ is a basis of $E_m$, we can write $\mathrm v_i= p\cdot \mathrm x_i$, where $\mathrm x_i\in\R^k$. 
We define a scalar product on $E_m$ by
\begin{equation*}
	\langle \mathrm v_1, \mathrm v_2\rangle = \mathrm x_1^t\mathrm x_2.
\end{equation*}
It is straightforward to check that this does not depend on the choice of $p$. 
Moreover, the scalar product defined in this way depends smoothly on $m$.

To summarize, an Euclidean structure on a vector bundle is equivalent to an $\O(k)$--structure.

\begin{exercise}
	A fiberweise volume form is by definition  a nowhere vanishing section of $\Lambda^k E^*$, where $k=\rk E$.
	Show that there is a one-to-one correspondence between fiberwise volume forms and $\SL_k(\R)$--structures.   
\end{exercise}

Let $V$ be a complex vector space of complex dimension $k$. 
One can view $V$ as a real vector space of dimension $2k$ equipped  with an endomorphism $I\in\End_\R(V),\ I\mathrm v=i\mathrm v$ so that $I^2 = -\id$. 
Conversely, given a real vector space equipped with an endomorphism $I\in\End_\R(V)$ such that $I^2 = -\id$ we can regard $V$ as a complex vector space, where  $i\cdot \mathrm v := I\mathrm v$.
In this case $\dim_\R V $ is necessarily even. 
The map $I$ is called a complex structure. 

With the above understood, a complex vector bundle is just a real vector bundle equipped with $I\in\Gamma\bigl ( \End E\bigr)$ such that $I^2 = -\id$. 
In particular, each fiber $E_m$ is endowed with a complex structure $I(m)$. 
Thus, a complex vecor bundle is essentially a family of complex vector spaces parametrized by points of the base. 

\begin{exercise}$\phantom{as}$
	\begin{enumerate}[(i),itemsep=-3pt]
		\item Show that a complex vector bundle can be defined as a locally trivial family of complex vector spaces akin to Definition~\ref{Defn_VB};
		\item Show that there is a one-to-one correspondence between complex structures on a real vector bundle $E$ or rank $2k$ and $\GL_k(\C)$--structures. 
		Here $\GL_k(\C)$ is viewed as a subgroup of $\GL_{2k}(\R)$
		\begin{equation*}
			\GL_k(\C) = \bigl \{ A\in \GL_{2k}(\R)\mid A\comp I_{st} = I_{st}\comp A\; \},
		\end{equation*}
		 where $I_{st}$ is the standard complex structure on $\R^{2k}$:
		 \begin{equation*}
		 	I_{st}(x_1, y_1,\dots, x_k, y_k) := (-y_1, x_1,\dots, -y_k, x_k). 
		 \end{equation*}
	\end{enumerate}
\end{exercise} 

\begin{exercise}
	\label{Exerc_HermStructures}
A Hermitian structure on a complex vector bundle is a smooth family of Hermitian scalar products on each fiber.
Prove that the following holds:
\begin{enumerate}[(i),itemsep=-3pt]
	\item Any complex vector bundle admits a Hermitian structure;
	\item There is a one-to-one correspondence between Hermitian structures and $\U(k)$--structures. 
\end{enumerate} 	
\end{exercise}

\subsubsection{The associated vector bundle}

Let $\pi\colon P\to M$ be a principal $G$--bundle.
For any representation $\rho\colon G\to \GL(V)$, where $V$ is a vector space, we can construct a vector bundle over $M$ as follows.

Define the right $G$--action on the product bundle $P\times V$ via
\begin{equation*}
(p, \mathrm v)\cdot g = \bigl (p\cdot g,\; \rho(g^{-1})\mathrm v \bigr).
\end{equation*}
Clearly, this action is free and properly discontinuous so that the quotient 
\begin{equation}
	\label{Eq_AssVB}
	P\times_{\rho}V := \bigl ( P\times V\bigr )/G 
\end{equation} 
is a smooth manifold. 
The map $\pi$ yields a well-defined projection  
\begin{equation*}
	P\times_{\rho}V\to M,\qquad [p, \mathrm v]\mapsto \pi(p)
\end{equation*}
which we still denote by the same letter $\pi$.
Its fibers are isomorphic to $V$, hence these have a canonical structure of vector spaces.  
Moreover, given a local section $s\in\Gamma(U;\, P)$ we can construct a local trivialization of $E$ via the map
\begin{equation*}
	\psi_U^{-1}\colon U\times V \to \pi_E^{-1}(U),\qquad (m,\mathrm v)\mapsto [s(m), \, \mathrm v].
\end{equation*}
Hence, if $P$ is locally trivial, so is $E$. 

\begin{defn}
	The vector bundle $E=E(P,\rho, V)$ defined by~\eqref{Eq_AssVB} is called the vector bundle associated with $(P,\rho)$, or simply \emph{the associated bundle}.
\end{defn}

\begin{example}
	Let $P=\Fr(E)$, $V=\R^k$, and $\rho=\id$ be the tautological representation of $\GL_k(\R)$. 
	Then the map
	\begin{equation*}
		\Fr(E)\times \R^k\longrightarrow E,\qquad (e, x)\mapsto \sum_{i=1}^n x_ie_i
	\end{equation*}
	induces an isomorphism $E\bigl (\Fr(E), \id\bigr )\cong E$.
\end{example}

This example shows that the construction of an associated bundle allows one to recover a vector bundle from its frame bundle.
However, by varying the representation $\rho$ we can obtain other bundles as well. 
The following example illustrates this. 


\begin{example}
	Consider $P=\Fr(E)$ again, however this time we take $V=\Lambda^p(\R^k)^*$ and $\rho$ the natural representation of $\GL_k(\R)$ on $\Lambda^p(\R^k)^*$, i.e.,
	\begin{equation}
	\label{Eq_ReprLamdaP}
	\rho\colon \GL_{k}(\R)\times \Lambda^p(\R^k)^*  \to \Lambda^p(\R^k)^*,\qquad (g,\a)\mapsto \a\bigl (g^{-1}\cdot\;,\dots , g^{-1}\cdot \;\bigr ).
	\end{equation}
	We have the map
	\begin{equation*}
	\Fr(E)\times \Lambda^p(\R^k)^*\to \Lambda^pE^*,\qquad  (e, \a)\mapsto \a\bigl ( e^{-1}\cdot\;,\dots, 	e^{-1}\cdot\;)
	\end{equation*}
	where we think of a frame $e$ as an isomorphism $\R^k\to E_{\pi(e)}$.
	This induces an isomorphism of vector bundles 
	\begin{equation*}
		\Fr(E)\times_{\rho} \Lambda^p(\R^k)^*\cong \Lambda^pE^*.
	\end{equation*} 
	Thus, $\Lambda^pE^*$ can be recovered as the vector bundle associated with $(\Fr(E), \rho)$, where $\rho$ is given by~\eqref{Eq_ReprLamdaP}.
\end{example}

\begin{exercise}
	Let $V$ be the space of $k\times k$-matrices $M_k(\R)\cong \End(\R^k)$ viewed as a $\GL_k(\R)$--representation as follows:
	\begin{equation*}
		\rho\colon(g, A)\mapsto gA g^{-1}.
	\end{equation*}
	Show that $\Fr(E)\times_\rho \End(\R^k)$ is isomorphic to $\End(E)$. 
\end{exercise}

\medskip

Denote  
\begin{equation*}
	C^\infty(P;\, V)^G:=\bigl\{ \hat s\colon P\to V\mid s(p\cdot g) = \rho(g^{-1})\hat s(p)\quad\forall p\in P\text{ and } g\in G\, \bigr\}.`
\end{equation*}

Pick any $\hat s \in C^\infty(P;\, V)^G$ and denote by $\varpi\colon P\times V\to P\times_\rho V$ the natural projection. 
Consider the map
\begin{equation*}
	\varpi\comp (\id, \hat s)\colon P\longrightarrow P\times_\rho V,\qquad p\mapsto [p, \,\hat s(p)].
\end{equation*}
The letter map is $G$-invariant, where $G$ acts trivially on the target space. 
Hence, there is a unique map $s\colon M\to P\times_\rho V$ such that 
\begin{equation}
	\label{Eq_SectEquivMaps}
	s\comp\pi = \varpi\comp (\id, \hat s).
\end{equation}
In other words, $s$ is defined by requiring that the diagram
\begin{center}
	\includegraphics[width=0.3\textwidth]{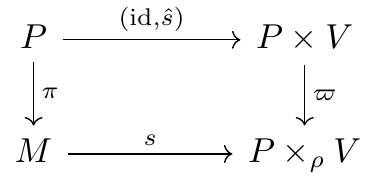}
\end{center}
commutes.  

\begin{proposition}
	\label{Prop_EquivMapsSections}
	The map
	\begin{equation*}
		C^\infty(P;\, V)^G\to \Gamma\bigl ( P\times_\rho V \bigr),\qquad \hat s\mapsto s,
	\end{equation*}
	where $s$ is defined by~\eqref{Eq_SectEquivMaps}, is a bijection. 
\end{proposition}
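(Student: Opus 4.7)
The plan is to verify that the assignment $\hat s\mapsto s$ makes sense, construct an inverse, and check that both directions yield smooth objects. Throughout, the main structural fact I will use is that the $G$-action on each fiber of $P$ is free and transitive, so that for a fixed $p\in P_m$ the map $V\to (P\times_\rho V)_m$, $\mathrm v\mapsto [p,\mathrm v]$, is a bijection (and linear); this ``frame'' identification is precisely what converts equivariant functions on $P$ into sections downstairs.

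First I would confirm that for $\hat s\in C^\infty(P;V)^G$ the formula $s(m):=[p,\hat s(p)]$, where $p$ is any element of $\pi^{-1}(m)$, is well-defined: for another choice $p\cdot g$ in the same fiber, equivariance of $\hat s$ gives $[p\cdot g,\hat s(p\cdot g)] = [p\cdot g,\rho(g^{-1})\hat s(p)] = [p,\hat s(p)]$ by the defining relation of the quotient $P\times_\rho V$. By construction $\pi\circ s=\id_M$ and the diagram~\eqref{Eq_SectEquivMaps} commutes. Smoothness of $s$ is local: after picking a local section $t\colon U\to P$ of $\pi$, we have $s|_U = \varpi\circ(t,\hat s\circ t)$, which is a composition of smooth maps.

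For injectivity I would argue as follows. If $\hat s_1,\hat s_2$ produce the same $s$, then for every $p\in P$, $[p,\hat s_1(p)]=[p,\hat s_2(p)]$, i.e.\ there exists $g\in G$ with $p\cdot g=p$ and $\hat s_2(p)=\rho(g^{-1})\hat s_1(p)$. Freeness of the $G$-action forces $g=e$, hence $\hat s_1(p)=\hat s_2(p)$. For surjectivity, given a section $s\colon M\to P\times_\rho V$, I define $\hat s(p)$ to be the unique element $\mathrm v\in V$ with $s(\pi(p))=[p,\mathrm v]$; this exists and is unique by the identification $V\cong (P\times_\rho V)_{\pi(p)}$ fixed by $p$. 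Equivariance is immediate: writing $s(\pi(p)) = [p,\hat s(p)] = [p\cdot g,\rho(g^{-1})\hat s(p)]$ and comparing with $s(\pi(p\cdot g)) = [p\cdot g,\hat s(p\cdot g)]$, uniqueness yields $\hat s(p\cdot g)=\rho(g^{-1})\hat s(p)$.

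The one step that requires a little care is the smoothness of $\hat s$. I would trivialize $P$ over a neighborhood $U$ using $\psi_U\colon \pi^{-1}(U)\to U\times G$ and use the induced local trivialization of the associated bundle, $\phi_U\colon \pi^{-1}(U)\subset P\times_\rho V \to U\times V$, given on the distinguished local section $t:=\psi_U^{-1}(\cdot,e)$ by $[t(m),\mathrm v]\mapsto(m,\mathrm v)$. In these coordinates, writing $s|_U=(\id,\sigma)$ for a smooth map $\sigma\colon U\to V$, the formula for $\hat s$ becomes $\hat s(\psi_U^{-1}(m,g)) = \rho(g^{-1})\sigma(m)$, which is manifestly smooth as a composition of smooth maps. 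This last verification is the only mildly nontrivial point; everything else is forced by the algebra of the $G$-action.
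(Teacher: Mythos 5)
Your proof is correct and follows the same approach as the paper: construct the inverse map by defining $\hat s(p)$ pointwise as the unique element of $V$ with $s(\pi(p))=[p,\hat s(p)]$, then verify equivariance. You simply fill in the routine details — well-definedness, a (slightly redundant) explicit injectivity check, and the local-trivialization argument for smoothness — that the paper dispatches with ``straightforward to check.''
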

\begin{proof}
	We only need to construct the inverse map.
	Thus, let $s$ be a section of $P\times_\rho V$. 
	For any $p\in P$ there is a unique $\hat s(p)\in V$ such that 
	\begin{equation*}
		s\bigl (\pi(p)\bigr) = [p,\; \hat s(p)].
	\end{equation*}
	It is straightforward to check that $\hat s$ is equivariant. 
\end{proof}

It will be useful below to have a description of differential forms with values in an associated bundle in the spirit of  \autoref{Prop_EquivMapsSections}. 
Before stating the claim, we need a few notions. 

For $g\in G$ denote $R_g\colon P\to P$, \ $R_g(p) = p\cdot g$. 
The infinitesimal action of the Lie algebra $\fg$ is given by the vector field
\begin{equation*}
K_\xi(p):=\Bigl.\frac d{dt}\Bigr|_{t=0} \bigl ( p\cdot\exp(t\xi)\bigr),\qquad \xi\in\fg.
\end{equation*}  

Since $G$ acts freely on the fibers, we have
\begin{equation*}
\cV_p:=\ker \pi_*|_p = \{  K_\xi(p)\mid \xi\in \fg \}\cong \fg.
\end{equation*}
$\cV_p$ is called \emph{the vertical subspace}.

\begin{defn}$\phantom{a}$
	\begin{itemize}[itemsep=-15pt]
		\item A $q$--form $\om$ with values in some $G$-representation $V$ is said to be \emph{$G$-equivariant}, if $R_g^*\,\om =\rho(g^{-1})\om$;\\
		\item A $q$--form $\om$  with values in some $G$-representation $V$ is said to be \emph{basic}, if $\om (\mathrm v_1,\dots,\mathrm v_q)=0$ whenever one of the arguments belongs to the vertical subspace. 
	\end{itemize}
\end{defn}
 
 Denote by $\Om^q_{bas}(P;\, V)^G$ the space of all basic and $G$-equivariant $q$-forms on $P$ with values in  $V$. 

\begin{proposition}
	\label{Prop_FormsPullBackPrincBundle}
	For any $a\in \Om^q\bigl (M;\, P\times_\rho V\bigr )$ the pull-back $\pi^*a$ can be viewed as an element of $\Om^q_{bas}(P;\, V)^G$. 
	Moreover, the map $a\mapsto\pi^*a$ establishes a bijective correspondence between $\Om^q\bigl (M;\, P\times_\rho V\bigr )$ and $\Om^q_{bas}(P;\, V)^G$.
\end{proposition}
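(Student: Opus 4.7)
The plan is to establish the bijection by writing down its inverse explicitly. The key observation is that each frame $p\in P$ determines a canonical linear isomorphism $\phi_p\colon V\to E_{\pi(p)}$ via $w\mapsto [p,w]$; these assemble into a canonical trivialization of $\pi^*E$ over $P$, which is precisely what converts the literal $E$-valued pull-back $\pi^*a$ into a $V$-valued form. Concretely, I will define
\[
(\pi^*a)_p(X_1,\ldots,X_q):=\phi_p^{-1}\bigl(a_{\pi(p)}(\pi_*X_1,\ldots,\pi_*X_q)\bigr),
\]
inherit smoothness from $a$ by composing with local sections of $P$, and then verify the two structural properties. Basicness is immediate, since $\pi_*X_i=0$ whenever $X_i$ is vertical. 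For equivariance, using $\pi\comp R_g=\pi$ together with $\phi_{p\cdot g}=\phi_p\comp\rho(g)$ (a direct consequence of $[p\cdot g,w]=[p,\rho(g)w]$) one obtains $R_g^*(\pi^*a)=\rho(g^{-1})\,\pi^*a$.

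For the inverse map I will assign to $\omega\in\Omega^q_{bas}(P;V)^G$ the $E$-valued form
\[
a_m(v_1,\ldots,v_q):=\bigl[\,p,\;\omega_p(X_1,\ldots,X_q)\,\bigr],
\]
where $p\in\pi^{-1}(m)$ is any chosen point and $X_i\in T_pP$ are any chosen lifts of $v_i$ (which exist because $\pi$ is a submersion). The crux of the argument is well-definedness, and this is exactly where the two hypotheses on $\omega$ are each indispensable: independence of the choice of lifts rests on basicness, since any two lifts of $v_i$ differ by a vertical vector; independence of the choice of $p$ in the fiber rests on equivariance, via the computation
\[
\bigl[p\cdot g,\,\omega_{p\cdot g}\bigl((R_g)_*X_1,\ldots,(R_g)_*X_q\bigr)\bigr]=\bigl[p\cdot g,\,\rho(g^{-1})\omega_p(X_1,\ldots,X_q)\bigr]=\bigl[p,\,\omega_p(X_1,\ldots,X_q)\bigr].
\]
Smoothness of the reconstructed $a$ is then verified by using local sections of $P$ to produce smooth local lifts of vector fields from $M$ to $P$.

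Finally, it remains to check that the two constructions are mutually inverse, which is essentially tautological once the definitions are unwound: starting from $a$, the composite yields $[p,\phi_p^{-1}(a_{\pi(p)}(\ldots))]=a_{\pi(p)}(\ldots)$, and starting from $\omega$ one recovers the same form by evaluating on the chosen lifts. The main obstacle in the entire argument is the well-definedness of the inverse map; this is where the interplay between the basic and the equivariant conditions becomes essential, and everything else is either forced by the construction or a routine unwinding of definitions. (Note that for $q=0$ the statement reduces exactly to Proposition~\ref{Prop_EquivMapsSections}, so the construction is a natural extension of that correspondence.)
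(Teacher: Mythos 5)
Your argument is correct and follows essentially the same route as the paper: the pull-back $\pi^*a$ is defined exactly by the identity $a(\pi_*\hat{\mathrm v}_1,\dots,\pi_*\hat{\mathrm v}_q)=[p,\hat a_p(\hat{\mathrm v}_1,\dots,\hat{\mathrm v}_q)]$ (your $\phi_p^{-1}$ is just the inverse of this identification), basicness is read off from $\ker\pi_*$, and the inverse is built by choosing a point $p$ in the fiber and arbitrary lifts $\hat{\mathrm v}_j$, with well-definedness resting on basicness and equivariance. You merely spell out a few steps the paper leaves implicit (the equivariance computation via $\phi_{p\cdot g}=\phi_p\circ\rho(g)$, smoothness via local sections, and the mutual-inverse check), which is fine.
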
 
\begin{proof}
	For any tangent vectors $\hat{\mathrm v}_1,\dots, \hat {\mathrm v}_q$ to $P$ the equality
	\begin{equation*}
		a\bigl (\pi_*\hat{\mathrm v}_1,\dots, \pi_*\hat{\mathrm v}_q\bigr )= [p, \;\hat a_p(\hat{\mathrm v}_1,\dots, \hat{\mathrm v}_q)]
	\end{equation*}
	determines uniquely $\hat a\in\Om^q(P;\, V)$.
	Since by definition the vertical space is $\ker \pi_*$, the fact that $\pi^*a$ is basic is clear. 
	The equivariancy of $\hat a$ follows from a straighforward computation. 
	
	Conversely, let  $\hat a$ be given.  Pick any $m\in M$ and any $p\in\pi^{-1}(m)$.
	Pick also any  $\mathrm v_1,\dots,\mathrm v_q\in T_mM$ and choose $\hat{\mathrm v}_1,\dots, \hat {\mathrm v}_q \in T_pP$ such that $\pi_*(\hat {\mathrm v}_j) = \mathrm v_j$ for all $j\in\{ 1,\dots, q\}$. 
	These lifts do exist because $\pi_*$ is surjective, however these need not be unique. 
	With this at hand, we can define $a$ by the equality
	\begin{equation*}
		a_m\bigl (\mathrm v_1,\dots, \mathrm v_q\bigr )= [p, \;\hat a_p(\hat{\mathrm v}_1,\dots, \hat{\mathrm v}_q)].	
	\end{equation*}
	Since $\hat a$ is basic and equivariant, $a$ does not depend on the choices involved.   
\end{proof}

\subsubsection{Connections on principal bundles}

The Lie algebra $\fg$ can be viewed as a $G$-representation, where the action is the adjoint one: $\ad\colon G\to \GL(\fg)$.
For example, if $G$ is a subgroup of $\GL_k(\R)$, then 
\begin{equation*}
	\ad_g\xi = g\,\xi\, g^{-1}.
\end{equation*}

\begin{defn}
	\emph{A connection form}, or simply \emph{a connection}, on a principal $G$-bundle $P$ is a $G$--equivariant 1--form $a$ with values in the Lie algebra $\fg$ such that 
	\begin{equation*}
		a (K_\xi) = \xi\qquad \forall\xi\in\fg.
	\end{equation*}
\end{defn}

Denote 
\begin{equation*}
	\ad P = P\times_{\ad}\fg.
\end{equation*}

\begin{thm}
	\label{Thm_SpaceOfAllConnPrincBundle}
	For any principal bundle the space of all connections $\cA(P)$ is an affine space modelled on $\Om^1(\ad P)$.
\end{thm}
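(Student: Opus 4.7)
The plan is to mirror the three-step structure used for Theorem~\ref{Thm_ConnIsAnAffSpace}: establish that (a) $\cA(P)$ is non-empty, (b) the difference of two connections lies in $\Om^1(\ad P)$, and (c) adding an element of $\Om^1(\ad P)$ to a connection gives a connection. The key technical tool will be \autoref{Prop_FormsPullBackPrincBundle}, which identifies $\Om^1(\ad P)$ with the space of basic, $G$-equivariant, $\fg$-valued $1$-forms on $P$.

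\textbf{Existence.} First I observe that $\cA(P)$ is convex: if $a_1, a_2 \in \cA(P)$ and $t\in[0,1]$, then $ta_1 + (1-t)a_2$ is still $G$-equivariant (the equivariance condition is linear) and sends $K_\xi$ to $t\xi + (1-t)\xi = \xi$. More generally the same holds for any affine combination $\sum t_i a_i$ with $\sum t_i = 1$, which is what a partition-of-unity argument actually requires. Next I build local connections: over a trivializing neighborhood $U$ for which $P|_U \cong U\times G$, pull back the (left-invariant) Maurer--Cartan form of $G$ via the second projection to obtain a $\fg$-valued $1$-form $a_U$ on $P|_U$; by construction it is $G$-equivariant and satisfies $a_U(K_\xi) = \xi$. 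Choose a locally finite cover $\{U_\alpha\}$ of $M$ of such trivializing neighborhoods and a subordinate partition of unity $\{\chi_\alpha\}$ on $M$. Then
\begin{equation*}
    a := \sum_\alpha (\chi_\alpha\comp\pi)\, a_{U_\alpha}
\end{equation*}
is a globally defined $\fg$-valued $1$-form; since $\sum_\alpha \chi_\alpha \equiv 1$, the convexity remark shows $a \in \cA(P)$.

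\textbf{Affine structure.} If $a, \hat a \in \cA(P)$, then $a - \hat a$ is $G$-equivariant and $(a-\hat a)(K_\xi) = \xi - \xi = 0$, so it vanishes on the vertical subspace $\cV_p$ at every point, i.e., it is basic. By \autoref{Prop_FormsPullBackPrincBundle} applied to the $G$-representation $(\ad, \fg)$, there is a unique $\alpha \in \Om^1(M;\, \ad P) = \Om^1(\ad P)$ with $\pi^*\alpha = a - \hat a$. Conversely, given $a \in \cA(P)$ and $\alpha \in \Om^1(\ad P)$, the form $\tilde a := a + \pi^*\alpha$ is $G$-equivariant (being the sum of two such forms) and satisfies $\tilde a(K_\xi) = a(K_\xi) + (\pi^*\alpha)(K_\xi) = \xi + 0 = \xi$, since $\pi_* K_\xi = 0$ by verticality of $K_\xi$. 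Hence $\tilde a \in \cA(P)$, and the map $\alpha\mapsto a+\pi^*\alpha$ is the required free transitive action.

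\textbf{Expected difficulty.} The routine manipulations (linearity of equivariance, vanishing on verticals, behaviour under partitions of unity) are painless. The only point that deserves care is the existence step: one must make sure the local models $a_{U_\alpha}$ are really defined intrinsically so that the partition of unity construction produces a \emph{global} form on the total space $P$, not just on $M$. Using $\chi_\alpha\comp\pi$ rather than $\chi_\alpha$ as the weights handles this, and the convex-combination observation ensures the two defining properties of a connection survive the averaging.
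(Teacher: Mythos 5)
Your proof is correct and takes essentially the same route the paper indicates: mirror the three-step structure of Theorem~\ref{Thm_ConnIsAnAffSpace}, with the key modification being that differences of connections are basic and $G$-equivariant $\fg$-valued $1$-forms, which \autoref{Prop_FormsPullBackPrincBundle} identifies with $\Om^1(\ad P)$. You supply more detail than the paper does, notably the existence step via the Maurer--Cartan form on the local models $U\times G$ and the observation that the partition-of-unity weights must be pulled back through $\pi$, and you correctly note that differences are $G$-\emph{equivariant} (for the adjoint action) rather than $G$-invariant as the paper somewhat loosely states.
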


This theorem  can be proved in the same manner as~\autoref{Thm_ConnIsAnAffSpace}. 
Instead of going through the details, we describe the only essential modification of the argument used in the proof of~\autoref{Thm_ConnIsAnAffSpace}. 
Namely, given $a,\,a'\in\cA(P)$ the difference $b:=a -a'$  is basic and $G$-invariant so that by applying \autoref{Prop_FormsPullBackPrincBundle} one can think of $b$ as a 1--form on $M$ with values in $\ad P$.

Notice that \autoref{Thm_SpaceOfAllConnPrincBundle} states in particular, that $\cA(P)$ is non-empty. 

\begin{example}
	\label{Ex_ConnHopfBundle}
		$\U(1)$ acts freely on 
		\begin{equation*}
		S^{2n+1}:=\bigl \{ z\in \C^{n+1}\mid |z_0|^2+\dots + |z_n|^2 = 1 \bigr \}
		\end{equation*}
		in the diagonal manner. 
		This represents $S^{2n+1}$ as the total space of the $\U(1)$-bundle
		\begin{equation}
		\label{Eq_HopfFibration}
		\pi\colon S^{2n+1}\to \CP^n, \qquad z\mapsto [z],
		\end{equation}
		which is sometimes called the \emph{Hopf fibration}.

		The infinitesimal action of $U(1)$ on $S^{2n+1}$ is given by the vector field 
		\begin{equation*}
		v(z) = \bigl (iz_0,\dots, iz_n \bigr ). 
		\end{equation*}
		There is a unique connection $a\in\Om^1\bigl (S^{2n+1}; \R i \bigr)$ such that $\ker a = v^\perp$. 
		Explicitly, 
		\begin{equation*}
		a_z(u) = \langle v(z), u\rangle\, i,\qquad \text{where } u\in T_zS^{2n+1}. 
		\end{equation*}
		Since the action of $\U(1)$ preserves the Riemannian metric of the sphere, $a$ is an invariant 1-form. 
		It remains to notice that for abelian groups the notions of equivariant and invariant forms coincide.		
		 Thus, $a$ is a connection 1-form on~\eqref{Eq_HopfFibration}.
\end{example}

\begin{exercise}
	\label{Exerc_TautolLineBundle}
	Prove that the associated bundle $S^{2n+1}\times_{\U(1)}\C$ is (canonically) isomorphic to \emph{the tautological line bundle:}
	\begin{equation*}
	\cO(-1):=\Bigl\{ \bigl( [z], w \bigr)\in \CP^n\times \C^{n+1}\mid w\in [z]\cup \{ 0 \} \Bigr \}.
	\end{equation*}
\end{exercise} 

\medskip

The definitions of a connection on a vector and principal bundles differ significantly and the reader may wonder what is the relation between these two notions.
The following result yields an answer to this question.

First notice that by differentiating $\rho\colon G\to \GL(V)$ we obtain an action of $\fg$ on $V$, i.e., a Lie algebra homomorphism 
\begin{equation}
\label{Eq_DRho}
\rho_*|_{\mathbbm 1}\colon \fg \to \End(V).
\end{equation}  
This way one can think of a connection $a$ as a 1-form with values in $\End(V)$ provided a representation $V$ is given.  

\begin{thm}$\phantom{a}$
\begin{enumerate}[(i),itemsep=-3pt]
	\item \label{It_FromConnOnVBToFrameBundle}
	Let $E$ be a vector bundle. Any connection $\nabla$ on $E$ determines a unique connection $a$ on the frame bundle such that
	\begin{equation*}
		e^*a = A(\nabla, e).
	\end{equation*}
	Here we think of a local frame $e$ as a local section of $\Fr(E)$ and $A(\nabla, e)$ is the local connection 1--form of $\nabla$ with respect to $e$. 
	\item \label{It_ConnOnAssBundle}
	Let $P$ be a principal $\rG$--bundle. 
	Any connection $a$ on $P$ induces a unique connection $\nabla$ on any associated vector bundle $P\times_\rho V$ such that
	\begin{equation}
	\label{Eq_ConnInTermsOfEquivMaps}
		\pi^*\nabla s = d\hat s + a\cdot \hat s.
	\end{equation} 
	Here the right hand side is a basic and $G$-invariant 1--form on $P$ and the equality is understood in the sense of \autoref{Prop_FormsPullBackPrincBundle}. 
\end{enumerate}	
\end{thm}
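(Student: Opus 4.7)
The two parts are mutually dual, and I would handle them in turn, starting with~\ref{It_ConnOnAssBundle} since the defining formula is already provided in~\eqref{Eq_ConnInTermsOfEquivMaps}. My plan for~\ref{It_ConnOnAssBundle} is: use the correspondence of \autoref{Prop_EquivMapsSections} to translate sections of $P\times_\rho V$ into equivariant maps, verify that the right-hand side of~\eqref{Eq_ConnInTermsOfEquivMaps} is a basic and equivariant $1$-form on $P$, descend it to a $1$-form on $M$ via \autoref{Prop_FormsPullBackPrincBundle}, and finally check the Leibniz rule and uniqueness.

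Concretely, for $s\in\Gamma(P\times_\rho V)$ with equivariant representative $\hat s\in C^\infty(P;V)^G$, I set
\[
\hat\eta := d\hat s + \rho_*(a)\,\hat s,
\]
where $\rho_*$ is as in~\eqref{Eq_DRho}. Equivariance of $\hat\eta$ reduces to three inputs: $R_g^*a = \ad_{g^{-1}}a$, $R_g^*\hat s = \rho(g^{-1})\hat s$, and the intertwining identity $\rho_*\comp\ad_{g^{-1}} = \rho(g^{-1})\,\rho_*\,\rho(g)$ obtained by differentiating $\rho(g^{-1}hg) = \rho(g^{-1})\rho(h)\rho(g)$ at the identity. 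Basicness reduces to a single vertical evaluation: $a(K_\xi)=\xi$ while $d\hat s(K_\xi)|_p = \tfrac{d}{dt}|_{0}\,\rho(\exp(-t\xi))\hat s(p) = -\rho_*(\xi)\hat s(p)$, so the two contributions cancel. The Leibniz rule is immediate since $\widehat{fs} = (f\comp\pi)\hat s$, and uniqueness follows because~\eqref{Eq_ConnInTermsOfEquivMaps} determines $\pi^*(\nabla s)$ and $\pi^*$ is injective on differential forms.

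For part~\ref{It_FromConnOnVBToFrameBundle} I would give a local construction glued by the change-of-frame formula~\eqref{Eq_ConnMatrGauge}. Uniqueness is clear first: any tangent vector to $\Fr(E)$ at $p$ decomposes as $K_\xi(p)$ plus the pushforward $e_*w$ of some tangent vector on the base, where $e$ is any local section with $e(\pi(p)) = p$. The requirements $a(K_\xi) = \xi$ and $e^*a = A(\nabla, e)$ therefore pin $a$ down pointwise. For existence, over a trivializing $U$ with section $e$ I identify $\pi^{-1}(U)\cong U\times\GL_k(\R)$ via $(m,h)\mapsto e(m)h$ and define
\[
a_U := h^{-1}\bigl(\pi^*A(\nabla,e)\bigr)h + h^{-1}dh,
\]
with $h^{-1}dh$ the pullback of the Maurer--Cartan form on $\GL_k(\R)$. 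By construction this form is $\GL_k(\R)$-equivariant, evaluates to $\xi$ on the fundamental vertical vector $K_\xi$, and its pullback along $e$ (where $h\equiv 1$) recovers $A(\nabla,e)$.

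The main obstacle, essentially the only nontrivial calculation, is the overlap check. If $e' = e\cdot g_0$ on $U\cap U'$, the second factors transform as $h = g_0(m)\,h'$, yielding
\[
h^{-1}dh = h'^{-1}\bigl(g_0^{-1}dg_0\bigr)h' + h'^{-1}dh'
\qandq
h^{-1}(\pi^*A(\nabla,e))h = h'^{-1}\bigl(g_0^{-1}(\pi^*A(\nabla,e))g_0\bigr)h'.
\]
Substituting the transformation law $A(\nabla,e')= g_0^{-1}A(\nabla,e)g_0 + g_0^{-1}dg_0$ from~\eqref{Eq_ConnMatrGauge} collapses the right-hand side to $h'^{-1}(\pi^*A(\nabla,e'))h' + h'^{-1}dh' = a_{U'}$, so the local forms agree on overlaps and patch to a global connection $a$ on $\Fr(E)$ with the desired property. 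The engineering of~\eqref{Eq_ConnMatrGauge} makes this compatibility essentially automatic once the bookkeeping is done.
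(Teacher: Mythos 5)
Your proposal is correct and follows essentially the same route as the paper's proof: both parts descend a locally defined form to a global one via the change-of-frame law~\eqref{Eq_ConnMatrGauge} for~\ref{It_FromConnOnVBToFrameBundle}, and check basicness and equivariance of $d\hat s + a\cdot\hat s$ for~\ref{It_ConnOnAssBundle}. Your explicit local formula $a_U = h^{-1}(\pi^*A)h + h^{-1}dh$ is a coordinate expansion of the paper's abstract tangent-space splitting $T_{e(m)}P = \cV_{e(m)}\oplus\Im\, e_*$, and your direct overlap substitution replaces the paper's auxiliary computation of $d\hat R$.
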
 
\begin{proof}
	The proof consists of a number of the following steps.
	\setcounter{step}{0}
	\begin{step}
		\label{Step_DiffOfRhat}
		Denote
		\begin{equation*}
			\hat R\colon P\times G\longrightarrow P,\qquad \hat R(p,g) = p\cdot g.
		\end{equation*}
		Then the differential $\hat R_*$ at $(p,g)$ satisfies:
		\begin{equation*}
			\hat R_* (\mathrm v, \mathrm w) = (R_g)_*\mathrm v + K_{(L_{g^-1})_*\mathrm w}(p\cdot g),
		\end{equation*}
		where $L_{g^{-1}}\colon G\to G,\ h\mapsto g^{-1}h$, $\mathrm v\in T_pP$, and $\mathrm w\in T_gG$.  
	\end{step}

	The proof of this step is a simple computation, which is left as an exercise.
	
	\begin{step}
		\label{Step_ConnFormsLoc}
		Let $e$ be a local frame of $E$ over $U$.  
		For any 1-form $A\in\Om^1(U;\,\gl_k(\R))$ there is a unique connection $a$ on $\Fr(E)|_U$ such that
		\begin{equation*}
			e^*a = A.
		\end{equation*}  
	\end{step}

	Since $e$ is a section of $\Fr(E)|_U$, we have $\pi_*\comp e_*=\id_U$ and therefore $e_*$ is injective. 
	Hence, by the dimensional reasons, we have $T_{e(m)}P = \cV_{e(m)}\oplus \Im e(m)_*$. 
	Therefore, we can define 
	\begin{equation*}
		a_{e(m)}\bigl (K_\xi + e_*\mathrm v\bigr) = \xi + A(\mathrm v).
	\end{equation*}
	This extends to a unique $G$-invariant 1-form on $\Fr(E)|_{U}$. 
	The equality 
	\begin{equation*}
		(R_g)_* K_\xi  = K_{\ad_{g^{-1}}\xi},
	\end{equation*}
	which can be checked by a straightforward computation, implies that $a$ is a connection. 
	
	\begin{step}
		We prove~\textit{\ref{It_FromConnOnVBToFrameBundle}}.
	\end{step}

	Choose two local frames $e$ and $e'$, which we may assume to be defined on the same open set $U$ (restrict to the intersection of the corresponding domains if necessary). 
	The equalities
	\begin{equation*}
		a_{e(m)}\bigl ( e_*\cdot\; \bigr) = A_m(\cdot)\qquad\text{and}\qquad 
		a_{e'(m)}'\bigl ( e'_*\cdot\; \bigr) =A'_m(\cdot) 
	\end{equation*}	
	determine unique $\GL_k(\R)$--invariant 1-forms $a$ and $a'$ on $\Fr(E)|_U$ by Step~\ref{Step_ConnFormsLoc}.
	
	We have
	\begin{align*}
		e'^*a (\cdot )&=a_{e'}\bigl (e'_*\;\cdot\; \bigr ) = a_{e\cdot g}\Bigl (\bigl (\hat R\comp (e,g)\bigr)_*\;\cdot\;\Bigr)
		=a_{e\cdot g}\Bigl (\bigl (R_g)_*\comp e_*\;\cdot\;\Bigr) + a_{e\cdot g}\bigl (K_{g^{-1}dg}\bigr (e\cdot g))\\
		&=g^{-1}a_e(e_*\cdot)g + g^{-1}dg = A'(\cdot).
	\end{align*} 
	Here the second equality follows by Step~\ref{Step_DiffOfRhat} and the last one by~\eqref{Eq_ConnMatrGauge}. 
	Thus, we conclude that $a = a'$ on the intersection of the domains of local frames $e$ and $e'$. 
	Thus, $a$ is globally well-defined.
	
	\begin{step}
		We prove~\textit{\ref{It_ConnOnAssBundle}}.
	\end{step}
	
	By the equivariancy of $\hat s\colon P\to V$ we have
	\begin{equation*}
		\bigl (d\hat s + a\cdot \hat s\bigr )(K_\xi) = -\xi\cdot \hat s + a(K_\xi)\cdot \hat s = 0,
	\end{equation*}
	i.e., $d\hat s + a\cdot \hat s$ is a basic 1-form, which is $G$-invariant as well. 
	Hence, there is a 1-form $\nabla s$ on $M$ such that~\eqref{Eq_ConnInTermsOfEquivMaps} holds. 
	Moreover, for any $G$-invariant function $f$ on $P$ we have the equality
	\begin{equation*}
		d(f\cdot \hat s) + a\cdot (f\cdot \hat s) = df\otimes \hat s + f \bigl (d\hat s + a\cdot  \hat s\bigr ),
	\end{equation*}
	which shows that $\nabla$ is a connection. 
	This finishes the proof of this theorem.
\end{proof}

One immediate corollary of this theorem is that a connection on $E$ induces a connection on $E^*, \ \End(E)$ ans so on. 
Indeed, these bundles can be interpreted as associated bundles for $P=\Fr(E)$ and the corresponding $\GL_k(\R)$--representations.
A more direct characterization of the induced connections is given in the following.
\begin{exercise}Let $\nabla$ be a connection on $E$.
	Show that the following holds.
	\begin{enumerate}[(i),itemsep=-3pt]
		\item There is a unique connection on $E^*$, still denoted by $\nabla$, such that
		\begin{equation*}
			d \langle \a, s\rangle = \langle \nabla\a, s\rangle + \langle \a, \nabla s\rangle\qquad \forall \a\in\Gamma(E^*)\qandq\forall s\in\Gamma(E),
		\end{equation*}
		where $\langle\cdot,\cdot\rangle$ denotes the natural pairing $E^*\otimes E\to \R$.
		This connection coincides with the induced one. 
		\item There is a unique connection on $\End(E)$, still denoted by $\nabla$, such that
		\begin{equation*}
		\nabla \bigl (\varphi( s) \bigr) = (\nabla\varphi )(s) + \varphi (\nabla s)\qquad \forall \varphi\in\Gamma(\End(E))\qandq\forall s\in\Gamma(E),
		\end{equation*}
		This connection coincides with the induced one.
	\end{enumerate}	
\end{exercise}   

\subsubsection{The curvature of a connection on a principal bundle}

Local expression~\eqref{Eq_CurvLocLieBrackets} of the curvature form suggests the following construction.
Let $a$ be a connection on a principal $G$--bundle $P$. 
The 2-form $da + \frac 12[a\wedge a]$ with values in $\fg$ is clearly $G$-invariant. 
This form is also basic as the following computation shows:
\begin{align*}
	\imath_{K_\xi}\bigl (  da +\frac 12 [a\wedge a] \bigr) 
	&= \cL_{K_\xi}a - d\bigl ( \imath_{K_\xi}a\bigr) + \frac 12 \bigl [a(K_\xi), a(\cdot)\bigr ] -\frac 12 \bigl [a(\cdot), a(K_\xi)\bigr]\\
	&= -[\xi,a] + 0 +\frac 12[\xi,a] + \frac 12 [\xi, a] \\
	&=0.
\end{align*} 
Here $\cL_K$ denotes the Lie derivative with respect to a vector field $K$ and the first equality uses Cartan's magic formula
\begin{equation*}
	\cL_{K} = d\,\imath_K + \imath_K d.	
\end{equation*} 
Hence, by \autoref{Prop_FormsPullBackPrincBundle} we obtain that there is some $F_a\in \Om^2(M;\ad\, P)$ such that
\begin{equation}
	\label{Eq_CurvPrincBundle}
	\pi^*F_a = da + \frac 12 [a\wedge a].
\end{equation}

\begin{defn}
	The 2--form $F_a$ defined by~\eqref{Eq_CurvPrincBundle} is called \emph{the curvature form} of $a$.  
\end{defn}

\begin{example}
	\label{Exam_CurvatureTautBundle}
	Let us compute the curvature of the connection $a$ that was constructed in Example~\ref{Ex_ConnHopfBundle} in the simplest case $n=1$. 
	On the 3-sphere we have the following vector fields
	\begin{equation*}
		v_1:= (-x_1, x_0, -x_3, x_2), \quad 
		v_2:= (-x_2, x_3, x_0, -x_1),\qand\quad 
		v_3:= (-x_3, -x_2 ,x_1 , x_0),
	\end{equation*}
	which at each point yield an orthonormal oriented basis of the tangent space to the sphere. 
	It is worthwhile to notice that $v_1$ coincides with  the vector field $v$ from Example~\ref{Ex_ConnHopfBundle}. 
	
	By the definition of the connection form $a$, we have
	\begin{equation*}
		a =\bigl ( -x_1\, dx_0 + x_0\, dx_1 -x_3\, dx_2 + x_2\, dx_3 \bigr )\, i.
	\end{equation*} 
	Hence,
	\begin{equation*}
		\pi^*F_a = da = 2\bigl ( dx_0\wedge dx_1 + dx_2\wedge dx_3 \bigr )\, i.
	\end{equation*}
	In particular, we have
	\begin{equation*}
		F_a(\pi_*v_2, \pi_*v_3) = \pi^*F_a(v_2, v_3) = 2\, i.
	\end{equation*}
	It can be shown that the quotient metric on $S^3/\U(1)$ yields the round metric on the sphere of radius $1/2$. 
	Explicitly, the corresponding isometry is given by 
	\begin{equation*}
		(z_0, z_1)\mapsto \bigl ( z_0\bar z_1, \tfrac 12(|z_0|^2 - |z_1|^2) \bigr ). 
	\end{equation*}   
	Here we think of $S^3$ as a subset of $\C^2$.
	
	Since $(\pi_*v_2, \pi_*v_3)$ is an oriented orthonormal basis of the tangent space of $S^2_{1/2}$, we conclude that $F_a = 2\,\vol_{S^2_{1/2}}\, i$, where $\vol_{S^2_{1/2}}$ is the volume form of the standard round metric on $S^2_{1/2}$. 
	
	Notice that 
	\begin{equation*}
		\int_{S^2} F_a =2\Vol (S^2_{1/2})\, i = 2\pi\, i. 
	\end{equation*}
\end{example}

The proof of the following proposition is left as an excercise. 

\begin{proposition}
	Let $a$ be a connection on a principal bundle $P$ and let $V$ be a  $G$--representation.
	Using~\eqref{Eq_DRho} we can think of $F_a$ as a 2-form with values in $\End(P\times_\rho V)$. 
	With these identifications in mind, the curvature of the induced connection $\nabla$ on $P\times_\rho V$ equals  $F_a$.\qed
\end{proposition}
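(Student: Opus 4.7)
The plan is to pass to the total space $P$, where both curvatures admit clean expressions, and then to compare them using the fact that $\rho_*\colon \fg \to \End(V)$ is a Lie algebra homomorphism.

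First I would extend the correspondence of \autoref{Prop_EquivMapsSections} to forms of arbitrary degree: by \autoref{Prop_FormsPullBackPrincBundle}, an element $\om \in \Om^q(E)$ with $E = P \times_\rho V$ corresponds bijectively to a basic $G$--equivariant $V$--valued $q$--form $\hat\om \in \Om^q_{bas}(P;\,V)^G$. The crucial point is that under this identification the operator $d_\nabla$ on $\Om^*(E)$ corresponds to the operator
\[
\hat\om \longmapsto d\hat\om + \rho_*(a) \wedge \hat\om
\]
on basic equivariant forms. For $\hat\om = \hat s$ this is precisely~\eqref{Eq_ConnInTermsOfEquivMaps}; for general $q$ it suffices to check that both sides satisfy the Leibniz rule with respect to pulled-back forms from $M$, and then invoke uniqueness.

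With this tool available, the computation is short. For $s \in \Gamma(E)$ with equivariant lift $\hat s$,
\begin{align*}
\pi^*(F_\nabla \cdot s) = \pi^* d_\nabla(\nabla s)
&= d\bigl(d\hat s + \rho_*(a)\,\hat s\bigr) + \rho_*(a) \wedge \bigl(d\hat s + \rho_*(a)\,\hat s\bigr)\\
&= \bigl(d\rho_*(a) + \rho_*(a) \wedge \rho_*(a)\bigr)\hat s,
\end{align*}
where the two terms involving $d\hat s$ cancel. Comparing with~\eqref{Eq_CurvPrincBundle}, the proof reduces to the algebraic identity
\[
\rho_*\bigl(\tfrac12[a \wedge a]\bigr) = \rho_*(a) \wedge \rho_*(a).
\]
This in turn is immediate: for any $\fg$--valued 1--form $\alpha$ one has $\tfrac12[\alpha \wedge \alpha](X,Y) = [\alpha(X), \alpha(Y)]$, and since $\rho_*$ is a Lie algebra homomorphism this maps to $[\rho_*\alpha(X),\rho_*\alpha(Y)] = (\rho_*\alpha \wedge \rho_*\alpha)(X,Y)$ in $\End(V)$. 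The injectivity of $\pi^*$ on $M$--forms then gives $F_\nabla = \rho_*(F_a)$, which is the desired equality under the stated identification.

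The main obstacle is the verification of the extension of~\eqref{Eq_ConnInTermsOfEquivMaps} to higher--degree forms; once that is settled, the rest is bookkeeping together with the bracket identity above. An alternative, more elementary route would be to pick a local section $\sigma\colon U \to P$, observe that $\rho_*(\sigma^*a)$ is the connection matrix of $\nabla$ in the local frame of $E$ induced by $\sigma$, and directly match $\sigma^*$ of~\eqref{Eq_CurvPrincBundle} with the local formula~\eqref{Eq_CurvLoc} for $F_\nabla$; this is more pedestrian but avoids the extension step.
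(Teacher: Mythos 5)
The paper does not prove this proposition; it is explicitly left as an exercise, so there is nothing to compare your argument against and I evaluate it on its own. Your global argument is correct in outline and in the details you supply: passing to basic equivariant forms, identifying $d_\nabla$ with $\hat\om\mapsto d\hat\om+\rho_*(a)\wedge\hat\om$, watching the $d\hat s$ terms cancel, and reducing to the identity $\rho_*\bigl(\tfrac12[a\wedge a]\bigr)=\rho_*(a)\wedge\rho_*(a)$, which holds precisely because $\rho_*$ is a Lie algebra homomorphism and $(\b\wedge\b)(X,Y)=[\b(X),\b(Y)]$ for an $\End(V)$--valued $1$-form $\b$. Two small gaps should be filled when you write this up. First, to push~\eqref{Eq_ConnInTermsOfEquivMaps} to higher degree via a uniqueness argument you also need to check that $d\hat\om+\rho_*(a)\wedge\hat\om$ is again basic and $G$-equivariant whenever $\hat\om$ is; the degree-zero version of this is exactly what the paper verifies when constructing $\nabla$ from $a$, and essentially the same $\imath_{K_\xi}$-computation extends. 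Second, the Leibniz comparison quietly uses $\rho_*(a)\wedge\pi^*\eta=(-1)^p\,\pi^*\eta\wedge\rho_*(a)$ for $\eta\in\Om^p(M)$, which is worth stating explicitly. The alternative local route you sketch at the end---pull back $\pi^*F_a=da+\tfrac12[a\wedge a]$ by a section $\sigma$ to get $dA+\tfrac12[A\wedge A]$, recognize $\rho_*(\sigma^*a)$ as the connection matrix of $\nabla$ in the frame induced by $\sigma$, and compare with~\eqref{Eq_CurvLoc} and~\eqref{Eq_CurvLocLieBrackets}---is equally valid, reuses machinery already in the paper, and avoids the extension step entirely; it is very likely the argument the author had in mind for the exercise.
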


Given a local trivialization of $P$, i.e., a section $\sigma$ over an open subset $U\subset M$, we say that $A:=\sigma^*a\in\Om^1(U;\fg)$ is a local representation of $a$ with respect to $\sigma$.
Then over $U$ we have
\begin{equation*}
	F_a = \sigma^*\pi^* F_a = \sigma^*\bigl (da + \frac 12 [a\wedge a]\bigr ) = dA + \frac 12 [A\wedge A]. 
\end{equation*}

One of the most basic properties of the curvature form is the so called Bianchi identity.

\begin{proposition}[Bianchi identity]
	\label{Prop_BianchiId}
	Let $a$ be a connection on $P$. Then the curvature form $F_\nabla$ satisfies
	\begin{equation*}
		d_{\nabla^a} F_a = 0,
	\end{equation*}
	where $\nabla^a$ is the connection on $\ad P$ induced by $a$.
\end{proposition}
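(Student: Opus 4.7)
The plan is to pull the identity back to the total space $P$ and verify it there. By \autoref{Prop_FormsPullBackPrincBundle}, the map $\pi^*\colon \Om^3(M;\ad P)\to \Om^3_{bas}(P;\fg)^G$ is injective, so it suffices to show $\pi^*(d_{\nabla^a}F_a)=0$ as a $\fg$-valued $3$-form on $P$. The main ingredient is the extension of \eqref{Eq_ConnInTermsOfEquivMaps} to forms of positive degree: for any $\tau\in\Om^q(M;\ad P)$ with corresponding basic equivariant $\fg$-valued form $\hat\tau$ on $P$, the induced covariant derivative satisfies
\[
\pi^*(d_{\nabla^a}\tau) = d\hat\tau + [a\wedge\hat\tau].
\]
This follows from the Leibniz rule defining $d_{\nabla^a}$ together with the fact that the differential of the adjoint representation is $\xi\mapsto[\xi,\cdot\,]$.

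Applying the displayed formula with $\tau=F_a$ and $\hat\tau = \pi^*F_a = da + \tfrac12[a\wedge a]$ reduces the statement to two computations on $P$. First, one computes $d(\pi^*F_a)$: the $d^2 a$ term vanishes, and the graded Leibniz rule combined with graded antisymmetry of the wedge-bracket on $\fg$-valued forms gives $d[a\wedge a] = 2[da\wedge a]$, whence $d(\pi^*F_a) = [da\wedge a]$. Second, one expands $[a\wedge \pi^*F_a] = [a\wedge da] + \tfrac12[a\wedge [a\wedge a]]$; graded antisymmetry yields $[a\wedge da] = -[da\wedge a]$, and the graded Jacobi identity applied to the $1$-form $a$ kills the triple bracket $[a\wedge[a\wedge a]]$. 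Adding the two pieces,
\[
\pi^*(d_{\nabla^a}F_a) \;=\; [da\wedge a] \;-\; [da\wedge a] \;+\; 0 \;=\; 0,
\]
and the conclusion follows from the injectivity of $\pi^*$.

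The main obstacle is the sign bookkeeping for the graded calculus of $\fg$-valued forms. Specifically, one must confirm (i) the sign in $[a\wedge da] = -[da\wedge a]$, which comes from the identity $[\alpha\wedge\beta]=-(-1)^{pq}[\beta\wedge\alpha]$ for $\fg$-valued forms of degrees $p,q$, and (ii) the vanishing $[a\wedge[a\wedge a]] = 0$, which, upon evaluation on three tangent vectors, reduces to the cyclic sum appearing in the ordinary Jacobi identity on $\fg$. Once these sign conventions are fixed, every remaining step is a direct calculation using the structure equation $\pi^*F_a = da+\tfrac12[a\wedge a]$.
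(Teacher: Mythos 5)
Your proof is correct and is essentially the same computation as the paper's: both expand the covariant exterior derivative of $F_a$ as $d(da + \tfrac12[a\wedge a]) + [a\wedge(da+\tfrac12[a\wedge a])]$ and cancel terms via graded antisymmetry of the wedge-bracket and the Jacobi identity. The only difference is one of framing: the paper works with the local representative $A = \sigma^*a$ on an open set $U\subset M$ and uses the formula $d_{\nabla^a} = d + [A\wedge\cdot\,]$ there, whereas you perform the identical algebra upstairs on $P$ with the connection form $a$ itself, appealing to the injectivity of $\pi^*$ (essentially the equivalent equation~\eqref{Eq_BianchiIdUpstairs}, which the paper records in the remark following the proof). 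One small thing to watch: you invoke the formula $\pi^*(d_{\nabla^a}\tau) = d\hat\tau + [a\wedge\hat\tau]$ for forms of degree $q>0$, which is plausible but is not actually established in the text — it is only stated for $q=0$ in~\eqref{Eq_ConnInTermsOfEquivMaps} — so in a polished writeup you would want to either verify it or fall back on the local computation the paper uses, which rests on formulas already proven.
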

\begin{proof}
	Let $A$ be a local representation of $a$ as above.
	We have
	\begin{equation*}
		\begin{aligned}
		d_{\nabla^a} F_a &= d\Bigl ( dA + \frac 12[A\wedge A] \Bigr ) + \Bigl [ A\wedge \bigl ( dA + \frac 12[A\wedge A] \bigr ) \Bigr ]\\
		& = \frac 12 [dA\wedge A] -\frac 12[A\wedge dA]+ [A\wedge dA] +\frac 12 \bigl [ A\wedge [A\wedge A] \bigr ]\\
		&=0.
		\end{aligned}
	\end{equation*}
	Here the first three summands sum up to zero because $[\om,\eta] = -[\eta,\om]$ for any $\om\in \Om^2(U;\, \fg)$ and $\eta\in\Om^1(U;\,\fg)$; The vanishing of the last summand follows from the Jacobi identity. 
\end{proof}

\begin{remark}
	Thinking of $\hat F_A:=\pi^*F_A = dA +\frac 12 [A\wedge A]$
	 also as a curvature form of $A$, the computation in the proof of \autoref{Prop_BianchiId} yields the following equivalent form of the Bianchi identity:
	 \begin{equation}
	 \label{Eq_BianchiIdUpstairs}
	 	d\hat F_A = [\hat F_A\wedge A]. 
	 \end{equation}
	 Indeed, this can be obtained by the computation: $d\hat F_A = [dA\wedge A] = [\hat F_A\wedge A]$.
\end{remark}

\medskip

Given a bundle $P\to M$ and a map $f\colon N\to M$ we can construct the pull-back bundle $f^*P\to N$ just like in the case of vector bundles.
Informally speaking, we have the equality of fibers $(f^*P)_n = P_{f(n)}$ and this defines $f^*P$.
More formally, define 
\begin{equation*}
	f^*P:= \bigl \{  (p,n)\in P\times N \mid f(n)=\pi (p)  \bigr \}. 
\end{equation*}
It is easy to see that $f^*P$ is a submanifold of $P\times N$. Moreover, we have a projection $\varpi\colon f^*P\to N$, which is just the restriction of the natural projection $(p,n)\mapsto n$. 
The structure group $\rG$ clearly acts on $P$ such that the action on the fibers $\varpi^{-1}(n)=P_{f(n)}$ is transitive.  
Moreover, we also have a natural $\rG$-equivariant map $\hat f\colon f^*P\to P$, which covers $f$, i.e., the diagram 
\begin{center}
	\includegraphics[width=0.28\textwidth]{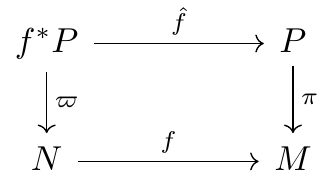}
\end{center}
commutes.

For future use we note the following statement, whose proof is left as an exercise.

\begin{proposition}
	\label{Prop_PullBackConn}
	Let $A$ be a connection on $P$. Then $f^*A:=\hat f^*A$ is a connection on $f^*P$ and $F_{f^*A} = f^*F_A$. \qed
\end{proposition}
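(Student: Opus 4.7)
The plan is to verify that $f^*A := \hat f^*A$ satisfies the two defining properties of a connection on $f^*P$, and then to derive the curvature identity by pulling back the structure equation~\eqref{Eq_CurvPrincBundle} via $\hat f$. Throughout, the main tool is the $G$-equivariance of $\hat f$ and the commutativity of the pullback square $\pi_P\circ\hat f = f\circ \pi_{f^*P}$.

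First I would check the two connection axioms. Equivariance: since $\hat f\circ R_g = R_g\circ\hat f$, we have
\[
R_g^*(\hat f^*A) = \hat f^*(R_g^* A) = \hat f^*(\ad_{g^{-1}} A) = \ad_{g^{-1}}(\hat f^*A).
\]
Normalization on fundamental vector fields: equivariance of $\hat f$ gives $\hat f_* K_\xi^{f^*P}(q) = K_\xi^P(\hat f(q))$ at every $q\in f^*P$, and hence
\[
(\hat f^*A)(K_\xi^{f^*P}) = A\bigl(\hat f_* K_\xi^{f^*P}\bigr) = A(K_\xi^P) = \xi.
\]

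For the curvature, I would pull back the structure equation~\eqref{Eq_CurvPrincBundle} upstairs. Naturality of $d$ and of the wedge-bracket yields
\[
\hat f^*\pi_P^*F_A = d(\hat f^*A) + \tfrac 12 [\hat f^*A\wedge \hat f^*A] = \pi_{f^*P}^* F_{f^*A}.
\]
On the other hand, from $\pi_P\circ\hat f = f\circ\pi_{f^*P}$ we also have $\hat f^*\pi_P^*F_A = \pi_{f^*P}^*(f^*F_A)$, once we identify $f^*(\ad P)$ with $\ad(f^*P)$ in the canonical way. Since $\pi_{f^*P}^*$ is injective on $\Om^2(N;\ad f^*P)$ by \autoref{Prop_FormsPullBackPrincBundle} (a basic $G$-equivariant form descends uniquely), we conclude $F_{f^*A} = f^*F_A$.

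The only subtlety I foresee is the canonical identification $\ad(f^*P)\cong f^*(\ad P)$, without which the two sides of the curvature identity literally take values in distinct bundles. This is routine: the associated-bundle construction commutes with pullback, and under this identification $\hat f$ is compatible with taking curvatures. Everything else is bookkeeping with naturality.
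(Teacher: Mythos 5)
Your argument is correct and complete: you verify both connection axioms for $\hat f^*A$ directly from the $G$-equivariance of $\hat f$, and you obtain the curvature identity by pulling back the structure equation $\pi^*F_A = dA + \tfrac 12[A\wedge A]$ through $\hat f$ and using the injectivity of $\pi^*$ on basic forms (\autoref{Prop_FormsPullBackPrincBundle}), modulo the routine identification $\ad(f^*P)\cong f^*(\ad P)$, which you rightly flag. The paper leaves this proposition as an exercise and supplies no proof of its own, so there is nothing to compare against; your route is the natural one.
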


\subsubsection{The gauge group}

For a principal bundle $P\to M$, the automorphism group
\begin{equation*}
	\cG(P):= \bigl\{ \psi\colon P\to P\mid \pi\comp\psi = \pi,\ \psi (pg) = \psi(p)g\quad \forall p\in P, \ g\in G\, \bigr \}
\end{equation*}
is called the gauge group. 

Since $\psi(p)$ lies in the same fiber as $p$ and $\rG$ acts transitively on the fibers, we can write
\begin{equation}
	\label{Eq_AuxAutomOfPrincBundle}
	\psi(p) = p\, \hat f(p),
\end{equation}
where $\hat f\colon P\to \rG$ is some map. 
The equivariancy of $\psi$ is then equivalent to 
\begin{equation}
	\label{Eq_AuxEquivOfF}
	\hat f(pg) = g^{-1}pg.	
\end{equation}
In other words, $\hat f$ can be identified with some section $f$ of the bundle
\begin{equation*}
	Ad\, P:= P\times_\rG \rG,
\end{equation*}
where $G$ acts on itself by conjugation. 

Conversely, given $f\in\Gamma(Ad\, P)$, we can construct an automorphism $\psi$ of $P$ via~\eqref{Eq_AuxAutomOfPrincBundle}. 
In other words we have a natural bijective map
\begin{equation*}
	\cG(P)\to \Gamma(Ad\, P). 
\end{equation*}

\begin{exercise}
	Fix some non-negative integer $k$. 
	Prove that $C^k(M;\, Ad\, P)$  is a Banach Lie group. 
	Moreover, show that the Lie algebra of this group is $C^k(M;\, \ad P)$.  
\end{exercise}

\begin{example}
	Assume $\rG$ is abelian. 
	Then~\eqref{Eq_AuxEquivOfF} just means that $\hat f$ is an \emph{invariant} map so that the corresponding section $f$ can be identified with a map $M\to \rG$. 
	This shows that \emph{for an abelian group} $\rG$ we have
	\begin{equation*}
		\cG(P)\cong C^\infty(M;\, \rG).
	\end{equation*}
\end{example}

Just like in the case of vector bundles, the gauge group acts naturally on the space of all connections, i.e., we have a map
\begin{equation}
\label{Eq_GaugeGpActionPrincBundles}
\cA(P)\times\cG(P) \to \cA(P),\qquad (a,\psi)\mapsto a\cdot\psi:=\psi^*a.
\end{equation}

\begin{exercise}
	Let $E:=P\times_{G,\,\rho} V$ be an associated vector bundle. 
	Show that the following: 
	\begin{enumerate}[(i),itemsep=-3pt]
		\item The representation $\rho$ induces a natural homomorphism of gauge groups $\gamma\colon\cG(P)\to \cG(E)$;
		\item The corresponding infinitezimal map  $d\rho_e\colon\mathfrak g\to \End V$ induces a Lie algebra homomorphism $C^k(M;\, \ad P)\to C^k(M;\, \End E)$;
		\item $\nabla^{a\cdot g} = g^{-1}\nabla^a g$. Here $a\in \cA(P)$, $g\in \Gamma(Ad\, P)$,  and $\nabla^a$ denotes the connection on $E$ induced by $a$.
	\end{enumerate}
\end{exercise}

\begin{exercise}
	Show that the infinitezimal action of the gauge group  is given by 
	\begin{equation*}
		\cA(P)\times \Gamma(\ad P)\to \Om^0(\ad P),\quad  (a, \xi) \mapsto - d_a\xi.
	\end{equation*}
\end{exercise}

\subsection{The Levi--Civita connection}

As we have seen above, any vector bundle of positive rank over a manifold of positive dimension admits a large family of connections.
Moreover, in general there is also no preferred connections.
The situation is somewhat different in the case of the tangent bundle of a manifold, where it turns out that a choice of a preferred connection does exist under certain circumstances.

Thus, pick a connection $\nabla$ on $TM\to M$.
For any vector fields $v,w$ on $M$ define \emph{the torsion} of $\nabla$ by
\begin{equation*}
	T(v,w):= \nabla_v w -\nabla_w v -[v, w].
\end{equation*} 

Clearly, $T$ is antysymmetric. 
It is also easy to check that $T$ is tensorial in both $v$ and $w$, i.e., $T(f_1 v, f_2w)= f_1f_2T(v,w)$ for any $f_1, f_2\in C^\infty(M)$.  
Hence, applying \autoref{Lem_Tensorial} we obtain that $T\in\Om^2(M;\, TM)$. 

\begin{defn}
	A connection $\nabla$ on the tangent bundle $TM$ is called \emph{torsion-free}, if $T\equiv 0$, i.e., if the following
	\begin{equation}
		\label{Eq_TorsFree}
		\nabla_v w - \nabla_w v = [v, w]
	\end{equation}
	holds for any vector fields $v$ and $w$.
\end{defn}

\begin{exercise}
	Let $\nabla$ be a connection on the tangent bundle. 
	Slightly abusing notations, denote also by $\nabla$ the induced connection on $T^*M$. 
	Consider the map
	\begin{equation}
		\label{Eq_AuxDeRhamDiff}
		\Om^1(M)\xrightarrow{\ \nabla\ }\Gamma(T^*M\otimes T^*M)\xrightarrow {\ Alt\ }\Om^2(M),
	\end{equation}	
	where the last map is the natural projection (alternation) $T^*M\otimes T^*M\to \Lambda^2T^*M$.
	Show that $\nabla$ is torsion-free if and only if~\eqref{Eq_AuxDeRhamDiff} coincides with the de Rham differential. 
\end{exercise}

\begin{thm}
	For any Riemannian manifold $(M, g)$ there is a unique metric torsion-free connection $\nabla$.\qed 
\end{thm}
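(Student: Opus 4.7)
The plan follows the classical derivation of the Koszul formula, which simultaneously proves uniqueness and provides the right expression for existence.

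\textbf{Uniqueness.} Suppose $\nabla$ is a metric and torsion-free connection on $TM$. I would apply the metric identity $X\langle Y, Z\rangle = \langle \nabla_X Y, Z\rangle + \langle Y, \nabla_X Z\rangle$ to the three cyclic permutations of $(X, Y, Z)$ and form the combination (first) + (second) $-$ (third). Using the torsion-free identity $\nabla_X Y - \nabla_Y X = [X, Y]$ to rewrite the differences $\nabla_X Z - \nabla_Z X$ and $\nabla_Y Z - \nabla_Z Y$, and isolating the remaining symmetric combination $\nabla_X Y + \nabla_Y X = 2\nabla_X Y - [X, Y]$, one arrives at the Koszul formula
\begin{equation*}
2\langle \nabla_X Y, Z\rangle = X\langle Y, Z\rangle + Y\langle Z, X\rangle - Z\langle X, Y\rangle + \langle [X, Y], Z\rangle - \langle [X, Z], Y\rangle - \langle [Y, Z], X\rangle.
\end{equation*}
Since $g$ is non-degenerate, the right-hand side, viewed as a function of $Z$, determines $\nabla_X Y$ uniquely as a vector field.

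\textbf{Existence.} I would take the right-hand side of the Koszul formula as the \emph{definition} of $2\langle \nabla_X Y, Z\rangle$. The first thing to check is that this expression is $C^\infty(M)$-linear in $Z$: under $Z \mapsto fZ$, the extra term $(Xf)\langle Y, Z\rangle$ produced by $X\langle Y, fZ\rangle$ cancels the term $-(Xf)\langle Z, Y\rangle$ produced by $-\langle[X, fZ], Y\rangle$, and similarly for the pair involving $Y$; the remaining summands simply rescale by $f$. By non-degeneracy of $g$, this identifies a unique vector field $\nabla_X Y$ pointwise. Then I would verify in turn: $C^\infty(M)$-linearity in $X$ (immediate from the analogous cancellation), $\R$-linearity in $Y$ together with the Leibniz rule $\nabla_X(fY) = (Xf)Y + f\nabla_X Y$ (the extra $Xf$-contributions from $X\langle fY, Z\rangle$ and $\langle[X, fY], Z\rangle$ combine to exactly $2(Xf)\langle Y, Z\rangle$), the torsion-free identity (antisymmetrise the formula in $(X, Y)$), and metric compatibility (symmetrise in $(Y, Z)$).

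The only mildly delicate point is the Leibniz rule in $Y$; the rest is sign bookkeeping in cyclic sums. No input is needed beyond the non-degeneracy of $g$, and in particular no partition of unity or local-to-global construction is required, since the Koszul formula is manifestly global. The main obstacle, such as it is, is simply ensuring the signs in the cyclic manipulation land correctly so that all four required properties fall out of a single formula.
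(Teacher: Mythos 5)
Your proof is correct, and it is precisely the standard Koszul-formula argument. Note that the paper itself does not give a proof of this theorem -- it simply cites \cite{Salamon:89}*{Prop.\,2.1}, which proves it by exactly the Koszul-formula route you describe, so your approach matches what the paper implicitly relies on.
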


The proof of this classical result, which is sometimes called the fundamental theorem of Riemannian geometry, can be found for instance in~\cite{Salamon:89}*{Prop.\,2.1}.
The unique connection described in the above theorem is called \emph{the Levi-Civita connection.}
It is uniquely characterized by~\eqref{Eq_TorsFree} together with
\begin{equation*}
	d \bigl( g(v, w) \bigr) = g(\nabla v, w) + g(v, \nabla w).
\end{equation*} 

The curvature form $R$ of the Levi-Civita connection can be viewed as a section of $\Lambda^2T^*M\otimes \End (TM)$.
This can be decomposed into various components.
For example, in terms of a local orthonormal frame $e_i$ we can define a quadratic form
\begin{equation*}
	Ric_g(\rv,\rw):= - \sum_i g\bigl ( R(e_i, \rv)e_i, \rw \bigr),
\end{equation*}
which is called \emph{the Ricci curvature.}
The trace of the Ricci curvature, i.e.,
\begin{equation*}
	s_g:=\sum_i Ric(e_i, e_i)
\end{equation*}
is called \emph{the scalar curvature}.
These are important characteristics of the  metric $g$. 
An interested reader may wish to consult~\cite{Joyce07_RiemannianHolGpsCalibr, Salamon:89, Besse08_EinsteinMflds} for more information on these matters. 
For us, the importance of the scalar curvature is explained by its appearance in the Weitzenb\"ock formula below, see~\autoref{Cor_WeitzenboeckForm}.

\subsection{Classification of $\U(1)$ and $\SU(2)$ bundles}

It is more convenient in this section to work in a topological category rather than the smooth one.
The reader will have no difficulties to adopt the corresponding notions to this setting.

\medskip

\begin{defn}
	Let $G$ be a compact Lie group. 
	A topological space $E$ equipped with an action of $G$ is said to be \emph{a classifying bundle} for $G$, if $E$ is contractible and the $G$--action is free. 
\end{defn} 

Denoting $B:=E/G$, we obtain a natural projection $E\to B$ so that $E$ could be thought of as a principal\footnote{The notations $E$ and $B$ are traditional in this context so I will keep to this tradition. The reader should not be confused by the fact that here $E$ denotes a principal rather than vector bunlde.} $G$--bundle over $B$.
If $E$ exists, it is easy to see that $E$ is unique up to a homotopy equivalence. 

One can prove that for a compact Lie group a classifying space always exist. An interested reader may wish to consult~\cite{GuilleminSternberg99_SUSYEquivDeRham}*{Sect.\,1.2}.
Also, we take the following result as granted.

\begin{thm}[\cite{GuilleminSternberg99_SUSYEquivDeRham}*{Thm.\,1.1.1 and Rem. 2}]
	\label{Thm_PrincipalBundlesHomotopyClasses}
	Let $P\to M$ be a (topological) principal $G$--bundle over a manifold $M$. 
	Then there exists a continuous map $f\colon M\to B$ such that  $P$ is isomorphic to $f^*E$.  
	In fact $f$ is unique up to a homotopy so that the map
	\begin{equation*}
		f\mapsto f^*E
	\end{equation*}
	yields a bijective correspondence between the set of isomorphism classes of principal  $G$-bundles over $M$ and the set $[M;\, B]$ of homotopy classes of maps $M\to B$. \qed
\end{thm}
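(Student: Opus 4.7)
The plan is to exploit the one-to-one correspondence between sections of an associated bundle and $G$--equivariant maps into the fibre (the analogue of \autoref{Prop_EquivMapsSections} for a general $G$--space rather than a $G$--representation). Form the associated fibre bundle $E_P:=P\times_G E\to M$, whose typical fibre is homeomorphic to $E$ and is therefore contractible. The two assertions of the theorem will be deduced from the existence, respectively the uniqueness up to homotopy, of continuous sections of $E_P$.

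The first, and central, step is to show that $E_P\to M$ admits a global section. Since $M$ is a manifold, it has the homotopy type of a CW complex, so one can build a section skeleton by skeleton: the obstruction to extending a section from the $n$--skeleton to the $(n+1)$--skeleton lies in $H^{n+1}(M;\,\pi_n(E))$, which vanishes because $E$ is contractible. Alternatively, one invokes Milnor's theorem that a numerable fibre bundle with contractible fibre admits a section. Given such a section $s$, define the associated $G$--equivariant lift $\tilde f\colon P\to E$ uniquely by the requirement $s(\pi(p))=[p,\tilde f(p)]$, and pass to the quotient to obtain the classifying map $f:=\tilde f/G\colon M\to B$. The map $p\mapsto(\pi(p),\tilde f(p))$ is then a $G$--equivariant bundle isomorphism $P\xrightarrow{\;\cong\;}f^*E$, which proves existence.

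For uniqueness up to homotopy, suppose $f_0,f_1\colon M\to B$ both classify $P$. The isomorphisms $P\cong f_i^*E$ produce equivariant lifts $\tilde f_i\colon P\to E$, hence two sections $s_0,s_1$ of $E_P$. Pull $E_P$ back to $M\times I$ via the projection $M\times I\to M$; the result is a bundle with contractible fibre, equipped with a partial section over $M\times\{0,1\}$ assembled from $s_0$ and $s_1$. The relative obstructions to extending this partial section live in $H^{n+1}(M\times I,\,M\times\partial I;\,\pi_n(E))$ and all vanish, so a global extension exists. The corresponding equivariant homotopy $P\times I\to E$ descends to a homotopy $f_0\simeq f_1$. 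Together with existence, this shows that $f\mapsto f^*E$ descends to a bijection $[M;\,B]\to\{\text{isomorphism classes of principal }G\text{--bundles on }M\}$.

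The main obstacle is the obstruction-theoretic (or Milnor-type) construction of sections of fibre bundles with contractible fibres and its relative analogue over $M\times I$; this is where the contractibility of $E$ is used essentially and where some regularity of $M$ (paracompactness, or CW-type) must be invoked. Once these analytic/topological facts are available, everything else is formal bookkeeping with the equivariant-map / section correspondence and the definition of the pullback bundle.
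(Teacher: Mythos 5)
The paper does not prove this statement at all: the text just before it says ``we take the following result as granted,'' and the theorem is stated with a citation to Guillemin--Sternberg and a \qed\ marking an omitted proof. Your argument is therefore entirely your own, and it is the standard one: form $E_P:=P\times_G E$, use the section/equivariant-map correspondence, and supply the only analytic input (existence and uniqueness up to fibrewise homotopy of sections of a numerable bundle with contractible fibre) by obstruction theory over the CW structure of $M$, respectively over $(M\times I,\,M\times\partial I)$. The argument is correct. One small point worth tightening: since $E$ carries a right $G$-action, either first convert it to a left $G$-space by $g\cdot e:=e\,g^{-1}$ before forming $P\times_G E$, or define $E_P$ with the equivalence $(p,e)\sim(pg,\,e g)$; with either convention the map $p\mapsto(\pi(p),\tilde f(p))$ lands in $f^*E\subset M\times E$ and is $G$-equivariant as you claim.
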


In some sense, the above theorem yields a classification of  principal---hence, also vector---bundles.
In praxis it is not always easy to describe the set $[M;\, B]$ though. 
One way to deal with this problem is via the so called characteristic classes, which I describe next.

\begin{defn}
	Let $f\colon M\to B$ be a continuous map.
	Pick any  $c\in H^\bullet (B;\, R)$, where $R$ is a ring. 
	Then $f^*c\in H^\bullet(M;\, R)$ is called a characteristic class of $f^*E$. 
\end{defn}

It is worth pointing out that characteristic classes depend on the isomorphism class of the bundle only. 

Usually, characteristic classes are easier to deal with than the set of homotopy classes of maps.  
The most common choices for the ring $R$ are $\Z,\ \Z/n\Z,\ \R$, and $\C$. 

In some cases the classifying bundle can be constructed fairly explicitly.
I restrict myself to the following two cases, namely $G=\U(1)$ and $G=\SU(2)$, which are most commonly used in gauge theoretic problems. 

\subsubsection{Complex line bundles}

It follows from Exercise~\ref{Exerc_HermStructures} that the classification problems for complex line bundles and principal $\U(1)$-bundles are equivalent.
Even though what follows below can be described in terms of vector bundles only, the language of $\U(1)$--bundles has certain advantages and will be mainly used below. 

Consider the following commutative diagram
\begin{center}
	\includegraphics[width=0.5\textwidth]{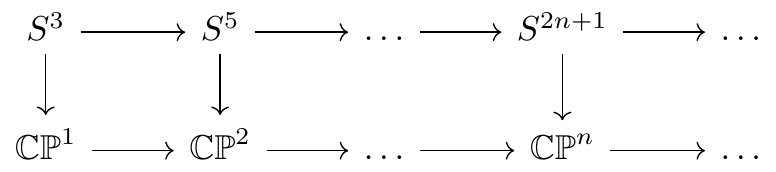}
\end{center}
where the horizontal arrows are natural inclusions. 
For example, the inclusion of the spheres is given by 
\begin{equation*}
	\C^n\supset S^{2n-1}\ni\quad (z_0,\dots, z_{n-1})\mapsto  (z_0,\dots, z_{n-1}, 0)\quad \in S^{2n+1}\subset \C^{n+1},
\end{equation*}
which is a $\U(1)$--equivariant map.

The direct limit construction yields a CW-complex $S^\infty$ equipped with a free  $\U(1)$-action. 
$S^\infty$ can be shown to be contractible.
Furthermore, we have also a CW-complex $\CP^\infty$ so that the natural quotient map 
\begin{equation*}
	S^\infty\to \CP^\infty
\end{equation*}
is the classifying bundle for the group $\U(1)$.

\begin{example}[Classification of line bundles on 2--manifolds]
	\label{Ex_ClassifBundlesOnSurfaces}
	Let $M$ be an oriented two--manifold. A continuous map $f\colon M\to \CP^\infty$ is homotopic to a map, which takes values in the 2-skeleton $\CP^1\subset \CP^\infty$ so that we have the equality $[M;\, \CP^\infty] = [M;\, \CP^1]$. Topologically, $\CP^1$ is just the 2-sphere so that we have a well--defined degree-map
	\begin{equation*}
		[M;\, S^2]\to \Z,\qquad [f]\mapsto \deg f,
	\end{equation*}
	which is in fact a bijection. 
	Thus, a complex line bundle $L$ on an oriented two--manifold is classified by an integer $d$, which is called the degree of $L$. 
\end{example}

It is easy to see that the cohomology ring $H^\bullet\bigl ( \CP^\infty;\, \Z\bigr )$ is generated by a single element $a\in H^2\bigl ( \CP^\infty;\, \Z\bigr )$. 
We can fix the choice of the generator by requiring 
\begin{equation}
	\label{Eq_NormOfGeneratorH2P1}
	\langle a, [\CP^1]\rangle =1,
\end{equation}
where $\langle\cdot, \cdot \rangle$ is the pairing between the homology and cohomology groups.

With this understood, to any principal $\U(1)$-bundle $P\to M$ (equivalently, to any complex line bundle $L\to M$) we can associate a cohomology class as follows. 
If $f\colon M\to \CP^\infty$ is a map such that $P$ is the pull-back of the bundle $S^\infty\to \CP^\infty$, then 
\begin{equation}
	\label{Eq_FirstChernClassDefn}
	c_1(P):= - f^*a\in H^2(M;\, \Z).
\end{equation}
The minus sign in this definition is a convention.

\begin{defn}
	\label{Defn_FirstChernClass_Top}
	The class $c_1(P)$ defined by~\eqref{Eq_FirstChernClassDefn} is called \emph{the first Chern class} of $P$.
\end{defn}

If $L\to M$ is a complex line bundle, we can choose a Hermitian scalar product, or, in other words, we can choose a $\U(1)$-structure $P\subset \Fr(L)$.  
Then 
\begin{equation*}
	c_1(L):=c_1(P)
\end{equation*}
is also called the first Chern class of $L$.

\begin{exercise}
	Check that the first Chern class of $L$ does not depend on the choice of the Hermitian scalar product on $L$.
\end{exercise}

\begin{thm}
	The first Chern class of the complex line bundle has the following properties:
	\begin{enumerate}[(i)]
		\item $c_1(\underline \C) =0$, where $\underline \C$ is the product bundle;
		\item $c_1(L_1\otimes L_2)= c_1(L_1) + c_1(L_2)$ for all line bundles $L_1$ and $L_2$ over the same base $M$;
		\item $c_1(L^\vee) = -c_1(L)$, where $L^\vee:= \Hom (L;\C)$ is the dual line bundle;
		\item $c_1(f^*L) = f^*c_1(L)$ for all line bundles $L\to M$ and all (continuous) maps $f\colon N\to M$.\qed
	\end{enumerate}
\end{thm}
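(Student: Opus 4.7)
The plan is to prove the four properties in the order $(iv)$, $(i)$, $(ii)$, $(iii)$, so that each one builds on its predecessors.

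Property $(iv)$ is pure naturality: if $g\colon M\to \CP^\infty$ classifies $L$ via Exercise~\ref{Exerc_HermStructures} and \autoref{Thm_PrincipalBundlesHomotopyClasses}, then $f^*L$ is classified by $g\circ f$, and the chain $c_1(f^*L)=-(g\circ f)^*a=f^*(-g^*a)=f^*c_1(L)$ settles the claim. Property $(i)$ follows by observing that the product bundle $\underline{\C}$ admits a constant classifying map, which factors through a point and hence kills $a\in H^2(\CP^\infty;\Z)$.

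The heart of the theorem is $(ii)$. My plan is to encode tensor product at the universal level as a continuous map $\mu\colon \CP^\infty\times\CP^\infty\to \CP^\infty$ classifying the external tensor product $p_1^*\cO(-1)\otimes p_2^*\cO(-1)$, where $p_i$ are the two projections. By the Künneth formula together with the standard computation $H^*(\CP^\infty;\Z)\cong \Z[a]$, one gets $H^*(\CP^\infty\times\CP^\infty;\Z)\cong \Z[a_1,a_2]$ with $a_i:=p_i^*a$, so for degree reasons $\mu^*a = \alpha\, a_1 +\beta\, a_2$ with integer coefficients. Restricting $\mu$ to $\{\mathrm{pt}\}\times \CP^\infty$ collapses the first factor and leaves the classifying map of the original tautological bundle (tensoring with the trivial line bundle is the identity operation), forcing $\beta=1$; symmetry yields $\alpha=1$. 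Then for arbitrary $L_1,L_2\to M$ with classifying maps $f_i$, the bundle $L_1\otimes L_2$ is classified by $\mu\circ (f_1,f_2)$, and pulling back $a$ through $\mu^*a=a_1+a_2$ produces exactly $c_1(L_1) + c_1(L_2)$ after applying the sign convention~\eqref{Eq_FirstChernClassDefn}. Property $(iii)$ is then immediate from $(i)$ and $(ii)$ via the canonical isomorphism $L\otimes L^\vee\cong \underline{\C}$ furnished by the evaluation pairing.

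The main obstacle is the sign/orientation check inside the proof of $(ii)$: I must verify that the normalization~\eqref{Eq_NormOfGeneratorH2P1} of $a$, the minus sign in~\eqref{Eq_FirstChernClassDefn}, and the identification of $\cO(-1)$ in Exercise~\ref{Exerc_TautolLineBundle} all combine so that the coefficients $\alpha,\beta$ are $+1$ rather than $-1$. In particular the restriction argument that pins down $\beta=1$ must use the bundle corresponding to $\mathrm{id}\colon \CP^\infty\to\CP^\infty$ consistently with the choice of tautological bundle used in defining $\mu$. The underlying algebraic topology (Künneth plus the cohomology ring of $\CP^\infty$) is classical and need not be reproved here.
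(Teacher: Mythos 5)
The paper provides no proof of this theorem: the statement is asserted with a \verb|\qed| and no argument, so there is nothing in the source to compare against. Your proof is the standard classifying-space argument and it is correct. Property (iv) is indeed immediate from the uniqueness of classifying maps up to homotopy (\autoref{Thm_PrincipalBundlesHomotopyClasses}), and (i) from the existence of a constant classifying map. The heart, (ii), is handled the right way: introduce $\mu\colon \CP^\infty\times\CP^\infty\to\CP^\infty$ classifying $p_1^*\cO(-1)\otimes p_2^*\cO(-1)$, use $H^*(\CP^\infty\times\CP^\infty;\Z)\cong\Z[a_1,a_2]$ to write $\mu^*a=\alpha a_1+\beta a_2$, and pin down $\alpha=\beta=1$ by restricting to the two axes; (iii) then falls out of $L\otimes L^\vee\cong\underline\C$ together with (i) and (ii).

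Two small remarks. First, the sign worry you flag as the ``main obstacle'' is in fact harmless for (ii): since $c_1$ is defined by a single overall minus sign against $f^*a$, that sign appears linearly on both sides of $c_1(L_1\otimes L_2)=c_1(L_1)+c_1(L_2)$ and cancels; the convention of~\eqref{Eq_FirstChernClassDefn} only becomes material in normalization statements like $c_1(\cO(-1))=-a$. What does need care, and you correctly identify it, is that the axis restriction of $\mu$ be homotopic to $\mathrm{id}_{\CP^\infty}$; this is exactly the content of Exercise~\ref{Exerc_TautolLineBundle} identifying the associated bundle of $S^\infty\to\CP^\infty$ with $\cO(-1)$. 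Second, strictly speaking one should note that the tensor-product operation descends to a well-defined $H$-space structure on $\CP^\infty$ (the existence of $\mu$ up to homotopy), which is the homotopy-theoretic expression of $B\U(1)\simeq K(\Z,2)$; the paper's \autoref{Thm_PrincipalBundlesHomotopyClasses} supplies the needed classification statement, so this is fine in context.
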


\subsubsection{Quaternionic line bundles}

Let $\H$ denote the $\R$-algebra of quaternions. 
One can think of the (compact) symplectic group
\begin{equation*}
	\Sp(1):=\bigl \{ q\in\H \mid |q|^2 = q\bar q =1 \bigr \}
\end{equation*}
as a quaternionic analogue of $\U(1)$.
It is easy to see that $\Sp(1)$ is isomorphic to $\SU(2)$. 
Indeed, it is easy to write down an isomorphism explicitly: 
\begin{equation*}
	\Sp(1)\to \SU(2), \qquad q= z+ wj\mapsto 
	\begin{pmatrix}
	z & w\\
	-\bar w & \bar z
	\end{pmatrix}.
\end{equation*}  

Just like $\U(1)$ acts freely on $S^{2n+1}$, $\Sp(1)$ acts freely on  $S^{4n+3}=\bigl\{ (h_0,\cdots, h_n)\in \H^n\mid |h_0|^2+\cdots + |h_n|^2 = 1 \bigr\}$ in the diagonal manner so that we have a natural principal $\Sp(1)$-bundle:
\begin{equation*}
	S^{4n+3}\to \mathbb{HP}^n.
\end{equation*}
The commutative diagram
\begin{center}
		\includegraphics[width=0.5\textwidth]{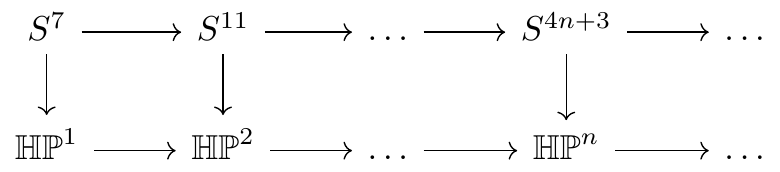}
\end{center}
leads to the construction of the classifying bundle for $\Sp(1)$:
 \begin{equation*}
 	S^\infty\to \mathbb{HP}^\infty.
 \end{equation*}
Notice that $\mathbb{HP}^\infty$ is (an infinite-dimensional) CW-complex, which has exactly one cell of dimension $4n$. 
This implies in particular that the cohomology ring $H^\bullet (\mathbb{HP}^\infty;\, \Z)$ is generated by a single generator $b$ of degree $4$. 
The choice of $b$ can be fixed for example by requiring 
\begin{equation*}
	\langle b, [\mathbb{HP}^1] \rangle =1. 
\end{equation*} 

\begin{proposition}
	\label{Prop_SU2BundlesAreTrivial}
	Let $M$ be a manifold of dimension at most $3$. 
	Any $\Sp(1)$-bundle over $M$ is trivial. 
\end{proposition}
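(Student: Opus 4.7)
The plan is to apply Theorem~\ref{Thm_PrincipalBundlesHomotopyClasses}, which reduces the classification of principal $\Sp(1)$-bundles over $M$ to the computation of the homotopy set $[M;\, \mathbb{HP}^\infty]$. So it is enough to show that any continuous map $f\colon M\to \mathbb{HP}^\infty$ is null-homotopic when $\dim M\le 3$.

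For this I would exploit the particularly sparse cell structure of $\mathbb{HP}^\infty$ that was recorded just above the proposition: the classifying space has exactly one cell in each dimension of the form $4n$, for $n\ge 0$. In particular, its $3$-skeleton is precisely the $0$-skeleton, which consists of a single point.

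Next I would invoke cellular approximation. Endow $M$ with a CW-structure; since $M$ is a manifold of dimension at most $3$, this can be arranged so that all cells have dimension $\le 3$. The cellular approximation theorem then guarantees that any continuous $f\colon M\to \mathbb{HP}^\infty$ is homotopic to a cellular map $f'$, which must take $M$ into the $3$-skeleton of $\mathbb{HP}^\infty$. Since that skeleton is a single point, $f'$ is constant, and hence $f$ is null-homotopic. Pulling back along a constant map produces the trivial bundle, so by Theorem~\ref{Thm_PrincipalBundlesHomotopyClasses} every $\Sp(1)$-bundle over $M$ is trivial.

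The only mildly delicate point is the appeal to the existence of a CW-structure on $M$ with cells of dimension $\le \dim M$; this is standard (for smooth manifolds it follows from Morse theory, and the proposition is stated topologically). Beyond that, the argument is essentially a one-line consequence of the $4$-connectedness of $\mathbb{HP}^\infty$ and obstruction theory, so I would not anticipate any genuine obstacle.
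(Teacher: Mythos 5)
Your proposal is correct and follows essentially the same argument as the paper's proof: use Theorem~\ref{Thm_PrincipalBundlesHomotopyClasses} to reduce to classifying maps $M\to\mathbb{HP}^\infty$, then homotope any such map into the $3$-skeleton of $\mathbb{HP}^\infty$, which is a single point, and conclude the bundle is trivial. You simply spell out the cellular-approximation step that the paper leaves implicit.
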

\begin{proof}
 Given a principal $\Sp(1)$-bundle $P$, by \autoref{Thm_PrincipalBundlesHomotopyClasses} we can find a map $f\colon M\to \mathbb{HP}^\infty$ such that $f^*S^\infty$ is isomorphic to $P$. 
 Furthermore, $f$ is homotopic to a map $f_1$ that maps into the 3-skeleton of $\mathbb{HP}^\infty$, which is a point. 
 Hence, $P$ is trivial. 
\end{proof}

Let $M$ be a manifold of arbitrary dimension. 
To any map $f\colon M\to \mathbb{HP}^\infty$ we can associate a class $f^*b\in H^4(M;\, \Z)$, where $b$ is a generator of  $H^\bullet (\mathbb{HP}^\infty;\, \Z)$ as above.

\begin{defn}
	Let $P\to M$ be a principal $\Sp(1)$--bundle. If $f\colon M\to\mathbb{HP}^\infty$ is a map such that $f^*S^\infty\cong P$, then
	\begin{equation*}
		c_2(P):= - f^*b\in H^4(M;\, \Z)
	\end{equation*} 
is called \emph{the second Chern class} of $P$. 
\end{defn}

\begin{remark}
	The terminology may seem to be somewhat strange at this point. The reason is that for a principal $\U(r)$--bundle $P$ one can define $r$ characteristic classes $c_1(P), c_2(P),\dots, c_r(P)$, where $c_j(P)\in H^{2j}(M;\, \Z)$. 
	In the particular cases described above, this yields the constructions of the first and the second Chern classes. 
\end{remark}

\section{The Chern--Weil theory}
\label{Sect_ChernWeil}

\subsection{The Chern--Weil theory}

In this section we could equally well work with both $\R$ and $\C$ as ground fields. 
I opt for $\C$ mainly for the sake of definiteness. 
The modifications needed for the case of $\R$ as a ground field are straightforward. 

\medskip

Let $p\colon \fg\to \C$ be an $ad$--invariant homogeneous polynomial of degree $d$. 
This means the following:
\begin{itemize}
	\item Given a basis $\xi_1,\dots,\xi_n$ of $\fg$, the expression $p\bigl (x_1\xi_1 + \dots + x_n\xi_n\bigr )$ is a polynomial of degree $d$ in $x_1,\dots, x_n$;
	\item $p(ad_g\,\xi) = p(\xi)$ for all $g\in G$ and all $\xi\in \fg$;
	\item $p(\l \xi) = \l^d p(\xi)$ for all $\l\in \R$ and all $\xi\in\fg$.
\end{itemize} 

\begin{example}\label{Ex_InvPolynomials}$\phantom{A}$
	\begin{enumerate}[(a)]
		\item For $\fg = \fu(r)$ the map $p_d(\xi) = i\tr \xi^d$ is an $ad$-invariant polynomial of degree $d$. 
		\item Choose $\fg=\fu(r)$ again and define polynomials $c_1,\dots, c_r$ of degrees $1,\dots, r$ respectively by the equality
		\begin{equation*}
			\det \Bigl ( \l\mathbbm 1 + \frac i{2\pi}\xi \Bigr ) = \l^r + c_1(\xi)\l^{r-1} + \dots + c_r(\xi).
		\end{equation*}
		For example, $c_r(\xi) = \frac {i^r}{(2\pi)^r}\det\xi$ and $c_1(\xi) =\frac i{2\pi}\tr \xi$. 
		Notice also that the equality
		\begin{equation*}
			\overline{\det \Bigl ( \l\mathbbm 1 + \frac i{2\pi}\xi \Bigr )} = \det \Bigl ( \bar\l\mathbbm 1 + \frac i{2\pi}\xi \Bigr )
		\end{equation*}
		implies that each $c_j$ takes values in $\R$. 
	\end{enumerate}
\end{example}

\medskip

Let $P\to M$ be a principal $\rG$--bundle equipped with a connection $a\in \Om^1(P;\, \fg)$. 
Think of the curvature form $\pi^*F_a = da + \tfrac 12 [a\wedge a]$ as a matrix, whose entries are 2-forms on $P$. Since forms of even degrees commute, the expression $p(\pi^*F_a)$ makes sense as an $\R$-valued differential form of degree at most $2d$ on $P$. 
Since each entry of $\pi^*F_a$ is basic, so is  $p(\pi^*F_a)$.
Moreover, the $ad$--invariancy of $p$ implies that $p(\pi^*F_a)$ is $\rG$--invariant. 
By \autoref{Prop_FormsPullBackPrincBundle} applied in the case of the trivial $\rG$--representation we obtain that there is a form $p(F_a)$ on $M$ of degree $2d$ such that 
\begin{equation*}
	\pi^*p(F_a) = p(\pi^*F_a). 
\end{equation*}

\begin{lem}
	\label{Lem_InvPolyCurvature}
	The following holds:
	\begin{enumerate}[(i)]
		\item \label{It_AuxPFAClosed}
		$p(F_a)$ is closed;
		\item \label{It_AuxPFAindepOfConn}
		The de Rham cohomology class of $p(F_a)$ does not depend on the choice of connection $a$.
	\end{enumerate}
\end{lem}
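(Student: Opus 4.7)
The plan is to work upstairs on $P$, where the curvature has the explicit formula $\hat F_a := \pi^*F_a = da + \tfrac12[a\wedge a]$, exploit Bianchi in the form $d\hat F_a = [\hat F_a\wedge a]$ from \autoref{Eq_BianchiIdUpstairs}, and convert the $ad$--invariance of $p$ into an infinitesimal identity that kills the offending terms. Since $\pi^*$ is injective on forms (the projection is a submersion), closedness and exactness of $p(F_a)$ can be verified after pulling back to $P$.

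Introduce the polarization $\tilde p\colon \fg^{\otimes d}\to\C$: the unique symmetric multilinear form with $\tilde p(\xi,\dots,\xi)=p(\xi)$, and extend it to $\fg$--valued differential forms on $P$ by using the wedge product on the form components. Differentiating $p(ad_{\exp(t\xi)}\eta)=p(\eta)$ at $t=0$ and polarizing yields the basic identity
\begin{equation*}
\sum_{k=1}^d \tilde p\bigl(\eta_1,\dots,[\xi,\eta_k],\dots,\eta_d\bigr)=0\qquad\forall\,\xi,\eta_1,\dots,\eta_d\in\fg,
\end{equation*}
which, after expanding in a basis of $\fg$, carries over verbatim to $\fg$--valued forms with the graded--wedge convention (no sign subtlety inside $\tilde p$, since $\tilde p$ itself is scalar--valued and symmetric).

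For \ref{It_AuxPFAClosed}, the even degree of $\hat F_a$ and the symmetry of $\tilde p$ give by Leibniz
\begin{equation*}
d\,p(\hat F_a)=d\cdot\tilde p\bigl(d\hat F_a,\hat F_a,\dots,\hat F_a\bigr).
\end{equation*}
Substituting $d\hat F_a=[\hat F_a\wedge a]$ from Bianchi and applying the extended $ad$--invariance identity with $\xi\leftrightarrow a$ and every $\eta_k=\hat F_a$ (so that all $d$ summands in the identity collapse to a single expression by symmetry) gives $\tilde p([\hat F_a\wedge a],\hat F_a,\dots,\hat F_a)=0$. Hence $d\,p(\hat F_a)=0$ upstairs, so $p(F_a)$ is closed on $M$.

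For \ref{It_AuxPFAindepOfConn}, given two connections $a_0,a_1\in\cA(P)$, interpolate via $a_t:=(1-t)a_0+t\,a_1$, which is a path of connections by \autoref{Thm_SpaceOfAllConnPrincBundle}. A direct calculation gives $\tfrac{d}{dt}\hat F_{a_t}=d\dot a_t+[a_t\wedge\dot a_t]$ with $\dot a_t=a_1-a_0$, so
\begin{equation*}
\tfrac{d}{dt}p(\hat F_{a_t})=d\cdot\tilde p\bigl(d\dot a_t+[a_t\wedge\dot a_t],\hat F_{a_t},\dots,\hat F_{a_t}\bigr).
\end{equation*}
I claim this is the exterior derivative of $d\cdot\tilde p(\dot a_t,\hat F_{a_t},\dots,\hat F_{a_t})$. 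Indeed, expanding $d\,\tilde p(\dot a_t,\hat F_{a_t},\dots,\hat F_{a_t})$ by the graded Leibniz rule produces $\tilde p(d\dot a_t,\hat F_{a_t},\dots)$ together with $(d-1)$ terms containing $d\hat F_{a_t}=[\hat F_{a_t}\wedge a_t]$; the surviving bracket contribution $\tilde p([a_t\wedge\dot a_t],\hat F_{a_t},\dots)$ on the other side pairs with these via the $ad$--invariance identity (applied with $\xi\leftrightarrow a_t$, one argument $\dot a_t$ and the rest $\hat F_{a_t}$) to produce exact cancellation. Integrating over $t\in[0,1]$ upstairs represents $p(\hat F_{a_1})-p(\hat F_{a_0})$ as the exterior derivative of a basic, $\rG$--invariant form on $P$, which by \autoref{Prop_FormsPullBackPrincBundle} descends to a form $\tau\in\Om^{2d-1}(M)$ with $p(F_{a_1})-p(F_{a_0})=d\tau$.

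The main technical obstacle is the second half: carefully lifting the pointwise $ad$--invariance identity to the setting of $\fg$--valued forms of mixed degrees and keeping the graded signs straight so that the bracket contribution from $\tfrac{d}{dt}\hat F_{a_t}$ exactly balances the Bianchi contributions from $d\hat F_{a_t}$. Once that identity is stated cleanly, both \ref{It_AuxPFAClosed} and \ref{It_AuxPFAindepOfConn} reduce to short manipulations.
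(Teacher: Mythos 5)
Your part~\textit{\ref{It_AuxPFAClosed}} is essentially the paper's argument: pull back to $P$, polarize $p$, differentiate the $ad$--invariance at the identity to get $\sum_k \tilde p(\eta_1,\dots,[\xi,\eta_k],\dots,\eta_d)=0$, and combine with the Bianchi identity $d\hat F_a=[\hat F_a\wedge a]$. For part~\textit{\ref{It_AuxPFAindepOfConn}}, however, you take a genuinely different route. The paper packages the homotopy invariance abstractly: it treats $A_t=(1-t)A_0+tA_1$ as a connection on $\varpi^*P\to M\times I$, observes that $p(F_{A_t})$ is closed on $M\times I$ by part~\textit{\ref{It_AuxPFAClosed}}, and invokes the Poincar\'e--lemma homotopy operator $Q$ with $\imath_1^*\om-\imath_0^*\om=dQ\om-Qd\om$ to exhibit $p(F_{A_1})-p(F_{A_0})$ as exact without ever naming a primitive. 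You instead differentiate $p(\hat F_{a_t})$ in $t$ directly on $P$ and show $\tfrac{d}{dt}p(\hat F_{a_t})=d\bigl[d\,\tilde p(\dot a_t,\hat F_{a_t},\dots,\hat F_{a_t})\bigr]$, then integrate over $t$ --- the classical Chern--Simons transgression. The trade-off: your argument produces an explicit primitive $\tau = d\int_0^1 \tilde p(a_1-a_0,\hat F_{a_t},\dots,\hat F_{a_t})\,dt$ (which is exactly the kind of object the paper later wants for the Chern--Simons functional in \S\ref{Sect_ChernSimons}), but it pays a price in sign bookkeeping; the paper's route reuses part~\textit{\ref{It_AuxPFAClosed}} and the abstract $Q$ to avoid all of that. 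The sign cancellation you flag as the ``main technical obstacle'' does genuinely go through --- moving $a_t$ (a 1--form) over $\dot a_t$ (a 1--form) plus $(k-2)$ copies of $\hat F_{a_t}$ (2--forms) contributes $(-1)^{1+2(k-2)}=-1$ for $k\ge 2$ and $+1$ for $k=1$, and together with $[a_t\wedge\hat F_{a_t}]=-[\hat F_{a_t}\wedge a_t]$ this matches the graded Leibniz expansion of $d\,\tilde p(\dot a_t,\hat F_{a_t},\dots)$ exactly --- but you are right that this needs to be written out rather than merely asserted.
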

\begin{proof}
The proof consists of the following steps.
\setcounter{step}{0}
\begin{step}
	We prove~\textit{\ref{It_AuxPFAClosed}}.
\end{step}
Pick any $\xi, \xi_1,\dots, \xi_{d}\in\fg$.
Thinking of $\fg$ as a matrix Lie algebra, I write temporarily $ad_{e^{t\xi}}\xi_j = e^{t\xi}\xi_j e^{-t\xi}$.  
Slightly abusing notations, denote by $p\colon Sym^d(\fg)\to \R$  the $d$-multilinear function whose restriction to the diagonal yields the original polynomial $p$.  
Then, differentiating the equality 
\begin{equation*}
	p\bigl (  e^{t\xi}\xi_1e^{-t\xi},\, \dots,\,   e^{t\xi}\xi_de^{-t\xi} \bigr ) = p(\xi_1,\dots,\xi_d)
\end{equation*}
with respect to $t$, yields
\begin{equation}
	\label{Eq_AuxInfinitInvariancy}
	p\bigl ([\xi, \xi_1],\, \xi_2,\dots, \xi_d  \bigr) +
	p\bigl (\xi_1,\, [\xi, \xi_2],\,\dots, \xi_d  \bigr)+\dots +
	p\bigl (\xi_1,\, \xi_2,\dots, [\xi, \xi_d]  \bigr) =0.
\end{equation}

Denote $\hat F_A:= \pi^*F_A$. 
Then~\eqref{Eq_AuxInfinitInvariancy} implies
\begin{equation*}
p\bigl ( [\hat F_A\wedge A],\, \hat F_A, \dots, \hat F_A \bigr ) + 
p\bigl ( \hat F_A, [\hat F_A\wedge A], \dots, \hat F_A \bigr ) +\dots + 
p\bigl ( \hat F_A,\,\hat F_A, \dots, [\hat F_A \wedge A]\bigr ) =0.
\end{equation*} 
Hence,
\begin{equation*}
	\begin{aligned}
	d\bigl (  p(\hat F_A)  \bigr ) &= p(d\hat F_A,\, \hat F_A, \dots,\, \hat F_A) + p(\hat F_A,\, d\hat F_A, \dots,\, \hat F_A) + \dots + p(\hat F_A,\, \hat F_A, \dots,\, d\hat F_A)\\
	&= p\bigl ( [\hat F_A\wedge A],\, \hat F_A, \dots, \hat F_A \bigr ) + 
	p\bigl ( \hat F_A, [\hat F_A\wedge A], \dots, \hat F_A \bigr ) +\dots + 
	p\bigl ( \hat F_A,\,\hat F_A, \dots, [\hat F_A\wedge A]\bigr )\\ 
	&=0.
	\end{aligned}
\end{equation*}
Here the second equality uses~\eqref{Eq_BianchiIdUpstairs}. 
This finishes the  proof of~\textit{\ref{It_AuxPFAClosed}}.

\begin{step}
	Let $I=[0,1]$ be the interval and $\imath_0,\imath_1\colon M\to M\times I$ be the natural inclusions correspodnding to the endpoints of the interval. 
	There exist linear maps $Q\colon \Om^k(M\times I)\to \Om^{k-1}(M)$ such that for any $\om \in \Om^k(M\times I)$ we have
	\begin{equation*}
		\imath_1^*\om - \imath_0^*\om = d\, Q\om - Q\, d\om. 
	\end{equation*}
\end{step}

The argument goes just like in the proof of the Poincar\'e lemma, see for example~\cite{BottTu82_DiffForms}*{Prop.\,4.1.1}. 
I omit the details.

\begin{step}
	We prove~\ref{It_AuxPFAindepOfConn}.
\end{step}

Pick any two connection $A_0$ and $A_1$ and think of $A_t:= (1-t)A_0 + tA_1$ as a connection on $\varpi^*P\to M\times I$, where $\varpi\colon M\times I\to M$ is the natural projection.
Then 
\begin{equation*}
	p(F_{A_1}) - p(F_{A_0})=\imath_1^* p(F_{A_t}) - \imath_0^*p(F_{A_t}) = d\, Qp(F_{A_t}) 
\end{equation*}
by the previous step. 
This proves~\textit{\ref{It_AuxPFAindepOfConn}}.
\end{proof}

\subsubsection{The Chern classes}

Let $c_j$ be the polynomial of degree $j$ from Example~\ref{Ex_InvPolynomials}.

Let $P\to M$ be a principal $\U(r)$--bundle with a connection $A$.
By \autoref{Lem_InvPolyCurvature}, $c_j(F_A)$ is closed, real valued, and the de Rham cohomology class of $c_j(F_A)$ does not depend on the choice of the connection.

\begin{defn}
	\label{Defn_ChernClassCurvature}
	 The class $c_j(P):= [c_j(F_A)]\in H^{2j}_{dR}(M;\R)$ is said to be the $j$th Chern class of $P$ and 
	 \begin{equation*}
	 	c(P):=1 + c_1(P) + \dots+ c_r(P)\in H^\bullet(M;\R)
	 \end{equation*} 
	 is called the total Chern class  of $P$.
\end{defn}

\begin{rem}
	The above definition yields Chern classes as elements of the de Rham cohomology groups only. 
	In fact, one can show that they lie in the image of  $H^\bullet (M;\Z)\to H^\bullet_{dR}(M;\R)$. 
	I will discuss this briefly in the case of the first two Chern classes below.
	 Also, at this point we have two a priori unrelated definitions of the first two Chern classes. 
	We will see below that in fact they agree.  
\end{rem}

\begin{rem}
	Let  $E$ be a complex vector bundle of rank $r$. 
	Choosing a fiberwise Hermitian structure on $E$, we obtain a principal $\U(r)$--bundle $\Fr_\U$ so that we can define $c_j(E):=c_j(\Fr_\U)$. 
	It is easy to show that this does not depend on the choice of the Hermitian structure. 
\end{rem}

\begin{thm}
	The Chern classes satisfy the following properties
		\begin{enumerate}[(i)]
			\item $c_0(E) =1$ for any vector bundle $E$;
			\item $c(f^*E)=f^*c(E)$ for all vector bundles $E\to M$ and all maps $f\colon N\to M$;
			\item $c(E_1\oplus E_2) = c(E_1)\cup c(E_2)$;
			\item $c\bigl (\cO(-1)\bigr ) =1 -a$, where $\cO(-1)$ is the tautological line bundle over $\mathbb P^1$ and $a$ is the  generator of the cohomology group of $\mathbb P^1$ such that~\eqref{Eq_NormOfGeneratorH2P1} holds.
		\end{enumerate}
\end{thm}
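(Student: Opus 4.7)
The strategy is to prove each of the four properties by making judicious choices of connections and reading off the de Rham representatives of the Chern classes directly from the defining polynomial.

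For (i), the statement $c_0(E) = 1$ is immediate from the convention in Definition~\ref{Defn_ChernClassCurvature}: the total Chern class begins with the degree-zero term $1$, which corresponds to the constant term $\lambda^r$ in the expansion of $\det(\lambda\mathbbm 1 + \tfrac i{2\pi}\xi)$ from Example~\ref{Ex_InvPolynomials}. For (ii) (naturality under pull-back), I would pick any connection $A$ on the unitary frame bundle $P = \Fr_\U(E)$ and apply \autoref{Prop_PullBackConn}: the connection $f^*A$ on $f^*P$ has curvature $F_{f^*A} = f^*F_A$. Since $c_j$ is just a polynomial in the matrix entries of the curvature, pull-back commutes with evaluation, so $c_j(F_{f^*A}) = f^*c_j(F_A)$; passing to de Rham cohomology and using \autoref{Lem_InvPolyCurvature}~\ref{It_AuxPFAindepOfConn} to confirm well-definedness gives $c(f^*E) = f^*c(E)$.

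For (iii) (the Whitney sum formula), I would choose connections $\nabla_1, \nabla_2$ on $E_1, E_2$ and equip $E_1 \oplus E_2$ with the direct sum connection $\nabla := \nabla_1 \oplus \nabla_2$, whose curvature is block-diagonal as an element of $\End(E_1 \oplus E_2)$:
\begin{equation*}
F_\nabla = \begin{pmatrix} F_{\nabla_1} & 0 \\ 0 & F_{\nabla_2} \end{pmatrix}.
\end{equation*}
Since forms of even degree commute, the determinant of a block-diagonal matrix of such forms factors, so
\begin{equation*}
\det\bigl(\lambda\mathbbm 1 + \tfrac i{2\pi} F_\nabla\bigr) = \det\bigl(\lambda\mathbbm 1 + \tfrac i{2\pi} F_{\nabla_1}\bigr)\cdot \det\bigl(\lambda\mathbbm 1 + \tfrac i{2\pi} F_{\nabla_2}\bigr),
\end{equation*}
and comparing coefficients of $\lambda$ on both sides yields $c_k(E_1\oplus E_2) = \sum_{i+j = k} c_i(E_1)\cup c_j(E_2)$, i.e. $c(E_1\oplus E_2) = c(E_1)\cup c(E_2)$.

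For (iv), I would use the explicit model: by Exercise~\ref{Exerc_TautolLineBundle}, $\cO(-1)$ is the line bundle associated to the Hopf bundle $S^3 \to \CP^1$, and Example~\ref{Exam_CurvatureTautBundle} supplies a connection with $\int_{S^2} F_a = 2\pi i$. For a $\U(1)$-connection the defining polynomial gives $c_1(F_a) = \tfrac i{2\pi} F_a$, whence
\begin{equation*}
\int_{\CP^1} c_1(F_a) = \tfrac i{2\pi}\cdot 2\pi i = -1.
\end{equation*}
Combined with the normalization $\langle a, [\CP^1]\rangle = 1$, this forces $c_1(\cO(-1)) = -a$, and since $\cO(-1)$ has rank one the total class is $c(\cO(-1)) = 1 - a$. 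The main obstacle I expect is (iii): one must be careful that the direct sum of unitary connections really does induce the connection on $\Fr_\U(E_1\oplus E_2)$ obtained from the block inclusion $\U(r_1)\times \U(r_2)\hookrightarrow \U(r_1 + r_2)$, so that computing $c_j$ via the block-diagonal curvature is legitimate. Once the block structure is in place, the determinant factorization is formal.
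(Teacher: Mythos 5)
Your proposal is correct and follows essentially the same route as the paper: (i) is definitional, (ii) is immediate from \autoref{Prop_PullBackConn}, (iii) is the determinant factorization for the block-diagonal curvature of a direct-sum connection, and (iv) reduces to the computation $\frac{i}{2\pi}\int_{\mathbb P^1}F_a=-1$ from \autoref{Exam_CurvatureTautBundle}. The extra caveat you raise for (iii) — that the direct-sum connection really corresponds to the $\U(r_1)\times\U(r_2)\hookrightarrow\U(r_1+r_2)$ reduction of $\Fr_\U(E_1\oplus E_2)$ — is a legitimate point the paper leaves implicit, but it is a routine check and does not alter the argument.
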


The first property above is jut the definition, the remaining properties  are called naturality, Whitney sum formula, and normalization respectively. 

\begin{proof}
	The naturality follows immediately from \autoref{Prop_PullBackConn}. 
	The normalization is equivalent to $\frac i{2\pi}\int_{\mathbb P^1} F_a = -1$, where $a$ is a unitary connection on the tautological line bundle. 
	This was established in Example~\ref{Exam_CurvatureTautBundle}. 
	
	Thus, we only need to prove the Whitney sum formula.  
	If $\nabla_1$ and $\nabla_2$ are unitary connections on $E_1$ and $E_2$ respectively, then the curvature of the corresponding connection on the Whitney sum is a block diagonal matrix. 
	More precisely, this means that the curvature is a 2-form with values in $\End(E_1)\oplus \End(E_2)$.   
	If  $A$ and $B$ are any square matrices,  we have
	\begin{equation*}
	 \begin{aligned}
		 \det 
		 \begin{pmatrix}
		 A & 0\\
		 0 & B
		 \end{pmatrix}
		 =\det A\, \det B\quad\implies\quad
		 c(F_{\nabla_1\oplus \nabla_2}) &=\det 
		 \begin{pmatrix}
		 \mathbbm 1 + \tfrac i{2\pi}F_{\nabla_1} & 0\\
		 0 & \mathbbm 1 + \tfrac i{2\pi} F_{\nabla_2}
		 \end{pmatrix}\\
		 &=c(F_{\nabla_1})\wedge c(F_{\nabla_2}).
	 \end{aligned}
	\end{equation*} 
	The latter equality clearly implies the Whitney sum formula.
\end{proof}

\begin{exercise}
	Let $E$ be a vector bundle. Prove that the following holds:
	\begin{enumerate}[(a)]
		\item The Chern classes depend on the isomorphism class of $E$ only;
		\item $c_j(E^\vee) = (-1)^jc_j(E)$ for all $j$;
		\item If $E$ is trivial, then $c(E)=1$;
		\item If $E\cong E_1\oplus \underline{\C}^k$, then $c_j(E) =0$ for $j > \rk E-k$. 
	\end{enumerate}
\end{exercise}

\begin{exercise}
	Show that the tangent bundle of $S^2$ is non-trivial. 
\end{exercise}

\begin{thm}
	Let $L$ be a complex line bundle. 
	Then the first Chern class in the sense of Definition~\ref{Defn_ChernClassCurvature} coincides with the image in $H^2_{dR}(M;\,\R)$ of the first Chern class in the sense of Definition~\ref{Defn_FirstChernClass_Top}.
\end{thm}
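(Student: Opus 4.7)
The plan is to exploit the fact that both definitions are natural under pullback, so that the equality of the two classes is forced by their equality on a single universal example. For the topological class $c_1^{top}$ this is built into the definition, while for the Chern--Weil class $c_1^{CW} := [\tfrac{i}{2\pi} F_A]$ naturality is exactly Proposition~\ref{Prop_PullBackConn} combined with the fact that $c_1$ is the polynomial $\frac{i}{2\pi}\tr$ applied to $F_A$.

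The first step is the reduction. Given a complex line bundle $L\to M$ with Hermitian structure, pick a classifying map $f\colon M\to\CP^\infty$ and note that, $M$ being finite-dimensional (or at least paracompact with finite-dimensional skeleta for the argument), $f$ factors up to homotopy through the inclusion $\CP^N\hookrightarrow\CP^\infty$ for some $N\geq 1$, so $L\cong g^*\cO(-1)$ for some $g\colon M\to\CP^N$. By naturality of both classes, it suffices to prove that $c_1^{CW}(\cO(-1)) = c_1^{top}(\cO(-1))$ in $H^2_{dR}(\CP^N;\R)$. Since $H^2_{dR}(\CP^N;\R)$ is one-dimensional and the natural inclusion $\iota\colon\CP^1\hookrightarrow\CP^N$ induces an isomorphism on $H^2_{dR}$, it suffices in turn to check the equality on $\iota^*\cO(-1)=\cO(-1)\to\CP^1$. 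There both classes lie in $H^2_{dR}(\CP^1;\R)\cong\R$ and are detected by integration over the fundamental class.

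The second step is the explicit computation on $\CP^1$. On the topological side, the classifying map of $\cO(-1)\to\CP^1$ is (homotopic to) the identity $\CP^1\to\CP^1\subset\CP^\infty$, so by Definition~\ref{Defn_FirstChernClass_Top} the class $c_1^{top}(\cO(-1))$ is $-a$, and the normalization~\eqref{Eq_NormOfGeneratorH2P1} gives $\langle c_1^{top}(\cO(-1)),[\CP^1]\rangle=-1$. On the Chern--Weil side, the Hopf connection $a$ of Example~\ref{Ex_ConnHopfBundle} is a $\U(1)$-connection on the $\U(1)$-bundle $S^3\to\CP^1$ underlying $\cO(-1)$ (by Exercise~\ref{Exerc_TautolLineBundle}), and Example~\ref{Exam_CurvatureTautBundle} computes $\int_{\CP^1} F_a = 2\pi i$. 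Hence
\begin{equation*}
\Bigl\langle c_1^{CW}(\cO(-1)),[\CP^1]\Bigr\rangle = \frac{i}{2\pi}\int_{\CP^1} F_a = \frac{i}{2\pi}\cdot 2\pi i = -1,
\end{equation*}
which agrees with the topological computation. This settles the universal case and, by the naturality reduction, the general case.

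The main obstacle I expect is the reduction step itself rather than the end computation: one must justify that $L$ is genuinely the pullback of $\cO(-1)$ from some finite $\CP^N$, and that both constructions of $c_1$ commute with this pullback, taking care that the Chern--Weil class is defined using a \emph{smooth} connection while the classifying map is a priori only continuous. This is handled by approximating $f$ by a smooth map into a finite Grassmannian (here $\CP^N$), invoking smooth homotopy invariance of de Rham cohomology so that different smooth representatives of $[f]$ yield the same pulled-back de Rham class, and then applying Proposition~\ref{Prop_PullBackConn} to identify the pullback connection's curvature. Once this technicality is in place, the proof reduces to the one-line calculation above.
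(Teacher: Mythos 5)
Your proposal is correct and follows essentially the same route as the paper's sketch: reduce via a smooth classifying map to $\cO(-1)\to\CP^N$, restrict to $\CP^1$ using the fact that the inclusion induces an isomorphism on $H^2_{dR}$, and finish with the curvature integral $\frac{i}{2\pi}\int_{\CP^1}F_a=-1$ from Example~\ref{Exam_CurvatureTautBundle}. The only (welcome) addition in your write-up is making explicit the smooth-versus-continuous classifying-map issue and the invocation of naturality via Proposition~\ref{Prop_PullBackConn}, which the paper's sketch leaves implicit.
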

\begin{proof}[Sketch of proof]
	Let $L\to M$ be a complex line bundle. It is not too hard to show that there is $N<\infty$ and a smooth map $f\colon M\to \mathbb P^N$ such that $f^*\cO(-1)$ is isomorphic to $L$, where $\cO(-1)$ is the tautological bundle of $\mathbb P^N$. 
	Notice that  $H^2_{dR}(\mathbb P^N;\, \R)$ is one dimensional and generated by the class Poincar\'e dual to $[\mathbb P^1]$, where $\imath\colon\mathbb P^1\subset \mathbb P^N$ is a standard embedding. 
	
	Pick a Hermitian structure on $\cO(-1)$ and a Hermitian connection $\nabla$.   
	Then $\imath^*\cO(-1)$ is the tautological bundle of $\mathbb P^1$, so that $\frac i{2\pi}\imath^* F_\nabla$ represents the first Chern class of $\cO_{\mathbb P^1}(-1)$. 
	Hence, by Example~\ref{Exam_CurvatureTautBundle} we have
	\begin{equation*}
		\Bigl \langle  [\tfrac i{2\pi}\imath^* F_\nabla] , \; [\mathbb P^1] \Bigr \rangle = -1
	\end{equation*}
	Hence, $c_1(\cO(-1)) = -a$ so that $c_1(L) = c_1(f^*L) = -f^*a$. 
\end{proof}
 
 \begin{rem}
 	One can prove arguing along similar lines that in the case of $\SU(2)$--bundles the two definitions of the second Chern class agree on the level of the de Rham cohomology groups. I leave the details to the readers. Moreover, one can also show that the infinite Grassmannian $\Gr_k(\C^\infty)$ is a classifying space for the group $\U(k)$.  Thus one could also define the Chern classes as pull-backs of certain classes on  $\Gr_k(\C^\infty)$.  
 \end{rem}
 
 \begin{rem}
 	One corollary of Definition~\ref{Defn_FirstChernClass_Top} is follows. 
 	Let  $L\to M$ be a Hermitian line bundle and $h\colon \Sigma\to M$ a smooth map, where $\Sigma$ is a compact oriented two--manifold. 
 	Then we have
 	\begin{equation*}
 		\frac i{2\pi}\int_M h^*F_\nabla \in \Z,
 	\end{equation*}
 	where $\nabla$ is any Hermitian line bundle. 
 	This property may be quite surprising if one's starting point is Definition~\ref{Defn_ChernClassCurvature}. 
 	
 	In particular, if $M$ is itself a compact oriented two-dimensional manifold (and $h$ is the identity map), then
 	\begin{equation*}
 		\frac i{2\pi}\int_M F_A
 	\end{equation*} 
 	is an integer, which coincides with the degree of $L$, cf. Example~\ref{Ex_ClassifBundlesOnSurfaces}. 
 \end{rem}
 
 \begin{rem}
 	A straightforward computation yields that for any matrix $\xi\in\su(2)$ we have $\tr \xi^2 =-2\det\xi$. 
 	Hence, for an $\SU(2)$--bundle $P$ we have 
 	 \begin{equation*}
 	 	 c_2(P) = \frac 1{8\pi^2} \bigl [ \tr(F_A\wedge F_A) \bigr  ]\in H^4_{dR}(M; \R).
 	 \end{equation*}
 	 In particular, if $M$ is a closed oriented four-manifold, the integration yields an isomorphism $H^4_{dR}(M;\, \R)\cong \R$. 
 	 In fact, just as in the case of line bundles above, we have
 	 \begin{equation}
	 	 \label{Eq_SecondChernClass}
 	 	c_2(P)= \frac 1{8\pi^2} \int_M \tr(F_A\wedge F_A) \in \Z. 
 	 \end{equation}
 \end{rem}
 
\subsection{The Chern--Simons functional}
\label{Sect_ChernSimons}

In this section I will restrict myself to dimension three and $G=\SU(2)$. 
Thus, let $M$ be a three manifold equipped with an  $\SU(2)$--bundle $P\to M$.
Notice that $P$ is trivial as we have seen in~\autoref{Prop_SU2BundlesAreTrivial}.

As a matter of fact, any closed oriented three-manifold is a boundary of a compact oriented four-manifold, say $\partial X=M$. 
Assume there is an extension of $P$ to $X$, i.e., a bundle $P_X$ such that $P_X|_M=P$. 
In this case any connection $A$ on $P$ can be extended to a connection $A_X$ on $P_X$ so that we can form the integral
\begin{equation*}
	\frac 1{8\pi^2}\int_X\tr \bigl (  F_{A_X}\wedge F_{A_X} \bigr ).
\end{equation*}
If we take any other extension $(X', P_X', A_X')$, we can glue $X$ and $X'$ along their common boundary to form a four-manifold without boundary. 
Strictly speaking, when performing the gluing we have to change the orientation of $X'$ so that $\partial X'$ is  equipped with the orientation opposite to that of $M$ to have the resulting manifold oriented. 
This together with~\eqref{Eq_SecondChernClass} yields, that the difference
\begin{equation*}
		\frac 1{8\pi^2}\int_X\tr \bigl (  F_{A_X}\wedge F_{A_X} \bigr ) - 
			\frac 1{8\pi^2}\int_{X'}\tr \bigl (  F_{A_X'}\wedge F_{A_X'} \bigr )
\end{equation*} 
 is an integer. 
 Hence, 
 \begin{equation}
 	\vartheta (A):= \frac 1{8\pi^2}\int_X\tr \bigl (  F_{A_X}\wedge F_{A_X} \bigr )
 \end{equation}
is well-defined as a function with values in $\R/\Z$. 
This is called \emph{the Chern--Simons functional}.

\medskip

While this definition makes transparent the relation of the Chern--Simons functional with the Chern--Weil theory, it is possible to compute the value of the Chern--Simons functional directly without extending $A$ to a four-manifold. 
In fact, choosing a trivialization of $P$ we can think of $A$ as a 1-form on $M$ with values in $\su(2)$. 
Then 
\begin{equation*}
	\vartheta (A) = \frac 1{8\pi^2}\int_M \tr \bigl ( A\wedge dA + \frac 23 A\wedge A\wedge A \bigr ). 
\end{equation*} 
Notice that this expression does not yield an $\R$--valued function. 
The reason is that by changing the trivialization of $P$ the value of $\vartheta$ changes by an integer so that we obtain again a map to the circle.

\begin{exercise}$\phantom{a}$
	\begin{enumerate}[(a)]
		\item For $A\in\Om^1(X)$, where $X$ is a four-manifold,  prove the equality
		\begin{equation}
			\label{Eq_AuxDChernSimonsIntegrand}
			d\;\tr \Bigl ( A\wedge dA +\frac 23 A\wedge A\wedge A \Bigr ) = \tr\bigl ( F_A\wedge F_A \bigr ).
		\end{equation}
		Notice the following: In the special case $X=M\times \R$ denote by $A_t$ the pull-back of $A$ to $M\times\{ t \}$. 
		Then~\eqref{Eq_AuxDChernSimonsIntegrand} clearly implies
		\begin{equation*}
			\vartheta (A_t)-\vartheta (A_{t_0}) = \int_{M\times [t_0, t]}\tr \bigl ( F_A\wedge F_A  \bigr ).
		\end{equation*}
		\item Prove that the two definition of the Chern--Simons functional agree.
		\item Prove that the values of the Chern--Simons functional with respect to two different trivializations differ by an integer. 
		\item Let $g$ be a gauge transformation, which can be though of as a map $M\to\SU(2)\cong S^3$. Show that
		\begin{equation*}
			\vartheta (A\cdot g) = \vartheta (A) + \deg g.
		\end{equation*}
	\end{enumerate} 
\end{exercise}

Let us compute the differential of $\vartheta$. 
For $a\in \Om^1(M;\, \su(2))$ we have
\begin{equation*}
	d\vartheta_A(a) = \frac 1{8\pi^2}\int_M\tr \bigl ( a\wedge dA + A\wedge da + 2a\wedge A\wedge A  \bigr )
	= \frac 1{4\pi^2}\int_M\tr\bigl ( F_A\wedge a\Bigr).
\end{equation*}
In the second equality the integration by parts is used.  
Hence, we conclude the following.
\begin{proposition}
	The critical points of the Chern--Simons functional are flat connections, i.e., connections $A$ such that $F_A=0$. \qed
\end{proposition}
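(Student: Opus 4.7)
The formula for the differential has essentially done all the work: we already know
\begin{equation*}
	d\vartheta_A(a) = \frac{1}{4\pi^2}\int_M \tr\bigl(F_A\wedge a\bigr),\qquad a\in\Om^1(M;\,\su(2)).
\end{equation*}
So the plan is simply to argue that $A$ is a critical point, i.e.\ $d\vartheta_A\equiv 0$, if and only if $F_A$ vanishes identically. One implication is trivial: if $F_A=0$, then the integrand vanishes for every test form $a$. The whole content of the proposition is the converse, which reduces to checking that the bilinear pairing
\begin{equation*}
	\Om^2(M;\,\su(2))\times \Om^1(M;\,\su(2))\longrightarrow \R,\qquad (\om, a)\mapsto \int_M \tr(\om\wedge a)
\end{equation*}
is non-degenerate in the first slot.

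To establish non-degeneracy I would fix an auxiliary Riemannian metric $g$ on $M$, with associated Hodge star $\ast\colon \Om^2(M)\to \Om^1(M)$, and apply it componentwise to $\su(2)$-valued forms. The key point is that the Killing form $(\xi,\eta)\mapsto -\tr(\xi\eta)$ on $\su(2)$ is positive definite. Consequently, for any $F\in \Om^2(M;\,\su(2))$ the 3-form $-\tr(F\wedge \ast F)$ coincides pointwise with $|F|^2\vol_g$ for a suitable pointwise norm built from $g$ and the Killing form. Thus
\begin{equation*}
	-\int_M \tr(F_A\wedge \ast F_A)=\|F_A\|^2_{L^2}\geq 0,
\end{equation*}
with equality iff $F_A\equiv 0$. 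Testing the vanishing of $d\vartheta_A$ against $a:=\ast F_A$ therefore forces $F_A=0$, which completes the proof.

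The argument has no genuine obstacle; the only substantive ingredient is the elementary but essential observation that $\tr$ restricted to $\su(2)$ is a non-degenerate (in fact negative definite) invariant bilinear form, which allows the Hodge star trick to turn the variational equation into an $L^2$-norm identity. Everything else is a reuse of the differential computation already carried out in the preceding paragraph.
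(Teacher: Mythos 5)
Your proof is correct and follows the same strategy as the paper: both start from the already-computed formula $d\vartheta_A(a)=\frac{1}{4\pi^2}\int_M\tr(F_A\wedge a)$ and conclude from it. The paper presents the proposition with $\qed$ immediately, treating the non-degeneracy of the pairing as self-evident; you have usefully spelled out why that pairing is non-degenerate (choosing a metric, testing against $a=\ast F_A$, and using negative-definiteness of $\tr$ on $\su(2)$), which is a clean way to make the implicit step explicit.
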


\subsection{The modui space of flat connections}

Even though in Section~\ref{Sect_ChernSimons} I opted to work with three-manifolds and $G=\SU(2)$, the notion of a flat connection clearly makes sense for any background manifold and any structure group. 
Thus, we do not need to impose these restrictions in this section. 

\medskip

Let $P\to M$ be a principal $\rG$--bundle. 
Denote by $\cA^\flat (P)$ the space of all flat connections on $P$.
The gauge group $\cG(P)$ acts on $\cA^\flat (P)$ so that we can form \emph{the moduli space of flat connections:}
\begin{equation*}
	\cM^\flat (P):=\cA^\flat (P)/ \cG(P). 
\end{equation*}  

Such moduli spaces are typical objects in gauge theory that we will meet many times below. 
The main questions we are interested in are the following: Is $\cM^\flat (P)$ compact? Is $\cM^\flat (P)$ a manifold?    

An important point to notice is that $\cM^\flat$ represents the space of all solutions of a non-linear PDE modulo an equivalence relation, so that in essence the question is to describe topological properties of the space of all solutions of a non-liner PDE. 
In general, this may be a hard question, however, in this particular case we will see below that this can be done with a little technology involved.  

However, why could one be potentially  interested in spaces like $\cM^\flat$? 
The two main reasons are as follows: First, sometimes $\cM^\flat$ encodes a subtle information about the background manifold $M$ (as well as the bundle $P$) and thus can be used for instance as a tool in studies of the topology of $M$;   Secondly, moduli spaces come often equipped with an extra structure, which may be of interest on its own. 
In these notes I will mainly emphasize the first point, while the second one will be only briefly mentioned below.

\subsubsection{Parallel transport and holonomy}
Let $E\to M$ be a vector bundle equipped with a connection $\nabla$. 
For any (smooth) curve  $\gamma\colon [0,1]\to M$,  $\gamma^*\nabla$ is a connection on $\gamma^*E$. 
A section $s\in \Gamma(\gamma^*E)$ is said to be \emph{parallel along $\gamma$}, if $(\gamma^*\nabla) (s) =0$. 
\begin{rem}
	If $\gamma$ is a simple embedded curve, then $s$ can be thought of as a section of $E$ defined along the image of $\gamma$. 
\end{rem}

Since any bundle over an interval is trivial, we can trivialize $\gamma^*E\cong \R^k\times[0,1]$ so that $\gamma^*\nabla$ can be written as $\tfrac d{dt} + B(t)\, dt$, where $B\colon [0,1]\to M_k(\R)$ is a map with values in the space of $k\times k$-matrices. 
Thinking of $s$ as a map $[0,1]\to \R^k$, we obtain that $s$ is parallel along $\gamma$ if and only if $s$ is a solution of the equations:
\begin{equation*}
	\dot s + A(t)s(t) =0.
\end{equation*}

By the main theorem of ordinary differential equations, the above equation has a unique solution for any initial value $s_0$ and this solution is defined on the whole interval $[0,1]$.
\begin{defn}
	If $s\in \Gamma(\gamma^*E)$ is parallel along $\gamma$, then $s(1)\in E_{\gamma(1)}$ is called the parallel transport of  $s(0)=s_0\in E_{\gamma(0)}$ with respect to $\nabla$.
\end{defn} 

The above consideration shows in fact that for any connection $\nabla$ any curve $\gamma$ we have a linear isomorphism 
\begin{equation*}
	\mathrm{PT}_\gamma\colon E_{\gamma(0)}\to E_{\gamma(1)},
\end{equation*}
which is called \emph{the parallel transport.}

In a special case, namely when $\gamma$ is a loop, the parallel transport yields an isomorphism of the fiber. 
If we concatenate two loops, the parallel transport is the composition of the parallel transports corresponding to the initial loops. 
Hence, for a fixed connection the set of all parallel transports is in fact a group.

\begin{defn}
	Pick a point $m\in M$. The group
	\begin{equation*}
		\Hol_{m}(\nabla):=\bigl \{ \, \mathrm{PT}_\gamma\in \GL(E_{m}) \mid \gamma \text{ is a loop based at } m\,\bigr \}
	\end{equation*}
	is called the holonomy group of $\nabla$ based at $m$.
\end{defn}  

Choosing a basis of $E_m$, we can think of $\Hol_m(\nabla)$ as a subgroup of $\GL_k(\R)$. 
A standard argument shows that if $m$ and $m'$ lie in the same connected component, then the holonomy groups $\Hol_m(\nabla)$ and $\Hol_{m'}(\nabla)$ are conjugate, i.e., there is $A\in \GL_k(\R)$ such that $\Hol_{m'}(\nabla) = A\Hol_m(\nabla) A^{-1}$. 
With this understood, we can drop the basepoint from the notation. 
Even though $\Hol(\nabla)$ is defined up to a conjugacy only, it is still commonly referred to as a subgroup of $\GL_k(\R)$.

\begin{exercise}
	Show that the following holds:
	\begin{itemize}
		\item $\nabla$ is Euclidean $\implies$ $\mathrm{PT}_\gamma$ is orthogonal $\implies$ $\Hol(\nabla)\subset O(k)$.
		\item $\nabla$ is complex $\implies$ $\mathrm{PT}_\gamma$ is complex linear $\implies$ $\Hol(\nabla)\subset \GL_{k/2}(\C)$.
		\item $\nabla$ is complex Hermitian $\implies$ $\mathrm{PT}_\gamma$ is unitary $\implies$ $\Hol(\nabla)\subset \U(k/2)$.
	\end{itemize}
\end{exercise}

\begin{rem}
	The concept of the parallel transport also makes sense for connections on principal bundles. 
	The construction does not differ substantially from the case of vector bundles. 
	The details are left to the readers. 
\end{rem}

\subsubsection{The monodromy representation of a flat connection}

Let $\nabla$ be a flat connection on a vector bundle $E$ of rank $k$. 
We can view the parallel transport as a map
\begin{equation*}
	\gamma\mapsto \Hol (\nabla;\gamma),
\end{equation*} 
where $\gamma$ is a loop based at some fixed point $m$. 
If $A$ is flat, this map depends on the homotopy class of $\gamma$ only~\cite{KobayashiNomizu96_FoundDiffGeomI}*{II.9} so that effectively we obtain a representation of the fundamental group: $\rho_A\colon \pi_1(M)\to \GL_k(\R)$.  

\begin{exercise}
	Show that a gauge-equivalent connection yields a conjugate representation. 
\end{exercise}

Conversely, given a representation $\rho\colon \tilde M\to \GL_k(\R)$ we can construct the bundle
\begin{equation*}
	E:=\tilde M\times_{\pi_1(M),\,\rho}\R^k.
\end{equation*}
Here $\pi_1(M)$ acts on $\tilde M$ by the deck transformations. 
This means that $\tilde M$ can be viewed as a principal $\pi_1(M)$--bundle so that $E$ is the associated bundle corresponding to the representation $\rho$. 

This bundle is equipped with a natural flat connection. 
Indeed, interpreting a section $s$ of $E$ as a $\pi_1(M)$--equivariant map $\hat s\colon \tilde M \to \R^k$, we can define $\nabla s$ via
\begin{equation*}
	\pi^*\nabla s = d\hat s,
\end{equation*}    
cf.~\eqref{Eq_ConnInTermsOfEquivMaps}. 

These constructions establish a bijective correspondence between the space of all flat connections $\cM^\flat$ and the representation variety
\begin{equation*}
	\cR(M; \GL_k(\R)):= \bigl\{ \rho\colon \pi_1(M)\to \GL_k(\R) \text{ is a group homomorphism}\,  \bigr \}/\text{Conj},
\end{equation*}
where two representations are considered to be equivalent if they are conjugate. 

\begin{remark}
	We could equally well consider flat connections on Euclidean or Hermitian vector bundles. 
	This requires only cosmetic changes and the outcome is the representation space $\cR(M;\, \O(k))$ and $\cR(M;\, \U(k))$ respectively. 
	Even more generally, the constructions above can be modified to the case of principal $\rG$--bundles so that the space of flat $\rG$--connections correspond to $\cR(M;\, \rG)$. 
	I leave the details to the readers.
\end{remark}

Since the fundamental group of a manifold is finitely presented, 
we can  choose a finite number of generators of $\pi_1(M)$, say $\gamma_1, \dots, \gamma_N$.
Then any representation $\rho$ is uniquely specified by the images of the generators $g_i =\rho(\gamma_i)\in G$, which satisfy a finite number of relations.
This shows the inclusion
\begin{equation*}
	\cR(M;\, \rG)\subset G^N/G,
\end{equation*}
where $G$ acts by the adjoint action on each factor.
This implies in particular the following. 

\begin{proposition}
	Let $M$ be a manifold. 
	If $\rG$ is a compact Lie group, then the space $\cM^\flat$ of all flat $\rG$-connections  is compact.\qed
\end{proposition}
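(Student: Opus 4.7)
The plan is to exploit the identification of $\cM^\flat$ with the representation variety $\cR(M;\rG)$ established just above via the monodromy construction, and then use straightforward point-set topology.

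First I would invoke the finite presentation of $\pi_1(M)$ to pick generators $\gamma_1,\dots,\gamma_N$ and relations $r_1,\dots,r_k$. The evaluation of each relation word $r_j$ on an $N$-tuple defines a continuous map $R_j\colon \rG^N\to \rG$, since it is a composition of the (continuous) multiplication and inversion on $\rG$. Sending a homomorphism $\rho$ to the tuple $(\rho(\gamma_1),\dots,\rho(\gamma_N))$ then identifies $\Hom(\pi_1(M),\rG)$ with the closed subset
\begin{equation*}
Z := \bigcap_{j=1}^{k} R_j^{-1}(e)\;\subset\;\rG^N.
\end{equation*}

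Second, since $\rG$ is compact, so is the finite product $\rG^N$, and hence $Z$ is compact as a closed subset of a compact space. The simultaneous conjugation action
\begin{equation*}
g\cdot (g_1,\dots,g_N) := (gg_1g^{-1},\dots, gg_Ng^{-1})
\end{equation*}
is continuous, so the quotient $\cR(M;\rG) = Z/\rG$ is a continuous image of the compact space $Z$ (under the projection $Z\to Z/\rG$) and is therefore compact.

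Finally, to conclude $\cM^\flat$ itself is compact, I would confirm that the bijection $\cM^\flat \to \cR(M;\rG)$ induced by monodromy is a homeomorphism, equivalently that holonomies along a fixed loop depend continuously on the connection in the $C^\infty$--topology. This is the one place some analysis enters: it follows from the continuous (in fact smooth) dependence of solutions of the parallel transport ODE on parameters. Once this is in place, compactness transfers from $\cR(M;\rG)$ to $\cM^\flat$. I expect the only mild obstacle to be precisely this continuity of the monodromy map; the algebraic part (cutting out $Z$ by closed conditions and quotienting by a compact group) is essentially formal.
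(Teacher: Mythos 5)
Your approach is the same as the paper's: identify $\cM^\flat$ with the representation variety via the holonomy correspondence, realize $\Hom(\pi_1(M),\rG)$ as a closed subset $Z$ of the compact space $\rG^N$, and pass to the quotient by conjugation. You are actually somewhat more careful than the paper, which simply records the inclusion $\cR(M;\rG)\subset \rG^N/\rG$ and leaves the rest implicit; your explicit observation that $Z$ is cut out by finitely many closed conditions (and hence is itself compact, rather than merely contained in something compact) is the correct way to close that loophole.

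One small imprecision in your last paragraph: you say that the bijection $\cM^\flat\to\cR(M;\rG)$ being a homeomorphism is ``equivalently'' the statement that holonomy depends continuously on the connection. That is only continuity in one direction, namely $\cM^\flat\to\cR(M;\rG)$, which by itself does not transfer compactness the way you want. What you actually need is either continuity of the inverse $\cR(M;\rG)\to\cM^\flat$ (which follows from the explicit construction of the flat bundle $\tilde M\times_{\rho}V$ out of $\rho$, and its continuous dependence on $\rho$), or the standard fact that a continuous bijection from a compact space to a Hausdorff space is automatically a homeomorphism --- in which case the real item to check is that the quotient $\cA^\flat/\cG$ is Hausdorff. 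Either path works; just be aware that the direction of continuity you singled out is not the one doing the work for compactness.
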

\begin{example}
	For $M=\mathbb T^n$, we have clearly
	\begin{equation*}
		\cR(\mathbb T^n;\, \U(1))=\Hom(\mathbb T^n, \U(1)) =\U(1)^n\cong \mathbb T^n.
	\end{equation*}
\end{example}

\begin{example}
	Let $\Sigma$ be a compact Riemann suface of genus $\gamma$ without boundary. 
	It is well-known that the fundamental group of $\Sigma$ has the following representation
	\begin{equation*}
		\pi_1(\Sigma)\cong \Bigl \langle a_1,\dots, a_\gamma, b_1,\dots, b_\gamma\mid \prod_i [a_i, b_i] = 1 \Bigr\rangle.
	\end{equation*} 
Hence, 
\begin{equation*}
	\cR(\Sigma, \rG)=\Bigl\{ A_1,\dots, A_\gamma, B_1,\dots, B_\gamma\in \rG \mid \prod_i [A_i, B_i] = 1\Bigr\}/\rG.
\end{equation*}
For instance, for $\rG=\SL(n;\C)$ the representation variety has a rich geometric structure, which is being actively studied, see for example~\cite{BradlowEtAl07_WhatISHiggsBundle, Gothen14_RepresSurfGps, Rayan18_AspectsTopHiggsBundles} and references therein. 
\end{example}

\section{Dirac operators}
\label{Sec_DiracOp}

\subsection{Spin groups and Clifford algebras}
In this subsection I recall briefly the notions of Clifford algebra and spin group focusing on low dimensions.  
More details can be found for instance in~\cite{LawsonMichelsohn:89}. 

Since $\pi_1(\SO(n))\cong\Z/2\Z$ for any $n\ge 3$, there is a simply connected Lie group denoted by $\Spin(n)$ together with a homomorphism $\Spin(n)\to \SO(n)$, which is a double covering. 
This characterizes $\Spin(n)$ up to an isomorphism. 
The spin groups can be constructed explicitly with the help of Clifford algebras, however in low dimensions this can be done more directly with the help of quaternions. 

Since $\Sp(1)\cong \SU(2)$ is diffeomorphic to the 3--sphere, this is a connected and simply connected Lie group.
Identify $\ImH=\{ \bar h=- h \}$ with $\R^3$ and consider the homomorphism
\begin{equation}
	\label{Eq_Sp1ToSO3}
	\a\colon \Sp(1)\to \SO(3),\qquad q\mapsto A_q,
\end{equation}
where $A_q h=qh\bar q$. 
It is easy to check that the corresponding Lie-algebra homomorphism is in fact an isomorphism. 
Since $\SO(3)$ is connected, $\a$ is surjective. 
Moreover,  $\ker\a = \{\pm 1\}$. Hence, \eqref{Eq_Sp1ToSO3} is a non-trivial double covering, i.e., $\Spin(3)\cong \Sp(1)$.   

\smallskip

To construct the group $\Spin(4)$,  recall first that the Hodge operator $*$ yields the splitting $\Lambda^2(\R^4)^*=\Lambda_+^2(\R^4)^*\oplus \Lambda^2_-(\R^4)^*$, where $\Lambda^2_\pm(\R^4)^*=\{\om\mid *\om=\pm\om\}$. Since $\mathfrak{so}(4)\cong \Lambda^2(\R^4)^*=\Lambda_+^2(\R^4)^*\oplus \Lambda^2_-(\R^4)^*=\mathfrak{so}(3)\oplus \mathfrak{so}(3)$, the adjoint representation yields a homomorphism $\SO(4)\to \SO(3)\times \SO(3)$.

Identify $\R^4$ with $\H$ and consider the homomorphism\footnote{We adopt the common convention $Sp_\pm(1)=Sp(1)$. The significance of the subscripts ``$\pm$'' will be clear below.}
\[
\b\colon \Sp_+(1)\times \Sp_-(1)\to \SO(4),\qquad (q_+, q_-)\mapsto A_{q_+\!,\, q_-},
\]
where $A_{q_+\!,\, q_-}h= q_+h \bar q_-$. 
An explicit computation shows that the composition $\Sp_+(1)\times \Sp_-(1)\to \SO(4)\to \SO(3)\times \SO(3)$ is given by $(q_+, q_-)\mapsto (A_{q_+}, A_{q_-})$. 
Hence, the Lie algebra homomorphism corresponding to $\b$ is an isomorphism and $\ker\b$ is contained in $\{(\pm 1, \pm 1)\}$. 
As it is readily checked, $\ker\b=\{ \pm(1,1)\}\cong \Z/2\Z$. 
Hence, $\Sp_+(1)\times \Sp_-(1)\cong \Spin(4)$.

\medskip

Let $U$ be an Euclidean vector space. 
Then the Clifford algebra $Cl(U)$ is the tensor algebra $TU=\R\oplus U\oplus U\otimes U\oplus\dots$ modulo the ideal generated by elements $u\otimes u +|u|^2\cdot 1$. 
In other words, $Cl(U)$ is generated by elements of $U$ subject to the relations $u\cdot u= -|u|^2$. 
For instance, $Cl(\R^1)\cong \R[x]/(x^2+1)\cong\C$. 
The algebra $Cl(\R^2)$ is generated by $1, e_1, e_2$ subject to the relations $e_1^2=-1=e_2^2$ and $e_1\cdot e_2=-e_2\cdot e_1$, which follows from $(e_1+e_2)^2=-2$. 
In other words,  $Cl(\R^2)\cong\H$. 
In general, $Cl(\R^n)$ is generated by $1,e_1,\dots, e_n$ subject to the relations $e_i^2=-1$ and $e_i\cdot e_j=-e_j\cdot e_i$ for $i\neq j$. 

It can be shown that the subgroup of $Cl(\R^n)$ generated by elements of the form $v_1\cdot v_2\cdot\ldots\cdot v_{2k}$ is isomorphic to $\Spin(n)$, where each $v_j\in \R^n$ has the unit norm. 
In particular, this shows that $\Spin(n)$ is a subgroup of $\Cl(\R^n)$.

It is convenient to have some examples of modules over Clifford algebras. 
Such module is given by a vector space $V$ together with a map
\[
U\otimes V\to V, \qquad u\otimes v\mapsto u\cdot v,
\] 
which satisfies $u\cdot (u\cdot v)= -|u|^2 v$ for all $u\in U$ and $v\in V$. 
An example of a $Cl(U)$--module is $V=\Lambda U^*$, where the $Cl(U)$--module structure is given by the map
\begin{equation}
	\label{Eq_CliffMultiplForms}
u\otimes \varphi\mapsto \imath_u\varphi - \langle u,\cdot\rangle\wedge\varphi.
\end{equation}

Let $V$ be a quaternionic vector space. 
Then the quaternionic multiplication gives rise to  the map $\ImH\otimes V\to V$, $h\otimes v\mapsto h\cdot v$, which satisfies $h\cdot (h\cdot v)=-h\bar h v=-|h|^2v$. 
Thus, any quaternionic vector space is a $Cl(\R^3)$--module. 
In particular, the fundamental representation $\slS\cong\H$ of $\Sp(1)\cong \Spin(3)$ with the action given by the left multiplication is a $Cl(\R^3)$--module.

The multiplication on the right by $\bar i$ endows $\slS$ with the structure of a complex $\Sp(1)$--representation, which is in fact also Hermitian (this is just another manifestation of the isomorphism $\Sp(1)\cong\SU(2)$).
It is then  an elementary exercise in the representation theory to show the isomorphisms
\begin{equation}
	\label{Eq_BasicIsoOfSUreps}
	\ImH\otimes\C\cong  \End_0(\slS),
\end{equation}
where the left hand side is viewed as an $\Sp(1)$--representation via the homomorphism $\a$ and $\End_0(\slS)$ denotes the subspace of traceless endomorphisms. 
Moreover, the real subspace $\ImH$ can be identified with the subspace of traceless Hermitian endomorphisms. 

Furthermore, for any quaternionic vector space $V$ the space $V\oplus V$ is a $Cl(\R^4)$--module. 
Indeed, the $Cl(\R^4)$--module structure is induced by the map 
\begin{equation}
\label{Eq_ClInDim4}
\H\otimes_\R (V\oplus V)\to V\oplus V,\qquad
h\otimes (v_1, v_2)\mapsto (h v_2,-\bar hv_1)=
\begin{pmatrix}
0 &  h\\
-\bar h & 0
\end{pmatrix}
\begin{pmatrix}
v_1\\ v_2
\end{pmatrix}.
\end{equation}
In particular, the $\Sp_+(1)\times \Sp_-(1)$--representation $\slS^+\oplus \slS^-$ is a $Cl(\R^4)$--module. Here, as the notation suggests, $\slS^\pm$ is the fundamental representation of $\Sp_\pm(1)$.

Just like in the case of dimension three, we have an isomorphism of $\Spin(4)$--representations
\begin{equation*}
	\H\otimes \C\cong\Hom(\slS^+;\, \slS^-),
\end{equation*}
where the left hand side is viewed as a $\Spin(4)$--representation via the homomorphism $\b$.

\subsection{Dirac operators}
\label{Sect_DiracOp}

Let $M$ be a Riemannian oriented manifold of dimension $n$.
The tautological action of $\SO(n)$ on $\R^n$ extends to an action on $Cl(\R^n)$ so that we can construct the associated bundle
\begin{equation*}
	Cl(M):=\Fr_{\SO}\times_{\SO(n)} Cl(\R^n).
\end{equation*}
This can be thought of as the bundle, whose fiber at a point $m\in M$ is $Cl(T_mM)\cong Cl(T_m^*M)$. 
Notice that the Levi--Civita connection yields a connection on $Cl(M)$.
This is denoted by the same symbol $\nabla^{LC}$. 

Let $E\to M$ be a bundle of $Cl(M)$--modules, i.e., there is a morphism of vector bundles 
\[
Cl\colon TM\otimes E\to E,\qquad (v,e)\mapsto v\cdot e,
\]
such that $v\cdot (v\cdot e)=-|v|^2 e$. 
Then $E$ is called a Dirac bundle if it is equipped with an Euclidean scalar product and a connection $\nabla$ such that the following conditions hold:
\begin{itemize}
	\item $\nabla$ is Euclidean;
	\item $\langle v\cdot e_1, v\cdot e_2 \rangle= |v|^2\langle e_1, e_2 \rangle$ for any $v\in T_mM$ and $e_1, e_2\in E_m$;
	\item $\nabla (\varphi\cdot s)= (\nabla^{LC} \varphi)\cdot s + \varphi\cdot\nabla s$ for any $\varphi\in\Gamma(Cl(M))$ and $s\in\Gamma(E)$. 
\end{itemize}
\begin{defn}
	If $E$ is a Dirac bundle, the operator
	\[
	\dirac\colon \Gamma(E)\xrightarrow{\ \nabla\ }\Gamma(T^*M\otimes E)\xrightarrow{\ Cl\ }\Gamma(E)
	\] 
	is called the Dirac operator of $E$. 
\end{defn}

In other words, if $e_1,\dots, e_n$ is a local orthonormal oriented frame of $TM$, then 
\begin{equation*}
	\dirac s =\sum_{i=1}^ne_i\cdot \nabla_{e_i} s 
\end{equation*}

\begin{example}
	\label{Ex_DiracOnForms}
	The bundle $\Lambda T^*M=\oplus_{k=0}^n\Lambda^kT^*M$ has a natural structure of a Dirac bundle, where the Clifford multiplication is given by~\eqref{Eq_CliffMultiplForms}. 
	The corresponding Dirac operator equals $d+d^*$~\cite[Thm~5.12]{LawsonMichelsohn:89}, where $d^*$ is the formal adjoint of $d$, see Section~\ref{Sect_deRhamComplex} below for more details. 
\end{example}

\begin{exercise}
	Show that the Dirac operator on a closed manifold is formally self-adjoint, i.e., for any $s_1, s_2\in\Gamma(E)$ we have
	\begin{equation*}
		\int_M \langle \dirac s_1, \, s_2\rangle = \int_M \langle s_1, \,\dirac s_2\rangle.
	\end{equation*}
	
\end{exercise}

\subsection{Spin and Spin$^c$  structures}
\label{Sect_SpinSpinCstr}

Let $\Fr_{\SO}\to M$ be the principal bundle of orthonormal oriented frames of $M$.   
\begin{defn}
	\label{Defn_SpinCStr}
	$M$ is said to be spinnable, if there is a principal $\Spin(n)$ bundle $P$ equipped with a $\Spin(n)$--equivariant map $\tau\colon P\to\Fr_\SO$, which covers the identity map on $M$ and is a fiberwise double covering.
	Here $\Spin(n)$ acts on $\Fr_\SO$ via the homomorphism $\Spin(n)\to\SO(n)$.
	
	A choice of a bundle $P$ as above is called a spin structure. 
	A manifold equipped with a spin structure is called a spin manifold. 
\end{defn}

For a given $M$ a spin structure may or may not exist.
If $M$ is spinnable, there may be many non-equivalent spin structures. 
The questions on existence and classification of spin structures may be completely answered in terms of the Stiefel--Whitney classes~\cite{LawsonMichelsohn:89}.
However, I will not go into the details here.
For the remaining part of this section I assume throughout that $M$ is spin.

Let $\om\in\Om^1\bigl (\Fr_{\SO};\, \so(n) \bigr)$ be the connection 1-form of the Levi--Civita connection. 
Since the homomorphism $\Spin(n)\to \SO(n)$ is a local diffeomorphism, we have an isomorphism of Lie algebras $\spin(n)\cong \so(n)$. 
Hence, $\tau^*\om\in \Om^1\bigl(P;\, \spin(n)\bigr )$ is a connection on $P$.
Slightly abusing terminology, this is still called the Levi--Civita connection. 

\medskip

For any $n\ge 3$ there is a unique complex representation $\rho\colon \Spin(n)\to \End(\slS)$ distinguished by the property that it extends to a complex irreducible representation of $\Cl(\R^n)$.
Notice that this means neither that $\slS$ is a unique $\Spin(n)$--representation, nor that $\slS$ is an irreducible $\Spin(n)$--representation. 
For example, for $n=3$ this representation coincides with the fundamental representation of $\Sp(1)=\Spin(3)$. 
For $n=4$ we have $\slS = \slS^+\oplus \slS^-$, see~\eqref{Eq_ClInDim4}. 

If $M$ is spin, we can construct \emph{the spinor bundle} 
\begin{equation*}
	\slS:=P\times_{\Spin(n),\,\rho}\slS.
\end{equation*}
Here, as it is quite common, we use the same notation both for the representation and the associated vector bundle.

Since $\rho$ extends to a representation of $Cl(\R^n)$, the spinor bundle is in fact a bundle of $Cl(M)$--modules. 
Hence, the construction of \autoref{Sect_DiracOp} yields the spin Dirac operator
\begin{equation*}
	\slD\colon \Gamma(\slS)\to \Gamma(\slS).
\end{equation*} 

\begin{remark}
	Recall that in the case $\dim M =4$, the spinor bundle splits: $\slS = \slS^+\oplus\slS^-$. 
	By~\eqref{Eq_ClInDim4}, the Clifford multiplication with 1--forms changes the chirality, i.e., for any $\om\in\Om^1(M)$ we have $\om\cdot \colon \slS^\pm\to \slS^{\mp}$. 
	Hence, 
	\begin{equation*}
	\slD=
	\begin{pmatrix}
	0 & \slD^-\\
	\slD^+ & 0
	\end{pmatrix},
	\qquad\text{where } \slD^\pm\colon\Gamma(\slS^\pm)\to\Gamma(\slS^\mp).
	\end{equation*}
\end{remark}

The following two variations of this construction are frequently used. 
First, let $P$ be a principal $\rG$--bundle equipped with a connection $A$ and let $\tau\colon \rG\to \U(n)$ be a unitary representation of $\rG$ so that we have the associated bundle $E= P\times_{\rG, \tau} \C^n$, which is Hermitian. 
Then the twisted spinor bundle $\slS\otimes E$ is also a Dirac bundle so that we have a twisted Dirac operator
\begin{equation*}
	\slD_A\colon \Gamma(\slS\otimes E)\to \Gamma(\slS\otimes E).
\end{equation*}

\begin{example}
	Let us assume that $\dim M =3$ for the sake of definiteness. 
	Choose $E=\slS$, which is equipped with the Levi--Civita connection.
	Then we have
	\begin{equation*}
		\slS\otimes\slS = \Sym^2(\slS)\oplus \Lambda^2\slS \cong T_\C^*M \oplus \C,
	\end{equation*}
	cf.~\eqref{Eq_BasicIsoOfSUreps}. 
	Hence, the twisted spinors can be identified with the complexification of odd forms. 
	Of course, the Hodge $*$-operator yields and isomorphism between odd and even forms so that we can identify the twisted spinors with even forms too.
	We already have seen above a Dirac operator acting on forms, namely
	\begin{equation*}
		d+ d^*\colon \Om^{\mathrm{odd}}(M)\to \Om^{\mathrm{even}}(M),
	\end{equation*}
	cf. \autoref{Ex_DiracOnForms}. 
	One can show that the complexification of $d+d^*$ coincides with  the twisted Dirac operator on $\Gamma(\slS\otimes\slS)$. 
\end{example}
\begin{exercise}
	The Clifford multiplication combined with the map $\ad P\to \End(E)$ yields the `twisted' Clifford multiplication 
	\begin{equation*}
		T^*M\otimes \ad P\to \End (\slS)\otimes \End(E)\cong \End (\slS\otimes E).
	\end{equation*} 
	Show that for $a\in \Om^1(\ad P)$ the following holds:
	\begin{equation*}
		\slD_{A +a}\psi = \slD_A\psi + a\cdot \psi.
	\end{equation*}
\end{exercise}

\medskip

Let me explain the second variation, which in essence is not really much different from the first one, however the details are somewhat involved.
 
Thus, by the construction of $\Spin(n)$ we have the exact sequence
\begin{equation*}
	\{  1 \}\to \{ \pm 1 \}\to \Spin(n)\to \SO(n)\to \{ 1 \}.
\end{equation*}
Moreover, one can show that the kernel $\{ \pm 1 \}$ lies in the center of $\Spin(n)$. 
This is clear anyway in our main cases of interest, namely for $n=3$ and $n=4$. 
Denote 
\begin{equation*}
	\Spin^c(n):=\Spin(n)\times \U(1)/\pm 1,
\end{equation*} 
where $\{ \pm 1\}$ is embedded diagonally. 
Notice that both $\Spin(n)$ and $\U(1)$ are subgroups of $\Spin^c(n)$ and $\U(1)$ lies in fact in the center of $\Spin^c(n)$. 

Furthermore, we have the following exact sequences:
\begin{equation*}
	\begin{aligned}
	  \{  1 \}\to \U(1)\to \Spin^c(n)\xrightarrow{\ \rho_0\ } \Spin(n)/\pm 1 = \SO(n)\to \{ 1 \},\\
	 \{  1 \}\to \Spin(n)\to\Spin^c(n)\xrightarrow{\ \rho_{\mathrm{det}}\ } \U(1)/\pm 1 \cong \U(1)\to \{ 1 \}.
	\end{aligned}
\end{equation*}
These in turn give rise to the exact sequence
\begin{equation}
	\label{Eq_SpincDoubleCov}
	\{  1 \}\to \{ \pm 1 \}\to \Spin^c(n)\xrightarrow{\ (\rho_0,\,\rho_{\mathrm{det}} ) \ } \SO(n)\times \U(1)\to \{ 1 \},
\end{equation}
which shows that $\Spin^c(n)$ is a double covering of $\SO(n)\times\U(1)$.

\begin{example}$\phantom{A}$
	\begin{enumerate}[(a)] 
		\item 
		For $n=3$ we have $\Spin^c(3) = \SU(2)\times \U(1)/\pm 1\cong \U(2)$. In particular, $\rho_{\mathrm{det}}(A) = \det A$ and the sequence~\eqref{Eq_SpincDoubleCov} has the following form
		\begin{equation*}
			\{  1 \}\to \bigl\{ \pm \mathbbm 1\bigr \}\to \U(2)\to \SO(3)\times \U(1)\to\{ 1 \},
		\end{equation*}
		where the homomorphism $\U(2)\to \SO(3) = \mathrm{PU}(2)$ is the natural projection. 
		 
		\item  For $n=4$ we have
		\begin{equation*}
		\begin{aligned}
		\Spin^c(4) &= \Bigl (\bigl ( \SU(2)\times\SU(2) \bigr )\times\U(1)\Bigr )/\pm 1 \\
		&=\bigl \{   (A_+,\, A_-)\in \U(2)\times\U(2)\mid \det A_+ = \det A_- \bigr \}. \phantom{\sum^0}
		\end{aligned}
		\end{equation*}
		In particular, $\rho_{\mathrm{det}} (A_+,\, A_-)=\det A_+=\det A_-$.
		
		Notice that in this case we also have the homomorphisms
		\begin{equation*}
			\rho_\pm\colon \Spin^c(4)\to \U(2),\qquad \rho_\pm (A_+,\, A_-)=A_\pm.
		\end{equation*}  
	\end{enumerate}
\end{example}

\begin{defn}
	A spin$^c$ structure on $M$ is a principal $\Spin^c(n)$--bundle $P\to M$ equipped with a  $\Spin^c(n)$--equivariant  map $P\to \Fr_\SO$ which induces an isomorphism $P/\U(1)\cong \Fr_\SO$.
\end{defn}

Just like in the case of spin structures, spin$^c$ structures may or may not exist. 
If a spin$^c$ structure exists, it is rarely unique. It is also clear that if $M$ is spin, then $M$ is also spin$^c$. 

It turns out that four--manifolds are somewhat special as the following result shows.  
\begin{proposition}
	Any closed oriented four--manifold admits a spin$^c$ structure.\qed
\end{proposition}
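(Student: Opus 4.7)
The plan is to translate the geometric existence question into a cohomological lifting problem, and then verify the lifting condition using Wu's formula together with Poincar\'e duality for a closed oriented $4$--manifold.

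First I would identify the obstruction. The double cover \eqref{Eq_SpincDoubleCov} realises $\Spin^c(n)$ as an extension of $\SO(n)\times \U(1)$ by $\{\pm 1\}$, so giving a spin$^c$ structure on the oriented tangent bundle of $M$ is equivalent to choosing a principal $\U(1)$--bundle (equivalently, a complex line bundle $L$) such that the product bundle $\Fr_\SO\times_M P_{\U(1)}$ lifts through this $\mathbb{Z}/2$--cover. The obstruction to such a lift is the pullback of the class in $H^2\bigl(B(\SO(n)\times \U(1));\mathbb{Z}/2\bigr)$ classifying the double cover, which one checks equals $w_2(M)+c_1(L)\bmod 2$. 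Hence the theorem reduces to producing an integral class $c\in H^2(M;\mathbb{Z})$ with $c\equiv w_2(M)\pmod 2$; equivalently, one must show that the integral Bockstein $W_3(M)=\beta(w_2(M))\in H^3(M;\mathbb{Z})$ vanishes.

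Next I would bring in Wu's formula. For a closed oriented manifold of dimension $n$ the Wu class $v\in H^\bullet(M;\mathbb{Z}/2)$ is characterised by $\langle v\cup x,[M]\rangle=\langle Sq(x),[M]\rangle$ for all $x$. For $n=4$ and $x\in H^2(M;\mathbb{Z}/2)$ this specialises to $v_2\cup x=Sq^2(x)=x\cup x$, because $Sq^2$ acts as squaring on degree $2$ classes. Combined with the Wu relations $w_1=v_1$ and $w_2=v_2+v_1\cup v_1+Sq^1(v_1)$, the orientability $w_1(M)=0$ gives $w_2(M)=v_2$. Therefore
\begin{equation*}
 w_2(M)\cup x\equiv x\cup x\pmod 2\qquad \forall x\in H^2(M;\mathbb{Z}/2),
\end{equation*}
i.e.\ $w_2(M)$ is a \emph{characteristic element} of the mod--$2$ intersection form.

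The final and most delicate step is to produce the integral lift. Poincar\'e duality on a closed oriented $4$--manifold yields a unimodular symmetric intersection form $Q$ on $H^2(M;\mathbb{Z})/\mathrm{torsion}$. A standard algebraic lemma says that for any unimodular symmetric form $Q$ on a finitely generated free abelian group $\Lambda$ there exists a characteristic element $c\in\Lambda$ with $Q(c,x)\equiv Q(x,x)\pmod 2$ for every $x\in\Lambda$, unique modulo $2\Lambda$. By the previous step the reduction of such a $c$ and the free part of $w_2(M)$ define the same linear functional on $H^2(M;\mathbb{Z})/\mathrm{torsion}\otimes \mathbb{Z}/2$, hence coincide. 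The only thing that can still go wrong is the $2$--torsion piece, which I would handle via the universal coefficient sequence relating $H^2(M;\mathbb{Z})$, $H^2(M;\mathbb{Z}/2)$ and $H^3(M;\mathbb{Z})$ together with Poincar\'e duality $H^3(M;\mathbb{Z})\cong H_1(M;\mathbb{Z})$: Wu's identity above forces $\beta(w_2(M))$ to pair trivially with every cycle, and unimodularity then promotes the lift on the free part to a lift of the full class $w_2(M)$. This reconciling of the torsion with the Wu identity is where I expect the only real subtlety to lie; everything else is formal once the obstruction has been identified with $W_3(M)$.
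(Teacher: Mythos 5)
The paper does not actually prove this proposition; it simply cites Morgan, Lemma~3.1.2. So there is no internal proof to match against, only the literature. That said, your proposal is a reasonable sketch of the classical Hirzebruch--Hopf argument, and the first two steps are sound: the reduction of the existence of a spin$^c$ structure to the vanishing of $W_3(M)=\beta(w_2(M))$ is correct (this is standard obstruction theory for the $\Z/2$ extension \eqref{Eq_SpincDoubleCov}), and the deduction via Wu's formula that $w_2(M)$ is characteristic for the mod-$2$ intersection form is also correct, including the use of $w_1=0$ to identify $w_2$ with $v_2$. The existence of an integral characteristic element on $H^2(M;\Z)/\mathrm{tors}$ is indeed an elementary fact about unimodular symmetric forms.

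The gap is exactly where you say you expect a subtlety, but your proposed patch does not close it. Having found $c\in H^2(M;\Z)$ whose mod-$2$ reduction agrees with $w_2(M)$ against all classes coming from integral homology, you must still show it agrees against all of $H_2(M;\Z/2)$; the discrepancy $w_2(M)-\rho(c)$ lives in the orthogonal complement of $\Im\bigl(\rho\colon H^2(M;\Z)\to H^2(M;\Z/2)\bigr)$ under the (nondegenerate) mod-$2$ Poincar\'e pairing, and you must show it vanishes. The phrase ``Wu's identity above forces $\beta(w_2(M))$ to pair trivially with every cycle'' proves only that $W_3$ is a torsion class in $H^3(M;\Z)$, which is automatic because any Bockstein image satisfies $2W_3=0$; what is actually needed is that $W_3$ vanishes under the \emph{torsion linking pairing} $T(H^3(M;\Z))\times T(H_1(M;\Z))\to\Q/\Z$, and that does not follow from what you have written. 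Equivalently, one must show that every element of the $\Z/2$-orthogonal complement of $\Im\rho$ is isotropic; this is the genuine content of Hirzebruch--Hopf and requires either a computation with the linking form, or a sidestep (e.g.\ producing an almost complex structure on $M$ minus a point and using that $H^3(M,M\smallsetminus\{pt\};\Z)=0$). As it stands, your final step replaces the missing argument with the conclusion, so the proof is incomplete precisely at the place you flagged.
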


A proof of this claim can be found for example in~\cite{Morgan96_SWequations}*{Lem.\,3.1.2}.
Notice however, that there are closed four--manifolds that are not spin.
In dimension three the situation is different: Any closed oriented three--manifold is spin~\cite{GompfStipsicz99_4MfldsKirby}*{Rem.\,1.4.27}, and hence also spin$^c$.  

\medskip

With these preliminaries at hand, let $M$ be a manifold equipped with a spin$^c$ structure $P$. 
Define \emph{the determinant line bundle}
\begin{equation*}
	L_{\mathrm{det}}:=P\times_{\rho_{\mathrm{det}}} \C.
\end{equation*}
This is clearly a Hermitian line bundle and its $\U(1)$--structure is
\begin{equation*}
	P_{\mathrm{det}}:=P/\Spin(n).
\end{equation*}
By~\eqref{Eq_SpincDoubleCov}, we have a double cover map 
\begin{equation*}
	\tau\colon P\to P/\{ \pm 1 \} = \Fr_\SO\times_M P_{\mathrm{det}}.	
\end{equation*}
Hence, if $A$ is a connection on $P_{\mathrm{det}}$, then $\tau^*(\om + A)\in \Om^1\bigl (P;\, \spin^c(n)\bigr )$, where we have used the isomorphism $\spin^c(n)\cong \so(n)\oplus \fu(1)$. 
Thus, the choice of  a unitary connection on the determinant line bundle together with the Levi--Civita connection determines a connection on  the spin$^c$ bundle.  

\medskip

Let $\slS$ be the distinguished representation of $\Spin(n)$. 
This can be clearly extended to a $\Spin^c(n)$--representation, which is still denoted by $\slS$: $[g, z]\cdot s = z\rho(g)\, s$.     
This in turn yields the spin$^c$ spinor bundle
\begin{equation*}
	\slS:=P\times_{\Spin^c(n)}\slS,
\end{equation*}
which is a Dirac bundle.
Hence, we obtain the spin$^c$ Dirac operator $\slD_A\colon \Gamma(\slS)\to\Gamma(\slS)$. 

\begin{example}
	Assume $M$ is spin and pick a spin structure $P_{\Spin}$. 
	Pick also a principal $\U(1)$--bundle $P_0$.
	Then 
	\begin{equation*}
		P:=P_\Spin\times_{M} P_0/\pm 1
	\end{equation*}
	is a principal $\bigl ( \Spin(n)\times \U(1)\bigr )/\pm 1 = \Spin^c(n)$ bundle. 
	In this case the spin$^c$ spinor bundle is just the twisted spinor bundle $\slS\otimes L_0$, where $\slS$ is the pure spinor bundle and $L_0:=P_0\times_{\U(1)}\C$ is the associated complex line bundle. 
	Likewise, the spin$^c$ Dirac operator is just the twisted Dirac operator. 
\end{example}

\begin{remark}
	\label{Rem_SpincBundleAsTwist}
	In some sense, any $spin^c$ spinor bundle can be thought of as the twisted spinor bundle just like in the example above. 
	The problem is that in general neither $\slS$ nor $L_0$ is globally well-defined, however their product $\slS\otimes L_0$ is well-defined. 
	The notion of a spin$^c$ structure is just a convenient way to make this `definition' precise.   	
\end{remark}

\begin{exercise}
	For  $a\in\Om^1(M;\R i)$ prove that 
	\begin{equation}
		\label{Eq_SpincDiracChangeOfConn}
		\slD_{A+a}\psi = \slD_A\psi + \frac 12\, a\cdot \psi.
	\end{equation}
\end{exercise}

The coefficient $\frac 12$ in \eqref{Eq_SpincDiracChangeOfConn}  can be explained as follows: Think of the spin$^c$ spinor bundle as $\slS\otimes L_0$ just like in \autoref{Rem_SpincBundleAsTwist}.
Then the determinant line bundle is $\Lambda^2(\slS\otimes L_0)=L_0^2$ so that $A_0\in \cA(L_0)$ induces some connection $A$ on $L_0^2$. 
Then $A_0 + a_0$ corresponds to $A + 2\,a_0$. 

\subsubsection{On the classification of spin$^c$ structures}

Let $P$ be a spin$^c$ structure with the spinor bundle $\slS$. 
If $L$ is any Hermitian line bundle, then $\slS\otimes L$ is a spin$^c$ spinor bundle corresponding to a spin$^c$ structure $P_L$.  
This defines an action of $H^2(M;\, \Z)$ on the set $\cS = \cS(M)$ of all spin$^c$ structures on $M$. 
This action can be shown to be free and transitive, hence $\cS(M)$ can be identified with $H^2(M;\, \Z)$, however, such an identification is not canonical. 

For example, let $M$ be spin and let $\slS$ be the pure spinor bundle.
The spin structure of $M$ can be also viewed as a distinguished spin$^c$ structure so that $\cS(M)$ has a preferred point, the origin. 
This choice fixes an isomorphism $\cS(M)\cong H^2(M;\,\Z)$.

\subsection{The Weitzenb\"ock formula}

For a function $u\colon \R^4\to \H$ the operators
\begin{equation*}
  \begin{aligned}
	  \slD^+(u) &= \phantom{-}\frac {\del u}{\del x_0} + i\frac {\del u}{\del x_1}  + j\frac {\del u}{\del x_2} + k\frac {\del u}{\del x_3},\\
	  \slD^-(u) &= -\frac {\del u}{\del x_0} + i\frac {\del u}{\del x_1}  + j\frac {\del u}{\del x_2} + k\frac {\del u}{\del x_3} 
  \end{aligned}
\end{equation*}
can be thought of as  four-dimensional analogues of the $\bar\del= \frac 12 (\frac {\del }{\del x} + i\frac {\del }{\del y})$ and $\del = \frac 12 (\frac {\del }{\del x} - i\frac {\del }{\del y})$ operators for complex--valued functions of one complex variable $z=x+yi$ respectively. 
In fact, tracing through the construction, it is easy to see that for the flat four--manifold $\R^4$ the Dirac operator can be written as follows 
\begin{equation*}
	\slD = 
	\begin{pmatrix}
		0 & \slD^-\\
		\slD^+ & 0
	\end{pmatrix},
\end{equation*} 
cf. \eqref{Eq_ClInDim4}.
Here I use the fact, that the spinor bundle of $\R^4$ is (canonically) the product bundle.
A straightforward computation yields
\begin{equation}
	\label{Eq_AuxDirac2Laplacian}
	\slD^2 = 
	\begin{pmatrix}
	\slD^-\slD^+ & 0 \\
	 0 &\slD^+\slD^-
	\end{pmatrix}
	= \Delta,
\end{equation} 
where $\Delta = -\sum_{i=0}^3 \frac{\del^2}{\del x_i^2}$ is the Laplacian. 
This is usually phrased as ``the Dirac operator is the square root of the Laplacian''.

\begin{remark}
	The Dirac operator on $\R^2$ has the form 
	\begin{equation*}
		\begin{pmatrix}
		0\phantom{\sum_{0}} & 2\frac {\del}{\del z}\\
		-2\frac\del {\del \bar z}\quad & 0
		\end{pmatrix}\colon C^\infty (\R^2;\, \C^2)\to C^\infty (\R^2;\, \C^2)
	\end{equation*}
	and squares to the Laplacian just like in dimension four. 
	Notice, however, that the case of dimension two requires a special treatment due to the fact that $\pi_1(\SO(2))\cong \Z$, which was one of the reasons I assumed $n\ge 3$ in this section.
	
	On $\R^3$ the spinor bundle can be identified with the product bundle $\underline{\H}$ so that the corresponding Dirac operator is 
	\begin{equation*}
		\slD(u) =  i\frac {\del }{\del x_1}  + j\frac {\del }{\del x_2} + k\frac {\del }{\del x_3},
	\end{equation*}
	which also squares to the Laplacian.
\end{remark}

On general Riemannian manifolds the relation $\slD^2=\Delta$ still holds up to a zero order operator. 
This is known as the Weitzenb\"ock formula, which I describe next.

Thus, given an Euclidean vector bundle $E$ with a connection $\nabla$ \emph{the connection Laplacian} is defined by  
\begin{equation}
	\nabla^*\nabla\colon \Gamma(E)\xrightarrow{\ \nabla\ }\Om^1(E)\xrightarrow{\;\nabla^* = - *d_{\nabla}*\;}\Gamma(E). 
\end{equation}
\begin{exercise}$\phantom{a}$
	\begin{enumerate}[(a)]
		\item Let $(e_1,\dots, e_n)$ be a local orthonormal frame of $TM$ over an open subset of $M$.
		Show that the connection Laplacian can be expressed as follows
		\begin{equation*}
		\nabla^*\nabla s = -\sum_{i=1}^n \bigl ( \nabla_{e_i}\nabla_{e_i} s - \nabla_{\nabla_{e_i}e_i}\, s\bigr ),
		\end{equation*}
		where $\nabla e_i$ means the Levi--Civita connection applied to $e_i$.
		\item Prove that the connection Laplacian is formally self-adjoint.
		\item In the case $M$ is closed, prove the identity $\langle \nabla^*\nabla s,\, s\rangle_{L^2} = \| \nabla s\|_{L^2}^2$. 
	\end{enumerate}
\end{exercise}

Assume that $E$ is a Dirac bundle and let $R\in \Om^2(\End(E))$ be the curvature 2-form of the corresponding connection $\nabla$. 
Using the Clifford multiplication, we obtain a homomorphism $\Lambda^2T^*M\otimes\End(E)\to \End(E)$, which maps $R$ to an endomorphism $\fR$.
In a local frame $(e_i)$ of $TM$ this can be expressed as follows
\begin{equation*}
	\fR (s):= \frac 12 \sum_{i,j=1}^n e_i\cdot e_j\cdot  R_{e_i, e_j}(s).
\end{equation*}

\begin{thm}[Weitzenb\"ock formula]
	Let $\slD$ be the Dirac operator for the Dirac bundle $E$. 
	Then the following holds:
	\begin{equation*}
		\slD^2 = \nabla^*\nabla + \fR.
	\end{equation*}
\end{thm}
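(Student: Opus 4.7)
The plan is to verify the identity pointwise by working in a synchronous orthonormal frame. Fix $m\in M$ and choose a local orthonormal frame $(e_1,\dots,e_n)$ of $TM$ near $m$ satisfying $\nabla^{LC}e_i|_m = 0$ for all $i$; such a frame is obtained by parallel transport along radial geodesics from $m$. Both sides of the asserted equality are differential operators on $\Gamma(E)$ whose values at $m$ depend only on $s$, its first, and its second derivatives at $m$, so it suffices to verify the identity at an arbitrary such $m$ and then invoke the fact that $m$ was arbitrary.

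Next I expand, using the Dirac-bundle compatibility $\nabla(\varphi\cdot s) = (\nabla^{LC}\varphi)\cdot s + \varphi\cdot\nabla s$ with $\varphi = e_j$:
\begin{align*}
	\slD^2 s
	&= \sum_{i} e_i\cdot \nabla_{e_i}\Bigl(\sum_j e_j\cdot \nabla_{e_j} s\Bigr) \\
	&= \sum_{i,j} e_i\cdot \bigl((\nabla^{LC}_{e_i}e_j)\cdot \nabla_{e_j} s\bigr) + \sum_{i,j} e_i\cdot e_j\cdot \nabla_{e_i}\nabla_{e_j} s.
\end{align*}
Evaluating at $m$, the first sum vanishes by the synchronous-frame choice, leaving $\slD^2 s|_m = \sum_{i,j} e_i\cdot e_j\cdot \nabla_{e_i}\nabla_{e_j} s|_m$.

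Now I split this into diagonal and off-diagonal pieces. The diagonal part, using $e_i\cdot e_i = -1$, is $-\sum_i \nabla_{e_i}\nabla_{e_i} s$; by the exercise characterization of the connection Laplacian and the vanishing of $\nabla^{LC}_{e_i}e_i$ at $m$, this equals $\nabla^*\nabla s|_m$. For the off-diagonal part, the Clifford relation $e_i\cdot e_j = -e_j\cdot e_i$ for $i\neq j$ lets me antisymmetrize:
\[
	\sum_{i\neq j} e_i\cdot e_j\cdot \nabla_{e_i}\nabla_{e_j} s = \tfrac12 \sum_{i,j} e_i\cdot e_j\cdot [\nabla_{e_i},\nabla_{e_j}]s.
\]
At $m$, the Lie bracket $[e_i,e_j]|_m = (\nabla^{LC}_{e_i}e_j - \nabla^{LC}_{e_j}e_i)|_m$ vanishes, so $[\nabla_{e_i},\nabla_{e_j}]s|_m = R_{e_i,e_j}s|_m$, and the right-hand side becomes exactly $\fR s|_m$ as defined in the statement. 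Adding the two pieces yields $\slD^2 s|_m = \nabla^*\nabla s|_m + \fR s|_m$.

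The one subtlety worth emphasizing is the careful application of the Clifford-compatibility axiom with $\varphi$ a \emph{frame vector} rather than a scalar: it is precisely the $(\nabla^{LC}_{e_i}e_j)\cdot\nabla_{e_j}s$ correction, together with the contribution $\nabla^{LC}_{e_i}e_i$ hidden inside $\nabla^*\nabla$, and the contribution $\nabla^{LC}_{[e_i,e_j]}$ hidden inside $[\nabla_{e_i},\nabla_{e_j}]$, that would each spoil the calculation in a generic frame. Choosing a synchronous frame kills all three simultaneously at the reference point, so the main ``obstacle'' is really a bookkeeping one—making sure the three vanishings line up—rather than a substantive analytic difficulty.
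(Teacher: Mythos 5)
Your proof is correct and follows essentially the same route as the paper's: choose a synchronous orthonormal frame at $m$, expand $\slD^2$, and split the double sum into a diagonal part (giving $\nabla^*\nabla$) and an antisymmetrized off-diagonal part (giving $\fR$ via the curvature). The paper compresses the bookkeeping you spell out explicitly—the $(\nabla^{LC}_{e_i}e_j)\cdot\nabla_{e_j}s$ correction and the vanishing of $[e_i,e_j]$ at $m$—but the argument is the same.
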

\begin{proof}
	Pick a point $m\in M$ and a local frame $(e_i)$ such that $\nabla_{e_i}e_j$ vanishes at $m$. 
	Then at $m$ we have the following:
	\begin{equation*}
		\begin{aligned}
			\slD^2 s &= \sum_{i=1}^n e_i\cdot \nabla_{e_i}\Bigl (\sum_{j=1}^n e_j\cdot \nabla_{e_j}s \Bigr ) = 
			\sum_{i,j=1}^n e_i\cdot e_j\cdot \nabla_{e_i}\nabla_{e_j}s\\
			&= \sum_{i=1}^n e_i\cdot e_i\cdot\nabla_{e_i}\nabla_{e_i}s + \frac 12 \sum_{i\neq j}e_i\cdot e_j\cdot \bigl ( \nabla_{e_i}\nabla_{e_j} -  \nabla_{e_j}\nabla_{e_i}\bigr)s\\
			& = \nabla^*\nabla s +\fR s.
		\end{aligned}
	\end{equation*}
\end{proof}

In the case of the spin$^c$ spinor bundle, a straightforward computation yields the following corollary, whose proof is left as an exercise.

\begin{corollary}
	\label{Cor_WeitzenboeckForm}
	Let $A$ be a Hermitian connection on the determinant line bundle with the curvature form $F_A$. 
	Then the spin$^c$ Dirac operator satisfies:
	\begin{equation*}
		\slD^2 \psi = \nabla^*\nabla \psi + \frac 14 s_g\, \psi + \frac 12 F_A\cdot\psi,
	\end{equation*}
	where $s_g$ is the scalar curvature of the background metric $g$.\qed
\end{corollary}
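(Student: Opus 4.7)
The plan is to apply the general Weitzenböck formula $\slD^2 = \nabla^*\nabla + \fR$ established in the theorem and then identify the curvature endomorphism $\fR$ explicitly for the spin$^c$ spinor bundle. The key point is that the connection on $\slS$ comes from two sources---the Levi--Civita connection on $\Fr_{\SO}$ and the unitary connection $A$ on $L_{\det}$---so its curvature splits as a sum of a Riemannian piece and a $\U(1)$ piece, and $\fR$ splits correspondingly.

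First I would make the splitting precise. Under the isomorphism $\spin^c(n)\cong\so(n)\oplus\fu(1)$, the connection on $P$ pulled back from $\om$ and $A$ has curvature $\tau^*(R^{LC}+F_A)$. The spin$^c$ representation $[g,z]\cdot s = z\rho(g)s$ differentiates so that the central $\U(1)$ acts on $\slS$ with half the weight with which it acts on $L_{\det}$ (this is the same factor $\tfrac 12$ explaining Exercise~\eqref{Eq_SpincDiracChangeOfConn}). Hence the curvature of $\nabla$ on $\slS$, viewed as an $\End(\slS)$-valued 2-form, equals $\rho_*(R^{LC}) + \tfrac 12 F_A\cdot\id_\slS$. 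Splitting $\fR = \fR_{\mathrm{spin}} + \fR_{\U(1)}$ accordingly, I would next compute each term.

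For the abelian piece, using the local formula for Clifford multiplication by a $2$-form, $F_A\cdot = \tfrac 12\sum_{i,j}F_A(e_i,e_j)\,e_i\cdot e_j$, a direct computation gives
\begin{equation*}
    \fR_{\U(1)}\psi = \frac 12\sum_{i,j}e_i\cdot e_j\cdot\Bigl(\frac 12 F_A(e_i,e_j)\Bigr)\psi = \frac 12\, F_A\cdot\psi,
\end{equation*}
which is the second term in the desired identity. The Riemannian piece is the nontrivial one. Under the Lie algebra isomorphism $\so(n)\cong\spin(n)\subset Cl(\R^n)$, a 2-form $\om = \tfrac 12\om_{kl}\,e^k\wedge e^l$ acts on $\slS$ as $\tfrac 14\om_{kl}\,e_k\cdot e_l$; hence
\begin{equation*}
    \fR_{\mathrm{spin}}\psi = \frac 18\sum_{i,j,k,l} R_{ijkl}\, e_i\cdot e_j\cdot e_k\cdot e_l\cdot\psi.
\end{equation*}
This is where the main work lies: one uses the first Bianchi identity $R_{ijkl}+R_{iklj}+R_{iljk}=0$ together with the Clifford relations $e_a\cdot e_b+e_b\cdot e_a=-2\delta_{ab}$ to collapse the fourfold Clifford product. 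The standard manipulation (separate off the cases where $\{i,j\}\cap\{k,l\}\neq\emptyset$ and apply Bianchi to the rest) produces exactly $\tfrac 14 s_g\,\psi$, yielding the first term.

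The hard part will be carrying out this algebraic collapse cleanly; once it is done, combining $\fR_{\mathrm{spin}}=\tfrac 14 s_g$ with $\fR_{\U(1)}=\tfrac 12 F_A\cdot$ and substituting into the general Weitzenböck formula proves the corollary. An alternative route, if one wished to avoid the index manipulation, would be to invoke the classical spin Weitzenböck identity $\slD^2_{\mathrm{spin}}=\nabla^*\nabla+\tfrac 14 s_g$ on the pure spinor bundle and then treat the determinant line bundle as the twisting factor (cf.\ \autoref{Rem_SpincBundleAsTwist}), deducing the $\tfrac 12 F_A\cdot$ term from the general formula for the Weitzenböck remainder of a twisted Dirac operator together with the factor $\tfrac 12$ relating a connection on $L_{\det}=L_0^2$ to the induced connection on $L_0$.
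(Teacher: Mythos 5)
The paper leaves this corollary as an exercise (the author states only that it follows from the general Weitzenb\"ock theorem by ``a straightforward computation''), so there is no written proof to compare against; your proposal correctly supplies that computation. The organizing idea---split the curvature of the spin$^c$ connection into a Levi--Civita part and a $\U(1)$ part, with the factor $\tfrac 12$ on $F_A$ arising because the central $\U(1)\subset\Spin^c(n)$ acts on the spinor representation with half the weight with which it acts on the determinant representation $\rho_{\det}$---is the right one; and with the paper's normalization $\fR(s) = \tfrac 12\sum_{i,j}e_i\cdot e_j\cdot R_{e_i,e_j}(s)$, your identification $\fR_{\U(1)}\psi=\tfrac 12 F_A\cdot\psi$ is correct. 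The Lichnerowicz collapse $\tfrac 18\sum R_{ijkl}\,e_i\cdot e_j\cdot e_k\cdot e_l = \tfrac 14 s_g$ is indicated rather than carried out, but the ingredients you name (first Bianchi identity for the triples with three distinct indices, Clifford relations for the coincidences $j=k$ and $j=l$, sign tracking to land on a Ricci trace) are exactly the standard ones and the step is routine. The alternative you sketch at the end---invoke the pure-spin Lichnerowicz formula, view the spin$^c$ spinor bundle as $\slS\otimes L_0$ with $L_0^2\cong L_{\det}$ as in \autoref{Rem_SpincBundleAsTwist}, and read off the twist contribution $F_{L_0}\cdot=\tfrac 12 F_A\cdot$ from the general twisted Weitzenb\"ock formula---is the argument most often seen in print and is equally valid.
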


In particular, in the case $M$ is spin and $\slS$ is the pure spinor bundle, we have
\begin{equation*}
\slD^2 \psi = \nabla^*\nabla \psi + \frac 14 s_g\, \psi.
\end{equation*}
This implies, for example, that for a metric with positive scalar curvature the space of harmonic spinors $\ker\slD$ is trivial. 

\medskip

Recall that in dimension four, the spinor bundle splits: $\slS = \slS^+\oplus \slS^-$. 
With respect to this splitting the Dirac operator takes the form 
	\begin{equation*}
		\slD = 
		\begin{pmatrix}
			0 & \slD^-\\
			\slD^+ & 0
		\end{pmatrix},
	\end{equation*} 
which we already have seen above in the case of the flat space. 
By \autoref{Cor_WeitzenboeckForm} for $\psi\in\Gamma(\slS^+)$ we obtain
\begin{equation*}
	\slD^-\slD^+ \psi= (\slD^+)^*\slD^+\psi = \nabla^*\nabla\psi +\frac 14 s_g\, \psi +\frac 12 F_A^+\cdot \psi.
\end{equation*}
Here we used the fact that anti-self-dual 2-forms act trivially on $\slS^+$. 

\section{Linear elliptic operators}
\label{Sec_LinEllOp}

\subsection{Sobolev spaces}

Consider the following classical boundary value problem in the theory of PDEs: Let $\Om\subset \R^n$ be a bounded domain with a smooth boundary $\partial \Om$. 
Does there exist a function $u\in C^2(\Om)\cap C^0(\bar \Om)$ such that 
\begin{equation}
	\label{Eq_DirichletBVLaplacian}
	\Delta u =0 \quad\text{in}\ \Om\qquad\text{and}\qquad u|_{\partial\Om} = \varphi,
\end{equation}
where $\varphi$ is a given function on $\partial \Om$?

Consider \emph{the energy functional}
\begin{equation*}
	E(u):=\frac 12 \int_\Om |\nabla u (x)|^2\, dx,
\end{equation*}
where $dx$ denotes the standard volume form on $\R^n$. 
Assume there exists an absolute minimum of $E$, i.e., a function  $u\in C^2(\Om)\cap C^0(\bar \Om)$ such that
\begin{equation*}
	E(u) = \inf \bigl \{  E(v)\mid  v\in C^2(\Om)\cap C^0(\bar \Om), \ v|_{\partial\Om} = \varphi \bigr \}.
\end{equation*}
A straightforward computation using integration by parts yields
\begin{equation*}
	0=\bigl .\frac  d{dt}\bigr |_{t=0} E(u+t w) = \int_\Om w\, \Delta u\, dx
\end{equation*}
for all  $w$  such that  $w|_{\del \Om} =0$. 
This implies that $u$ is harmonic in $\Om$.
Thus, we can find a solution of~\eqref{Eq_DirichletBVLaplacian} if we can prove that the energy functional attains its minimum. 

Hence, a strategy for proving the existence of solutions of~\eqref{Eq_DirichletBVLaplacian} may be the following: Pick a sequence $u_k$ such that $E(u_k)$ converges to the infimum of the energy functional and prove that $u_k$ converges to a limit $u$ possibly after extracting a  subsequence.  
This strategy does work indeed, however, it requires certain technology known as the theory of Sobolev spaces, which turned out to be very useful in gauge theory as well. 
What follows below is a crash course in Sobolev spaces. 
An interested reader should consult more specialized literature for details.  

\medskip

First recall that the space $L^2(\R^n)$ of square integrable functions equipped with a scalar product 
\begin{equation*}
	\langle u, v\rangle_{L^2}:=\int_{\R^n}uv\, dx
\end{equation*}
is a Hilbert space, i.e., $L^2(\R^n)$ is complete with respect to the norm $\|u \|_{L^2}:=\sqrt{\langle u, u\rangle}$. 
This space can be viewed as a completion of the space $C^\infty_0(\R^n)$ of all smooth functions with compact support with respect to the norm $\| \cdot \|_{L^2}$. 

This definition admits a number of variations, which will be of use below.   
First, we can pick any $p>1$ and put
\begin{equation*}
	\|  u \|_{L^p}:= \Bigl (  |u(x)|^p\, dx \Bigr )^{\frac 1p},
\end{equation*}
where $u\in C^\infty_0(\R^n)$. 
The completion of $C^\infty_0(\R^n)$ with respect to $\| \cdot \|_{L^p}$ is then the Banach space $L^p(\R^n)$.

Secondly, we can also consider
\begin{equation*}
	\|  u \|_{W^{k,p}}:= \Bigl ( \sum_{i=0}^k \| \nabla^i u \|_{L^p} \Bigr )^{\frac 1p}.
\end{equation*} 
The completion of $C^\infty_0(\R^n)$ with respect to this norm is denoted by $W^{k,p}(\R^n)$. 
These spaces are called \emph{Sobolev spaces}. 

\begin{remark}
	Many different and inconsistent notations are in use for Sobolev spaces. For example, sometimes $L_r^s$ may mean $W^{r,s}$ and sometimes $W^{s,r}$. 
	Of all notations being used for Sobolev spaces, the symbol $W^{k,p}$ seems to be most consistently used, hence I opted for this one.    
\end{remark}

Third, we can replace the domain $\R^n$ by any open subset of $\R^n$ or, even more generally, by a Riemannian manifold $M$. 
The corresponding spaces will be denoted by $W^{k,p}(M)$. 
Here $\nabla u$ should be understood as the differential of $u$, that is $\nabla u\in \Gamma(T^*M)$.
fixing a connection on $T^*M$, for example the Levi--Civita one, $\nabla u$ can be differentiated  again. 
This clarifies the meaning of $\nabla^2 u\in \Gamma(T^*M\otimes T^*M)$ and so on up to the dependence of the space $W^{k,p}(M)$ on the choice of connection.

This naturally leads us to one more variation of the definitions above. 
Pick a vector bundle $E\to M$ and connections $\nabla^E\in \cA(E), \nabla^M \in \cA(T^*M)$. 
This yields a connection on $T^*M\otimes E$ so that the higher derivatives $\nabla^i(\nabla s)$ are well defined for any $s\in \Gamma(E)$. 
This yields the Sobolev spaces $W^{k,p}(M;\, E)$ in the same manner as above. 
While the norm $\|\cdot\|_{W^{k,p}}$ does depend on $\nabla^E$ and $\nabla^M$, different choices yield equivalent norms so that the resulting topology is independent of the choices made.   

\medskip

With this understood, for any $p>1$ we have the sequence of inclusions
\begin{equation*}
	L^p(M;\, E)=W^{0,p}(M;\, E)\supset W^{1,p}(M;\, E)\supset W^{2,p}(M;\, E)\supset\dots 
\end{equation*}
Relations between all these spaces is given by the following theorem, which is of fundamental importance in the theory of PDEs.

\begin{thm} 
	\label{Thm_SobolevEmbedding}
	Let $M$ be a compact manifold.
	\begin{enumerate}[(i)]
		\item If $s\in W^{k,p}(M;\, E)$, then $s\in W^{m, q}(M;\, E)$ provided
		\begin{equation*}
			k-\frac np\ge m-\frac nq\qand\quad k\ge m,
		\end{equation*}
		where $n=\dim M$, and there is a constant $C$ independent of $s$ such that 
		$\| s \|_{W^{m,s}}\le C \| s \|_{W^{k,p}}$. 
		In other words, the natural embedding
		\begin{equation*}
			j\colon W^{k,p}(M;\, E)\subset W^{m, q}(M;\, E)
		\end{equation*}
		is continuous. 
		\item  $j$ is a compact operator provided
		\begin{equation}
		\label{Eq_CondForCompEmb}
		k-\frac np > m-\frac nq\qand\quad k > m.
		\end{equation}
		This means that any sequence bounded in $W^{k,p}$ has a subsequence, which converges in $W^{m,q}$ provided~\eqref{Eq_CondForCompEmb} holds.
		\item
		We have a natural continuous embedding
		\begin{equation*}
			W^{k,p}(M;\, E)\subset C^r(M;\, E)
		\end{equation*}
		provided $k-\frac np>r$.
		In particular, if $s\in W^{k,p}(M;\, E)$ for some fixed $p$ and for all $k\ge 0$, then $s\in C^\infty(M;\, E)$. 
		\item\label{It_SobolevMultipl}
			\begin{enumerate}
				\item In the case $kp>n$ the space $W^{k,p}(M;\,\R)$ is an algebra.
				\item In the case $kp <n$, we have a bounded map
				\begin{equation*}
				\pushQED{\qed}
					W^{k_1,p_1}\otimes W^{k_2, p_2}\to W^{k,p},\qquad \text{provided}\quad k_1 -\frac n{p_1}\  +\  k_2 -\frac n{p_2}\ge k - \frac np.
				\qedhere\popQED
				\end{equation*}
			\end{enumerate} 
	\end{enumerate}
\end{thm}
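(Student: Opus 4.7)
The plan is to reduce everything to scalar, compactly supported functions on $\R^n$ via a finite trivializing atlas. Since $M$ is compact, fix a finite cover by coordinate charts $U_\alpha$ over which $E$ is trivial, together with a subordinate partition of unity $\chi_\alpha$. The intrinsic Sobolev norms defined via the chosen connections on $E$ and $TM$ are equivalent to $\sum_\alpha \| \chi_\alpha s \|_{W^{k,p}(\R^n)}$ applied to the coordinate/trivialization representatives, so it suffices to establish each claim for $u \in C_c^\infty(\R^n;\R)$ and then pass to the densely defined completions.

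The core estimate is the Gagliardo--Nirenberg--Sobolev inequality
\[
\|u\|_{L^{p^*}(\R^n)} \le C(n,p)\,\|\nabla u\|_{L^p(\R^n)}, \qquad \tfrac{1}{p^*} = \tfrac{1}{p} - \tfrac{1}{n},
\]
valid for $1\le p<n$. I would first treat the case $p=1$: write $|u(x)| \le \int_\R |\partial_i u|\,dt_i$ for each coordinate direction, multiply the $n$ resulting one-dimensional estimates to bound $|u|^{n/(n-1)}$, and iterate the generalized H\"older inequality across the coordinate slices. The case $p>1$ then follows by applying the $p=1$ bound to $|u|^\gamma$ with $\gamma = p(n-1)/(n-p)$. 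Iterating this estimate handles the scaling-critical case $k-n/p = m-n/q$; interpolation with the trivial inclusions $W^{k,p}\subset W^{m,p}$ (for $k\ge m$) then gives the general continuous embedding.

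For the Morrey-type embedding in the regime $p>n$ I would prove the pointwise estimate
\[
|u(x)-u(y)| \le C\,|x-y|^{1 - n/p}\,\|\nabla u\|_{L^p(\R^n)}
\]
by integrating $\nabla u$ along rays from $x$ and $y$ to points in the ball of radius $|x-y|$ and averaging, which yields $W^{1,p}\subset C^{0,1-n/p}$, and then iterate on derivatives. For compactness I would apply the Fr\'echet--Kolmogorov criterion: a bounded family in $W^{1,p}$ on a bounded domain has $L^p$-equicontinuous translations (via difference quotients and the gradient bound) and is therefore precompact in $L^p$. The interpolation inequality $\|u\|_{L^q} \le \|u\|_{L^p}^\theta \|u\|_{L^{p^*}}^{1-\theta}$ then upgrades this to precompactness in $L^q$ for any $q<p^*$---the strict inequality in the hypothesis is precisely what opens a nontrivial interpolation gap. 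Statement~\ref{It_SobolevMultipl} follows by Leibniz, H\"older on the resulting products of derivatives, and the continuous embeddings established in the previous steps.

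The main technical hurdle is careful exponent bookkeeping when iterating the Gagliardo--Nirenberg--Sobolev inequality $k$ times, since intermediate integrability exponents may cross $n$ and one must then switch from the Sobolev to the Morrey regime; matching the iteration to the precise threshold $k-n/p \ge m-n/q$ requires a bit of care. Everything else is routine: partition-of-unity reduction, standard interpolation, and Fr\'echet--Kolmogorov, which explains why this classical result is invoked here without proof.
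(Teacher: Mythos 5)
The paper does not prove this theorem at all: it is stated with a terminal \(\square\), defers explicitly to the literature (``the proof of this theorem goes beyond the goals of these notes''), and instead offers a remark illustrating only the special case \(M=S^1\), \(k=1\), \(p=2\) via Cauchy--Schwarz and Arzel\`a--Ascoli. Your proposal, by contrast, is a correct outline of the standard full proof, and the route you sketch is sound. The comparison is worth noting: the paper's one-dimensional illustration is precisely a degenerate instance of your Morrey-regime argument (since there \(p = 2 > 1 = n\), the H\"older estimate~\eqref{Eq_AuxPoincareIneqCircle} is exactly your \(W^{1,p}\hookrightarrow C^{0,1-n/p}\)), and its use of Arzel\`a--Ascoli is the 1D shadow of your Fr\'echet--Kolmogorov/Rellich step. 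What your proposal adds beyond the paper is everything else: the partition-of-unity reduction from \(E\to M\) to \(C_c^\infty(\R^n;\R)\), the Gagliardo--Nirenberg--Sobolev iteration that covers the subcritical regime \(p<n\), the switch between the Sobolev and Morrey regimes as intermediate exponents cross \(n\), the interpolation that converts strict gain of regularity (\(k-n/p > m-n/q\)) into compactness, and the Leibniz/H\"older argument for the multiplication statement. Your observation that the ``main technical hurdle'' is the exponent bookkeeping when the iteration crosses \(n\) is exactly right, and is the part most textbook proofs handle somewhat carelessly. In short, the paper states and motivates; you give the actual proof strategy, and your strategy is correct.
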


\begin{remark}
	Although the proof of this theorem goes beyond the goals of these notes, it may be instructive to see `the spirit of the proof' in one particular case. 
	Thus, let $M=S^1$ and $u\in C^\infty(S^1;\, \R)$. 
	Denote $\bar u:= \frac 1{2\pi}\int u(\theta)\, d\theta$ and $u_0(\theta):= u(\theta) - \bar u$. 
	By the mean value theorem, there is $\theta_0\in S^1$ such that $u_0(\theta_0) =0$.  
	Hence, for any $\theta\in S^1$ we have
	\begin{equation}
		\label{Eq_AuxPoincareIneqCircle}
		|u_0(\theta)|=\Bigl | \int_{\theta_0}^{\theta} u_0'(\varphi)\, d\varphi \Bigr |\le \sqrt{\int_{\theta_0}^\theta |u_0'(\varphi)|^2\, d\varphi}\; \sqrt{\int_{\theta_0}^{\theta} 1^2\, d\varphi} \le
		\sqrt{2\pi}\;\| u_0 \|_{W^{1,2}},
	\end{equation}
	where the first inequality follows by the the Cauchy--Schwarz inequality.  
	This yields the estimate $\| u\|_{C^0}\le C\| u \|_{W^{1,2}}$, which in turn shows that there is a continuous embedding $W^{1,2}(S^1)\subset C^0(S^1)$. 
	
	Furthermore, by tracing through \eqref{Eq_AuxPoincareIneqCircle} it is easy to see  that 
	\begin{equation*}
		|u(\theta_1) - u(\theta_2)|\le \sqrt{2\pi}\, \| u \|_{W^{1,2}}\;\mathrm{dist} (\theta_1, \theta_2)^{\frac 12}.	
	\end{equation*}
	Hence, if $u_k$ is any sequence bounded in $W^{1,2}(S^1)$, then this sequence consists of uniformly bounded and equicontinuous functions on $S^1$. 
	By the Arzela--Ascoli theorem, this sequence has a convergent subsequence in $C^0(S^1)$, thus proving the compactness of the embedding $W^{1,2}(S^1)\subset L^p(S^1)$ for any $p$.   
\end{remark}

\subsection{Elliptic operators}

A map $L\colon C^\infty(\Om;\, \R^r)\to C^\infty (\Om;\, \R^s)$, where $\Om\subset \R^n$ is an open subset, is said to be a linear differential operator of order $\ell$ if $L$ can be expressed in the form 
\begin{equation}
	\label{Eq_LDOP}
	Lf = \sum_{|\a|\le \ell} A_\a(x) \frac {\del^{|a|}}{\del x^\a} f,
\end{equation} 
where $\a= (\a_1,\dots, \a_n), \ \a_i\in\Z_{\ge 0},$ is a multi-index, $|\a| := \sum \a_i$, and $A_\a\in C^\infty\bigl (\Om; \Hom (\R^r;\, \R^s)\bigr )$.
For example, in the case of operator of order $\ell=2$ acting on functions of two variables, we have
\begin{equation*}
	Lf= A_{20}\frac {\del^2}{\del x_1^2} + A_{11}\frac {\del^2}{\del x_1\del x_2} + A_{02}\frac {\del^2}{\del x_2^2} + A_{10}\frac {\del}{\del x_1} + A_{01}\frac {\del}{\del x_2} + A_{00},
\end{equation*} 
where $A_{ij}$ are smooth functions on $\Om$.

It is intuitively clear that the highest order terms determine some essential properties of $L$. 
Thus, we say that 
\begin{equation*}
	\sigma_L(\xi):=\sum_{|\a| = \ell} A_\a(x)\xi^\a = \sum_{|\a| = \ell} A_\a(x)\xi_1^{\a_1}\cdot\ldots\cdot \xi_n^{\a_n}\qquad \text{where }\xi\in (\R^n)^*\cong \R^n
\end{equation*}
is \emph{the principal symbol} of $L$.  
The symbol can be conveniently viewed as a map $\Om\times \R^n\to \Hom (\R^r;\, \R^s)$, which is polynomial in the $\xi$-variable. 

\begin{defn}
	A linear differential operator $L$ is called \emph{elliptic}, if for all $x\in\Om$ and all $\xi\in\R^n,\ \xi\neq 0$, we have: $\sigma_L(\xi)$ is an invertible homomorphism.
\end{defn}

Notice in particular that we must require  $r=s$ to have an elliptic operator.

\begin{example}
	For the (non-negative) Laplacian on $\R^n$ acting on functions we have
	\begin{equation*}
	\sigma_\Delta(\xi) = -\sum_{i=1}^n \xi_i^2 = -|\xi|^2.
	\end{equation*}
	Hence, $\Delta$ is an elliptic operator. 
	This is in fact a prototypical example of an elliptic operator. 
\end{example}

\medskip

The concepts above make sense for a more general setting of vector bundles.
To spell some details, let $E$ and $F$ be vector bundles over a manifold $M$ of rank $r$ and $s$ respectively.
We say that a map $L\colon \Gamma(E)\to\Gamma (F)$ is a linear differential operator of order $\ell$, if for any choice of local coordinates on $\Om\subset M$ and any trivializations of $E|_\Om$ and $F|_\Om$ the map $L$ can be represented as in~\eqref{Eq_LDOP}, where the coefficients $A_\a$ are allowed to depend on the choices made. 

Let $\pi\colon T^*M\to M$ be the natural projection. 
A straightforward computation shows that the symbol makes sense as a section of $\Hom(\pi^*E;\, \pi^*F)$. 
Then $L$ is said to be elliptic if the symbol is a pointwise invertible homomorphism away from the zero section of $T^*M$. 

\begin{example}
	Let $M$ be an oriented Riemannian manifold. 
	Consider the Laplace--Beltrami operator acting on the space of functions on $M$:
	\begin{equation*}
		\Delta f = -*d*df,
	\end{equation*}
	where $*\colon \Lambda^kT^*M\to \Lambda^{n-k}T^*M$ is the Hodge operator. 
	
	Choose local coordinates $(x_1,\dots, x_n)$ and write $g=g_{ij}\, dx_i\otimes dx_j$, $|g|=\det (g_{ij})$. 
	Denoting by  $(g^{ij})$  the inverse matrix, a straightforward computation yields the local form of the Laplace--Beltrami operator:
	\begin{equation*}
		\Delta f = - |g|^{-\frac 12}\sum_{i, j=1}^n \frac \del{dx_i}\Bigl (  |g|^{\frac 12} g^{ij}\frac {\del f}{\del x_j} \Bigr)
	\end{equation*} 
	From this it is easy to compute the symbol. Indeed, if $\xi=\sum \xi_i\, dx_i$, then
	\begin{equation*}
		\sigma_\Delta (\xi)= - |g|^{-\frac 12}\sum_{i, j=1}^n \xi_i |g|^{\frac 12} g^{ij}\xi_j = -|\xi|^2,
	\end{equation*} 
	where $|\cdot|$ denotes the norm on $T^*X$. 
	In particular, the Laplace--Beltrami operator is elliptic.
\end{example}  

\begin{example}
	For any Dirac bundle $E$ the corresponding Dirac operator $\dirac\colon\Gamma(E)\to\Gamma(E)$ is elliptic.
	Indeed, it is easy to see that the principal symbol $\sigma_\xi(D)$ is just the Clifford multiplication with $\xi$.  
	This is invertible with the inverse given by the Clifford multiplication with $|\xi|^{-2} \xi$.
	
	In dimension four (and in fact in any even dimension) the chiral Dirac operators $\slD^\pm$ are also elliptic.
\end{example}



Any linear elliptic operator $L\colon C^\infty(M;\, E)\to C^\infty(M;\, F)$ of order $\ell$ extends as a  bounded linear map 
\begin{equation}
	\label{Eq_AuxExtensionEllOp}
	L\colon W^{k+\ell, p}(M;\, E)\to W^{k, p}(M;\, F)
\end{equation}
 for any $k\ge 0$ and any $p>1$.

\begin{thm}[Elliptic estimate]
	\label{Thm_EllipticEstimate}
	Let $M$ be a compact manifold.
	For any linear elliptic operator $L$  there is a constant $C>0$ with the following property: If $Ls\in W^{k,p}(M;\, F)$, then $s\in W^{k+\ell,p}(M;\, E)$ and
	\begin{equation*}
		\| s \|_{W^{k+\ell,p}}\le C\bigl ( \|Ls \|_{W^{k,p}} + \| s \|_{L^p} \bigr ). 
	\end{equation*}
	Here $C$ depends on $k$ and $p$ but not on $s$.\qed
\end{thm}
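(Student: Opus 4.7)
The plan is to reduce the theorem to a model case on $\R^n$ with constant coefficients and then to upgrade the resulting a priori estimate to the asserted regularity statement. First I would treat a constant coefficient homogeneous elliptic operator $L_0=\sum_{|\alpha|=\ell}A_\alpha\partial^\alpha$ acting on functions $\R^n\to\R^r$. Taking the Fourier transform converts $L_0$ into multiplication by its symbol $\sigma_{L_0}(\xi)$, which by ellipticity is invertible away from the origin and homogeneous of degree $\ell$. Hence for $|\beta|=\ell$ the operator $\partial^\beta\circ L_0^{-1}$ is the Fourier multiplier with symbol $\xi^\beta\sigma_{L_0}(\xi)^{-1}$, which is smooth off the origin and homogeneous of degree $0$. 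For $p=2$, Plancherel's theorem immediately yields $\|s\|_{W^{\ell,2}}\le C(\|L_0 s\|_{L^2}+\|s\|_{L^2})$; for general $p\in(1,\infty)$ the analogous bound follows from the Mikhlin (or H\"ormander) multiplier theorem, which is the deepest analytic input to the whole argument.

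Second, I would pass to variable coefficient elliptic operators on a small ball by freezing the leading part. Given $x_0$ in a coordinate chart and a small radius $\delta>0$, write $L=L_0+R$ where $L_0$ is the constant coefficient operator with leading coefficients $A_\alpha(x_0)$, and $R$ consists of lower order terms together with a principal part whose coefficients have sup norm at most $\varepsilon$ on $B_\delta(x_0)$, by continuity of the $A_\alpha$. For $s$ supported in $B_\delta$, the previous step applied to $L_0 s=Ls-Rs$ yields
\[\|s\|_{W^{\ell,p}}\le C\bigl(\|Ls\|_{L^p}+\varepsilon\|s\|_{W^{\ell,p}}+\|s\|_{W^{\ell-1,p}}\bigr).\]
Choosing $\varepsilon$ small enough absorbs the second term on the right into the left side, while the lower order term is handled by an interpolation inequality together with $\|s\|_{L^p}$. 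Differentiating the equation $Ls=f$ up to $k$ times and commuting derivatives past the coefficients of $L$ upgrades the bound to the $W^{k+\ell,p}$ estimate by induction on $k$.

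Third, to globalize I would pick a finite cover of the compact manifold $M$ by coordinate balls over which $E$ and $F$ trivialize and on which the previous step applies, together with a subordinate partition of unity $\{\chi_j\}$. Apply the local estimate to each $\chi_j s$; the commutator $[L,\chi_j]$ has order at most $\ell-1$, so its contribution to $\|L(\chi_j s)\|_{W^{k,p}}$ is controlled by $\|s\|_{W^{k+\ell-1,p}}$, which is again absorbed by interpolation against $\|s\|_{W^{k+\ell,p}}$ and $\|s\|_{L^p}$. Summing over $j$ produces the global inequality, under the running assumption that $s$ is known to lie in $W^{k+\ell,p}(M;\,E)$.

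The main obstacle, and the final step, is the regularity claim: only the boundedness of $Ls$ in $W^{k,p}$ is assumed, so it must still be shown that $s\in W^{k+\ell,p}$. For this I would apply the a priori estimate to smooth approximations $s_\eta$ of $s$ obtained from Friedrichs mollifiers (or difference quotients) carried out locally within the cover above. The commutator $[L,(\cdot)_\eta]$ is controlled uniformly in $\eta$ by standard properties of mollifiers, and $(Ls)_\eta\to Ls$ in $W^{k,p}$ since $Ls\in W^{k,p}$; hence the estimate of the previous step gives a uniform bound on $\|s_\eta\|_{W^{k+\ell,p}}$. Reflexivity of $W^{k+\ell,p}$ yields a weakly convergent subsequence whose limit coincides with $s$ by uniqueness in $L^p$, proving $s\in W^{k+\ell,p}$; passing to the limit in the estimate then establishes the bound claimed in the theorem.
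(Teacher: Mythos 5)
The paper does not prove this theorem: it is stated with a terminal \qed and, as the introduction explains, such PDE facts are quoted without proof with pointers to references such as \cite{Evans10_PDEs} and \cite{Wells80_AnalOnCxMflds}. Your outline is the standard proof and is essentially correct: the Fourier-multiplier/Mikhlin step for the homogeneous constant-coefficient model (Plancherel sufficing at $p=2$), the freezing-of-coefficients argument with the smallness of the perturbation absorbed, the partition-of-unity patching exploiting that $[L,\chi_j]$ drops order, and the mollification-plus-weak-compactness step to upgrade the a priori inequality to a genuine regularity statement. The only points that deserve a word of care when writing this out: the multiplier $\xi^\beta\sigma_{L_0}(\xi)^{-1}$ must be cut off near the origin (its singularity is degree $0$, so Mikhlin applies to $(1-\chi(\xi))\xi^\beta\sigma_{L_0}(\xi)^{-1}$, the low-frequency piece contributing the $\|s\|_{L^p}$ term); in the induction on $k$, differentiating $Ls=f$ produces commutator terms involving derivatives of the coefficients, which in the vector-bundle setting means also the connection used to define the Sobolev norms, so these must be bundled into the lower-order remainder; and the Friedrichs commutator lemma is what makes the mollification step uniform. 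None of these is a gap in principle, just bookkeeping.
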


Assume that both $E$ and $F$ are equipped with an euclidean structure. 
An operator $L^*\colon C^\infty(M;\, F)\to C^\infty(M;\, E)$ is said to be formal adjoint of $L$ if
\begin{equation}
\label{Eq_FormalAdjoint}
\langle Ls, t\rangle_{L^2} = \langle s, L^* t\rangle_{L^2}
\end{equation}
holds for any $s\in C^\infty(M;\, E)$ and any $t\in C^\infty (M;\, F)$. 
One can show that $L^*$ exists and is a linear differential operator of order $\ell$. 
Moreover, $L$ is elliptic if and only if $L^*$ is elliptic. 

One of the most important results in the theory of elliptic differential operators is the following.  

\begin{thm}[Fredholm alternative]
	\label{Thm_FredholmAlternative}
	Let $L$ be elliptic, $M$ compact, and $t\in C^\infty(M;\, F)$. 
	The equation
	\begin{equation*}
		Ls = t
	\end{equation*}
	has a smooth solution if and only if $t\in \ker L^*$. \qed
\end{thm}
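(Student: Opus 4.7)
The plan is to deduce the Fredholm alternative from two fundamental properties of elliptic operators on closed manifolds: the finiteness of $\dim\ker L$ and the closedness of $\mathrm{Im}\,L$ inside $L^2$. I will work in the Hilbert setting $p=2$ so that the formal adjoint defined by~\eqref{Eq_FormalAdjoint} coincides with the functional-analytic adjoint with respect to the $L^2$-pairing. By~\eqref{Eq_AuxExtensionEllOp}, $L$ extends to a bounded operator $W^{k+\ell,2}\to W^{k,2}$ for every $k\ge 0$. I read the conclusion as ``$t$ is $L^2$-orthogonal to $\ker L^*$'', which is the standard form of the alternative.

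First, I would prove that $\ker L\subset C^\infty(M;E)$ is finite-dimensional. Smoothness is a bootstrap from \autoref{Thm_EllipticEstimate}: if $s\in W^{\ell,2}$ and $Ls=0$, then $s\in W^{k+\ell,2}$ for every $k$, whence $s\in C^\infty$ by the Sobolev embedding in \autoref{Thm_SobolevEmbedding}. For finiteness, observe that on $\ker L$ the elliptic estimate reduces to $\|s\|_{W^{\ell,2}}\le C\|s\|_{L^2}$, so the $L^2$-closed unit ball of $\ker L$ is a bounded set in $W^{\ell,2}$; the compactness of the embedding $W^{\ell,2}\subset L^2$ makes this ball $L^2$-compact, and Riesz's lemma forces $\dim\ker L<\infty$. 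The same argument applied to the elliptic operator $L^*$ shows $\ker L^*$ is finite-dimensional and smooth.

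The main analytic step is to show that $\mathrm{Im}\,L\subset L^2(M;F)$ is closed. Let $V$ be the $L^2$-orthogonal complement of $\ker L$ inside $W^{\ell,2}$. I claim the sharpened estimate
\[
\|s\|_{W^{\ell,2}}\le C\|Ls\|_{L^2}\qquad\forall s\in V
\]
holds. Assuming the contrary, one extracts a sequence $s_n\in V$ with $\|s_n\|_{W^{\ell,2}}=1$ and $Ls_n\to 0$ in $L^2$; the compact embedding produces an $L^2$-Cauchy subsequence; then \autoref{Thm_EllipticEstimate} promotes this to a $W^{\ell,2}$-Cauchy subsequence whose limit lies in $V\cap\ker L=\{0\}$, contradicting $\|s_n\|_{W^{\ell,2}}=1$. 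The sharpened estimate makes $L|_V$ a homeomorphism onto its image, hence $\mathrm{Im}\,L=L(V)$ is closed in $L^2$.

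Closedness upgrades to the orthogonal decomposition $L^2(M;F)=\mathrm{Im}\,L\oplus\ker L^*$, the identification $(\mathrm{Im}\,L)^\perp=\ker L^*$ being read off directly from~\eqref{Eq_FormalAdjoint}. Given smooth $t$ that is $L^2$-orthogonal to $\ker L^*$, we therefore obtain $s\in W^{\ell,2}$ with $Ls=t$; since $t\in C^\infty$, one more bootstrap with \autoref{Thm_EllipticEstimate} gives $s\in C^\infty$. The converse direction (necessity of orthogonality) is immediate from~\eqref{Eq_FormalAdjoint}. The principal obstacle is the closed-range step: without the sharpened estimate on $V$, one cannot rule out a bounded sequence whose images converge outside $\mathrm{Im}\,L$, and the only route I see to that estimate is the compactness-contradiction argument above, which cleverly couples the elliptic regularity of \autoref{Thm_EllipticEstimate} with the compact Sobolev embeddings of \autoref{Thm_SobolevEmbedding}.
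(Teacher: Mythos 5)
Your argument is correct in substance, and it follows the same route as the sketch the paper gives for the adjacent theorem (the one stating that \eqref{Eq_AuxExtensionEllOp} is a Fredholm operator); the Fredholm alternative itself is stated in the paper without proof. Finite-dimensionality of $\ker L$ from the elliptic estimate together with the compact Sobolev embedding, and closed range via the sharpened estimate on the complement $V$ of $\ker L$, are exactly the standard ingredients, and your compactness--contradiction argument for the sharpened estimate is sound. You also correctly repair the typo in the statement (it should read $t\perp\ker L^*$, not $t\in\ker L^*$).

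One step is understated, though. The identification $(\mathrm{Im}\,L)^\perp=\ker L^*$ is \emph{not} read off directly from \eqref{Eq_FormalAdjoint}. What \eqref{Eq_FormalAdjoint} gives immediately is only the inclusion $\ker L^*\subset(\mathrm{Im}\,L)^\perp$. For the reverse inclusion --- the one actually needed to pass from ``$t\perp\ker L^*$'' to ``$t\in\mathrm{Im}\,L$'' --- you must show that any $w\in L^2$ orthogonal to $\mathrm{Im}\,L$ is a smooth section annihilated by the differential operator $L^*$. To begin with, such a $w$ is only a \emph{weak} solution: $\langle Ls, w\rangle_{L^2}=0$ for all smooth $s$. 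Upgrading this to a genuine smooth solution of $L^*w=0$ is an elliptic-regularity statement for $L^*$; you invoke Theorem~\ref{Thm_EllipticEstimate} for regularity elsewhere in your argument, but not here, where it is equally required. The paper's own sketch of the Fredholm property is explicit about exactly this point (``One can show that all sections lying in $V$ are in fact in $W^{\ell,p}$''). Flagging and filling this step would complete your proof.
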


\begin{remark}
	A corollary of \autoref{Thm_FredholmAlternative} can be formulated as follows.
	Under the hypotheses of \autoref{Thm_FredholmAlternative} one and only one of the following statements hold:
	\begin{enumerate}[(i), itemsep=0pt]
		\item The homogeneous equation $L^* s =0$ has a non-trivial solution.
		\item The inhomogeneous equation $Ls = t$ has a unique solution for any smooth $t$.  
	\end{enumerate}
	This form of  \autoref{Thm_FredholmAlternative} is widely used in the theory of PDEs. 
\end{remark}

\begin{defn}
	A bounded linear map $B\colon X\to Y$ between two Banach spaces is called Fredholm, if the following conditions hold:
	\begin{enumerate}[(a)]
		\item $\dim\ker B<\infty$;
		\item \label{It_ClosedImageOfFredholmOp}
		$\Im B$ is a closed subspace of $Y$;
		\item $\coker B:=Y/\Im B <\infty$. 
	\end{enumerate}
	If $B$ is Fredholm, the integer
	\begin{equation*}
		\ind B:=\dim\ker B - \dim\coker B
	\end{equation*}
	is called the index of $B$. 
\end{defn}

\begin{rem}
	One can show that~\ref{It_ClosedImageOfFredholmOp} follows the other two conditions. 
	Nevertheless, it will be useful to know that $\Im B$ is closed, even if one does not necessarily need to check this.     
\end{rem}

For example, any linear map between finite dimensional spaces is Fredholm and its index equals $\dim Y -\dim X$. 
In fact, Fredholm operators resemble linear maps between finite dimensional vector spaces well known from the basic course of linear algebra and this largely explains the importance of Fredholm operators for us. 

\begin{exercise}
	Let $B_0$ be a Fredholm operator. Show that there is an $\e>0$ such that any bounded operator $B$ with
	\begin{equation*}
		\| B-B_0\| <\e
	\end{equation*} 
	is also Fredholm. 
	Here $\| \cdot \|$ means the operator norm in the space of bounded linear maps $X\to Y$. 
	Prove also that $\ind B = \ind B_0$. 
\end{exercise}

\begin{thm}
	For any elliptic operator $L$ of order $\ell>0$ on a compact manifold $M$,~\eqref{Eq_AuxExtensionEllOp} is a Fredholm operator.
	Moreover, $\ker L$ consists of smooth sections only.
\end{thm}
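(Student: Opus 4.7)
The plan is to verify the three Fredholm conditions---finite-dimensional kernel, closed range, and finite-dimensional cokernel---together with smoothness of $\ker L$, using only the elliptic estimate (\autoref{Thm_EllipticEstimate}), the Sobolev embedding theorem (\autoref{Thm_SobolevEmbedding}), and the corresponding facts for the formal adjoint $L^*$, which is itself elliptic of order $\ell$.

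First I would dispose of the kernel. If $s\in W^{k+\ell,p}$ lies in $\ker L$, then $Ls=0\in W^{m,p}$ for every $m\ge 0$, so applying the elliptic estimate iteratively yields $s\in W^{m+\ell,p}$ for every $m$; by Sobolev embedding this forces $s\in C^\infty$, giving the second assertion. For finite-dimensionality, on $\ker L$ the elliptic estimate degenerates to $\|s\|_{W^{k+\ell,p}}\le C\|s\|_{L^p}$, so these two norms are equivalent on $\ker L$. Since $\ell\ge 1$, the embedding $W^{k+\ell,p}\hookrightarrow L^p$ is compact, and so the closed unit ball of $\ker L$ is precompact in its own norm. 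Riesz's lemma then implies $\dim\ker L<\infty$.

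The closed range is the main obstacle, and I expect to obtain it by promoting the elliptic estimate to
\begin{equation*}
\|s\|_{W^{k+\ell,p}}\le C\|Ls\|_{W^{k,p}}
\end{equation*}
on a closed complement $V$ of $\ker L$ in $W^{k+\ell,p}$ (which exists since $\ker L$ is finite-dimensional). The argument is by contradiction: if no such $C$ existed, there would be a sequence $s_n\in V$ with $\|s_n\|_{W^{k+\ell,p}}=1$ and $\|Ls_n\|_{W^{k,p}}\to 0$; compactness of the embedding $W^{k+\ell,p}\hookrightarrow L^p$ would extract an $L^p$-Cauchy subsequence, and then the elliptic estimate applied to $s_n-s_m$ would upgrade this to a $W^{k+\ell,p}$-Cauchy sequence. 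Its limit $s_\infty\in V$ would satisfy $\|s_\infty\|_{W^{k+\ell,p}}=1$ and $Ls_\infty=0$, contradicting $V\cap \ker L=\{0\}$. With the improved estimate in hand, if $Ls_n\to t$ in $W^{k,p}$, I may assume $s_n\in V$; then $(s_n)$ is Cauchy in $W^{k+\ell,p}$ and its limit $s$ satisfies $Ls=t$, so $\Im L$ is closed.

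Finally, since $L^*$ is elliptic of order $\ell$, the first paragraph applied to $L^*$ gives $\dim\ker L^*<\infty$ with $\ker L^*\subset C^\infty$. A standard duality argument then identifies the annihilator of the closed subspace $\Im L\subset W^{k,p}$ inside the dual space with $\ker L^*$: any continuous functional $\varphi$ vanishing on $\Im L$ satisfies $L^*\varphi=0$ in the distributional sense, and elliptic regularity for $L^*$ forces $\varphi$ to be a smooth solution of the homogeneous equation for $L^*$. Hence $\coker L\cong \ker L^*$ is finite-dimensional, completing the verification of the Fredholm property.
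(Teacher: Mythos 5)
Your proof is correct, and the kernel part tracks the paper's sketch exactly: bootstrap regularity with the elliptic estimate, then use compactness of the Sobolev embedding to show the unit ball of $\ker L$ is compact. You diverge in two ways. First, you prove closure of the range directly via the standard contradiction argument yielding the improved estimate $\|s\|_{W^{k+\ell,p}}\le C\|Ls\|_{W^{k,p}}$ on a complement $V$ of $\ker L$; the paper omits this, relying instead on its earlier remark that closed range follows formally from finite-dimensional kernel and cokernel. Second---and this is the bigger difference---the paper deliberately restricts its cokernel argument to $p=2$, setting $V:=(\Im L)^\perp$ in the $L^2$ inner product and showing $V=\ker L^*$ by elliptic regularity, so that no discussion of dual Banach spaces is required. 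You instead work with general $p$ and identify $\coker L$ with the annihilator of $\Im L$ inside $(W^{k,p})^*$, then invoke distributional $L^*$ and elliptic regularity to push the annihilator into $\ker L^*$. This is more general, but it carries a hidden burden you gloss over: one must identify $(W^{k,p})^*$ with a concrete negative Sobolev space relative to the $L^2$ pairing before the formal-adjoint relation can be transported to the abstract annihilator. The paper's choice of $p=2$ sidesteps precisely this identification at the cost of generality. In short, your argument is sound and more complete, while the paper's Hilbert-space shortcut gives a quicker route to $\coker L\cong\ker L^*$ when only $p=2$ is needed.
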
 
\begin{proof}[Sketch of proof]
	If  $s\in W^{\ell, p}$ is in the kernel of $L$, then $s\in W^{k,p}$ for any $k\ge 0$ by \autoref{Thm_EllipticEstimate}. 
	Hence, $s$ is smooth by \autoref{Thm_SobolevEmbedding}.

	Let $s_j\in W^{\ell, p}$ be any sequence such that $Ls_j =0$ and $\| s_j \|_{W^{\ell, p}}\le 1$. 
	Then the sequence $\| s_j\|_{W^{\ell +1, p}}$ is bounded by \autoref{Thm_EllipticEstimate}.
	Hence, by \autoref{Thm_SobolevEmbedding} after passing to a subsequence if necessary, $s_j$ converges to some limit $s_\infty$ in $W^{\ell, p}$.
	Since $L\colon W^{\ell, p}\to L^p$ is bounded,  $s_\infty\in \ker L$. 
	In other words, the unit ball in $\ker L$ is compact with respect to the $W^{\ell, p}$ norm. 
	This implies that $\ker L$ is finite dimensional. 
	
	Next, let us assume that $p=2$, which simplifies the  discussion somewhat. 
	Denote $V:=\bigl (\Im L\colon W^{k+\ell, 2}\to W^{k,2} \bigr)^\perp$, where the orthogonal complement is understood in the sense of $L^2$--scalar product.   
	One can show that all sections lying in $V$ are in fact in $W^{\ell,p}$.
	Then~\eqref{Eq_FormalAdjoint} implies that $V=\ker L^*$. 
	This in turn yields that $V$ is finite dimensional (and consists of smooth sections only). 
\end{proof}

\subsection{Elliptic complexes}

Fix a manifold $M$ and a sequence of vector bundles $E_1,\dots, E_k$, which I assume to be finite for the simplicity of exposition.
Let
\begin{equation}
	\label{Eq_ComplexDiffOp}
	0\to \Gamma(E_1) \xrightarrow{\ L_1\ }
	\Gamma(E_2) \xrightarrow{\ L_2\ }\dots
	\xrightarrow{\ L_{k-1}\ } \Gamma(E_k)\to 0 
\end{equation} 
be a sequence of differential operators such that $L_j\comp L_{j-1} =0$ for all integer $j\in [1, k-1]$, i.e., \eqref{Eq_ComplexDiffOp} is a complex. 
In particular, we can define the corresponding cohomology groups
\begin{equation*}
	H^j(E):= \ker L_j/\im L_{j-1},
\end{equation*}
where $j\in \{ 1,2,\dots, k \}$.

Associated to~\eqref{Eq_ComplexDiffOp} is the sequence of principal symbols:
\begin{equation*}
	0\to \pi^*E_1\xrightarrow{\ \sigma_{L_1}\ }
	\pi^*E_2\xrightarrow{\ \sigma_{L_2}\ }
	\pi^*E_3\xrightarrow{\ \sigma_{L_3}\ }\dots \xrightarrow{\ \sigma_{L_{k-1}} \ } \pi^*E_k\to 0.
\end{equation*} 
If this is exact on the complement of the zero section, \eqref{Eq_ComplexDiffOp} is called an elliptic complex. 
For example, a very short complex $0\to \Gamma(E)\xrightarrow{\ L\ }\Gamma(F)\to 0$ is elliptic if and only if $L$ is elliptic. 

With these preliminaries at hand, we can construct the associated Laplacians:
\begin{equation*}
\Delta = \Delta_j\colon \Gamma(E_j)\to \Gamma (E_j),\qquad \Delta_j = L_j^*L_j + L_{j-1}L_{j-1}^*.
\end{equation*}

\begin{exercise}
	Prove that each $\Delta_j$ is an elliptic operator provided the initial complex is  elliptic. 
\end{exercise}

In the sequel, I will drop the index of the differentials $L_i$ to simplify the notations so that all maps in~\eqref{Eq_ComplexDiffOp} are denoted by the same symbol $L$. 
I assume also that all differentials $L$ are of order $1$, since only this case will show up in these notes, but operators of other orders could be considered as well.
Also, I assume that all bundles $E_i$ are equipped with an Euclidean structure.  

Denote
\begin{equation*}
	\cH_j(E):=\bigl\{ s\in\Gamma(E_j)\mid \Delta s =0 \bigr \}.
\end{equation*}
Elements of $\cH_j(E)$ are called harmonic sections. 
\begin{thm}
	For an elliptic complex on a compact manifold the following holds:
	\begin{enumerate}[(i)]
		\item Each $\cH_j(E)$ is a finite dimensional vector space.
		\item $s\in \cH_j(E)$ if and only if $Ls=0$ and $L^*s=0$.
		\item The natural homomorphism
		\begin{equation*}
			\cH_j(E)\to H^j(E),\qquad s\mapsto [s]
		\end{equation*}
		is an isomorphism. 
	\end{enumerate} 	
\end{thm}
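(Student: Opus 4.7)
The plan is to handle (i) and (ii) quickly from material already developed, and to reduce (iii) to the \emph{Hodge decomposition}, which in turn follows from Fredholm theory applied to the self-adjoint elliptic Laplacian $\Delta_j$. For (i), the preceding exercise asserts that each $\Delta_j$ is elliptic; on a compact manifold the kernel of an elliptic operator is finite dimensional by the Fredholm theorem of the previous subsection, and by definition $\cH_j(E) = \ker \Delta_j$. For (ii), integration by parts yields
\begin{equation*}
\langle \Delta s, s\rangle_{L^2} = \langle L^*L s, s\rangle + \langle L L^* s, s\rangle = \|Ls\|_{L^2}^2 + \|L^*s\|_{L^2}^2,
\end{equation*}
so $\Delta s = 0$ forces $Ls = 0$ and $L^* s = 0$; the converse is immediate from $\Delta = L^*L + L L^*$.

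The substantive part is (iii). The key intermediate claim is the orthogonal Hodge decomposition
\begin{equation*}
\Gamma(E_j) = \cH_j(E) \oplus \Im L_{j-1} \oplus \Im L_j^*.
\end{equation*}
Pairwise orthogonality of the three summands is formal once (ii) is known: harmonic sections are annihilated by both $L$ and $L^*$, so they are $L^2$-perpendicular to $\Im L_{j-1}$ and to $\Im L_j^*$; moreover $\langle L_{j-1} u,\, L_j^* v\rangle = \langle L_j L_{j-1} u,\, v\rangle = 0$ because $L_j \circ L_{j-1} = 0$. Existence of the decomposition I would obtain from Fredholm theory applied to the self-adjoint elliptic operator $\Delta_j$: the Sobolev extension $\Delta_j \colon W^{2,2}(E_j) \to L^2(E_j)$ is Fredholm, and its image is the $L^2$-orthogonal complement of $\ker \Delta_j^* = \ker \Delta_j = \cH_j(E)$. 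Since $\Im \Delta_j \subseteq \Im L_{j-1} + \Im L_j^*$, combining this containment with the orthogonality delivers the decomposition, and smoothness of each summand is guaranteed by elliptic regularity.

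With the decomposition in hand, (iii) is rapid. The map $s \mapsto [s]$ is well defined because harmonic sections are $L$-closed by (ii). Injectivity: if $h \in \cH_j$ and $h = L_{j-1} t$ for some $t$, then $\|h\|_{L^2}^2 = \langle L_{j-1} t, h\rangle = \langle t, L_{j-1}^* h\rangle = 0$ since $L^* h = 0$. Surjectivity: given $\eta \in \ker L_j$, decompose $\eta = h + L_{j-1} t + L_j^* u$ and apply $L_j$ to obtain $0 = L_j L_j^* u$, whence $\|L_j^* u\|_{L^2}^2 = \langle L_j L_j^* u, u\rangle = 0$, so $\eta = h + L_{j-1} t$ represents the same class as $h$. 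The main obstacle in the whole argument is the Hodge decomposition itself, specifically verifying that $\Im \Delta_j$ is closed and equals $(\ker \Delta_j)^\perp$; this is precisely the step where the full weight of the elliptic-operator machinery (elliptic estimate, Fredholm alternative, elliptic regularity) is genuinely needed.
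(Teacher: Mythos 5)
Your proof is correct, and its treatment of (i) and (ii) matches the paper's. For (iii), though, you take a genuinely different route. The paper does not establish the Hodge decomposition inside the proof: given a cocycle $s_0$ with $Ls_0 = 0$, it looks directly for $t$ such that $s_0 + Lt$ is harmonic, which (since $s_0 + Lt$ is automatically $L$-closed) reduces to the single equation $L^*Lt = -L^*s_0$. This is handled by first solving $\Delta t = -L^*s_0$ via the Fredholm alternative and then observing that, because $\Im L \perp \Im L^*$, the two pieces $LL^*t$ and $L^*(s_0 + Lt)$ of the resulting identity must vanish separately. So the paper solves exactly the one equation it needs and records the three-term decomposition $\Gamma(E_j) = \Im L \oplus \cH_j(E) \oplus \Im L^*$ only afterward, as a remark that a ``refinement of the argument'' yields it. You instead prove that decomposition first and read off both injectivity and surjectivity of $s \mapsto [s]$ from it. Your route is the more standard textbook packaging and makes the mechanism more transparent (it also hands you the decomposition as a by-product); the paper's route is leaner in that it never has to assert the full splitting explicitly. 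Both hinge on the same key input, the Fredholm alternative for the self-adjoint elliptic operator $\Delta_j$, so the difference is one of organization rather than substance. One small point worth making explicit in your write-up: since $\cH_j(E)$ is finite-dimensional and consists of smooth sections, the $L^2$-orthogonal projection of a smooth section onto $\cH_j(E)$ is again smooth, which is what guarantees the decomposition actually lives in $\Gamma(E_j)$ rather than merely in an $L^2$ completion.
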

\begin{proof}
	The first statement follows from the ellipticity of $\Delta$. 
	The second statement follows easily from the equality
	\begin{equation*}
		\langle \Delta s, s\rangle_{L^2} = 
		\bigl \langle (LL^* + L^*L) s, s\bigr\rangle_{L^2} = 
		\| L^*s\|_{L^2}^2 + \| Ls\|_{L^2}^2.
	\end{equation*} 
	
	It remains to prove the last claim. 
	Thus pick any $s_0\in \Gamma(E_j)$ such that $L s_0 =0$. 
	We wish to show that the equation 
	\begin{equation*}
		(L+L^*) \bigl ( s_0+ Lt \bigr) =0
	\end{equation*}
	has a solution $t\in \Gamma(E_{j-1})$.
	Notice that this equation is equivalent to 
	\begin{equation}
		\label{Eq_AuxHarmRepresentative}
		L^*L t = -L^*s_0.
	\end{equation}
	Consider instead the equation $\Delta t = -L^*s_0$, whose right hand side is clearly $L^2$-orthogonal to $\ker \Delta^* = \ker\Delta$. 
	Hence, by \autoref{Thm_FredholmAlternative} there is a unique solution of $\Delta t =-L^*s_0$, which can be rewritten as 
	\begin{equation*}
		L(L^*t) + L^*(s_0 + Lt) =0.
	\end{equation*}
	A moment's thought shows that $\Im L$ is $L^2$-orthogonal to $\Im L^*$.
	 Hence,  $t$ is a solution of~\eqref{Eq_AuxHarmRepresentative} in fact. This finishes the proof of the existence part.
	 
	 The uniqueness is easy to show: If $s_1, s_2\in\cH_j(E)$ are such that $s_1-s_2 = Lt$ for some $t\in \Gamma(E_{j-1})$, then 
	 \begin{equation*}
	 	\| Lt \|_{L^2}^2 = \langle L^*Lt, t \rangle_{L^2} = \langle L^*(s_1-s_2), t \rangle_{L^2}=0.
	 \end{equation*}
\end{proof}

A refinement of the argument in the proof of the above theorem shows that in fact we have the following decomposition
\begin{equation}
	\Gamma(E_j) = \Im L\oplus \cH_j(E)\oplus \Im L^*,
\end{equation}
which is $L^2$--orthogonal. 
Details can be found for example in~\cite{Wells80_AnalOnCxMflds}. 

\begin{exercise}
	\label{Ex_EllComplexEllOper}
	Show that the short complex
	\begin{equation*}
		0\to \Gamma(E_1) \xrightarrow{\ L_1\ }
		\Gamma(E_2) \xrightarrow{\ L_2\ } \Gamma(E_3)\to 0
	\end{equation*}
	is elliptic if and only if the operator
	\begin{equation*}
		(L_2, L_1^*)\colon \Gamma(E_2)\to \Gamma(E_3\oplus E_1)
	\end{equation*}
	is elliptic. 
\end{exercise}

\subsubsection{A gauge-theoretic interpretation}

The cohomology group $H^j(E)$ is an example of a linear gauge--theoretic moduli space, cf. Section~\ref{Sect_Intro}.
Let me spell some details. 
The manifold\footnote{One can consider $\Gamma(E_j)$ as a Banach manifold by taking a Sobolev completion. Since this is not really important at this point, I describe the setting in the smooth category.} $\cB=\Gamma (E_j)$ carries an action of the additive group $\cG:= \Gamma(E_{j-1})$ by translations: $(s, t)\mapsto s + Lt$.  
If $\Gamma(E_{j+1})$ is viewed as a trivial $\cG$--representation, 
$L\colon\Gamma (E_j)\to\Gamma(E_{j+1})$ is a $\cG$-equivariant map, which in this particular case just means that $L$ is $\cG$-invariant. 
The action of the gauge group on $L^{-1}(0)$ is not free in general, however $\ker L$ acts trivially so that the corresponding `moduli space' $L^{-1}(0)/\cG = H^j(E)$ is a finite dimensional manifold (a vector space, in fact) provided the complex is elliptic and the base manifold is compact. 
This explains our interest in the theory of elliptic operators.

Notice, however, that because of the linear setting we can not expect that the `moduli space' $H^j(E)$ will be compact.
The reason is that $H^{j}(E)$ inherits the action of $\R_{>0}$ by dilations and if we take the quotient of $H^j\setminus\{ 0 \}$ by this action the resulting space is compact.
Thus, in this setting $\dim H^j(E)<\infty$ is a suitable replacement for the compactness of the moduli space.

One more important feature we have seen is the so called gauge fixing. 
Namely, we have shown that for each point $s\in L^{-1}(0)$ there is a unique representative $h(s)$ in the `gauge-equivalence class of $s$' such that $h(s)$ is harmonic. 
Moreover, the map 
\begin{equation*}
	L^{-1}(0)\to \cH_j(E),\qquad s\mapsto h(s)
\end{equation*}  
induces a diffeomorphism $L^{-1}(0)/\cG\to \cH^j(E)$. 

Furthermore, an isomorphism class of a  finite dimensional vector space is determined by a unique non-positive integer, namely its dimension.
Hence,  $b_j(E):=\dim H^j(E)$ is an `invariant' of $E$.
In many cases, these invariants capture a subtle information about the underlying manifold $M$.

\medskip

The example of the de Rham complex   in the following subsection will make these constructions more concrete.

\subsubsection{The de Rham complex}
\label{Sect_deRhamComplex}

Recall that for any manifold $M$ of dimension $n$ we have the de Rham complex
\begin{equation*}
	0\to \Om^0(M)\xrightarrow{\ d\ }
	\Om^1(M)\xrightarrow{\ d\ }
	\Om^2(M)\xrightarrow{\ \ \ }\dots \xrightarrow{\ \ \ } \Om^n(M)\to 0.
\end{equation*}

\begin{exercise}
	Show that the de Rham complex is elliptic.
\end{exercise}

If $M$ is oriented and Riemannian, we have the Hodge operator $*\colon \Lambda^jT^*M\to \Lambda^{n-j}T^*M$.  
Using this, the formal adjoint of the exterior derivative can be expressed as
\begin{equation*}
	d^*= (-1)^{n(j-1) +1}*d\,*\colon\Om^j(M)\to \Om^{j-1}(M)
\end{equation*} 
so that the Hodge--de Rham Laplacian is $\Delta = d d^* + d^* d$. 
If $M$ is in addition compact, the space $\cH_j$ of harmonic forms of degree $j$ is naturally isomorphic to the $j$th de Rham cohomology group. 
Then $b_j(M):=\dim \cH_j$ is the $j$th Betti number of $M$.
This is a topological invariant of $M$ even though in this approach a smooth structure has been used.

\section{Fredholm maps}
\label{Sect_FredholmMaps}

\subsection{The Kuranishi model and the Sard--Smale theorem}

Let $X$ and $Y$ be Banach manifolds.

\begin{defn}
	A map $F\in C^\infty(X;\, Y)$ is said to be Fredholm if the differential $dF$ is a Fredholm linear map at each point. 
\end{defn}

Hence, for each $x\in X$  the index of $d_xF$ is well defined. 
If $X$ is connected, then $\ind d_xF$ does not depend on $x$ and this common value is denoted by $\ind F$. 

Fredholm maps have a lot in common with smooth maps between finite dimensional manifolds.
A manifestation of this is the following result.
\begin{thm}[Kuranishi model]
	\label{Thm_KuranishiModel}
	Let $X$ and $Y$ be Banach spaces and $F\colon X\to Y$ a Fredholm map. 
	Pick a point $p\in F^{-1}(0)$ and denote $X_0=\ker d_{p}F$,\ \; $Y_0:=\im d_{p}F$. 
	Furthermore, choose subspaces $X_1\subset X$ and $Y_1\subset Y$ such that 
	\begin{equation*}
		X=X_0 \oplus X_1\qquad\text{and}\qquad Y= Y_0\oplus Y_1.
	\end{equation*}
	Then there is a diffeomorphism $\varphi$ of a neighborhood of the origin in $X$ onto a neighbourhood of $p$ such that $\varphi(0) =p$, a linear isomorphism $T\colon X_1\to Y_0$, and a smooth map $f\colon X\to Y_1$ such that
	\begin{equation*}
		F\comp\varphi (x_0, x_1) = Tx_1 + f(x_0, x_1) 
	\end{equation*}
	for all $(x_0,x_1)\in X_0\oplus X_1$ in the neighborhood of the origin.
	
	In particular, if $f_0\colon X_0\to Y_1$ denotes the restriction of $f$ to $X_0$, then a neighborhood of $p$ in $F^{-1}(0)$ is homeomorphic to a neighborhood of the origin in $f_0^{-1}(0)$.\qed
\end{thm}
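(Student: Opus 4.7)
The plan is to apply the inverse function theorem in Banach spaces to reduce $F$ locally to the stated normal form. First, I would note that the decompositions $X = X_0 \oplus X_1$ and $Y = Y_0 \oplus Y_1$ are genuine topological direct sum decompositions: $X_0 = \ker d_p F$ is finite dimensional by the Fredholm hypothesis, $Y_0 = \Im d_p F$ is closed, and $Y_1$ may be chosen finite dimensional since $\coker d_p F$ is, so the associated projections $Q_0 \colon X \to X_0$ and $P_j \colon Y \to Y_j$ are continuous. A key preliminary observation is that
\[ T := P_0\comp d_pF|_{X_1}\colon X_1 \to Y_0 \]
is a continuous linear bijection (injective because $\ker d_pF = X_0$, surjective because $\Im d_pF = Y_0$), hence a topological isomorphism by the open mapping theorem.

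Next, I would consider the smooth map
\[ \Psi\colon X \to X_0 \oplus Y_0, \qquad \Psi(x) := \bigl(\, Q_0(x - p),\; P_0 F(x) \,\bigr), \]
and compute that $d_p\Psi(h_0, h_1) = (h_0, T h_1)$ for $(h_0, h_1) \in X_0 \oplus X_1$, using that $h_0 \in \ker d_pF$ kills the second component coming from $h_0$. Since $T$ is an isomorphism, so is $d_p\Psi$, and the Banach inverse function theorem yields a smooth local inverse $\tilde\varphi$ with $\tilde\varphi(0) = p$. Pre-composing with $\id_{X_0} \oplus T\colon X_0 \oplus X_1 \to X_0 \oplus Y_0$ produces the desired diffeomorphism $\varphi(x_0, x_1) := \tilde\varphi(x_0,\, T x_1)$. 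By construction $P_0 F(\varphi(x_0, x_1)) = T x_1$, so setting $f(x_0, x_1) := P_1 F(\varphi(x_0, x_1)) \in Y_1$ delivers
\[ F\comp\varphi(x_0, x_1) = T x_1 + f(x_0, x_1), \]
with $T$ and $f$ as in the statement.

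The local description of $F^{-1}(0)$ then follows because $T x_1 \in Y_0$ and $f(x_0, x_1) \in Y_1$ lie in a direct sum: the equation $F\comp\varphi(x_0, x_1) = 0$ splits into $T x_1 = 0$ and $f(x_0, x_1) = 0$, and injectivity of $T$ forces $x_1 = 0$, reducing the system to $f_0(x_0) := f(x_0, 0) = 0$; thus $\varphi$ restricts to a homeomorphism from a neighborhood of the origin in $f_0^{-1}(0)$ onto a neighborhood of $p$ in $F^{-1}(0)$. I expect the main obstacle to be purely bookkeeping, namely verifying that the two projections are in fact continuous (which requires both closedness of $Y_0$ and finite-dimensionality of the complements $X_0, Y_1$, all granted by the Fredholm hypothesis) and that $d_p\Psi$ has the claimed block-triangular form; once this is in place, the proof is a clean single invocation of the inverse function theorem, and the finite-dimensionality of $X_0$ and $Y_1$ is what makes $f_0$ a genuinely finite-dimensional model for $F^{-1}(0)$ near $p$.
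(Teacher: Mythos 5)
The paper states the Kuranishi model with a \verb|\qed| inside the theorem environment and does not supply a proof, so there is nothing internal to compare your argument against; here one simply checks your argument on its own merits. It is correct: you verify that the Fredholm hypothesis makes $X_0$ finite dimensional, $Y_0$ closed, and $Y_1$ a finite-dimensional complement, so all four projections are bounded; you observe that $T=d_pF|_{X_1}\colon X_1\to Y_0$ is a bounded bijection and hence an isomorphism by the open mapping theorem; you introduce $\Psi(x)=(Q_0(x-p),\,P_0F(x))$ with $d_p\Psi$ block-diagonal (not block-triangular, as you wrote, but that is only a label) and invertible, invoke the Banach inverse function theorem to get $\tilde\varphi$, precompose with $\id_{X_0}\oplus T$, and read off $F\circ\varphi(x_0,x_1)=Tx_1+f(x_0,x_1)$ with $f=P_1\circ F\circ\varphi$. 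The reduction of $F^{-1}(0)$ to $f_0^{-1}(0)$ then falls out of the direct-sum decomposition of $Y$ and the injectivity of $T$ exactly as you say. This is the standard argument, and I see no gap. Two tiny cosmetic points: $P_0\circ d_pF|_{X_1}$ is literally $d_pF|_{X_1}$ since $d_pF$ already lands in $Y_0$, so the composition with $P_0$ is redundant; and your $f$ is defined only on the neighborhood where $\varphi$ is, which matches the intent of the statement even though the statement's phrasing suggests $f$ is globally defined on $X$.
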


\begin{corollary}
	Assume the hypotheses of \autoref{Thm_KuranishiModel}. 
	Suppose also that $0$ is a regular value of $F$, i.e., the $d_pF$ is surjective for all $p\in F^{-1}(0)$. 
	Then $F^{-1}(0)$ is a smooth manifold of dimension $\ind F$.  
\end{corollary}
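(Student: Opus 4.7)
The plan is to read off the conclusion from the Kuranishi model. Fix $p\in F^{-1}(0)$. After passing to Banach space charts around $p\in X$ and $0\in Y$, we are in the setting of \autoref{Thm_KuranishiModel} with $p$ corresponding to the origin. The hypothesis that $0$ is a regular value means that $d_pF$ is surjective, so $Y_0 := \Im d_pF = Y$, and we may take the complementary subspace to be $Y_1 = \{0\}$. In particular the ``nonlinear tail'' $f\colon X\to Y_1$ appearing in the Kuranishi normal form is identically zero.

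With this choice the Kuranishi model collapses to $F\comp\varphi(x_0,x_1) = Tx_1$, where $T\colon X_1\to Y_0 = Y$ is a linear isomorphism. Hence $F\comp\varphi(x_0,x_1)=0$ if and only if $x_1=0$, so $\varphi$ carries a neighborhood of $0$ in $X_0=\ker d_pF$ diffeomorphically onto a neighborhood of $p$ in $F^{-1}(0)$. This produces a submanifold chart at every point of $F^{-1}(0)$. Since any two such charts arise as restrictions of diffeomorphisms of open subsets of the ambient space $X$, the transition maps between them are automatically smooth, and $F^{-1}(0)$ inherits the structure of a smooth submanifold.

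The dimension count is immediate from the Fredholm condition: at each $p\in F^{-1}(0)$, surjectivity of $d_pF$ gives $\dim\coker d_pF = 0$, and therefore
\[
\dim \ker d_pF \;=\; \ind d_pF + \dim\coker d_pF \;=\; \ind d_pF \;=\; \ind F,
\]
where the last equality uses that the Fredholm index is locally constant (being a continuous $\Z$-valued function) and hence constant on each connected component of $X$. There is no serious obstacle in this argument beyond the Kuranishi model itself, which is where the real analytic work was done; the only minor point worth noting is the well-definedness of a single number $\ind F$, which is built into the convention adopted just before the corollary.
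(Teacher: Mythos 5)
Your argument is correct and follows essentially the same route as the paper: invoke the Kuranishi normal form, observe that surjectivity of $d_pF$ forces $Y_1=\{0\}$ so that the nonlinear tail is trivial and $F^{-1}(0)$ is locally $\varphi(X_0\cap\text{nbhd})$, then count $\dim X_0=\dim\ker d_pF=\ind F$ using $\coker d_pF=0$. The extra remarks on chart compatibility and local constancy of the index are harmless elaborations of what the paper leaves implicit.
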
 
\begin{proof}
	Since $\Im d_pF = Y$, we necessarily have $Y_1=\{ 0 \}$ so that $f_0$ is a constant map. 
	Hence, $F^{-1}(0)$ is diffeomorphic to a neighborhood of the origin in $X_0$, i.e., a manifold of dimension $\dim X_0=\ind F$. 
\end{proof}

For a smooth map between finite dimensional manifolds, almost any value is regular by Sard's theorem~\cite{BardenThomas03_IntroDiffMflds}*{Thm.\,9.5.4}.
There is a generalization of this statement for an infinite dimensional setting due to Smale. 
This is commonly known as the Sard--Smale theorem.

Recall that a subset $A$ of a topological space is said to be of \emph{second category}, if $A$ can be represented as a countable intersection of open dense subsets.   
If the underlying topological space is a Banach manifold, then a subset of second category is dense.

\begin{thm}
	Let $F$ be a smooth Fredholm map between paracompact Banach manifolds.
	Then the set of regular values of $F$ is of second category, in particular dense.\qed
\end{thm}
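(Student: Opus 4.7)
The plan is to reduce the statement, via the Kuranishi model, to the classical finite-dimensional Sard theorem, and then to assemble countable local contributions inside the Baire space $Y$. Throughout, let $\Sigma\subset X$ denote the critical set of $F$, i.e., the set of points where $dF$ is not surjective, so that the set of critical values of $F$ is $F(\Sigma)$.

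First I would fix $p\in X$ and apply the Kuranishi model (\autoref{Thm_KuranishiModel}) to obtain coordinates in which $F$ takes the form $G(x_0,x_1)=Tx_1+f(x_0,x_1)$, where $X_0=\ker d_pF$ is finite-dimensional by Fredholmness, $T\colon X_1\to Y_0=\Im d_pF$ is a linear isomorphism, and $f$ takes values in a finite-dimensional complement $Y_1$ of $Y_0$. A direct computation $dG_{(x_0,x_1)}(v_0,v_1)=Tv_1+\partial_0 f(v_0)+\partial_1 f(v_1)$ shows that $dG$ is surjective if and only if $\partial_0 f\colon X_0\to Y_1$ is surjective; equivalently, a point $(Tx_1^*,v)\in Y_0\oplus Y_1$ is a regular value of $G$ exactly when $v$ is a regular value of the finite-dimensional map $f_{x_1^*}\colon x_0\mapsto f(x_0,x_1^*)$. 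Classical Sard then asserts that the critical values of $f_{x_1^*}$ form a closed set of Lebesgue measure zero, hence nowhere dense, in $Y_1$.

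The second task is to promote this fibrewise nowhere-density to genuine meagerness of $G(U\cap\Sigma)$ inside $Y$. For this I would stratify $\Sigma\cap U$ by the rank of $\partial_0 f$ and by norm bounds on $(x_0,x_1)$, obtaining a countable exhaustion by closed sets on each of which a uniform version of the finite-dimensional Sard theorem shows the $G$-image is closed and nowhere dense in $Y$. Finally, paracompactness of $X$ together with the standard separability of the underlying Banach model allows a countable cover of $X$ by Kuranishi neighbourhoods $\{U_n\}$; then $F(\Sigma)=\bigcup_n G_n(U_n\cap\Sigma)$ is a countable union of meager sets, hence meager in $Y$, and by the Baire property of $Y$ its complement, the set of regular values, is residual and therefore dense.

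The main obstacle is the meagerness step: the naive principle that ``fibrewise meager implies meager'' fails in general, and the argument depends essentially on the finite-dimensionality of both $X_0$ and $Y_1$ afforded by the Fredholm hypothesis, together with a careful rank stratification of $\Sigma$ whose strata have $G$-images controlled fibrewise by classical Sard. Everything else is formal: the Fredholm hypothesis is exactly what makes the Kuranishi reduction available and secures the finite-dimensionality needed downstream, while paracompactness is precisely what upgrades the local statement to the global one.
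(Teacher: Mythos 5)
The paper states the Sard--Smale theorem without giving a proof, so there is no in-text argument to compare against; on its own merits, your skeleton is right: Kuranishi local form, fibrewise finite-dimensional Sard, countable box exhaustion, Lindel\"of cover, Baire. You also correctly identify the danger spot---that fibrewise nowhere density does not automatically globalize to meagerness.

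However, the tool that actually closes this gap is not the one you name. What makes $G(\Sigma\cap K_R)$ closed on a box $K_R=K\times B$ (with $K\subset X_0$ compact and $B\subset X_1$ a closed ball) is neither a ``uniform'' finite-dimensional Sard theorem nor a rank stratification of $\Sigma$; it is the \emph{properness} of $G$ restricted to the box. Since $G(x_0,x_1)=Tx_1+f(x_0,x_1)$ with $T\colon X_1\to Y_0$ a linear isomorphism, the $Y_0$--component of $G$ recovers $x_1$, and combined with compactness of $K$ this shows that preimages of compact sets under $G|_{K\times B}$ are compact, so $G|_{K\times B}$ is a closed map and $G(\Sigma\cap K_R)$ is closed. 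Now the plain, non-uniform, fibrewise Sard statement---that each slice $\{y_1 \colon (y_0,y_1)\in G(\Sigma\cap K_R)\}$ has Lebesgue measure zero in the finite-dimensional $Y_1$---forces $G(\Sigma\cap K_R)$ to have empty interior, hence to be nowhere dense. The rank stratification you invoke is the device used inside the proof of finite-dimensional Sard itself; it does nothing for the infinite-dimensional globalization step and in particular cannot by itself yield closed images, since the rank strata are only locally closed. Once the properness argument is inserted, the rest of your outline (countable boxes $K_R$, Lindel\"of cover by Kuranishi charts, Baire) goes through.
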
 

Let $Z\subset Y$ be a smoothly embedded finite dimensional submanifold. 
A map $F$ is said to be \emph{transverse} to $Z$, if for any $z\in Z$ and any $x\in F^{-1}(Z)$ the following holds: 
\begin{equation*}
	\Im d_xF + T_zZ = T_zY.
\end{equation*}
In particular,  $F$ is transverse to $Z=\{ z \}$ is and only if $z$ is a regular value of $F$. 
It is well-known that the notion of transversality is a useful generalization of the notion of regular value, see for instance~\cite{GuilleminPollack10_DiffTop}.

Just as in the finite dimensional case we have the following result.
\begin{thm}
	Let $Z\subset Y$ be a smoothly embedded finite dimensional submanifold. 
	If $F$ is transverse to $Z$, then $F^{-1}(Z)$ is a smooth submanifold of $X$ and 
	\begin{equation*}
		\pushQED{\qed}
		\dim F^{-1}(Z)= \ind F + \dim W. 
		\qedhere\popQED
	\end{equation*} 
\end{thm}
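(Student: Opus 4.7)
\medskip

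\textbf{Proof plan.} The strategy is entirely local: the claim is about a subset of $X$ being a smooth submanifold, so it suffices to exhibit near each $x_0\in F^{-1}(Z)$ a smooth chart of $X$ in which $F^{-1}(Z)$ appears as an open piece of a linear subspace of the correct dimension. I will reduce the problem to the implicit function theorem applied to a composition $G := p\comp F$ taking values in a Banach space, and then read off the dimension count from the index of $dF$.

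First I would fix $x_0\in F^{-1}(Z)$ and set $z_0 = F(x_0)\in Z$. Since $Z$ is finite dimensional, $T_{z_0}Z$ is finite dimensional and hence splits off: choose a closed complement $W\subset T_{z_0}Y$ with $T_{z_0}Y = T_{z_0}Z\oplus W$. Using that $Z$ is smoothly embedded I would pick a local chart $\psi\colon V\to T_{z_0}Y$ of $Y$ at $z_0$ sending $Z\cap V$ into the linear subspace $T_{z_0}Z$. Letting $p\colon T_{z_0}Y\to W$ denote the projection along $T_{z_0}Z$, set
\[
G := p\comp\psi\comp F\colon F^{-1}(V)\longrightarrow W.
\]
By construction $F^{-1}(Z)\cap F^{-1}(V) = G^{-1}(0)$, so it is enough to analyze $G$.

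The next step is to verify that $d_{x_0}G = p\comp d_{x_0}F$ (after identifying tangent spaces via $\psi$) is a Fredholm operator which is in addition surjective. Surjectivity is precisely the transversality hypothesis: $\Im d_{x_0}F + T_{z_0}Z = T_{z_0}Y$ is equivalent to $p(\Im d_{x_0}F) = W$. For Fredholmness I would observe that $p$ is a bounded linear map with finite-dimensional kernel $T_{z_0}Z$, so composing the Fredholm map $d_{x_0}F$ with $p$ preserves finiteness of kernel and cokernel; in fact a short diagram chase gives
\[
\dim\ker(p\comp d_{x_0}F) \;=\; \dim\ker d_{x_0}F + \dim\bigl(\Im d_{x_0}F\cap T_{z_0}Z\bigr),
\]
and transversality identifies the second term with $\dim T_{z_0}Z - \dim\coker d_{x_0}F$. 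Combining these,
\[
\dim\ker d_{x_0}G \;=\; \ind F + \dim Z,
\]
while $\coker d_{x_0}G = 0$.

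Finally, since $d_{x_0}G$ is a surjective bounded linear operator between Banach spaces whose kernel is finite dimensional (hence complemented), the Banach-space implicit function theorem applies: a neighborhood of $x_0$ in $G^{-1}(0) = F^{-1}(Z)$ is a smooth submanifold of $X$ of dimension $\dim\ker d_{x_0}G = \ind F + \dim Z$. Doing this at every $x_0\in F^{-1}(Z)$ yields the global conclusion. The one slightly delicate point I anticipate is the setup of the chart $\psi$ straightening $Z$: one must invoke that a finite-dimensional embedded submanifold of a Banach manifold is automatically split, so that a local product chart exists; everything after that is bookkeeping with the exact sequences above.
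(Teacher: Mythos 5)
The paper states this result without proof (the \(\qed\) symbol is placed directly after the displayed formula), so there is no argument in the source to compare against; your proposal supplies the standard proof and it is correct. The key steps --- straightening \(Z\) in a slice chart, post-composing \(F\) with the projection along \(T_{z_0}Z\) onto a closed complement (which exists automatically because any finite-dimensional subspace of a Banach space is complemented), reading off surjectivity of \(p\circ d_{x_0}F\) from transversality, and computing \(\dim\ker(p\circ d_{x_0}F) = \ind F + \dim Z\) via the two short exact sequences you write down --- are exactly right, and the Banach-space regular value theorem then applies since the kernel is finite dimensional, hence split. One remark: the formula in the paper reads \(\dim F^{-1}(Z) = \ind F + \dim W\), but no \(W\) is defined in that statement; this is evidently a typo for \(\dim Z\), which is what your dimension count correctly produces.
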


\subsection{The $\Z/2\Z$ degree}
\label{Sect_DegMod2Fredholm}

Recall that a map $F\colon X\to Y$ is called \emph{proper} if preimages of compact subsets are compact.

Let $F\colon X\to Y$ be a proper Fredholm map between (paracompact) Banach manifolds of index zero, where $Y$ is connected. 
Then for any regular value $y\in Y$ the preimage $F^{-1}(y)$ is a compact manifold of dimension zero, hence a finite number of points.
The number
\begin{equation*}
	\deg_2 F:= \# f^{-1}(y) \mod 2
\end{equation*}
is called the $\Z/2\Z$ degree of $F$.

\begin{thm}$\phantom{a}$
	\label{Thm_Deg2FredholmMaps}
	\begin{enumerate}[(i)]
		\item \label{It_Mod2DegIsIndepOfRegValue}
		$\deg_2 F$ does not depend on the choice of the regular value $y$;
		\item If $F_0$ and $F_1$ are homotopic within the class of proper Fredholm maps of vanishing index, then $\deg_2 F_0 = \deg_2 F_1$.
	\end{enumerate}
\end{thm}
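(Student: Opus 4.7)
The strategy for both parts is the classical one from finite-dimensional mod~$2$ degree theory: I would reduce each statement to the elementary fact that the number of boundary points of a compact $1$-manifold is even. \autoref{Thm_KuranishiModel} guarantees that preimages of regular values of Fredholm maps of index zero are $0$-manifolds and that transverse preimages of $1$-dimensional submanifolds under index-zero Fredholm maps are $1$-manifolds; the Sard--Smale theorem ensures such regular values are abundant. The substantive work is arranging transversality simultaneously for the objects involved and controlling compactness via properness.

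For~\textit{\ref{It_Mod2DegIsIndepOfRegValue}}, pick two regular values $y_0, y_1$ of $F$. Since $Y$ is a connected Banach manifold it is path-connected, so I would choose a smooth embedded path $\gamma\colon[0,1]\to Y$ with $\gamma(0)=y_0$ and $\gamma(1)=y_1$. After a small perturbation of $\gamma$ rel endpoints (applying Sard--Smale to a suitable parametrized family of paths, or using a density argument for embedded arcs), I may assume $F$ is transverse to the $1$-dimensional submanifold $\Gamma:=\gamma([0,1])$ while $y_0, y_1$ remain regular values. Then $F^{-1}(\Gamma)$ is a smooth submanifold of $X$ of dimension $\ind F + \dim\Gamma = 1$, whose boundary is $F^{-1}(y_0)\sqcup F^{-1}(y_1)$. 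Properness of $F$ forces $F^{-1}(\Gamma)$ to be compact. A compact $1$-manifold with boundary is a disjoint union of circles and closed intervals, hence has an even number of boundary points, giving $\#F^{-1}(y_0)\equiv \#F^{-1}(y_1)\pmod 2$.

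Part~(ii) follows the same pattern applied to the homotopy. Let $H\colon X\times[0,1]\to Y$ be the given homotopy, which I take to be smooth and proper as a map on the total space (this is implicit in the hypothesis). Then $H$ is itself a Fredholm map of index $1$ from the manifold with boundary $X\times[0,1]$. By Sard--Smale I would choose $y\in Y$ that is simultaneously a regular value of $F_0$, $F_1$, and $H$; such $y$ exist because the set of regular values of each map is of second category, and a finite intersection of such sets remains dense. The preimage $H^{-1}(y)$ is then a compact $1$-manifold with boundary $F_0^{-1}(y)\sqcup F_1^{-1}(y)$, and the parity argument above yields $\deg_2 F_0 = \deg_2 F_1$ after invoking part~\ref{It_Mod2DegIsIndepOfRegValue} to compute both degrees at the common regular value $y$.

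The main obstacle I anticipate lies in the infinite-dimensional transversality step in~\ref{It_Mod2DegIsIndepOfRegValue}. In finite dimensions this is routine, but in Banach manifolds one must carefully justify that a generic smooth path is transverse to $F$. The cleanest route is a parametric transversality theorem — itself a consequence of Sard--Smale applied to an evaluation-type Fredholm map on a Banach manifold of paths — and one must check that this evaluation map is Fredholm of the correct index. A secondary concern in~(ii) is verifying that "homotopy within proper Fredholm maps" is strong enough to make the total map $H$ proper and Fredholm of index $1$; if the hypothesis only ensures pointwise Fredholmness and properness of each slice, one needs an additional uniformity assumption, which I would state explicitly at the outset.
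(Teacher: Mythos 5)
Your part~(ii) coincides essentially with the paper's Step~3: choose $y$ that is simultaneously a regular value of $F_0$, $F_1$, and the homotopy $H\colon X\times[0,1]\to Y$, so that $H^{-1}(y)$ is a compact $1$-manifold with boundary $F_0^{-1}(y)\sqcup F_1^{-1}(y)$; parity of boundary points finishes. (The paper separately notes, in its Step~1, that for a \emph{proper} Fredholm map the regular values are not merely of second category but \emph{open and dense} — since the critical locus is closed and proper maps are closed — which is what makes the finite intersection in your argument unproblematic.) Your part~(i), however, follows a genuinely different route from the paper's. You connect $y_0$ and $y_1$ by a path $\gamma$ in $Y$, perturb $\gamma$ to make $F$ transverse to $\Gamma = \gamma([0,1])$, and count boundary points of the compact $1$-manifold $F^{-1}(\Gamma)$. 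The paper instead proves a Banach-manifold homogeneity lemma (Steps~4--5): starting from Milnor's finite-dimensional construction of a compactly supported isotopy moving the origin to an arbitrary point of the unit ball, it builds for any connected Banach manifold $Y$ and any $y_1, y_2\in Y$ a diffeomorphism $\varphi\in\Diff_0(Y)$ with $\varphi(y_1)=y_2$; this makes $y_2$ a common regular value of $F$ and $\varphi\comp F$, reducing~(i) to the already-proved homotopy invariance~(ii). The trade-off is exactly the obstacle you yourself flag: your route requires a parametric transversality theorem — that a \emph{generic} path rel endpoints is transverse to $F$ — which, while true, is not an immediate corollary of the Sard--Smale theorem as stated and needs a separate Fredholm analysis of the relevant evaluation map on a Banach manifold of paths. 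The paper's homogeneity argument buys you a cleaner proof precisely by avoiding that step: moving the target point by an ambient isotopy replaces path transversality with the far easier statement that diffeomorphisms preserve properness, Fredholm-ness, and regular values. Both approaches are correct; if you do pursue yours, filling in the parametric transversality for paths is the essential missing lemma, and you should also check that the perturbed $\gamma$ can be kept injective so that $\Gamma$ is a genuine embedded $1$-submanifold (otherwise $F^{-1}(\Gamma)$ need not be a manifold).
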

\begin{proof}
	The proof requires a number of steps.
	\setcounter{step}{0}
	\begin{step}
		\label{Step_RegValuesOpenFredholm}
		If $F$ is a proper Fredholm map, then the set of regular values of $F$ is open and dense.
	\end{step}
			First notice that the set of critical points $\mathrm{Crit}(F)$ is closed. 
			Since any proper map is closed,  the set of critical values $F(\mathrm{Crit}(F))$ is closed.
			
	\begin{step}
		\label{Step_IndexIsLocConst}
		The function
		\begin{equation*}
			y\mapsto \# F^{-1}(y)\mod 2
		\end{equation*}
		is locally constant on the set of regular values. 
	\end{step}
			Let $y$ be a regular point of $F$, $F^{-1}(y)=\{ x_1,\dots, x_k \}$. 
			By the inverse function theorem,  there is a neighborhood $U$ of $y$ and a neighborhood $V_j$ of $x_j$ such that $F\colon V_j\to U$ is a diffeomorphism. 
			Then, for any $y'\in U$ we have $\#f^{-1}(y')=k$, hence the claim.  
			
		\begin{step}
			\label{Step_Mod2DegForHomMapsWithcommonRegValue}
			Let $F_0$ and $F_1$ be homotopic so that the homotopy is within the class  of proper Fredholm maps. 
			Then 
			\begin{equation}
			\label{Eq_AuxDeg2}
			\# F_0^{-1}(y) = \# F_1^{-1}(y) \mod 2
			\end{equation}
			for any $y$, which is a regular value for both $F_0$ and $F_1$.   
		\end{step}
		Let $F_t$, $t\in [0,1]$, be a homotopy.
		Let me assume first that, $y$ is a regular value for  $F\colon X\times [0,1]\to Y$.
		Then $F^{-1}(y)$ is a 1-dimensional manifold with boundary such that $\del F^{-1}(y) = F_0^{-1}(y)\cup F_1^{-1}(y)$. 
		Hence, \eqref{Eq_AuxDeg2} holds. 
		
		If $y$ is not a regular value of $F$, then we can choose $y_1$ arbitrarily close to $y$ such that $y_1$ is a regular value for any of $F_0, F_1, F$.  
		The conclusion of this step follows by Step~\ref{Step_IndexIsLocConst}.
			
	\begin{step}
		Let $x$ be an arbitrary point in the unit ball of a Banach space $B$. 
		Then there is a diffeomorphism $\varphi$ of $B$ such that the following holds: $\varphi(0)= x$,  $\varphi$ is the identity map on the complement of the Ball of radius $2$, and $\varphi$ is homotopic to the identity map relative to the complement of the ball of radius $2$.
	\end{step}
			This is really a finite dimensional statement. 
			Indeed, choose decomposition $B = V\oplus V'$, where $V$ is finite dimensional and $x\in V$. 
			By~\cite{Milnor97_TopologyFromDiffPtView}*{P.\,22} there is a 1-parameter family $\psi_t\colon V\to V$ of diffeomorphisms such that $\psi_1(0) =x$, $\psi_0=\id_{V}$, and $\psi_t$ is identity outside the unit ball in $V$.
			
			Choose a smooth function $\chi\colon \R_{\ge 0}\to \R_{\ge 0}$ such that $\chi (0) =1$ and $\chi(t)=0$ for $t\ge 1$. 
			Then 
			\begin{equation*}
				\varphi_t(v, v'):= \psi_{t\chi(|v'|)} (v)+ v'
			\end{equation*}  
			is a family of diffeomorphisms such that $\varphi_0 = \id$ and $\varphi_1(0,0) = \psi_1(0)=x$. 
			Clearly, for any $t$ we have $\|\varphi_t(v, v')\|\le 2$ by the triangle inequality. 
			
			Let us check that $\varphi_t$ is identity on the complement of the ball of radius $2$.
			Thus, pick any $(v, v')$ such that $\| v \| + \|  v'\|\ge 2$.
			Then either $\| v\|\ge 1 $ or $\| v'\|\ge 1$.
			The first case is clear and the second one we have $\varphi_t(v, v') = \psi_0(v) + v'=v+v'$ since $\chi (\| v' \|) =0$ by the construction. 
			
	\begin{step}
		If $Y$ is a connected Banach manifold, then for any $y_1, y_2\in Y$ there is a diffeomorphism $\varphi\in \Diff_0(Y)$  such that $\varphi (y_1)=y_2$, where $\Diff_0(Y)$ denotes the subgroup of all diffeomorphisms homotopic to the identity.
	\end{step}
	
	Fix $y_2\in Y$ and denote 
	\begin{equation*}
		C(y_2):=\bigl \{ y_1\in Y\mid \exists \varphi \in \Diff_0(Y)\text{ such that }\varphi(y_1)=y_2 \bigr  \}	
	\end{equation*}
	By the previous step, $C(y_2)$ is open and non-empty, hence $C(y_2)=Y$.
	
	\begin{step}
		We prove the claim of this theorem.
	\end{step}		

	Let $F$ be as in the statement of this theorem. 
	Pick any regular values $y_1$ and $y_2$ and $\varphi\in \Diff_0(Y)$ such that $\varphi(y_1) =y_2$. 
	Then $y_2$ is a common regular value for $F$ and $\varphi\comp F$.
	Since these maps are homotopic, by Step~\ref{Step_Mod2DegForHomMapsWithcommonRegValue} we have
	\begin{equation*}
		F^{-1}(y_2)=(\varphi\comp F)^{-1}(y_2) = F^{-1}(y_1)\mod 2.
	\end{equation*}
	This proves~\textit{\ref{It_Mod2DegIsIndepOfRegValue}}.
	
	Furthermore, if $F\colon X\times [0,1]\to Y$ is a homotopy between $F_0$ and $F_1$, we can choose $y\in Y$ which is a regular value for any of the following maps: $F_0, F_1,$ and $F$. 
	This implies the second assertion.
\end{proof}

\begin{corollary}
	Let $F\colon X\to Y$ be a proper Fredholm map between (paracompact) Banach manifolds of index zero, where $Y$ is connected. 
	If $\deg_2 F\neq 0$, then $F$ is surjective.\qed
\end{corollary}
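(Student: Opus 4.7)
The plan is to prove the contrapositive: if $F$ is not surjective, then $\deg_2 F = 0$. The strategy relies on producing a regular value at which the fibre is empty, and then invoking part~\textit{(i)} of \autoref{Thm_Deg2FredholmMaps} to conclude that this empty-fibre count agrees with the count over any other regular value.

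First I would observe that $F(X) \subset Y$ is closed. This uses properness: if $y_n = F(x_n) \to y$, then $\{y_n\}\cup\{y\}$ is compact, so its preimage is compact, and a convergent subsequence of $\{x_n\}$ produces a preimage of $y$. Hence, if $F$ fails to be surjective, the open set $Y \setminus F(X)$ is nonempty. Since $Y$ is connected and the set of regular values of $F$ is dense in $Y$ (by Step~\ref{Step_RegValuesOpenFredholm} in the proof of the theorem, or directly by Sard--Smale combined with properness), I can pick a point $y \in Y \setminus F(X)$; any such $y$ is vacuously a regular value because $F^{-1}(y) = \emptyset$ makes the surjectivity-of-differential condition trivially satisfied.

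Then $\#F^{-1}(y) = 0$, so the mod-$2$ count at this particular regular value is $0$. By \autoref{Thm_Deg2FredholmMaps}\,\textit{(i)} the $\Z/2\Z$ degree does not depend on the choice of regular value, so $\deg_2 F = 0$. This contradicts $\deg_2 F \neq 0$, hence $F$ must be surjective.

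There is no real obstacle here; the only point that requires a moment's care is the verification that a point outside $F(X)$ counts as a regular value, and that properness is needed to guarantee $F(X)$ is closed (so that such points form an \emph{open} subset from which a regular value — in fact any point — can be chosen). Everything else is a direct appeal to the theorem already proved.
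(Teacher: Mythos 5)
Your proof is correct and is essentially the argument the paper intends: the corollary carries a \qed\ precisely because a point $y\notin F(X)$ is vacuously a regular value with $\#F^{-1}(y)=0$, so \autoref{Thm_Deg2FredholmMaps}\,(i) forces $\deg_2 F=0$. One small remark: the digression establishing that $F(X)$ is closed (hence that $Y\setminus F(X)$ is open) is harmless but not actually needed --- mere non-surjectivity already gives you a point $y$ with $F^{-1}(y)=\emptyset$, and any such point is automatically a regular value, which is all the argument requires.
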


\subsection{The parametric transversality}
\label{Sect_ParamTransversality}

Given a `random' point $y$ in the target, there is no reason to expect that the preimage $F^{-1}(y)$ will be a submanifold. 
Of course, if $F$ is Fredholm the  theorem of Sard--Smale implies that we can choose $y'$ arbitrarily  close to $y$ so that $F^{-1}(y')$ is a manifold indeed.
In practice, however, it is not always the case that one has a freedom to choose a point in the target. 
This is typically the case, which will be considered in some detail below, if $X$ and $Y$ are equipped with an action of a Lie group $\cG$ and $F$ is $\cG$--equivariant.
In this case it is natural to choose $y$ as a fixed point of the $\cG$--action on $Y$ so that $F^{-1}(y)$ inherits a $\cG$--action. 
This, however, restricts severely possible choices of $y$ so that the Sard--Smale theorem is not applicable in a straightforward manner.

One way to deal with this problem is as follows. 
Assume there is a connected  Banach manifold $W$ and a smooth Fredholm map $\cF\colon X\times W\to Y$ with the following properties:
\begin{enumerate}[(A)]
	\item \label{Hyp_EachMemberOfFamilyIsFredholm}
	For each $w\in W$ the map $\cF_w\colon X\times\{ w \}\to Y$ is Fredholm;
	\item \label{Hyp_PertOfF}
	There is a point $w_0\in W$ such that $\cF_{w_0} =F$;
	\item $y$ is a regular value of $\cF$;
\end{enumerate}  
Typically, it is not too hard to construct a map $\cF$ satisfying these properties.

Since $y$ is a regular value, $\cF^{-1}(y)\subset X\times W$ is a smooth Banach submanifold. 
We have a natural projection $\pi\colon \cF^{-1}(y)\to W$, which is just the restriction of the projection $X\times W\to W$. 

\begin{lem}$\phantom{a}$
	\begin{enumerate}[(i)]
		\item $\pi$ is a Fredholm map and $\ind\pi = \ind F$;
		\item There is a subset $W_0\subset W$ of second category such that $y$ is a regular value for  $\cF_w$ for all $w\in W_0$. \qed
	\end{enumerate}
\end{lem}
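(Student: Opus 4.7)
The plan is to compute $\ker d\pi$ and $\coker d\pi$ at each point of $\cF^{-1}(y)$ in terms of the linearisation of the individual map $\cF_w$, and then to derive (ii) by applying the Sard--Smale theorem to $\pi$ itself. Fix $(x,w) \in \cF^{-1}(y)$ and set
\[
L := d_x \cF_w \colon T_xX \to T_yY, \qquad K := d_{(x,w)}\cF|_{T_wW}\colon T_wW \to T_yY,
\]
so that $d_{(x,w)}\cF(\xi,\eta) = L\xi + K\eta$. By hypothesis $L$ is Fredholm, and $L+K$ is surjective because $y$ is a regular value of $\cF$. The tangent space to $\cF^{-1}(y)$ at $(x,w)$ is $\ker(L+K)$, and $d_{(x,w)}\pi$ is the projection $(\xi,\eta) \mapsto \eta$ restricted to this kernel.

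From this setup I would read off that $\ker d\pi = \{(\xi,0) : L\xi = 0\} \cong \ker L$ and that $\Im d\pi = \{\eta \in T_wW : K\eta \in \Im L\}$. The induced map $\psi\colon T_wW \to \coker L$, $\eta \mapsto [K\eta]$, then has kernel $\Im d\pi$ and is surjective precisely because $L+K$ is, so it descends to an isomorphism $\coker d\pi \cong \coker L$. Both spaces are thus finite-dimensional, and
\[
\ind d_{(x,w)}\pi = \dim\ker L - \dim\coker L = \ind \cF_w.
\]
Since the Fredholm index is locally constant and $W$ is connected, this common integer equals $\ind \cF_{w_0} = \ind F$, which proves (i).

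For (ii) I would apply the Sard--Smale theorem to the Fredholm map $\pi\colon \cF^{-1}(y) \to W$, producing a second-category subset $W_0 \subset W$ of regular values of $\pi$. It remains to verify that if $w \in W_0$ then $y$ is a regular value of $\cF_w$; equivalently, $L = d_x\cF_w$ is surjective at every $x \in \cF_w^{-1}(y)$. Given such an $x$ and an arbitrary $v \in T_yY$, the surjectivity of $L+K$ yields $(\xi,\eta)$ with $L\xi + K\eta = v$; and since $d_{(x,w)}\pi$ is surjective, there exists $\xi'$ with $L\xi' + K\eta = 0$. Subtracting gives $v = L(\xi-\xi') \in \Im L$, as required.

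The only real content is the linear-algebra identification $\coker d\pi \cong \coker L$ in (i); once that is secured, both claims follow formally. I do not expect a serious analytic obstacle, because the Fredholm hypothesis on each $\cF_w$ combined with the regularity of $y$ for $\cF$ reduces the whole computation to finite-dimensional linear algebra in each fibre, after which the Sard--Smale theorem applied to $\pi$ takes care of the parameter count.
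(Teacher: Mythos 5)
The paper states this lemma without proof (note the \qed immediately following the statement), so there is no argument to compare against; your proof is correct and is the standard one. The linear-algebra step is exactly as it should be: writing $d_{(x,w)}\cF(\xi,\eta)=L\xi+K\eta$ with $L=d_x\cF_w$, you correctly identify $\ker d\pi\cong\ker L$ and, via the surjection $\eta\mapsto[K\eta]$ onto $\coker L$ (surjective because $L+K$ is), $\coker d\pi\cong\coker L$; part (ii) then follows by running the same diagram in reverse, showing that surjectivity of $d\pi$ at $(x,w)$ forces surjectivity of $L$. Two small points you leave implicit, both standard: closedness of $\Im d\pi$ (which, as the paper remarks in its definition of Fredholm operators, follows automatically from finite-dimensionality of kernel and cokernel), and the fact that the identification $\ind d_{(x,w)}\pi=\ind d_x\cF_w$ combined with connectedness of $X\times W$ and local constancy of the Fredholm index gives $\ind\pi=\ind F$ globally, not just at the single point $(x,w)$.
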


Hence, by the Sard--Smale theorem, there is $w$ arbitrarily close to $w_0$ such that 
\begin{equation*}
	\pi^{-1}(w)=\cF_w^{-1}(y)
\end{equation*}
is a smooth submanifold of dimension $\ind F$. 

\medskip

Suppose that in addition to hypotheses  \ref{Hyp_EachMemberOfFamilyIsFredholm}--\ref{Hyp_PertOfF}, the following holds:
\begin{enumerate}[(D)]
	\item \label{Hyp_EachMemberIsProper}
	Each map $\cF_w$ is proper.
\end{enumerate}

In what follows I will also assume that $\ind F =0$, since this is the setting where the degree of a Fredholm map was defined. 
However, this is by no means essential. 

Thus, if  \ref{Hyp_EachMemberOfFamilyIsFredholm}--\ref{Hyp_EachMemberIsProper} holds, $\cF_w^{-1}(y) =\pi^{-1}(w)$ consists of finitely many points. 
Since $W$ is connected by assumption, there is a path connecting $w_0$ and $w$ so that $\cF_{w}$ is homotopic to $\cF_{w_0}=F$.
Hence,  
\begin{equation*}
	\deg_2 F =\# \cF_w^{-1}(y) \mod 2 .
\end{equation*}
In particular, $\# \cF_w^{-1}(y) \mod 2$ does not depend on $w$. 

Of course, this conclusion is pretty much straightforward thanks to the facts we have established in the preceding subsection. 
The point is that the number  $\# \cF_w^{-1}(y) \mod 2$ could be taken as a definition of the degree of $F$ thus omitting the construction of  \autoref{Sect_DegMod2Fredholm}. 
For instance, this is commonly used  in the equivariant setting as discussed at the beginning of this section. 

\medskip

Let me describe this alternative construction of the degree of a proper Fredholm map in some detail.
Thus, choose any $w_1, w_2\in W_0$ so that $y$ is a regular value for $\cF_j:=\cF_{w_j}$. 
For a fixed $k\ge 0$ denote 
\begin{equation*}
	\Gamma(w_1, w_2):=\Bigl \{ \gamma\in C^k\bigl ( [1,2];\, W\bigr )\mid \gamma(1)=w_1\text{ and } \gamma(2)=w_2 \Bigr \},
\end{equation*}
which is a Banach manifold. 
Consider the map
\begin{equation*}
	\mathbf{F}\colon X\times\Gamma(w_1, w_2)\times [1,2]\to Y, \qquad \mathbf F(x,\gamma, t):= \cF(x, \gamma(t)). 
\end{equation*}  
Clearly, $y$ is a regular value for $\mathbf F$ too so that $\mathbf{F}^{-1}(y)$ is a submanifold of $X\times\Gamma(w_1, w_2)\times [1,2]$. 
Applying the Sard--Smale theorem to the restriction of the projection $X\times\Gamma(w_1, w_2)\times [1,2]\to \Gamma(w_1, w_2)$ we obtain that for a generic $\gamma\in \Gamma(w_1, w_2)$ the subset 
\begin{equation*}
	\cM_\gamma:=\bigl \{  (x,t)\in X\times [1,2]\mid \mathbf F(x, \gamma, t) =y\bigr \}
\end{equation*}
is a smooth submanifold of dimension $1$ and 
\begin{equation*}
	\partial \cM_\gamma = \cF_{1}^{-1}(y)\cup \cF_2^{-1}(y).
\end{equation*}
Hence, 
\begin{equation*}
	\#\cF_{1}^{-1}(y) =\#\cF_2^{-1}(y)\mod 2
\end{equation*}
so that the number $\#\cF_{w}^{-1}(y) \mod 2$ does not depend on $w\in W_0$. 
This common value, as we already know, is just $\deg_2(F)$.

\subsection{The determinant line bundle}

If $X$ and $Y$ are closed \emph{oriented} manifolds of the same finite dimension, for any map $F\colon X\to Y$ the degree can be extended to take values in $\Z$ rather than $\Z/2\Z$ \cite{GuilleminPollack10_DiffTop}.  
This generalization requires orientations of the background manifolds and this tool is not readily available in infinite dimensions. 
In this section I will describe how to deal with this problem.

\medskip

Let $\cP$ be a topological space and $\{T_p\mid p\in \cP\}$ be a continuous family of linear Fredholm maps. 
This means that the map 
\begin{equation*}
	\cP\to \mathrm{Fred}(X; Y),\qquad p\mapsto T_p
\end{equation*} 
is continuous with respect to the operator norm, where $X, Y$ are Banach spaces and $\mathrm{Fred}(X; Y)$ denotes the subspace of linear Fredholm maps. 

Pick a point $p\in \cP$. Since both the kernel and cokernel of $T_p$ are finite dimensional, we can construct the (real) line
\begin{equation*}
	\det T_p:=\Lambda^{\mathrm{top}}\ker T_p\otimes \Lambda^{\mathrm{top}}(\coker T_p)^*.
\end{equation*} 
As $p$ varies, we obtain a family of vector spaces of dimension one.
It turns out that this family is actually a locally trivial vector bundle, which is somewhat non-obvious given that the dimensions of the kernel as well as cokernel may `jump' as $p$ varies.  

To understand why this is the case, pick a point $p_0$ and a finite dimensional subspace  $V\subset Y$ transverse to $\Im T_{p_0}$.
The linear map 
\begin{equation*}
	T_{p, V}\colon X\oplus V\to Y,\qquad T_{p, V}(x,v) = T_{p}\, x +v
\end{equation*} 
is  surjective for $p=p_0$ and, hence, also for all $p$ sufficiently close to $p_0$.  
Therefore, we have the exact sequence
\begin{equation}
	\label{Eq_AuxSESDetBundle}
	0\to \ker T_p\to\ker T_{p, V}\to V \to V/\Im T_{p}\cap V \to 0
\end{equation}
Notice that $\coker T_p = (\Im T_p + V)/\Im T_p = V/\Im T_{p}\cap V$. 

\begin{exercise}
	Let $0\to U_0\to U_1\to U_2\to 0$ be a short exact sequence of real vector spaces. 
	Show that the `inner product map'
	\begin{equation*}
		\Lambda^\top U_0\otimes \Lambda^{\top}U_1^*\to \Lambda^\top (U_1/U_0)^* =\Lambda^\top U_2^*
	\end{equation*}
	induces an isomorphism $\Lambda^\top U_1\cong \Lambda^\top U_0\otimes \Lambda^\top U_2$.
	
	More generally, show that for any exact sequence of real vector spaces $0\to U_0\to U_1\to U_2\to \dots\to U_k\to 0$ there is a canonical isomorphism
	\begin{equation*}
		\bigotimes\Lambda^\top U_{\mathrm{even}}\cong \bigotimes \Lambda^\top U_{\mathrm{odd}}.
	\end{equation*}
\end{exercise}

Hence, by~\eqref{Eq_AuxSESDetBundle} we have a canonical isomorphism
\begin{equation*}
	\det T_p \cong \Lambda^\top\ker T_{p,V}\otimes \Lambda^\top V^*.
\end{equation*}
It can be shown  not only that $\dim \ker T_{p, V}$ is constant in $p$ near $p_0$ but also  that $\ker T_{p, V}$ is trivial in a neighborhood $W$ of $p_0$. 
Details can be found for instance in~\cite{McDuffSalamon12_JholomCurvesSymplTop}*{Thm.\,A.2.2}. 
Thus, the family of real lines $\{ \det T_p\mid p\in W\, \}$ admits a trivialization, i.e., $\det T$ is a vector bundle. 

\begin{defn}
	Two continuous families of Fredholm operators $\{ T_{p,\,i}\mid p\in \cP \}, \ i\in\{ 0,1 \}$ are said to be \emph{homotopic}, if there is a continuous family of Fredholm operators $\{ T_{p,\,t}\mid (p,t)\in \cP\times [0,1]\,  \}$, whose restriction to $\cP\times\{ 0 \}$ and $\cP\times\{ 1 \}$ yields the initial families.  
\end{defn}

Assume that $\{ T_{p,\,0} \}$ is homotopic to $\{T_{p,\,1}\}$ and the determinant line bundle $\det T_{p,\, 1}$ is trivial. 
Since $\cP\times [0,1]$ is homotopy equivalent to $\cP$, the bundle $\det T_{p,\,t}$ is also trivial. 
In particular, $\det T_{p,\, 0}$ is trivial. 
More precisely, we have the following statement.

\begin{proposition}
	Let $\{ T_{p,\,0} \}$ and $\{T_{p,\,1}\}$ be homotopic families of Fredholm operators such that $\det T_{p,\, 1}$ is trivial.
	Then $\det T_{p,\,0}$ is also trivial. 
	Moreover, a choice of a trivialization of $\det T_{p,\, 1}$ and a homotopy $\{ T_{p,t}\mid t\in [0,1] \}$ yields a trivialization of $\det T_{p,\,0}$.
	This is unique up to a multiplication with a positive function.  \qed
\end{proposition}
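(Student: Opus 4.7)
The plan is to treat the homotopy $\{T_{p,t}\}$ as a single continuous family of Fredholm operators parameterized by $\cP\times[0,1]$, apply the determinant line bundle construction from the preceding discussion to this enlarged parameter space, and then exploit the contractibility of $[0,1]$ to compare the endpoint line bundles.

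First, I would verify that the local stabilization construction goes through verbatim with parameter space $\cP\times[0,1]$ in place of $\cP$. Given $(p_0,t_0)$, one picks a finite-dimensional $V\subset Y$ transverse to $\Im T_{p_0,t_0}$; the map $T_{(p,t),V}(x,v):=T_{p,t}x+v$ is surjective on an open neighborhood $W$ of $(p_0,t_0)$ by continuity, and the analogue of the short exact sequence~\eqref{Eq_AuxSESDetBundle} with parameter $(p,t)$ yields a canonical identification
\begin{equation*}
\det T_{p,t}\;\cong\;\Lambda^\top\ker T_{(p,t),V}\otimes\Lambda^\top V^{*},
\end{equation*}
together with a local trivialization coming from a trivialization of the finite-rank subbundle $\ker T_{(p,t),V}\to W$. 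Assembling these pieces produces a real line bundle $\cL\to\cP\times[0,1]$ whose restrictions $\iota_{0}^{*}\cL$ and $\iota_{1}^{*}\cL$ (via the inclusions $\iota_i(p)=(p,i)$) are canonically the original $\det T_{p,0}$ and $\det T_{p,1}$.

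Next, I would invoke the homotopy invariance of real line bundles over a paracompact base: the projection $\pi\colon\cP\times[0,1]\to\cP$ and the inclusion $\iota_{1}$ are homotopy inverses, so there is an isomorphism $\cL\cong\pi^{*}\iota_{1}^{*}\cL=\pi^{*}(\det T_{p,1})$. Concretely, such an isomorphism can be produced by covering $\cP$ with opens over which $\det T_{p,1}$ is trivial, extending each local trivialization constantly in $t$ to $\cP\times[0,1]$, and gluing via a partition of unity in $\cP$; the compatibility of the local constructions above ensures this extension lies in the same isomorphism class of line bundles. Restricting to $\cP\times\{0\}$ then yields an isomorphism $\det T_{p,0}\cong\det T_{p,1}$, and a trivialization of $\det T_{p,1}$ is carried through to a trivialization of $\det T_{p,0}$.

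The step I expect to be the main subtlety is the uniqueness clause, because at this point two trivializations of $\cL$ extending the same trivialization of $\iota_{1}^{*}\cL$ need not agree. Any two such extensions differ by a continuous nowhere-vanishing function $\lambda\colon\cP\times[0,1]\to\R$ with $\lambda(p,1)\equiv 1$. For each fixed $p$, the path $t\mapsto\lambda(p,t)$ is continuous, nowhere zero on the connected interval $[0,1]$, and equals $1$ at $t=1$; hence it is strictly positive throughout. Restricting to $t=0$ shows that the two induced trivializations of $\det T_{p,0}$ differ by the positive function $\lambda(\cdot,0)$ on $\cP$, which is precisely the stated uniqueness. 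In particular, triviality of $\det T_{p,1}$ propagates to triviality of $\det T_{p,0}$, and the orientation class of the trivialization is well defined.
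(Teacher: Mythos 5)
Your proof is correct and follows essentially the same route as the paper's: construct the determinant line bundle over the enlarged parameter space $\cP\times[0,1]$, note that it restricts to $\det T_{p,0}$ and $\det T_{p,1}$ on the two ends, and invoke homotopy invariance of line bundles over a paracompact base. You additionally spell out the uniqueness clause via the positivity of the comparison function $\lambda$, which the paper leaves implicit.
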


\subsection{Orientations and the $\Z$--valued degree}
\label{Sect_OrientAndDeg}

Let $X, Y,$ and $W$ be Banach manifolds. 
Let $F\colon X\times W\to Y$ be a smooth map such that the following holds:
\begin{enumerate}[(a)]
	\item \label{It_HypFamOfFredholmMaps}
	$F_w=F|_{X\times\{w \}}\colon X\to Y$ is a Fredholm map of index $d$ for each $w\in W$;
	\item $y\in Y$ is a regular value of $F$;
	\item $F_w^{-1}(y)$ is compact for any $w\in W$;
	\item \label{It_HypTrivOfDetBundle}
	$\det d_x F_w$ is trivial over $X\times W$. 
\end{enumerate}
Furthermore, I assume that a trivialization of the determinant line bundle has been fixed.

Pick any $w\in W$ such that $y$ is a regular value of $F_w$. 
Then for any $x\in F_w^{-1}(y)$ we have 
\begin{equation*}
	\det d_xF_w = \Lambda^\top \ker d_xF_w = \Lambda^\top T_x F_w^{-1}(y) 
\end{equation*}
so that the hypotheses above imply that $\cM_w:=F_w^{-1}(y)$ is an  oriented $d$--manifold. 

Furthermore, let $\gamma\colon [0, 1]\to W$ be a  path connecting $w_0$ and $w_1$ akin to the situation considered in \autoref{Sect_ParamTransversality}. 
For generic $\gamma$ the space
\begin{equation*}
	\cM_\gamma:= \bigl \{  (x,t)\in X\times [0, 1]\mid F_{\gamma(t)}(x) =y \bigr \}
\end{equation*}    
is an oriented manifold of dimension $d+1$ such that 
\begin{equation}
	\label{Eq_BdryOfCobordism}
	\partial \cM_\gamma = \cM_{w_1}\sqcup \overline{\cM}_{w_0},
\end{equation} 
where $\overline{\cM}_{w_0}$ means that the orientation of $\cM_{w_0}$ is reversed.

In the particular case $d=0$, $\cM_{w}$ is just a finite collection of points $\{ m_1,\dots, m_k \}$ equipped with signs $\{ \e_1,\dots, \e_k\,\}$ so that we can define
\begin{equation}
	\deg F_w :=\sum_{i=1}^k \e_k \in\Z.
\end{equation}
In fact, for any two choices  $w_0$ and $w_1$ as above, \eqref{Eq_BdryOfCobordism} implies that $\deg F_{w_0}= \deg F_{w_1}$ provided $w_0$ and $w_1$ are in the same connected component. 
Thus,  $\deg F_w$ does not depend on $w$ as long as $W$ is connected.

The following result summarizes the considerations above.  

\begin{thm}$\phantom{a}$
	\label{Thm_DegFredholmMaps}
	Assume $F\colon X\times W\to Y$ satisfies Hypotheses  \ref{It_HypFamOfFredholmMaps}--\ref{It_HypTrivOfDetBundle} above.
	Then $\deg F_w$ does not depend on $w$. \qed
\end{thm}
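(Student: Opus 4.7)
The plan is to carry out the cobordism argument sketched in the paragraph immediately preceding the theorem, with care given to the orientation bookkeeping that was left implicit there. Without loss I take $W$ to be connected and fix $w_0,w_1\in W$ at which $y$ is a regular value of $F_w$. First I would apply the Sard--Smale theorem within the Banach manifold of $C^k$--paths from $w_0$ to $w_1$, as in Section~\ref{Sect_ParamTransversality}, to obtain a path $\gamma\colon[0,1]\to W$ with $\gamma(0)=w_0$, $\gamma(1)=w_1$, along which $y$ is a regular value of the evaluation map $E_\gamma\colon X\times[0,1]\to Y$, $E_\gamma(x,t):=F(x,\gamma(t))$.

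By the Kuranishi model (Theorem~\ref{Thm_KuranishiModel}) applied to $E_\gamma$, the set $\cM_\gamma := E_\gamma^{-1}(y)\subset X\times[0,1]$ is then a smooth $1$--manifold whose boundary consists of $\cM_{w_0}$ at $t=0$ and $\cM_{w_1}$ at $t=1$; compactness of $\cM_\gamma$, which is needed to make the signed boundary count finite, follows from a uniform version of Hypothesis~(c) along the compact set $\gamma([0,1])$---standard in this framework and which I take for granted. Thus $\cM_\gamma$ is a finite disjoint union of circles and arcs. To orient it, I use at each $(x,t)\in\cM_\gamma$ the canonical identification
\[
\det d_{(x,t)}E_\gamma \;\cong\; \det d_x F_{\gamma(t)} \otimes \Lambda^\top T_t[0,1]
\]
coming from decomposing $dE_\gamma$ into its $X$- and $[0,1]$-contributions. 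Combining the given trivialization of $\det dF_w$ on $X\times W$ (pulled back along $(\id,\gamma)$) with the standard orientation of $[0,1]$ produces a nowhere vanishing section of $\Lambda^\top T\cM_\gamma = \det d E_\gamma$, i.e., a global orientation.

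With this orientation in hand, the outward-normal convention for boundary orientations identifies $\partial\cM_\gamma = \cM_{w_1}\sqcup\overline{\cM}_{w_0}$ as oriented $0$--manifolds. Since the signed boundary sum of a compact oriented $1$--manifold vanishes, this yields $\deg F_{w_1} - \deg F_{w_0} = 0$, which is the theorem. The main obstacle will be the orientation check at the endpoints: verifying that the chain of determinant-line isomorphisms, together with the outward-normal convention, really produces opposite signs at the two ends of each arc-component, so that $\overline{\cM}_{w_0}$ (and not $\cM_{w_0}$) appears on the boundary. Once this sign is pinned down, the remaining ingredients---Sard--Smale for the path, the Kuranishi model for the local structure of $\cM_\gamma$, and the elementary $1$--manifold boundary principle---assemble routinely into the stated conclusion.
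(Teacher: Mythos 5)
Your argument is essentially the one the paper itself sketches in the paragraphs preceding the theorem statement: pick a generic path $\gamma$, orient the $1$--manifold $\cM_\gamma$ via the trivialization of the determinant line, and read off $\deg F_{w_1}-\deg F_{w_0}=0$ from $\partial\cM_\gamma=\cM_{w_1}\sqcup\overline{\cM}_{w_0}$ (Eq.~\eqref{Eq_BdryOfCobordism}); the theorem is recorded with a \qed precisely because it is a summary of that discussion. The compactness caveat you rightly flag (Hypothesis~(c) gives compactness of each slice $F_w^{-1}(y)$, not of $\cM_\gamma$) is likewise implicit in the paper, so your treatment matches its level of rigor, and your orientation bookkeeping is the correct way to make precise what the paper leaves unstated.
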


Clearly, the case $d=0$ is not really special. 
Indeed, as we already know for any $d\ge 0$, $\cM_w = F_w^{-1}(y)$ is a smooth oriented manifold for all $w$ in a dense subset in $W$. 
Then \eqref{Eq_BdryOfCobordism} shows that  for any two choices $w_0$ and $w_1$ in this subset, $\cM_{w_0}$ and $\cM_{w_1}$ are cobordant, i.e., the oriented cobordism class of $[\cM_w]$ is well--defined and does not depend on $w$. 
One can take this oriented cobordism class as an invariant, however, in practice it may be hard to deal with.
One way to extract a number out of this cobordism class is as follows. 

Let $P\to X$ be a (principal) bundle.
Assume there are characteristic classes $\a_1,\dots\a_k$ of $P$ such that $\a:=\a_1\cup\dots\cup\a_k\in H^d(X;\, \Z)\subset H^d(X;\, \R)$.
Since the restriction of $\a$ to $\cM_w$ can be represented by a closed $d$--form, say $\om$,  Stokes' theorem implies that 
\begin{equation*}
	\langle \a, [\cM_{w}] \rangle =\int_{\cM_{w}}\om
\end{equation*}
does not depend on $w$. 
In fact, this is an integer, since $[\om]$ represents an integral cohomology class. 

\subsection{An equivariant setup}
\label{Sect_EquivSetup}

Let $X$ be a Banach manifold equipped with an action of a Banach Lie group $\cG$. 
For any $x\in X$ the infinitesimal action of $\cG$ at $x$ is given by the linear map
\begin{equation}
	R_x\colon \Lie(\cG)\to T_xX,
\end{equation} 
whose image is the tangent space to the orbit through $x$.

For the sake of simplicity of exposition let me assume that $\cG$ acts freely on $X$. 
It will be also convenient to assume that $X$ and $\cG$ are Hilbert manifolds.

\begin{defn}
	A Hilbert submanifold $S\subset X$ containing $x$ is said to be a local slice of the $\cG$-action at $x$, if  the set $\cG S:=\{ g\cdot s\mid g\in \cG, \ s\in S \}$ is open in $X$ and 	the natural map
	\begin{equation*}
		\cG\times S\to \cG S,\qquad (g, s)\mapsto g\cdot s
	\end{equation*}
	is a diffeomorphism. 
\end{defn}

\begin{proposition}
	Assume $\cG$ acts freely on $X$. 
	If the $\cG$--action admits a slice at any point, then the quotient $X/\cG$ is a manifold. 	
\end{proposition}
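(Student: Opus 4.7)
The plan is to use slices to define a smooth atlas on $X/\cG$. Equip $X/\cG$ with the quotient topology induced by $\pi\colon X\to X/\cG$. First I would observe that $\pi$ is an open map, since for any open $U\subset X$ we have $\pi^{-1}(\pi(U)) = \bigcup_{g\in\cG} g\cdot U$, which is open because $\cG$ acts by diffeomorphisms. Consequently, for any slice $S$ at a point $x$, the image $\pi(S)=\pi(\cG S)$ is open in $X/\cG$.

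Next I would use each slice as a coordinate chart. Fix a slice $S$ at $x$. The freeness of the action combined with the slice hypothesis---that $\Phi\colon \cG\times S\to \cG S$, $(g,s)\mapsto g\cdot s$, is a diffeomorphism---implies that each $\cG$-orbit meets $S$ in exactly one point inside $\cG S$. Hence $\pi|_S\colon S\to \pi(S)$ is a continuous bijection. Its inverse is the composition of the section $\cG S\to S$ obtained by projecting $\Phi^{-1}$ onto the $S$-factor (which is smooth, hence continuous, and $\cG$-invariant, so descends to $\pi(S)$) with the identification of $\pi(S)$ with orbits in $\cG S$. This produces a homeomorphism $\phi_S:=(\pi|_S)^{-1}\colon \pi(S)\to S$, which I take as a chart modelled on the Hilbert manifold $S$.

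For smooth compatibility, let $S_1$ and $S_2$ be slices with $V:=\pi(S_1)\cap \pi(S_2)\neq\emptyset$. The transition map $\phi_{S_2}\comp \phi_{S_1}^{-1}$ sends $s_1\in \phi_{S_1}(V)\subset S_1$ to the unique point of $\cG\cdot s_1\cap S_2$. Using the slice diffeomorphism for $S_2$, this point is precisely the $S_2$-component of $\mathrm{pr}_{S_2}\comp \Phi_{S_2}^{-1}(s_1)$, where on the open set $\phi_{S_1}(V)\subset S_1\cap \cG S_2$ the map $\Phi_{S_2}^{-1}$ is a smooth diffeomorphism onto an open subset of $\cG\times S_2$. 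Hence the transition is smooth, and the collection $\{(\pi(S),\phi_S)\}$ forms a smooth atlas.

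The step I would expect to be the main obstacle is verifying that $X/\cG$ is Hausdorff (and second countable). Given distinct orbits $[x_1]\neq [x_2]$, one chooses slices $S_1,S_2$ through $x_1,x_2$ and must shrink them to ensure $\cG S_1\cap \cG S_2=\emptyset$; this requires that the orbit $\cG\cdot x_2$ stay away from $x_1$ in a uniform sense, which is where properness-type content hidden in the slice hypothesis must be extracted. Granting this point-separation, the open map property of $\pi$ together with the local chart structure then yields that $X/\cG$ is a Hausdorff Hilbert manifold of the expected local model, completing the proof.
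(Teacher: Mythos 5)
Your proof is correct and follows the same approach as the paper---using slices as coordinate charts on the quotient---the paper simply states this as a one-line observation (``$S$ can be identified with a neighbourhood of the orbit $\cG\cdot x$ in the quotient space $X/\cG$''), whereas you supply the openness of $\pi$, the smooth compatibility of the charts, and honestly flag the Hausdorff question that the paper silently suppresses.
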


The proof of this proposition is clear: $S$  can be identified with a neighbourhood of the orbit $\cG\cdot x$ in the quotient space $X/\cG$.

\begin{example}
	The group $\U(1)$ acts on $S^2$ by rotations around the $z$-axis. 
	If we remove the north and the south poles, this action is free. 
	The submanifold 
	\begin{equation*}
		S:= \bigl\{  (x, 0, z)\mid x^2 + z^2 =1, x>0 \bigr \}\cong (-1, 1)
	\end{equation*}
	is a global slice for the $\U(1)$--action.
	In particular, the quotient is a manifold, which in this case is naturally diffeomorphic to an interval.
\end{example}

\bigskip

Let $Y$ be a smooth manifold equipped with a $\cG$--action and  $F\colon X\times W\to Y$ be a smooth map such that each $F_w\colon X\to Y$ is $\cG$--equivariant.  
Let $y\in Y$ be a fixed point of the $\cG$--action, i.e., $g\cdot y=y$ for all $g\in \cG$. 
For any $x\in F_w^{-1}(y)$ we can construct the sequence 
\begin{equation}
	\label{Eq_AbstrDefComplex}
	0\to \Lie(\cG)\xrightarrow{\, R_x\ } T_xX\xrightarrow{\ d_xF_w\ } T_yY\to 0,
\end{equation}
which is in fact a complex.
This follows immediately from the equivariancy of $F_w$ (and the assumption that $y$ is a fixed point). 
This is called \emph{the deformation complex} at $x$.

The zero's cohomology group of the deformation complex is just the Lie algebra of the stabilizer of $x$.
Our assumption implies that this is trivial. 

The second cohomology group is just the cokernel of $d_xF_w$.
This is trivial if and only if $y$ is a regular value for $F_w$.

It is also easy to understand the meaning of the first cohomology group.
Indeed, $\ker d_xF_w$ is the Zariski tangent space to $F_w^{-1}(y)$. 
Since $y$ is fixed, $\cG$ acts on $F_w^{-1}(y)$ so that the first cohomology group can be thought of as the tangent space to the `moduli space'
\begin{equation*}
	\cM_w:=F_w^{-1}(y)/\cG
\end{equation*}
at $\cG\cdot x$. 

Since $T_xX$ is by assumption a Hilbert space, we have a linear map
\begin{equation}
	\label{Eq_AbstrDeformOperator}
	D_x:=(R_x^*,\, d_xF_w)\colon T_xX\to \Lie(\cG)\oplus T_yY
\end{equation}
whose kernel can be identified with the first cohomology group of the deformation complex. 

\begin{thm}
	\label{Thm_AbstrModuliSpaceIsMfld}
	Let $y$ is a fixed point of the $\cG$--action on $Y$.
	Assume that the following holds:
	\begin{enumerate}[(i)]
		\item \label{It_HypGActsFreely}
		$\cG$ acts freely on $X$;
		\item $y$ is a regular value for $F_w$;
		\item There is a local slice at each point $x\in F_w^{-1}(y)$; 
		\item \label{It_HypDefOperFredholm}
		$D_x$ is a Fredholm linear map of index $d$.
	\end{enumerate}
Then $\cM_w$ is a smooth manifold of dimension $d$.
\end{thm}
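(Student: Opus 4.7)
The plan is to build smooth charts on $\cM_w=F_w^{-1}(y)/\cG$ near each orbit $[x_0]$ by means of the slice theorem. Fix $x_0\in F_w^{-1}(y)$ and let $S$ be a local slice at $x_0$, whose existence is granted by hypothesis~\ref{It_HypDefOperFredholm}(iii). The slice diffeomorphism $\cG\times S\to\cG S$ carries each orbit to a factor $\cG\times\{s\}$, so any $\cG$--invariant subset of $\cG S$ has the form $\cG\times T$ for some $T\subset S$. Since $y$ is fixed by $\cG$ and $F_w$ is equivariant, $F_w^{-1}(y)\cap\cG S$ corresponds to $\cG\times\bigl(F_w^{-1}(y)\cap S\bigr)$, and therefore a neighborhood of $[x_0]$ in $\cM_w$ is canonically homeomorphic to $F_w^{-1}(y)\cap S$ near $x_0$. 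The task reduces to showing that this slice-level zero locus is a smooth $d$--manifold.

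To this end I would apply the Banach implicit function theorem to the restriction $F_w|_S\colon S\to Y$. The slice decomposition $T_{x_0}X=\Im R_{x_0}\oplus T_{x_0}S$, combined with the Hilbert structure, allows one to take $T_{x_0}S=(\Im R_{x_0})^\perp=\ker R_{x_0}^*$. The key claim is that $d_{x_0}F_w|_{T_{x_0}S}$ is surjective: given $\eta\in T_yY$, pick $v\in T_{x_0}X$ with $d_{x_0}F_w\, v=\eta$ (possible since $y$ is regular), decompose $v=R_{x_0}\xi+v_S$ with $v_S\in T_{x_0}S$, and note that equivariance of $F_w$ together with the identity $g\cdot y=y$ forces $d_{x_0}F_w\comp R_{x_0}=0$, so $d_{x_0}F_w\, v_S=\eta$.

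For the dimension count, observe that
\begin{equation*}
\ker\bigl(d_{x_0}F_w|_{T_{x_0}S}\bigr)=\ker R_{x_0}^*\cap\ker d_{x_0}F_w=\ker D_{x_0}.
\end{equation*}
Freeness of the action makes $\ker R_{x_0}=0$, and existence of the slice makes $\Im R_{x_0}$ closed, so by the closed range theorem $R_{x_0}^*\colon T_{x_0}X\to\Lie(\cG)$ is surjective; combined with the surjectivity argument above, this yields that $D_{x_0}$ itself is surjective. Hence $\coker D_{x_0}=0$ and $\dim\ker D_{x_0}=\ind D_{x_0}=d$ by hypothesis~\ref{It_HypDefOperFredholm}(iv). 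The kernel being finite-dimensional is automatically complemented in the Hilbert space $T_{x_0}S$, so the implicit function theorem produces a smooth $d$--dimensional submanifold chart for $F_w^{-1}(y)\cap S$ at $x_0$. The main obstacle, and the place where all four hypotheses enter together, is precisely this surjectivity-plus-index bookkeeping for $D_{x_0}$; smooth compatibility of charts coming from different slices is then a routine consequence of the smoothness of the $\cG$--action on $X$.
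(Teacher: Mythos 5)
Your argument is correct and follows essentially the same route as the paper's: pass to a local slice at a solution, check that $y$ remains a regular value of $F_w|_S$ using equivariance, and identify the tangent space of the slice-level zero locus with $\ker D_{x_0}$. The only difference is that you spell out the surjectivity of $D_{x_0}$ (via the closed-range theorem applied to $R_{x_0}$), whereas the paper leaves this step implicit when it passes from $\ind D_x=d$ to $\dim\ker D_x=d$.
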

\begin{proof}
	The statement is local, so we can restrict our attention to a neighborhood of a point $x\in F_w^{-1}(y)$. 
	Let $S$ be a slice at $x$ so that $T_xS$ and $\Im R_w$ are complementary subspaces in $T_xX$. 
	Then $y$ is still a regular value for $F_w|_{S}$ and 
	\begin{equation*}
		\ker d_x F_w|_{S}=\ker (R_x^*,\; d_xF_w)
	\end{equation*}
	so that $F_w^{-1}\cap S$ is a manifold of dimension $\dim \ker D_x = d$.
\end{proof}

By tracing through the discussion of \autoref{Sect_OrientAndDeg} it is easy to see that the following theorem holds. 

\begin{thm}
	Assume that in addition to Hypotheses~\ref{It_HypGActsFreely}--\ref{It_HypDefOperFredholm} of \autoref{Thm_AbstrModuliSpaceIsMfld} the determinant line bundle $\det D_x$ is trivialized and this trivialization is preserved by the action of $\cG$. 
	Then $\cM_w$ is oriented. 
	If in addition $\cM_w$ is compact for any $w$, then the oriented bordism class of $\cM_w$ does not depend on $w$. \qed 
\end{thm}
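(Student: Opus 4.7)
The plan is to build the proof in two main stages: first, construct an orientation on $\cM_w$ from the trivialization of $\det D_x$, then produce an oriented cobordism between $\cM_{w_0}$ and $\cM_{w_1}$ via a generic path in $W$.

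\textbf{Step 1 (Orientation of $\cM_w$).} The key observation is that under the hypotheses of \autoref{Thm_AbstrModuliSpaceIsMfld}, the deformation complex \eqref{Eq_AbstrDefComplex} is exact except in the middle: $R_x$ is injective because $\cG$ acts freely, and $d_xF_w$ is surjective because $y$ is a regular value. Consequently, the Fredholm operator $D_x$ of \eqref{Eq_AbstrDeformOperator} satisfies $\coker D_x = 0$ and, as in the proof of \autoref{Thm_AbstrModuliSpaceIsMfld}, $\ker D_x \cong T_{[x]}\cM_w$. Therefore
\begin{equation*}
\det D_x = \Lambda^{\mathrm{top}}\ker D_x \cong \Lambda^{\mathrm{top}}T_{[x]}\cM_w,
\end{equation*}
so a nowhere-vanishing section of $\det D_x$ pins down an orientation of $T_{[x]}\cM_w$ varying continuously with $x$. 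The $\cG$-equivariance of the trivialization ensures that orienting the slice at $x$ and at $g\cdot x$ yields compatible orientations, so the orientation descends to $\cM_w$. This first step is essentially bookkeeping once one unpacks the identification $\det D_x \cong \Lambda^{\mathrm{top}}T_{[x]}\cM_w$.

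\textbf{Step 2 (Parametric family and cobordism).} For $w_0,w_1 \in W$ in the same connected component with $y$ a regular value of $F_{w_0}$ and $F_{w_1}$, choose a path $\gamma\colon [0,1]\to W$ with $\gamma(i)=w_i$. Following the strategy of~\autoref{Sect_ParamTransversality}, for a generic such $\gamma$ the preimage
\begin{equation*}
\tilde\cM_\gamma := \bigl\{(x,t)\in X\times[0,1]\mid F_{\gamma(t)}(x)=y\bigr\}
\end{equation*}
is a smooth $\cG$-invariant submanifold, and the quotient $\cM_\gamma:=\tilde\cM_\gamma/\cG$ is a $(d+1)$-manifold with boundary $\cM_{w_1}\sqcup \cM_{w_0}$ (as unoriented manifolds). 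The associated deformation operator at $(x,t)$ is
\begin{equation*}
\tilde D_{(x,t)}\colon T_xX\oplus\R\to \Lie(\cG)\oplus T_yY,\qquad (\xi,\tau)\mapsto \bigl(R_x^*\xi,\,d_xF_{\gamma(t)}\xi+\tau\,\partial_t F_{\gamma(t)}(x)\bigr),
\end{equation*}
whose determinant line satisfies $\det \tilde D_{(x,t)}\cong \det D_x\otimes\Lambda^{\mathrm{top}}\R$ by the short exact sequence $0\to T_xX\to T_xX\oplus\R\to\R\to 0$ and the determinant-functor formalism. Using the canonical orientation of $\R$, the given trivialization of $\det D_x$ therefore induces a trivialization of $\det \tilde D_{(x,t)}$, hence an orientation on $\cM_\gamma$.

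\textbf{Step 3 (Boundary conventions and compactness).} One verifies that the induced orientation on $\partial\cM_\gamma$ agrees with $\cM_{w_1}$ and with the reverse of $\cM_{w_0}$ — this is the standard $\tau=0,1$ boundary-orientation check arising from the $\R$-factor. Together with the hypothesis that each $\cM_w$ is compact (which forces $\cM_\gamma$ to be compact by viewing the projection $\cM_\gamma\to[0,1]$ as a proper map with compact fibres over a compact base, using that $\cM_\gamma$ sits inside $\bigcup_t \cM_{\gamma(t)}\times\{t\}$), this exhibits $\cM_\gamma$ as an oriented cobordism between $\cM_{w_0}$ and $\cM_{w_1}$. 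For $w$'s in different components of $W$, connectedness is not assumed, so the statement should be read componentwise, or one argues via a local-constancy argument over $W$ using the Sard--Smale theorem to find nearby regular $w$'s in each direction.

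\textbf{Main obstacle.} The genuinely delicate point is the compactness of $\cM_\gamma$: the hypothesis only ensures compactness of the individual fibres, and one needs a uniform statement along a path — this is straightforward if one invokes the $\cG$-equivariant version of properness for the projection $(x,t)\mapsto t$, but in concrete applications (e.g.\ Seiberg--Witten) it is typically this step that requires real work. The orientation computation in Steps~1--2 is conceptually clean once the determinant-line formalism is in place; the boundary-sign verification in Step~3 is routine but must be checked with care to match conventions.
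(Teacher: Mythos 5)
Your proposal is correct and matches the approach the paper intends: the theorem carries a ``\textbackslash qed'' precisely because its proof is a cross-reference to the parametric cobordism argument laid out in \autoref{Sect_OrientAndDeg}, which you have reconstructed, and your Step~1 is the right equivariant refinement — under the hypotheses of \autoref{Thm_AbstrModuliSpaceIsMfld} the deformation complex~\eqref{Eq_AbstrDefComplex} has $H^0=0$ (free action) and $H^2=0$ (regular value), hence $\coker D_x=0$ and $\det D_x = \Lambda^{\mathrm{top}}\ker D_x \cong \Lambda^{\mathrm{top}}T_{[x]}\cM_w$, so a $\cG$-invariant trivialization of $\det D_x$ descends to an orientation of $\cM_w$. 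The determinant-line identification in Step~2 and the boundary-sign check in Step~3 are likewise what the paper has in mind via~\eqref{Eq_BdryOfCobordism}.

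The compactness caveat you flag is genuine and is glossed over in the paper as well: Hypothesis (c) of \autoref{Sect_OrientAndDeg} and the present theorem assert only that each fibre $\cM_w$ is compact, and this alone does not give compactness of the total space $\cM_\gamma$. What is actually needed is properness of the parametrized map (equivalently, a priori estimates uniform in the path variable), which is exactly what is established separately in concrete applications, cf.\ \autoref{Rem_CompactnessForPerturbedCase}. So the theorem as stated carries an implicit uniformity hypothesis, and you have located the one place where the hypotheses as written do not by themselves deliver the conclusion; your honest flagging of this is appropriate rather than a defect of your argument.
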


\section{The Seiberg--Witten gauge theory}

\subsection{The Seiberg--Witten equations}
Recall that in dimension $4$ we have an isomorphism of $\Spin(4)$ representations
\begin{equation}
	\label{Eq_R4HomS+S-}
	\R^4\otimes\C\cong \Hom (\slS^+;\, \slS^-),
\end{equation} 
where $\Spin(4)$ acts on  $\R^4$ via the homomorphism $\Spin(4)\to \SO(4)$. 
\eqref{Eq_R4HomS+S-} is still valid as an isomorphisms of $\Spin^c(4)$--representations. 
Somewhat more explicitly, the standard inclusion $\R^4\to Cl(\R^4)$ yields a monomorphism 
\begin{equation*}
	\R^4\to \Hom(\slS^+;\, \slS^-),\qquad v\mapsto (\psi\mapsto v\cdot \psi).
\end{equation*}  
 Of course, we have also  an inclusion $\R^4\to \Hom(\slS^-;\, \slS^+)$.
 Hence, the Clifford multiplication with a 2--form yields a map $\Lambda^2\R^4\to \End(\slS^\pm)$, whose kernel is $\Lambda^2_\mp\R^4$ as a straightforward computation shows.
 Moreover, the image of this map consists of skew-Hermitian endomorphisms so that we obtain an isomorphism
 \begin{equation*}
 	\Lambda^2_+\R^4\to \su(\slS^+),
 \end{equation*}
 whose complexification yields $\Lambda^2_+\R^4\otimes \C\cong \End_0(\slS)$, cf.~\eqref{Eq_BasicIsoOfSUreps}. 

Notice also that we have a quadratic map
\begin{equation*}
	\mu\colon\slS^+\to i\, \su(\slS^+)\subset \End_0(\slS^+),\qquad \mu(\psi) = \psi\psi^* - \frac 12 |\psi|^2, 
\end{equation*} 
where the expression on the right hand side means the following: $\mu(\psi)(\varphi) = \langle \varphi, \psi\rangle \psi + \frac 12 |\psi|^2\,\varphi$. 
Somewhat more concretely, $\mu$ is just the map 
\begin{equation*}
\C^2\to  i\,\su(2),\qquad 
\begin{pmatrix}
\psi_1\\ \psi_2
\end{pmatrix}
\mapsto 
\frac 12 
\begin{pmatrix}
|\psi_1|^2 - |\psi_2|^2 & 2\, \psi_1\bar\psi_2\\
2\,\bar\psi_1\psi_2 & |\psi_2|^2 - |\psi_1|^2
\end{pmatrix}.
\end{equation*}
Hence, we can think of $\mu(\psi)$ as a purely imaginary self--dual 2-form.

\medskip

A more global version of these identifications  is as follows.
Pick an oriented Riemannian four--manifold $M$ equipped with a spin$^c$ structure. 
Denote by $\slS^\pm$ the corresponding spinor bundles, see \autoref{Sect_SpinSpinCstr} for details.
Then we have the isomorphisms of vector bundles 
\begin{align}
	T_\C^*M\cong \Hom(\slS^+;\, \slS^-), \\
	i\,\Lambda^2_+T^*M\cong i\,\su(\slS^+) 
\end{align}
and a fiberwise quadratic map 
\begin{equation*}
	\mu\colon\slS^+\to i\, \Lambda^2_+T^*M. 
\end{equation*}

With this understood, \emph{the Seiberg--Witten equations} for a pair $(\psi, A)\in \Gamma(\slS^+)\times\cA(P_{\mathrm{det}})$  are as follows:
\begin{equation}
	\label{Eq_SW4D}
	\slD_A^+\psi =0\qand\quad F_A^+ = \mu(\psi).
\end{equation}
It is also convenient to introduce \emph{the Seiberg--Witten map} by
\begin{equation*}
	SW\colon \Gamma(\slS^+)\times\cA(P_{\mathrm{det}})\to \Gamma(\slS^-)\times\Om^2_+(M;\, \R i),\qquad SW(\psi, A) = \Bigl ( \slD^+\psi, F_A^+ -\mu(\psi) \Bigr). 
\end{equation*}

\subsubsection{The gauge group action}

Since the structure group of $P_{\det}$ is the abelian group $\U(1)$, we have an identification $\cG:=\cG(P_{\det})\cong C^\infty(M;\, \U(1))$. 
This acts on $\cA(P_{\det})$ on the right by gauge transformations:
\begin{equation}
	\label{Eq_GaugeGpActOnAbConn}
	A\cdot g = A +2\, g^{-1}dg.
\end{equation} 
We can extend this to the right action of $\cG$ on \emph{the configuration space} $\Gamma(\slS^+)\times \cA(P_{\det})$ as follows:
\begin{equation*}
	(\psi, A)\cdot g = (\bar g\psi,\, A\cdot g). 
\end{equation*}

We let also $\cG$ act on $\Gamma(\slS^-)$ on the left in the obvious manner.
Extending this action by the trivial one on $\Om^2_+(M;\, \R i)$, we obtain a left action of $\cG$ on   $\Gamma(\slS^-)\times\Om^2_+(M;\, \R i)$.  

\begin{lem}
	The Seiberg--Witten map is $\cG$-equivariant, i.e.,
	\begin{equation*}
	\pushQED{\qed}
		SW \bigl ( (\psi, A)\cdot g \bigr ) =\bar g\cdot SW(\psi, A). 
	\qedhere\popQED
	\end{equation*}
\end{lem}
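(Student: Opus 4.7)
The plan is to verify the two components of $SW$ separately: the Dirac part and the curvature/moment-map part. Since $\cG$ acts trivially on $\Om^2_+(M;\R i)$, the equivariance statement reduces to the pair of identities
\begin{equation*}
\slD^+_{A\cdot g}(\bar g\psi) = \bar g\,\slD^+_A\psi \qquad\text{and}\qquad F^+_{A\cdot g} - \mu(\bar g\psi) = F^+_A - \mu(\psi).
\end{equation*}

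For the second identity I would dispatch both summands independently. Since $\U(1)$ is abelian and $g^{-1} = \bar g$, the action \eqref{Eq_GaugeGpActOnAbConn} yields $A\cdot g - A = 2\,g^{-1}dg$, and differentiating gives $F_{A\cdot g} = F_A + 2\,d(g^{-1}dg)$. Expanding $d(g^{-1}dg) = -g^{-2}dg\wedge dg = 0$ shows $F_{A\cdot g} = F_A$, hence also for the self-dual parts. For the moment map, note that $|\bar g|\equiv 1$, so $|\bar g\psi|^2 = |\psi|^2$ and $(\bar g\psi)(\bar g\psi)^* = \bar g g\,\psi\psi^* = \psi\psi^*$; thus $\mu(\bar g\psi) = \mu(\psi)$.

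For the Dirac identity I would combine the exercise stating $\slD_{A+a}\psi = \slD_A\psi + \tfrac12\,a\cdot\psi$ (with $a = 2\,g^{-1}dg$) and the Leibniz rule $\slD_A(f\psi) = df\cdot\psi + f\,\slD_A\psi$ (where $df$ acts by Clifford multiplication) applied to $f = \bar g$. The computation runs
\begin{align*}
\slD^+_{A\cdot g}(\bar g\psi)
&= \slD^+_A(\bar g\psi) + g^{-1}dg\cdot \bar g\psi \\
&= d\bar g\cdot \psi + \bar g\,\slD^+_A\psi + \bar g\,g^{-1}dg\cdot\psi \\
&= \bigl(d\bar g + \bar g\,g^{-1}dg\bigr)\cdot\psi + \bar g\,\slD^+_A\psi.
\end{align*}
The key cancellation comes from differentiating $g\bar g = 1$, which gives $d\bar g = -\bar g^2\,dg = -\bar g\,g^{-1}dg$; thus the parenthesised term vanishes and the desired equality follows.

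The only point that requires a bit of care is the bookkeeping: the factor $2$ in the gauge action $A\cdot g = A + 2\,g^{-1}dg$ is exactly what is needed to absorb the factor $\tfrac12$ in the formula \eqref{Eq_SpincDiracChangeOfConn} for how $\slD_A$ depends on the connection on the determinant line bundle. Once this compatibility is noted, both verifications are essentially formal; no hard analytic input is needed, and the main conceptual obstacle—ensuring that the abelian nature of $\U(1)$ and the spin$^c$ conventions line up—is settled by the elementary identity $d\bar g + \bar g\,g^{-1}dg = 0$.
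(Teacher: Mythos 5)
Your proof is correct. The paper states this lemma with an immediate $\qed$ and gives no argument, so the only meaningful comparison is whether your verification is sound—and it is: you check each of the two components of $SW$, use the abelianness of $\U(1)$ to get $F_{A\cdot g}=F_A$ and $\mu(\bar g\psi)=\mu(\psi)$, and combine the Leibniz rule for the Dirac operator with the formula~\eqref{Eq_SpincDiracChangeOfConn} so that the factor $\tfrac12$ cancels the factor $2$ in the gauge action~\eqref{Eq_GaugeGpActOnAbConn}, with $d\bar g+\bar g\,g^{-1}dg=0$ providing the final cancellation. This is precisely the computation the paper is implicitly asking the reader to perform.
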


From this we obtain that $\cG$ acts on the space of solutions of~\eqref{Eq_SW4D}. 
The quotient
\begin{equation*}
	\cM_{SW} = \bigl\{ (\psi, A) \text{ is a solution of } \eqref{Eq_SW4D}\,\bigr \}/\cG
\end{equation*}
is called \emph{the Seiberg--Witten moduli space.} 

Notice that the action of $\cG$ on the configuration space is \emph{not} free. 
Indeed, if $g\in \cG$ is the stabilizer of a point $(\psi, A)$ in the configuration space, then~\eqref{Eq_GaugeGpActOnAbConn} implies that $g$ is constant. 
Hence, 
\begin{equation*}
	\Stab(\psi, A)\neq \{ 1 \}\quad\Longleftrightarrow\quad \psi \equiv 0.
\end{equation*}
The point $(\psi, A)$ with a non-vanishing spinor  $\psi$ are called \emph{irreducible}, while points of the form $(0, A)$ are called \emph{reducible}. 

Denote also
\begin{equation*}
	\cM_{SW}^{irr}:= \bigl\{ (\psi, A) \text{ is an irreducible solution of } \eqref{Eq_SW4D}\,\bigr \}/\cG. 
\end{equation*}

\subsubsection{The deformation complex}

As we know from \autoref{Sect_EquivSetup}, for any solution $(\psi, A)$ of the Seiberg--Witten equations, we can associate the deformation complex\footnote{Strictly speaking, at this point the constructions of \autoref{Sect_EquivSetup} are not applicable, but we will see below how to fix this.}:
\begin{equation}
	\label{Eq_SWDefComplex}
	0\to \Om^0(M;\, \R i)\xrightarrow{R_{(\psi, A)}} \Gamma(\slS^+)\oplus \Om^1(M;\, \R i)\xrightarrow{\ d_{(\psi, A)} SW\ } \Gamma(\slS^-)\oplus \Om^2_+(M;\, \R i)\to 0.
\end{equation}
To explain, since $\cA(P_{\det})$ is an affine space modelled on $\Om^1(M;\, \R i)$, the tangent space to the configuration space at any point can be naturally identified with the middle space of the complex. 
It is easy to compute the infinitesimal action of the gauge group:
\begin{equation*}
	R_{(\psi, A)}\,\xi = \bigl (  -\xi\psi,\;  2\,d\xi \,\bigr ),\qquad \xi\in \Om^0(M;\, \R).  
\end{equation*}  

\begin{lem}
	We have
	\begin{equation*}
		d_{(\psi, A)}SW(\dot\psi, \, \dot a) = \Bigl (  \slD_A^+\dot\psi + \frac 12\,\dot a\cdot \psi,\ d^+\dot a - 2\mu(\psi,\dot\psi) \Bigr ),
	\end{equation*}
	where the second summand of the first component means the Clifford multiplication of $\dot a$ and $\psi$, $d^+\dot a$ is the projection of $d\dot a$ onto the space of self--dual 2-forms, and $\mu(\cdot, \cdot)$ is the polarization of $\mu$.  \qed
\end{lem}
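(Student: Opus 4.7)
The plan is to compute $d_{(\psi,A)}SW$ by evaluating $SW$ along the affine ray $t\mapsto(\psi+t\dot\psi,\,A+t\dot a)$ and reading off the linear term in $t$. I would handle the two components of $SW$ separately, since each reduces to a result already established in the text.

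For the first component I would invoke the transformation formula for the spin$^c$ Dirac operator under a perturbation of the connection by an imaginary 1--form, namely $\slD^+_{A+a}\varphi = \slD^+_A\varphi + \tfrac12\,a\cdot\varphi$ (the spin$^c$ version of \eqref{Eq_SpincDiracChangeOfConn}). Applied to $a=t\dot a$ and $\varphi=\psi+t\dot\psi$ this gives
\begin{equation*}
\slD^+_{A+t\dot a}(\psi+t\dot\psi)=\slD^+_A\psi + t\bigl(\slD^+_A\dot\psi+\tfrac12\,\dot a\cdot\psi\bigr) + O(t^2),
\end{equation*}
whose derivative at $t=0$ is the first component of the stated formula.

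For the second component I would use two elementary facts. First, because the structure group of $P_{\det}$ is abelian, the bracket term in the formula for the curvature vanishes and $F_A$ depends affinely on $A$: $F_{A+t\dot a}=F_A+t\,d\dot a$, hence $F^+_{A+t\dot a}=F^+_A+t\,d^+\dot a$. Second, $\mu$ is homogeneous quadratic in $\psi$, so writing $\mu(\cdot,\cdot)$ for its polarization (the symmetric bilinear form with $\mu(\varphi,\varphi)=\mu(\varphi)$) one has
\begin{equation*}
\mu(\psi+t\dot\psi)=\mu(\psi)+2t\,\mu(\psi,\dot\psi)+t^2\mu(\dot\psi).
\end{equation*}
Combining and differentiating at $t=0$ produces $d^+\dot a - 2\,\mu(\psi,\dot\psi)$, as required.

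There is no real obstacle: both constituents of $SW$ have been analyzed individually earlier in the notes, so the claim is essentially a bookkeeping exercise. The only points requiring care are the factor $\tfrac12$ in the first component, which reflects the fact that $A$ is a connection on the \emph{determinant} line bundle rather than on the line bundle $L_0$ of \autoref{Rem_SpincBundleAsTwist}, and the factor $2$ in the second component, which is the standard combinatorial factor produced by polarizing a homogeneous quadratic map. Once these conventions are fixed, the formula follows immediately.
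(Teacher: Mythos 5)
Your proof is correct, and since the paper states this lemma with a \qed{} (no proof supplied), your computation fills in exactly the verification the author treats as routine: differentiate $SW$ along the ray $(\psi+t\dot\psi,\,A+t\dot a)$, use~\eqref{Eq_SpincDiracChangeOfConn} for the Dirac term, use that $F_A$ is affine in $A$ because the structure group is abelian, and polarize the quadratic map $\mu$. All factors (the $\tfrac12$ from the determinant line bundle convention and the $2$ from the polarization) are accounted for correctly.
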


\begin{proposition}
	\label{Prop_SWDefComplexElliptic}
	For any solution $(\psi, A)$ of the Seiberg--Witten equations~\eqref{Eq_SWDefComplex} is an elliptic complex. 
\end{proposition}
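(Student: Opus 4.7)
The plan is to verify ellipticity directly by examining the principal symbol sequence. Only the top-order parts of $R_{(\psi,A)}$ and $d_{(\psi,A)}SW$ contribute: the summand $-\xi\psi$ in $R$, and $\tfrac12 \dot a\cdot\psi$ and $-2\mu(\psi,\dot\psi)$ in $d_{(\psi,A)}SW$, are all zero-order. Hence the symbol sequence at any $\xi\in T_m^*M\setminus\{0\}$ reduces to
\begin{equation*}
0\to \R i \xrightarrow{\ \sigma(R)\ } \slS^+_m\oplus T_m^*M\otimes\R i \xrightarrow{\ \sigma(dSW)\ } \slS^-_m\oplus \Lambda^2_+T^*_mM\otimes \R i\to 0,
\end{equation*}
with $\sigma(R)(t)=(0,\,2t\xi)$ and $\sigma(dSW)(\dot\psi,\dot a)=(\xi\cdot\dot\psi,\;(\xi\wedge\dot a)_+)$. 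My goal is to show this sequence is exact.

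Injectivity of $\sigma(R)$ is immediate since $\xi\neq 0$. The composition $\sigma(dSW)\circ\sigma(R)$ vanishes because $\xi\wedge\xi=0$. For exactness at the middle term, suppose $\sigma(dSW)(\dot\psi,\dot a)=0$. From $\xi\cdot\dot\psi=0$ and the Clifford relation $\xi\cdot(\xi\cdot\dot\psi)=-|\xi|^2\dot\psi$, Clifford multiplication by $\xi$ is an isomorphism $\slS^+_m\to \slS^-_m$, so $\dot\psi=0$. It remains to identify the kernel of the linear map $\dot a\mapsto (\xi\wedge\dot a)_+$ with $\R\xi$, which is the only place linear-algebraic content enters.

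For this key point I would pick an oriented orthonormal basis $e_0,\dots,e_3$ of $T_m^*M$ with $\xi=|\xi|e_0$. A direct check using the standard basis $e_0\wedge e_i + e_{j}\wedge e_k$ (with $(ijk)$ a cyclic permutation of $(123)$) of $\Lambda^2_+$ shows that the three vectors $(e_0\wedge e_i)_+$, $i=1,2,3$, are a basis of $\Lambda^2_+$. Thus $\dot a\mapsto (\xi\wedge\dot a)_+$ is surjective onto $\Lambda^2_+$ with kernel exactly $\R\xi$, matching $\Im\sigma(R)$.

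Finally, surjectivity of $\sigma(dSW)$ follows from the same two observations (Clifford multiplication by $\xi$ maps $\slS^+_m$ onto $\slS^-_m$, and the projection above is onto $\Lambda^2_+$), or equivalently from a dimension count $1-8+8-0=0$ over~$\R$. Alternatively, one could invoke Exercise~\ref{Ex_EllComplexEllOper} and check instead that the single operator $\bigl(d_{(\psi,A)}SW,\, R_{(\psi,A)}^*\bigr)$ is elliptic; its symbol is injective by the same argument. The only genuine obstacle is the self-dual projection claim in the previous paragraph; once that standard four-dimensional Hodge computation is in hand, ellipticity follows mechanically.
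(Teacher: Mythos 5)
Your approach is sound and reaches the same conclusion, but by a more hands-on route than the paper. The paper observes that, modulo zero-order terms, the deformation complex~\eqref{Eq_SWDefComplex} splits as a direct sum of the Atiyah complex $0\to\Om^0\xrightarrow{d}\Om^1\xrightarrow{d^+}\Om^2_+\to 0$ (whose ellipticity it has already stated as a separate proposition) and the two-term Dirac complex $0\to\Gamma(\slS^+)\xrightarrow{\slD^+}\Gamma(\slS^-)\to 0$, and then simply quotes ellipticity of both factors. You perform precisely this block decomposition at the symbol level, but instead of invoking the pre-packaged ellipticity of the two pieces you verify exactness of each block by explicit linear algebra: the Clifford relation $\xi\cdot(\xi\cdot\dot\psi)=-|\xi|^2\dot\psi$ for the Dirac factor, and the self-dual projection computation in an adapted orthonormal basis for the Atiyah factor. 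The two proofs are the same modulo packaging; the paper's is more modular (it makes the Atiyah complex reusable elsewhere), while yours is more self-contained and explains why the Atiyah complex is elliptic in the first place rather than leaving that as a black box.

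One small slip in the optional aside: your "dimension count" reads $1-8+8-0=0$, which is neither the correct count nor even equal to zero. The target of the complex is $\slS^-_m\oplus\Lambda^2_+T^*_mM\otimes\R i$, which has real dimension $4+3=7$ (not $8$), so the alternating sum for the three-term symbol sequence is $1-8+7=0$. Since you already established surjectivity block by block, this is harmless and does not affect the argument, but as written the count would not actually certify surjectivity.
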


The proof of this proposition hinges on the following result, which is of independent interest. 

\begin{proposition}
	For any Riemannian oriented four--manifold $X$, the Atiyah complex
	\begin{equation*}
		0\to \Om^0(X)\xrightarrow{\ d\ }\Om^1(X)\xrightarrow{\ d^+\ }\Om^2_+(X)\to 0
	\end{equation*}
	is elliptic.\qed
\end{proposition}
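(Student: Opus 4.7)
\medskip

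\noindent\textbf{Proof proposal.} My plan is to compute the principal symbols of $d$ and $d^+$ explicitly and then verify exactness of the symbol sequence
\begin{equation*}
0\to \R\xrightarrow{\ \sigma_d(\xi)\ }\Lambda^1 T_x^*X\xrightarrow{\ \sigma_{d^+}(\xi)\ }\Lambda^2_+ T_x^*X\to 0
\end{equation*}
for every $x\in X$ and every nonzero $\xi\in T_x^*X$. From the definition of the de Rham differential one has $\sigma_d(\xi)(\lambda)=\lambda\xi$, and for $d^+=\tfrac12(1+*)\comp d$ one gets $\sigma_{d^+}(\xi)(\eta)=(\xi\wedge\eta)^+$. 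Injectivity at the left term is then immediate since $\xi\neq 0$, and the inclusion $\Im\sigma_d(\xi)\subset\ker\sigma_{d^+}(\xi)$ is automatic from $d^+\comp d=0$ (or equivalently from $\xi\wedge\xi=0$).

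The main computational step is to verify exactness in the middle and surjectivity on the right. For the former, assume $(\xi\wedge\eta)^+=0$ and decompose $\eta=a\xi+\eta^\perp$ with $\eta^\perp$ orthogonal to $\xi$ in the induced metric on $T_x^*X$. Then $\xi\wedge\eta=\xi\wedge\eta^\perp$, and if $\eta^\perp\neq 0$ one can complete $\{\xi/|\xi|,\eta^\perp/|\eta^\perp|\}$ to an oriented orthonormal basis $e_0,e_1,e_2,e_3$ of $T_x^*X$; a direct check gives $*(e_0\wedge e_1)=e_2\wedge e_3$, so $e_0\wedge e_1$ has a nonzero self--dual part, contradicting our assumption. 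Hence $\eta^\perp=0$ and $\eta\in \R\xi=\Im\sigma_d(\xi)$. This is the key point of the argument; the small Hodge--star computation is where the oriented four--dimensional geometry enters in an essential way.

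Surjectivity of $\sigma_{d^+}(\xi)$ then follows by dimension counting: we have just shown $\dim\ker\sigma_{d^+}(\xi)=1$, while $\dim\Lambda^1 T_x^*X=4$ and $\dim\Lambda^2_+T_x^*X=3$, so the image has the full dimension $3$. This establishes exactness of the symbol sequence and therefore ellipticity of the complex.

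As an alternative, essentially equivalent presentation, one could invoke Exercise~\ref{Ex_EllComplexEllOper} and reduce the problem to showing that the single operator $(d^+, d^*)\colon \Om^1(X)\to \Om^2_+(X)\oplus \Om^0(X)$ is elliptic. Its symbol at $\xi$ sends $\eta$ to $\bigl((\xi\wedge\eta)^+, -\langle\xi,\eta\rangle\bigr)$, a map between spaces of equal dimension $4$; the kernel computation above (combined with the observation that $\langle\xi,\eta\rangle=0$ forces $\eta=\eta^\perp$) shows the kernel is trivial, so the symbol is an isomorphism. I expect the only real obstacle to be the self--dual two--plane identification used in the middle cohomology step; everything else is formal.
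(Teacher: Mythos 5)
Your proof is correct and follows the first of the two approaches the paper itself suggests (computing principal symbols), merely filling in the details that the paper leaves to the reader. The Hodge--star computation $*(e_0\wedge e_1)=e_2\wedge e_3$ in the middle-exactness step is precisely where the oriented four--dimensional geometry enters, and your dimension count for surjectivity is the standard way to finish; the alternative via Exercise~\ref{Ex_EllComplexEllOper} that you sketch is essentially the paper's second suggested route.
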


One can prove this proposition either by computing the principal symbols in local coordinates or by noticing that $d^+ + d^*$ is a twisted Dirac operator. 
I leave the details to the reader.

\medskip

\begin{proof}[Proof of \autoref{Prop_SWDefComplexElliptic}]
	Modulo zero order terms, which are clearly immaterial for the statement of this proposition, \eqref{Eq_SWDefComplex} can be written as the direct sum of the Atiyah complex and 
	\begin{equation*}
		0\to 0\to \Gamma(\slS^+)\xrightarrow{\ \slD^+\ } \Gamma(\slS^-)\to 0.
	\end{equation*}
	The claim follows from the ellipticity of both complexes.
\end{proof}

\subsubsection{Sobolev completions}

The smooth category, which was used to define the Seiberg--Witten map, does not allow us to use the technique developed in \autoref{Sect_FredholmMaps}. 
Hence, we will work with Sobolev completions of the spaces under considerations.
This is described next. 

Pick any $A_0\in\cA(P_{\det })$ so that we can identify $\cA(P_{\det })$ with $\Gamma(T^*M\otimes \R i)$ even if not canonically so. 
For any fixed $(k,p)$ the space
\begin{equation*}
	\cA^{k,p}(P_{\det }):=A_0 + W^{k,p}(T^*M\otimes \R i).
\end{equation*}
 is an affine Banach space, hence a Banach manifold.
 It is easy to see that the resulting structure is independent of the choice of $A_0$.
 
 Just like in the case of the configuration space, it is also convenient to complete the gauge group. 
 Namely, a map $M\to S^1$ is said to be of class $W^{k,p}$ if the composition $M\to S^1\subset \R^2$ is in $W^{k,p}(M;\, \R^2)$.
 The subset $\cG^{k,p}$ of all such maps is not a vector space, however this is a Banach manifold. 
 Moreover, if $kp>4$ the Sobolev multiplication theorem shows that  $\cG^{k,p}$ is closed under the pointwise multiplication, so that $\cG^{k,p}$ is in fact a Banach Lie group. 
 Its Li algebra is given by 
 \begin{equation*}
 	\Lie (\cG^{k,p}) = W^{k,p}(M;\, \R i).
 \end{equation*}
 
 \begin{proposition}
 	For any $k$ and any  $p>1$ such that $kp>4=\dim M$ the Seiberg--Witten map extends as a smooth map
 	\begin{equation*}
 		SW\colon W^{k+1, p}(\slS^+)\times\cA^{k+1, p}(P_{\mathrm{det}})\to W^{k,p}(\slS^-)\times W^{k,p}(\Lambda^2_+T^*M\otimes\R i).
 	\end{equation*}
 	The action of the gauge group extends to a smooth action of $\cG^{k+2,p}$ and $SW$ is equivariant with respect to this action.  
  \end{proposition}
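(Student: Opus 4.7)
The strategy is to decompose the Seiberg--Witten map into terms that are either continuous linear, continuous bilinear, or continuous quadratic between the relevant Sobolev spaces, and to invoke the Sobolev multiplication theorem (\autoref{Thm_SobolevEmbedding}\,\ref{It_SobolevMultipl}) at each step. Since continuous multilinear maps between Banach spaces are automatically smooth, this will establish the required smoothness.

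Concretely, fix a reference connection $A_0$ and write $A = A_0 + a$ with $a\in W^{k+1,p}(T^*M\otimes \R i)$. Because $\U(1)$ is abelian, the curvature simplifies to $F_A = F_{A_0} + da$, so
\begin{equation*}
	F_A^+ - \mu(\psi) = F_{A_0}^+ + d^+ a - \mu(\psi),
\end{equation*}
while the Dirac part, by the formula~\eqref{Eq_SpincDiracChangeOfConn}, reads
\begin{equation*}
	\slD_A^+\psi = \slD_{A_0}^+\psi + \tfrac 12\, a\cdot \psi.
\end{equation*}
The operators $d^+$ and $\slD_{A_0}^+$ are first order linear differential operators, hence bounded $W^{k+1,p}\to W^{k,p}$. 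The remaining terms $a\cdot\psi$ and $\mu(\psi)$ are bilinear, respectively quadratic, in the variables $(\psi, a)$. Since $kp>4 = \dim M$, the hypotheses of \autoref{Thm_SobolevEmbedding}\,\ref{It_SobolevMultipl}(a) are satisfied for $W^{k+1,p}$, so $W^{k+1,p}$ is a Banach algebra; in particular these pointwise products land in $W^{k+1,p}\subset W^{k,p}$ with continuous dependence on the factors. This proves that $SW$ extends as claimed and that the extension is the sum of a bounded affine map and a continuous quadratic polynomial, hence smooth.

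For the gauge group action, the key point is the shift of one derivative between $\cG^{k+2,p}$ and $\cA^{k+1,p}$. If $g\in\cG^{k+2,p}$ then $dg\in W^{k+1,p}$; multiplication by $g^{-1}=\bar g\in W^{k+2,p}\subset W^{k+1,p}$ preserves $W^{k+1,p}$ by the algebra property, so $A\cdot g = A + 2\,g^{-1}dg\in\cA^{k+1,p}$. Likewise $\bar g\psi\in W^{k+1,p}(\slS^+)$ because $\psi\in W^{k+1,p}$ and $\bar g\in W^{k+2,p}$. The same multiplication estimates show that the action map $\cG^{k+2,p}\times\bigl(W^{k+1,p}(\slS^+)\times\cA^{k+1,p}\bigr)\to W^{k+1,p}(\slS^+)\times\cA^{k+1,p}$ is smooth, since it is again built out of continuous multilinear operations and the smooth inversion $g\mapsto g^{-1}$ on the Banach Lie group $\cG^{k+2,p}$. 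Equivariance of $SW$ on the smooth configuration space was already established, and since smooth pairs are dense in $W^{k+1,p}$ and all maps in sight are continuous, equivariance persists after completion.

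The main technical point to check carefully is the bookkeeping of Sobolev indices in the multiplication estimates: one must verify that every pointwise product that appears (both in $SW$ and in the gauge action, including the second derivative needed to differentiate $g^{-1}dg$) remains in $W^{k,p}$ or $W^{k+1,p}$ as appropriate. This reduces in each instance to the algebra property $kp>4$, or equivalently $(k+1)p>4$, together with the fact that $W^{k+2,p}\hookrightarrow W^{k+1,p}$ is continuous; no serious analytic input beyond \autoref{Thm_SobolevEmbedding} is required.
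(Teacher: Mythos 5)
Your proof is correct and follows essentially the same approach as the paper: fix a reference connection, decompose $SW$ into a linear differential part plus bilinear/quadratic terms, and invoke the Sobolev multiplication theorem, with the same bookkeeping for $\cG^{k+2,p}$ acting via $g\mapsto g^{-1}dg$. You spell out the smoothness from continuous multilinearity in a bit more detail than the paper, but the argument is the same.
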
  
 \begin{proof}
 	Pick a smooth connection  $A_0$ as a reference point. 
 	Then recalling~\eqref{Eq_SpincDiracChangeOfConn}  we obtain by \autoref{Thm_SobolevEmbedding}~\ref{It_SobolevMultipl}
 	\begin{align*}
 		\slD_{A_0 + a}^+\psi &=\slD_{A_0}^+\psi + \frac 12\,a\cdot \psi\in W^{k,p}(\slS^-),\\
 		F_{A_0 + a}^+ - \mu(\psi) &=F_{A_0}^+ + d^+ a - \mu(\psi)\in W^{k,p}(\Lambda^2_+T^*M\otimes\R i),
 	\end{align*}
 	where $a\in W^{k+1, p}(T^*M\otimes\R i)$. 
 	
 	The fact that the action of the gauge group extends follows from the Sobolev multiplication theorem and~\eqref{Eq_GaugeGpActOnAbConn}, which explains the choice $k+2$ when completing the gauge group.    
 \end{proof}
 
 In what follows, any $(k,p)$ with $k$ sufficiently large would work. 
 For the sake of definiteness, I will stick to $(k,p) = (5,2)$, which suffices for the arguments invoked below.

 \subsubsection{Compactness of the Seiberg--Witten moduli space}
 
 The most important property of the Seiberg--Witten moduli space is its compactness. 
 In this section I explain why $\cM_{SW}$ enjoys this property. 

Before going into details, let me briefly explain the approach. 
Given any sequence $(\psi_n, A_n)$ we need to show that there is a convergent subsequence. 
This would follow from the compactness of Sobolev embedding, if we could establish that the sequence  $\| (\psi_n, A_n)\|_{W^{6,2}}$ is bounded from above by a constant independent of $n$. 
Here we have the freedom to change solutions by the gauge group action, since we want to extract a subsequence, which converges in the quotient space $\cC/\cG$. 
In fact, we will see below that $(\psi_n, A_n)$ is bounded in $W^{k,2}$ for any $k\ge 0$ possibly after applying  gauge transformations.
 
 The formal proof requires a number of technical lemmas. 
 The key property is as follows.
 
 \begin{lem}
 	\label{Eq_LemPointwiseEstSpinor}
 	There is a constant $C>0$ such that for any solution 
 	\begin{equation*}
 		(\psi, A)\in \cC^{5,2}:= W^{5,2}(\slS^+)\times \cA^{5,2}(P_{\det })	
 	\end{equation*}
 	 of the Seiberg--Witten equations we have 
 	\begin{equation*}
 		\| \psi \|_{C^0}\le C. 
 	\end{equation*}
 \end{lem}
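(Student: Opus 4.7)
The plan is to combine the Weitzenböck formula of \autoref{Cor_WeitzenboeckForm} with the curvature equation $F_A^+ = \mu(\psi)$ and a maximum principle argument applied to the smooth function $|\psi|^2$ on the compact manifold $M$. Concretely, since $\slD_A^+\psi = 0$ implies $\slD_A^-\slD_A^+\psi = 0$, \autoref{Cor_WeitzenboeckForm} together with the fact that anti--self--dual 2-forms act trivially on $\slS^+$ yields
\begin{equation*}
	\nabla^*\nabla \psi + \tfrac 14 s_g\, \psi + \tfrac 12 F_A^+\cdot\psi = 0.
\end{equation*}
Substituting the second Seiberg--Witten equation gives $\nabla^*\nabla \psi + \tfrac 14 s_g\, \psi + \tfrac 12\, \mu(\psi)\cdot\psi = 0$.

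The key algebraic step is to compute the pointwise inner product $\langle \mu(\psi)\cdot\psi,\psi\rangle$. Under the identification $i\,\Lambda^2_+T^*M\cong i\,\su(\slS^+)$, the Clifford multiplication $\mu(\psi)\cdot$ corresponds (up to a universal positive constant) to the endomorphism $\psi\psi^* - \tfrac 12 |\psi|^2\,\mathrm{id}$. Applying this to $\psi$ gives $\tfrac 12 |\psi|^2\psi$ (in the normalization used in the excerpt), so
\begin{equation*}
	\langle \mu(\psi)\cdot\psi,\,\psi\rangle = c\,|\psi|^4
\end{equation*}
for some explicit positive constant $c>0$. I would take a moment to pin down $c$ by comparing with the scalar model on $\R^4$ but the key point is only that $c>0$.

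Next, I would pair the Weitzenböck identity with $\psi$ and use the pointwise identity $\tfrac 12 \Delta |\psi|^2 = \langle \nabla^*\nabla\psi,\psi\rangle - |\nabla\psi|^2$ (where $\Delta$ is the non--negative Laplacian on functions). This yields
\begin{equation*}
	\tfrac 12 \Delta |\psi|^2 + |\nabla\psi|^2 + \tfrac 14 s_g\,|\psi|^2 + \tfrac{c}{2}\,|\psi|^4 = 0,
\end{equation*}
and in particular the differential inequality
\begin{equation*}
	\tfrac 12 \Delta |\psi|^2 \le -\tfrac 14 s_g\,|\psi|^2 - \tfrac{c}{2}\,|\psi|^4.
\end{equation*}
Since $(\psi,A)\in W^{5,2}$ and $\dim M=4$, Sobolev embedding makes $|\psi|^2$ of class $C^3$, so there is a point $x_0\in M$ at which $|\psi|^2$ attains its maximum and $\Delta|\psi|^2(x_0)\ge 0$. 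Evaluating the inequality at $x_0$ gives either $\psi(x_0)=0$, whence $\psi\equiv 0$, or
\begin{equation*}
	\tfrac{c}{2}\,|\psi(x_0)|^2 \le -\tfrac 14 s_g(x_0) \le \tfrac 14 \max_M |s_g|,
\end{equation*}
so $\|\psi\|_{C^0}^2 \le \tfrac{1}{2c}\max_M|s_g|$, which is the desired bound with $C$ depending only on the background geometry. The main (small) obstacle is purely computational, namely fixing the constant $c$ in $\langle\mu(\psi)\cdot\psi,\psi\rangle = c|\psi|^4$ with the sign conventions used for Clifford multiplication and $\mu$; once this positivity is established, the maximum principle closes the argument uniformly over all solutions.
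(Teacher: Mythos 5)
Your proposal is correct and follows essentially the same route as the paper: apply the Weitzenb\"ock formula for the spin$^c$ Dirac operator, substitute $F_A^+ = \mu(\psi)$, pair pointwise with $\psi$ using $\tfrac 12\Delta|\psi|^2 = \langle\nabla_A^*\nabla_A\psi,\psi\rangle - |\nabla_A\psi|^2$, and evaluate at a maximum of $|\psi|^2$. The paper, too, does not pin down the exact universal constant in $\langle\mu(\psi)\cdot\psi,\psi\rangle$ beyond noting its positivity, so your one flagged computational step is handled no more precisely in the source.
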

 \begin{proof}
 	First notice that $\psi\in W^{5,2}$ implies that $\psi\in C^2$. 
 	
 	Let $x_0$ be the point of maximum of the function $|\psi|^2$. 
 	The pointwise equality
 	\begin{equation*}
 		\Delta |\psi|^2 = 2\,\langle\nabla_A^*\nabla_A\psi, \psi\rangle - 2\,|\nabla_A\psi|^2
 	\end{equation*}
 	implies that 
 	\begin{equation*}
 		\langle\nabla_A^*\nabla_A\psi, \psi\rangle \ge |\nabla_A\psi|^2 \qquad \text{at } x_0.
 	\end{equation*}
 
	 Furthermore, notice also that we have the following  pointwise equality:
	 \begin{equation*}
	 	\langle \mu(\psi)\psi,\psi  \rangle = \bigl\langle |\psi|^2 \psi -\frac 12 |\psi|^2\psi,\; \psi \bigr\rangle  = \frac 12\, |\psi|^4.
	 \end{equation*}
 	Combining this with the Weitzenb\"ock formula, we obtain
 	\begin{equation*}
 		0 =\langle \nabla_A^*\nabla_A\psi, \psi  \rangle +\frac 14 s_g|\psi|^2 + \frac 12\, |\psi|^4.
 	\end{equation*}
 	Hence, at $x_0$ the following holds:
 	\begin{equation*}
 		\frac 14 s_g(x_0)|\psi|^2(x_0) + \frac 12\, |\psi|^4(x_0) \le -|\nabla_A\psi|^2(x_0)\le 0.
 	\end{equation*}
 	If $|\psi(x_0)| =0$, there is nothing to prove. 
 	If $|\psi|(x_0)|>0$, then the above inequality yields
 	\begin{equation*}
 		|\psi|^2(x_0)\le -\frac 12\, s_g(x_0),
 	\end{equation*}
 	thus providing the required estimate. 
 \end{proof}

 \begin{remark}
	Notice that the proof of this lemma does not go through for the equations $\slD^+_A\psi =0, F_A^+=-\mu(\psi)$, which differ from~\eqref{Eq_SW4D} just by a sign. 
\end{remark}
 
 \begin{corollary}
 	\label{Lem_L4AprioriBound}
 	For any $p>1$ there is a non-negative constant $\kappa_p$, which depends on the background Riemannian metric $g$ only, such that for any solution $(\psi, A)\in \cC^{5,2}$
 	of the Seiberg--Witten equations the following estimate holds: $\| \psi \|_{L^p}\le \kappa_p$.\qed
 \end{corollary}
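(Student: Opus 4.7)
The plan is to deduce the $L^p$ estimate directly from the pointwise $C^0$ bound established in the preceding lemma, using only that the background manifold $M$ is compact (so that $\Vol(M)<\infty$ and $s_g$ is bounded).

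First I would record what the preceding lemma actually gives. Its proof shows not just that $\|\psi\|_{C^0}$ is bounded by some abstract constant, but that at a point of maximum $x_0$ of $|\psi|^2$ one has
\begin{equation*}
|\psi|^2(x_0)\le -\tfrac{1}{2}\, s_g(x_0)\le \tfrac{1}{2}\,\max_{M}|s_g|,
\end{equation*}
so in fact $\|\psi\|_{C^0}\le C$ with $C:=\bigl(\tfrac{1}{2}\max_M|s_g|\bigr)^{1/2}$, a quantity depending only on $g$.

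Given this, the corollary is immediate: since $M$ is compact, for any $p>1$ one has
\begin{equation*}
\|\psi\|_{L^p}^p=\int_M|\psi|^p\,d\vol_g\le \|\psi\|_{C^0}^p\,\Vol(M,g)\le C^p\,\Vol(M,g),
\end{equation*}
so setting $\kappa_p:=C\,\Vol(M,g)^{1/p}$ yields $\|\psi\|_{L^p}\le\kappa_p$, with $\kappa_p$ depending on $g$ only.

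There is essentially no obstacle here; the whole content of the corollary sits in the preceding pointwise bound, and the only step is to integrate the $p$th power over the compact manifold. The reason to isolate the $L^p$ form of the estimate is presumably that in the subsequent compactness argument one wants to feed such $L^p$ bounds (for appropriate $p$) into the curvature equation $F_A^+=\mu(\psi)$ and then into the elliptic estimates for $d^+\oplus d^*$ in order to bootstrap regularity of $A$ modulo gauge.
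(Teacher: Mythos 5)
Your proof is correct and takes exactly the route the paper intends: the corollary is stated with no proof precisely because it follows immediately by integrating the $C^0$ bound of the preceding lemma over the compact manifold $M$, which is what you do. Your observation that the $C^0$ bound can be taken as $\bigl(\tfrac12\max_M|s_g|\bigr)^{1/2}$, depending only on $g$, is also the intended justification for the clause that $\kappa_p$ depends on $g$ only.
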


 \begin{corollary}
 	\label{Prop_L2AprioriBoundsCurvature}
 	For any solution $(\psi, A)$ of the Seiberg--Witten equations we have the estimates
 	\begin{equation}
	 	\label{Eq_AprioriCurvEst}
 		\| F_A^+ \|_{L^2}\le C\qquad \text{and}\qquad \| F_A^- \|_{L^2}\le C -4\pi^2\, c_1(L_{\det})^2,  
 	\end{equation}
 	where the constants depend on the background metric only. 
 \end{corollary}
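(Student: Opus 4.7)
The plan is to handle the two estimates separately: the self-dual part follows directly from the second Seiberg--Witten equation together with the pointwise bound on spinors, while the anti-self-dual part is controlled via a topological identity of Chern--Weil type.

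First I would bound $\| F_A^+ \|_{L^2}$. By the second Seiberg--Witten equation we have $F_A^+ = \mu(\psi)$, and a pointwise inspection of $\mu$ shows that $|\mu(\psi)|\le c\,|\psi|^2$ for some universal constant $c$ (this follows directly from the explicit matrix form of $\mu$ recalled just before the equations). Hence
\begin{equation*}
\| F_A^+ \|_{L^2}^2 = \| \mu(\psi) \|_{L^2}^2 \le c^2\int_M |\psi|^4 \le c^2\,\kappa_4^4,
\end{equation*}
by \autoref{Lem_L4AprioriBound}. This gives the first estimate.

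For the second estimate I would use the observation that $\int_M F_A\wedge F_A$ is a topological quantity. Writing $F_A = F_A^+ + F_A^-$ and noting that $F_A^+\wedge F_A^- = 0$ (orthogonality of self-dual and anti-self-dual forms), together with the identities $\alpha\wedge\alpha = -|\alpha|^2\,\vol_g$ for self-dual $\alpha\in\Om^2_+(M;\R i)$ and $\alpha\wedge\alpha = +|\alpha|^2\,\vol_g$ for anti-self-dual $\alpha\in\Om^2_-(M;\R i)$ (the extra sign coming from the fact that $F_A$ is imaginary-valued), one obtains
\begin{equation*}
\int_M F_A\wedge F_A = -\| F_A^+ \|_{L^2}^2 + \| F_A^- \|_{L^2}^2.
\end{equation*}
On the other hand, since $c_1(L_{\det})$ is represented in de Rham cohomology by $\tfrac{i}{2\pi}F_A$, Chern--Weil theory gives
\begin{equation*}
\int_M F_A\wedge F_A = -4\pi^2\, c_1(L_{\det})^2.
\end{equation*}
Combining these two identities with the already-established bound on $\| F_A^+ \|_{L^2}$ yields
\begin{equation*}
\| F_A^- \|_{L^2}^2 = \| F_A^+ \|_{L^2}^2 - 4\pi^2\, c_1(L_{\det})^2 \le C - 4\pi^2\, c_1(L_{\det})^2,
\end{equation*}
as required.

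There is no real obstacle in this argument; the only care needed is in tracking the signs in the pairing of imaginary-valued 2-forms under the Hodge decomposition, so that the purely topological term $c_1(L_{\det})^2$ enters with the correct sign. Once this bookkeeping is done, the entire corollary is a straightforward combination of the pointwise spinor bound of \autoref{Eq_LemPointwiseEstSpinor} (via \autoref{Lem_L4AprioriBound}) with the Chern--Weil representation of $c_1(L_{\det})^2$.
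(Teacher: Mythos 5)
Your proof is correct and takes essentially the same route as the paper: the first bound from $F_A^+ = \mu(\psi)$ together with the pointwise spinor estimate, and the second from the Chern--Weil identity $c_1(L_{\det})^2 = \tfrac 1{4\pi^2}\bigl(\|F_A^+\|_{L^2}^2 - \|F_A^-\|_{L^2}^2\bigr)$. You are in fact slightly more careful than the source about tracking the sign introduced by the $i\R$-coefficients, which is the only place one could slip.
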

 \begin{proof}
 	The first inequality follows immediately from  \autoref{Lem_L4AprioriBound}.
	 To prove the second bound, recall that the first Chern class of $L_{\det}$ is represented by $\frac {i}{2\pi}F_A$ so that we have
	 \begin{equation*}
		 \begin{aligned}
			 	c_1(L_{\det})^2 &=\frac {i^2}{(2\pi)^2}\int_M F_A\wedge F_A = -\frac 1{4\pi^2}\int_M (F_A^+\wedge F_A^+ + F_A^-\wedge F_A^-)\\
			 	& = \frac 1{4\pi^2} \bigl ( \| F_A^+ \|^2_{L^2} - \| F_A^-\|^2_{L^2}\bigr),
		 \end{aligned}
	 \end{equation*}	
	 which yields the required bound.
 \end{proof}
 
 \begin{rem}
 	Of course, the right hand side of the second inequality of~\eqref{Eq_AprioriCurvEst} is just a constant independent of the solution. 
 	However, this explicit form will be useful below.
 \end{rem}

 The proof of the proposition below hinges on the following technical lemma, which follows essentially from the elliptic estimate.
 
 \begin{lem}[\cite{Morgan96_SWequations}*{Lemma\,5.3.1}]
 	\label{Lem_UhlenbLemma}
 	Let $L$ be any Hermitian line bundle over $M$. 
 	Fix a smooth reference connection $A_0$.
 	For sny $k\ge 0$ there are positive constants $C_1$ and $C_2$ with the following property: For any $W^{k,2}$--connection $A$ on $L$ there is a gauge transformation $g\in \cG^{k+1, 2}$ such  that $A\cdot g = A_0 + \a$, where $\a\in W^{k, 2}(T^*M\otimes \R i)$ satisfies 
 	\begin{equation*}
	 		d^*\alpha = 0\qquad\text{and}\qquad 
	 		\| \a \|_{W^{k,2}}\le C_1 \| F_A^+ \|_{W^{k-1, 2}} + C_2.
 	\end{equation*} 
 	Moreover, the harmonic component $\a_h$ of $\a$ can be assumed to be bounded in $L^2$ by a constant independent of $k$.\qed
 \end{lem}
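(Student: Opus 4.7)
\textbf{Proof plan for Lemma~\ref{Lem_UhlenbLemma}.} Write $A=A_0+a$ with $a\in W^{k,2}(T^*M\otimes \R i)$; the lemma asks us to find $g\in\cG^{k+1,2}$ so that $\alpha:=a+2g^{-1}dg$ is co-closed and satisfies the stated bound. The strategy is standard Coulomb gauge fixing: first kill the exact part of $a$ by a ``small'' gauge transformation, then shift the harmonic part into a fundamental domain by a ``large'' one, and finally apply the elliptic estimate for the Atiyah-type operator $d^*\oplus d^+$.

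The plan is, first, to write any $U(1)$-valued gauge transformation as $g=e^{i\theta}$ with $\theta\in W^{k+1,2}(M;\R)$ (globally defined since below I will solve a Poisson equation for $\theta$, which produces an $\R$-valued, not circle-valued, solution). Then $2g^{-1}dg=2i\,d\theta$, so the Coulomb condition $d^*\alpha=0$ becomes the scalar Poisson equation
\[
2\,\Delta\theta \;=\; i\,d^*a,
\]
where $\Delta=d^*d$ acts on functions. The right-hand side is $L^2$-orthogonal to the constants (by integration by parts against $1$), and Hodge theory on the compact manifold $M$ produces a unique solution $\theta$ in $W^{k+1,2}(M;\R)$ with mean zero, giving a small gauge transformation $g_0\in\cG^{k+1,2}$ that puts the connection in Coulomb gauge relative to $A_0$.

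Second, I would handle the harmonic part. After the previous step, $\alpha$ decomposes as $\alpha=\alpha_h+\alpha'$ with $\alpha_h\in \cH^1(M;\R i)$ harmonic and $\alpha'\in(\ker\Delta)^\perp$. Large gauge transformations change the harmonic part: for any closed 1-form $\eta$ with $[\eta/2\pi i]\in H^1(M;\Z)$ there is $g_1\colon M\to U(1)$ with $g_1^{-1}dg_1=\eta$ (represent $\eta$ via periods and integrate along paths), and $\alpha_h$ shifts by $2\eta$. Thus the possible harmonic representatives form a coset in the torus $\cH^1(M;\R i)/4\pi i\cdot H^1(M;\Z)_{\mathrm{free}}$. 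Choosing $g_1$ so that the resulting harmonic part lies in a fixed fundamental domain bounds $\|\alpha_h\|_{L^2}\le C_0$, where $C_0$ depends only on the metric, the lattice, and $A_0$ — in particular, independently of $k$ and $A$.

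Third, I would invoke the elliptic estimate. Thanks to \autoref{Prop_SWDefComplexElliptic} and the exercise following it, the operator $d^*\oplus d^+\colon \Omega^1\to\Omega^0\oplus\Omega^2_+$ is elliptic, and its kernel consists exactly of harmonic 1-forms. Applying \autoref{Thm_EllipticEstimate} to $\alpha'$ (which is $L^2$-orthogonal to the kernel), I get
\[
\|\alpha'\|_{W^{k,2}} \;\le\; C\bigl(\|d^*\alpha'\|_{W^{k-1,2}}+\|d^+\alpha'\|_{W^{k-1,2}}\bigr).
\]
Since $d^*\alpha=0$ and $\alpha_h$ is closed, $d^*\alpha'=0$ and $d^+\alpha'=d^+\alpha=F_A^+-F_{A_0}^+$, so the right-hand side is bounded by $C\|F_A^+\|_{W^{k-1,2}}+C\|F_{A_0}^+\|_{W^{k-1,2}}$. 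Adding $\|\alpha_h\|_{W^{k,2}}\le C_k\|\alpha_h\|_{L^2}\le C_kC_0$ (using that the finite-dimensional space of harmonic forms has all norms equivalent) gives the desired estimate
\[
\|\alpha\|_{W^{k,2}} \;\le\; C_1\,\|F_A^+\|_{W^{k-1,2}}+C_2,
\]
with $C_2$ absorbing $\|F_{A_0}^+\|_{W^{k-1,2}}$ and the harmonic contribution.

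The main obstacle is the bound on the harmonic part: without the large-gauge-transformation step one cannot avoid a term $\|\alpha_h\|_{L^2}$ that can grow with $A$. Making this step rigorous requires knowing that the image of $\cG(P_{\det})$ in $\cH^1(M;\R i)$ under $g\mapsto 2g^{-1}dg$ is precisely the lattice $4\pi i\cdot H^1(M;\Z)_{\mathrm{free}}$ (the map $\pi_0(\cG)\to H^1(M;\Z)$ induced by $g\mapsto [g^{-1}dg/2\pi i]$ is an isomorphism because $U(1)=K(\Z,1)$), so that one can indeed choose $g_1$ producing a representative in a compact fundamental domain. Everything else is direct Hodge theory and the elliptic estimate.
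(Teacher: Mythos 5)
Your plan is correct, and it is essentially the same Coulomb-gauge argument that Morgan's Lemma 5.3.1 uses, which is what the paper cites for this statement (the paper itself gives no proof). The three ingredients — solve the scalar Poisson equation $2\Delta\theta=i\,d^*a$ for the small gauge transformation $e^{i\theta}$, shift the harmonic component into a fixed fundamental domain of $\cH^1(M;\R i)/4\pi i\,H^1(M;\Z)$ by a large gauge transformation whose $\R i$-valued 1-form $g_1^{-1}dg_1$ can be taken harmonic (so the Coulomb condition is preserved), and then apply the elliptic estimate for $(d^*,d^+)$ to the part of $\alpha$ orthogonal to the harmonic forms — are exactly the standard steps, and your bookkeeping (the factor $2$ from $A\cdot g=A+2g^{-1}dg$, the identification $d^+\alpha=F_A^+-F_{A_0}^+$, the $k$-independence of the $L^2$ bound on $\alpha_h$) is right. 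The one point you should spell out when writing this up is that the kernel of $(d^*,d^+)$ on a closed oriented four-manifold is precisely the harmonic $1$-forms (because $0=\int d\alpha\wedge d\alpha=\|d^+\alpha\|^2_{L^2}-\|d^-\alpha\|^2_{L^2}$ forces $d^-\alpha=0$ once $d^+\alpha=0$), so the elliptic estimate on the orthogonal complement drops the $\|\alpha'\|_{L^2}$ term by the usual compactness-contradiction argument.
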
 
 
 \begin{proposition}
 	\label{Prop_WkpAprioriEstimForSW}
 	For each $k\ge 0$ there are positive constants $C_k>0$ with the following property: For any solution $(\psi, A)$ of the Seiberg--Witten equations there is a gauge transformation $g\in \cG^{k+1, 2}$ such that 
 		\begin{equation}
 		\label{Eq_AuxAprioriBoundForPsiAlpha}
 		\| (\psi, \a) \|_{W^{k,2}}\le C_k,
 		\end{equation}
 		where $A\cdot g=A_0 +\a$.
 \end{proposition}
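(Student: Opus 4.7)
The argument is a bootstrap starting from the pointwise bound on $\psi$ (\autoref{Eq_LemPointwiseEstSpinor}) and the $L^2$ bound on $F_A^+$ (\autoref{Prop_L2AprioriBoundsCurvature}), alternating between Coulomb gauge fixing (\autoref{Lem_UhlenbLemma}) to control $\alpha$ in terms of $F_A^+$ and elliptic regularity for $\slD_{A_0}$ to promote the regularity of $\psi$. I would first apply \autoref{Lem_UhlenbLemma} to replace $A$ by a gauge-equivalent connection $A_0 + \alpha$ satisfying $d^*\alpha = 0$, with the harmonic component $\alpha_h$ bounded in $L^2$ independently of the solution. The pointwise inequality $|\mu(\psi)| = \tfrac{1}{2}|\psi|^2$ combined with \autoref{Eq_LemPointwiseEstSpinor} shows that $\|F_A^+\|_{L^p}$ is bounded for every $p$, so \autoref{Lem_UhlenbLemma} with $k=1$ already supplies the base case $\|\alpha\|_{W^{1,2}} \le C$.

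For the induction, suppose $\|\alpha\|_{W^{j,2}}, \|\psi\|_{W^{j,2}} \le C_j$ for all $j \le k$. Using \eqref{Eq_SpincDiracChangeOfConn}, the Dirac equation in Coulomb gauge reads
\begin{equation*}
\slD_{A_0}\psi = -\tfrac{1}{2}\,\alpha \cdot \psi.
\end{equation*}
The right-hand side lies in $W^{k,2}$ by the Sobolev multiplication theorem, together with $\|\psi\|_{C^0} \le C$ and, in the low-regularity range, the explicit embeddings $W^{1,2}(M) \hookrightarrow L^4(M)$ and $W^{2,2}(M) \hookrightarrow L^p(M)$ for all $p < \infty$ guaranteed by \autoref{Thm_SobolevEmbedding}. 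The elliptic estimate of \autoref{Thm_EllipticEstimate} then promotes $\psi$ to $W^{k+1,2}$. For the connection, $F_A^+ = \mu(\psi)$ is quadratic in $\psi$, so the same multiplication arguments give $\|F_A^+\|_{W^{k+1,2}} \le C$, and feeding this into \autoref{Lem_UhlenbLemma} with index $k+2$ yields $\|\alpha\|_{W^{k+2,2}} \le C$. A finite number of iterations delivers the claimed estimate at any prescribed $k$.

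The principal technical obstacle is the low-regularity range $k \le 2$, where $W^{k,2}(M)$ is not yet a Banach algebra (this requires $k \ge 3$ in dimension four). There one must combine the pointwise bound $\|\psi\|_{C^0} \le C$ with the explicit Sobolev embeddings listed above rather than invoking the algebra form of multiplication directly; once the threshold $k = 3$ is crossed, the alternating bootstrap becomes entirely mechanical. A smaller subtlety is that the Coulomb condition $d^*\alpha = 0$ does not by itself control the harmonic component of $\alpha$, which is precisely why the final clause of \autoref{Lem_UhlenbLemma} providing the uniform $L^2$ bound on $\alpha_h$ is essential.
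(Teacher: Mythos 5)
Your overall scheme---an alternating bootstrap between the Coulomb gauge lemma (\autoref{Lem_UhlenbLemma}) for $\a$ and elliptic regularity for $\slD_{A_0}$ applied to $\psi$---is the same as the paper's, and you correctly identify the low-regularity range $k\le 2$ as the delicate part. But the way you propose to get through that range does not actually work, and the paper's proof contains a specific device at exactly that point which your outline misses.

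The failure is in your induction step at the bottom of the range. You want to go from $\a,\psi\in W^{1,2}$ to $\psi\in W^{2,2}$ by showing $\a\cdot\psi\in W^{1,2}$ and invoking the elliptic estimate. But
\begin{equation*}
\nabla(\a\cdot\psi)=(\nabla\a)\cdot\psi+\a\cdot\nabla\psi,
\end{equation*}
and while the first term is in $L^2$ because of the pointwise bound $\|\psi\|_{C^0}\le C$, the second term involves $\nabla\psi$, on which you have \emph{no pointwise control}. With only $\a\in W^{1,2}\hookrightarrow L^4$ and $\nabla\psi\in L^2$, H\"older gives $\a\cdot\nabla\psi\in L^{4/3}$, not $L^2$; the embeddings $W^{1,2}\hookrightarrow L^4$ and $W^{2,2}\hookrightarrow L^p$ you invoke cannot close this gap, since $W^{1,2}\otimes W^{1,2}\to W^{1,2}$ is false in dimension four. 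So ``promote $\psi$ first, then $\a$'' fails at the first genuine step.

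The paper's proof resolves this by improving $\a$ \emph{before} $\psi$, using a pointwise estimate that is strictly stronger than what multiplication theorems give: because $\mu$ is quadratic and $|\psi|$ is uniformly bounded, the curvature equation $F_A^+=\mu(\psi)$ yields
\begin{equation*}
|\nabla^{LC}F_A^+|=|\nabla^{LC}\mu(\psi)|\le C\,|\nabla^{A_0}\psi|\,|\psi|\le C\,|\nabla^{A_0}\psi|,
\end{equation*}
so a $W^{1,2}$-bound on $\psi$ gives a $W^{1,2}$-bound on $F_A^+$ directly, \emph{without} any Sobolev multiplication. \autoref{Lem_UhlenbLemma} then yields $\a\in W^{2,2}$, and only \emph{now} is $\a\cdot\psi\in W^{1,2}$ available (the borderline product $W^{2,2}\otimes W^{1,2}\to W^{1,2}$ does hold), so the elliptic estimate for $\slD_{A_0}$ gives $\psi\in W^{2,2}$. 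Your sketch effectively runs those two substeps in the wrong order and therefore needs a multiplication that is false. Once $k\ge3$ the algebra property of $W^{k,2}$ makes your mechanical alternation correct, as you say; the missing ingredient is precisely this pointwise derivative bound at $k=2$, which you should add (and you should also state explicitly how the base $\psi\in W^{1,2}$ is obtained from $\alpha\cdot\psi\in L^2$).
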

 \begin{proof}
 	The proof is given via the induction on $k$. 
 	However, a few first values of $k$ require a special treatment.
 	
 	For $k=0$ we know already that $\| (\psi, F_A^+ )\|_{L^2}$ is bounded by a constant independent of $(\psi, A)$. 
 	\autoref{Lem_UhlenbLemma} yields immediately the required estimate for $\a$.

 	For $k=1$, using the Seiberg--Witten equations we have
 	\begin{equation*}
 		0=\slD^+_A\psi = \slD^+_{A_0}\psi + \frac 12 \, \a \cdot \psi,
 	\end{equation*}
 	where the second summand is bounded in $L^2$ by \autoref{Eq_LemPointwiseEstSpinor}.
 	Invoking the elliptic estimate for $\slD_{A_0}^+$, we obtain a  bound for $\psi$ in $W^{1,2}$.
 	This proves~\eqref{Eq_AuxAprioriBoundForPsiAlpha} for $k=1$.

 	Let us consider the case $k=2$.
 	Notice that by the Seiberg--Witten equations we have the pointwise estimate
 	\begin{equation*}
 		|\nabla^{LC} F_A^+| = |\nabla^{LC}\mu (\psi)| \le C |\nabla^{A_0}\psi||\psi|\le C|\nabla^{A_0}\psi|,
 	\end{equation*}
 	where the first inequality follows from the fact that $\mu$ is quadratic and the second one follows by \autoref{Eq_LemPointwiseEstSpinor}.
 	This clearly implies a bound on the $W^{1,2}$-norm of  $F_A^+$.
 	This in turn, yields a bound on the $W^{2,2}$--norm of $\a$.  
 	
 	Furthermore, the bound for $\| \a\|_{W^{2,2}}$ together with the Sobolev multiplication theorem yields
 	that $\a\cdot \psi$ is bounded in $W^{1,2}$. 
 	By the same token as above, this yields the $W^{2,2}$--bound on $\psi$. 
 	
 	By now, the reader will have no difficulties in proving~\eqref{Eq_AuxAprioriBoundForPsiAlpha} for $k=3$.
 	I leave this as an exercise. 
 	
 	We are now prepared to make the induction step. 
 	Thus, assume that~\eqref{Eq_AuxAprioriBoundForPsiAlpha} has been established for some $k\ge 3$. 
 	Since $W^{k, 2}$ is an algebra for $k\ge 3$, we have a bound on $F_A^+$ in $W^{k,2}$.
 	\autoref{Lem_UhlenbLemma} yields a $W^{k+1,2}$--bound on $\a$.
 	
 	By a similar argument,  $\a\cdot \psi$ is bounded in $W^{k,2}$ so that the elliptic estimate yields a $W^{k+1, 2}$--bound for $\psi$.
 	This finishes the proof of this proposition.  
 \end{proof}
 
 \begin{corollary}
 	\label{Cor_CompactnessOfTheModuli}
 	The Seiberg--Witten moduli space 
 	\begin{equation*}
 		\cM:=\bigl \{  (\psi, A)\in \cC^{5,2} \mid SW(\psi, A) =0\bigr \}/\cG^{6,2}
 	\end{equation*}
 	is compact.
 \end{corollary}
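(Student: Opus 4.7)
The plan is a standard bootstrapping-plus-compact-embedding argument: the hard analytic work has already been packaged into \autoref{Prop_WkpAprioriEstimForSW}, so what remains is a clean diagonal/extraction argument on the quotient space. Specifically, I will take an arbitrary sequence of points in $\cM$, lift to the configuration space, apply gauge transformations produced by the proposition to land in a region of $\cC^{5,2}$ with uniform bounds in every Sobolev norm, and then invoke the Rellich--Kondrachov compact embedding $W^{6,2}\hookrightarrow W^{5,2}$ (part of \autoref{Thm_SobolevEmbedding}) to extract a $W^{5,2}$-convergent subsequence. Continuity of $SW$ on $\cC^{5,2}$ will ensure the limit remains a solution.

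In more detail: let $\bigl\{[(\psi_n, A_n)]\bigr\}_{n\ge 1}\subset \cM$ be an arbitrary sequence and pick representatives $(\psi_n, A_n)\in\cC^{5,2}$ with $SW(\psi_n, A_n)=0$. By \autoref{Prop_WkpAprioriEstimForSW} applied with $k=6$, there exist gauge transformations $g_n\in\cG^{6,2}$ (possibly upgraded by the proposition to higher regularity, which only helps) such that, writing
\begin{equation*}
	\psi_n':=\bar g_n\psi_n,\qquad A_n\cdot g_n=A_0+\alpha_n,
\end{equation*}
one has $\|(\psi_n',\alpha_n)\|_{W^{6,2}}\le C_6$, uniformly in $n$. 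Since the inclusion $W^{6,2}\hookrightarrow W^{5,2}$ is compact, after passing to a subsequence we obtain $(\psi_n',\alpha_n)\to(\psi_\infty,\alpha_\infty)$ in $W^{5,2}$ for some limit $(\psi_\infty, A_0+\alpha_\infty)\in\cC^{5,2}$.

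Because $SW\colon \cC^{5,2}\to W^{5,2}(\slS^-)\times W^{5,2}(\Lambda^2_+T^*M\otimes \R i)$ is smooth (and in particular continuous), and because each $(\psi_n', A_0+\alpha_n)$ is gauge-equivalent to $(\psi_n, A_n)$ (hence still a solution, by $\cG^{6,2}$-equivariance of $SW$), we have
\begin{equation*}
	SW(\psi_\infty, A_0+\alpha_\infty)=\lim_{n\to\infty} SW(\psi_n', A_0+\alpha_n)=0.
\end{equation*}
Thus $[(\psi_\infty, A_0+\alpha_\infty)]\in\cM$, and the projection to the quotient is continuous, so $[(\psi_n, A_n)]\to [(\psi_\infty, A_0+\alpha_\infty)]$ in $\cM$. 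This gives sequential compactness; since $\cM$ is metrizable (as a quotient of a subset of the second-countable Banach manifold $\cC^{5,2}$ by the action of the Banach Lie group $\cG^{6,2}$), sequential compactness is equivalent to compactness.

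The main subtlety is simply that I never need the gauge transformations $g_n$ themselves to converge---only the gauge-fixed representatives. The rest is just making sure the Sobolev indices line up: we chose to complete connections and spinors in $W^{5,2}$ and the gauge group in $W^{6,2}$, so the proposition (applied for any $k\ge 5$) provides gauge representatives bounded in a Sobolev norm strictly stronger than the one defining $\cC^{5,2}$, which is precisely what is needed for compactness of the embedding. No additional ``bubbling'' or concentration analysis is required, because the pointwise $C^0$-bound on $\psi$ from \autoref{Eq_LemPointwiseEstSpinor} and the $L^2$-bound on $F_A$ from \autoref{Prop_L2AprioriBoundsCurvature} already prevent the usual modes of non-compactness.
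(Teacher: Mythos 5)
Your proof is correct and follows essentially the same route as the paper: invoke \autoref{Prop_WkpAprioriEstimForSW} to gauge-fix into a $W^{6,2}$-bounded set, extract a $W^{5,2}$-convergent subsequence by the compact Sobolev embedding, and observe the limit is still a solution by continuity of $SW$. The only slip is cosmetic: on $\cC^{5,2}$ the map $SW$ lands in $W^{4,2}(\slS^-)\times W^{4,2}(\Lambda^2_+T^*M\otimes\R i)$ (it loses one derivative, as in the paper's proposition on Sobolev completions), not $W^{5,2}$; continuity into $W^{4,2}$ is of course all you need.
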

 \begin{proof}
 	Any sequence of solutions $(\psi_n, A_n)$ is gauge equivalent to a sequence $(\psi_n, A_0 +\alpha_n)$ such that  $(\psi_n, \a_n)$ is bounded in $W^{6,2}$. 
 	By the Sobolev embedding theorem, a subsequence converges in $\cC^{5,2}$ and the limit is a solution of the Seiberg--Witten equations. 
 \end{proof}
 
 In fact, \autoref{Cor_CompactnessOfTheModuli} can be substantially  strengthened, as the following result shows.
 
 \begin{thm}
 	For each solution  $(\psi, A)\in \cC^{5,2}$ of the Seiberg--Witten equations there is a gauge transformation $g\in \cG^{6,2}$ such that $(\psi, A)\cdot g$ is smooth. Furthermore, $\cM$ is homeomorphic to  
 	\begin{equation*}
 		\cM^\infty:=\bigl \{  (\psi, A)\in \Gamma(\slS^+)\times \cA(P_{{\det}}) \mid SW(\psi, A) =0\bigr \}/ C^\infty (M;\, \U(1)). 
 	\end{equation*}
 	This space is compact in the $C^\infty$--topology.  
 \end{thm}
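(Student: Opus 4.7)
The plan has three parts: (1) smoothing any $W^{5,2}$ solution by gauge fixing and elliptic bootstrap, (2) using (1) to identify $\cM$ with $\cM^\infty$, and (3) deducing $C^\infty$--compactness from the uniform $W^{k,2}$--estimates of \autoref{Prop_WkpAprioriEstimForSW}.

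\textbf{Step 1 (smoothness).} Given $(\psi, A)\in\cC^{5,2}$, I would first apply \autoref{Lem_UhlenbLemma} with $k=5$ to find a gauge transformation $g\in\cG^{6,2}$ putting the connection in Coulomb gauge relative to a fixed smooth reference $A_0$: writing $A\cdot g = A_0+\alpha$ one has $d^*\alpha=0$ and $\alpha\in W^{5,2}$. Replace $(\psi,A)$ by the gauge-equivalent pair, still in $\cC^{5,2}$. The Seiberg--Witten equations together with the Coulomb condition then read
\begin{equation*}
\slD_{A_0}^+\psi + \tfrac12\,\alpha\cdot\psi = 0,\qquad
(d^+ + d^*)\alpha = \mu(\psi) - F_{A_0}^+.
\end{equation*}
The linear operator $(\slD_{A_0}^+,\, d^+ +d^*)$ acting on $\Gamma(\slS^+)\oplus\Omega^1(M;\R i)$ is elliptic (ellipticity of the spin$^c$ Dirac operator and of the Atiyah complex, cf.\ \autoref{Prop_SWDefComplexElliptic} together with Exercise~\ref{Ex_EllComplexEllOper}). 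Now I bootstrap: if $(\psi,\alpha)\in W^{k,2}$ with $k\ge 5$, then since $W^{k,2}$ is a Banach algebra (\autoref{Thm_SobolevEmbedding}\,\ref{It_SobolevMultipl}) both $\alpha\cdot\psi$ and the quadratic expression $\mu(\psi)$ lie in $W^{k,2}$, so the elliptic estimate \autoref{Thm_EllipticEstimate} upgrades $(\psi,\alpha)$ to $W^{k+1,2}$. Iterating and invoking the Sobolev embedding $\bigcap_{k\ge 0}W^{k,2}\subset C^\infty$ shows that the gauge-transformed pair is smooth.

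\textbf{Step 2 (identification of moduli spaces).} There is a tautological map $\cM^\infty\to\cM$ sending a smooth solution to its $\cG^{6,2}$--equivalence class. Surjectivity is precisely Step~1. For injectivity, suppose two smooth solutions $(\psi_0,A_0)$ and $(\psi_1,A_1)=(\psi_0,A_0)\cdot g$ are identified by some $g\in\cG^{6,2}$. From~\eqref{Eq_GaugeGpActOnAbConn} one has $2\,g^{-1}dg = A_1-A_0$, so $dg$ is smooth up to multiplication by $g$; writing $dg = \tfrac12 g\,(A_1-A_0)$ and bootstrapping in Sobolev classes (the right-hand side gains one derivative each iteration once $g$ is known in $W^{k,2}$), $g$ itself is smooth. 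Continuity of the map $\cM^\infty\to\cM$ is immediate from the inclusion of topologies; continuity of the inverse reduces to the remark that Step~1 produces a smooth representative depending continuously on the starting $W^{5,2}$--class, which follows from the continuous dependence in \autoref{Lem_UhlenbLemma} together with the elliptic bootstrap.

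\textbf{Step 3 (compactness in $C^\infty$).} Given a sequence $[(\psi_n,A_n)]\in\cM^\infty$, apply \autoref{Prop_WkpAprioriEstimForSW} to obtain, for each $k$, gauge transformations $g_n^{(k)}$ so that the representative $(\psi_n^{(k)},\alpha_n^{(k)})$ is bounded in $W^{k,2}$ by a constant $C_k$ independent of $n$. The Coulomb gauge condition (plus fixing the harmonic component as in \autoref{Lem_UhlenbLemma}) pins the gauge transformation essentially uniquely, so the $g_n^{(k)}$ can be taken independent of $k$; call them $g_n$. Then $(\psi_n\cdot g_n,A_n\cdot g_n)$ is bounded in $W^{k,2}$ for every $k$ simultaneously. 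A diagonal extraction using the compact embeddings $W^{k,2}\hookrightarrow W^{k-1,2}$ (\autoref{Thm_SobolevEmbedding}) produces a subsequence converging in every $W^{k,2}$ norm, hence in $C^\infty$. The limit satisfies the Seiberg--Witten equations and is smooth by Step~1, giving a limit point in $\cM^\infty$.

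\textbf{Main obstacle.} The genuinely delicate point is Step~3: one needs the Coulomb gauge transformations $g_n$ to be \emph{simultaneously} effective for all Sobolev exponents, i.e.\ one fixed gauge must realize all the estimates of \autoref{Prop_WkpAprioriEstimForSW}. This requires unpacking the proof of \autoref{Lem_UhlenbLemma} to confirm that the gauge-fixed representative is unique modulo a harmonic piece (itself bounded in $L^2$), and showing that the bootstrap of \autoref{Prop_WkpAprioriEstimForSW} is carried out in that same Coulomb slice rather than allowing the gauge to change at each inductive step. Once this consistency is in place, the diagonal argument is routine.
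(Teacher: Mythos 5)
Your argument is correct and lands close to the paper's, but there is one genuine difference worth noting in how smoothness of the gauge representative is obtained. The paper's proof of smoothness exploits that $F_A$ is gauge invariant for the abelian group $\U(1)$: from \autoref{Prop_WkpAprioriEstimForSW} one has, for each $k$, \emph{some} gauge transformation putting $\a$ in $W^{k,2}$, hence $d\a = F_A - F_{A_0} \in W^{k-1,2}$; since the left-hand side is gauge-independent, $F_A$ is smooth without ever choosing a single gauge. One then picks a single Coulomb gauge for $k=5$, so that $(d+d^*)\a = F_A - F_{A_0}$ is a smooth elliptic equation for $\a$ alone, giving $\a\in C^\infty$, and finally $\psi\in C^\infty$ as an element of $\ker\slD_A^+$ for the now-smooth connection $A$. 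Your Step 1 instead fixes one Coulomb gauge from the start and bootstraps the coupled first-order system $(\slD_{A_0}^+ + \tfrac12\a\cdot,\ d^+ + d^*)$ in $(\psi,\a)$ using the Banach algebra property of $W^{k,2}$. Both are correct; the paper's route cleanly decouples the gauge and analytic issues via the gauge-invariance trick, whereas yours is more self-contained and does not need to re-invoke \autoref{Prop_WkpAprioriEstimForSW}.

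Your Step~3 observation is perceptive: the paper's diagonal argument does implicitly rely on the gauge transformations coming from the Coulomb slice (with controlled harmonic part) so that the $W^{k,2}$ estimates hold for all $k$ in the \emph{same} gauge, and you are right that this should be flagged explicitly. Your identification of $\cM$ with $\cM^\infty$ in Step~2, including the Sobolev bootstrap on $g$ via $dg = \tfrac12 g(A_1-A_0)$, is carried out in more detail than the paper gives, and is correct. No gaps.
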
 
 \begin{proof}
 	Let $(\psi, A)\in \cC^{5,2}$ be any solution.
 	By \autoref{Prop_WkpAprioriEstimForSW}, $F_A = F_{A\cdot g} = F_{A_0} + d\a\in W^{k-1,2}$ for all $k$.
 	In particular, $F_A$ is smooth.
 	
 	Writing $A\cdot g=A_0 +\a$ as before, we obtain
 	\begin{equation*}
 		(d+d^*)\,\a = d\a = F_{A} - F_{A_0}\in C^\infty. 
 	\end{equation*}
 	By the elliptic regularity, $\a$ is smooth, i.e., $A$ is smooth. 
 	Hence, $\psi$ is also smooth as an element of the kernel of a smooth elliptic differential operator $\slD^+_A$.
 	
 	Furthermore, let $(\psi_n, A_n)$ be any sequence of smooth solutions. 
 	By \autoref{Cor_CompactnessOfTheModuli}, this contains a subsequence still denoted by $(\psi_n, A_n)$, which converges in the $W^{5,2}$--topology after possibly applying a sequence of $\cG^{6,2}$ gauge transformations and the limit lies also in $\cC^{5,2}$.  
 	
 	Since $(\psi_n, A_n)$ is bounded in $W^{7,2}$, a subsequence converges in  $W^{6,2}$ after possibly applying a sequence of $\cG^{7,2}$ gauge transformations. 
 	Repeating this process for each $k\ge 6$ and choosing the diagonal subsequence, we obtain the claim.  
 \end{proof}
 
  \subsubsection{Slices}
  
  In this section we wish to construct local slices for the gauge group action on the subspace of irreducible configurations
  \begin{equation*}
  \cC_{\mathrm{irr}}^{5,2}:= \bigl \{ (\psi, A)\in \cC^{5,2} \mid \psi\not\equiv 0\, \bigr \},
  \end{equation*}
  where $\cG^{6, 2}$ acts freely.

  The tangent space to the slice at a point $(\psi, A)$ must be transversal to $\Im R_{(\psi, A)}$, hence it is natural to consider the kernel of the formal adjoint operator
  \begin{equation*}
  R_{(\psi, A)}^* (\dot\psi, \dot a) =2\, d^*\dot a + i\,\Re \langle \psi, i\,\dot \psi \rangle. 
  \end{equation*}
  Here $\langle\cdot, \cdot \rangle$ denotes the Hermitian scalar product on the spinor bundle.
  
  \begin{proposition}
  	For any irreducible configuration $(\psi, A)$ the subspace
  	\begin{equation*}
  	(\psi, A) + \ker R_{(\psi, A)}^*
  	\end{equation*}
  	is a slice for the $\cG^{6,2}$--action on $\cC_{\mathrm{irr}}^{5,2}$. 
  	Here $\ker R_{(\psi, A)}^*$ means the kernel of  the map $R_{(\psi, A)}^*\colon W^{5,2}\to W^{4,2}$.\qed  
  \end{proposition}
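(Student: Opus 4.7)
The plan is to reduce the slice statement to the implicit function theorem applied to the action map, with the key input being a Hodge-style splitting of the tangent space coming from the ellipticity of $R^*_{(\psi,A)} R_{(\psi,A)}$. Once the splitting is in hand, everything else is formal, so this splitting is really where the work lies.

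First I would compute $R^*_{(\psi,A)} R_{(\psi,A)}$ on $\Om^0(M;\, \R i)$: writing $\xi=i\eta$ with $\eta$ real, a short calculation yields
\[
R^*_{(\psi,A)} R_{(\psi,A)}\, \xi = 4\, d^*d\, \xi + |\psi|^2\, \xi,
\]
a second order formally self--adjoint elliptic operator with non--negative potential. Since $(\psi, A)$ is irreducible, $\psi$ is not identically zero; any $W^{6,2}$--solution of $R^*R\,\xi = 0$ is smooth by elliptic regularity, and pairing with $\xi$ and integrating by parts then forces $d\xi \equiv 0$ and $|\psi|\,\xi\equiv 0$, so $\xi$ is constant and vanishes on a set of positive measure, hence $\xi=0$. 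Standard Fredholm theory for self--adjoint elliptic operators then promotes this to the assertion that $R^*R\colon W^{6,2}(M;\R i) \to W^{4,2}(M;\R i)$ is a topological isomorphism. From this one extracts the topological direct sum decomposition
\[
T_{(\psi,A)}\cC^{5,2} \;=\; \Im R_{(\psi,A)} \;\oplus\; \ker R^*_{(\psi,A)},
\]
with projections given explicitly by $v \mapsto R_{(\psi,A)}(R^*R)^{-1} R^*_{(\psi,A)} v$ and its complement.

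Next I would apply the IFT to the smooth action map
\[
\Phi\colon \cG^{6,2} \times \bigl((\psi,A) + \ker R^*_{(\psi,A)}\bigr) \;\to\; \cC^{5,2}_{\mathrm{irr}}, \qquad (g, s)\mapsto g \cdot s.
\]
Its differential at $(\mathbbm 1,(\psi,A))$ is $(\xi, w)\mapsto R_{(\psi,A)}\xi + w$, which is a bounded linear isomorphism by the splitting above. The IFT therefore produces open neighborhoods $U \ni \mathbbm 1$ in $\cG^{6,2}$ and $V \ni (\psi,A)$ in $(\psi,A) + \ker R^*_{(\psi,A)}$ such that $\Phi|_{U \times V}$ is a diffeomorphism onto an open neighborhood of $(\psi,A)$ in $\cC^{5,2}_{\mathrm{irr}}$. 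To upgrade this to the slice statement I would invoke freeness of the gauge action on irreducibles: $g\cdot (\psi,A) = (\psi,A)$ forces $g^{-1}dg \equiv 0$, so $g$ is constant on the connected manifold $M$, and $\bar g\psi = \psi$ with $\psi \not\equiv 0$ then forces $g \equiv 1$. Shrinking $V$ if needed so that all its points remain irreducible, this freeness, combined with the local diffeomorphism from the IFT, promotes the local bijection to a global diffeomorphism $\cG^{6,2} \times V \to \cG^{6,2}\cdot V$ onto an open neighborhood of the orbit via a standard continuation argument.

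The main obstacle I anticipate is keeping track of the Sobolev regularities: one must check that $R_{(\psi,A)}$, $R^*_{(\psi,A)}$, and the action map $\Phi$ are well defined and smooth with the asserted linearisations in the chosen Banach setting, and that the formal argument producing the splitting survives in Sobolev spaces. These are all consequences of \autoref{Thm_EllipticEstimate} together with the Sobolev multiplication theorem~\ref{Thm_SobolevEmbedding}\,\ref{It_SobolevMultipl}, whose use is precisely enabled by the condition $kp>4=\dim M$ satisfied by the choice $(k,p)=(5,2)$. The remaining ingredients --- the symbol computation, the IFT, and the freeness argument --- are then mechanical.
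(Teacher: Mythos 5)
The paper states this proposition without proof (it ends in \qed with no argument), but your proof is correct and is the standard slice argument: compute $R^*_{(\psi,A)}R_{(\psi,A)} = 4d^*d + |\psi|^2$, show it is an isomorphism $W^{6,2}\to W^{4,2}$ using irreducibility and integration by parts, deduce the complemented splitting $T\cC^{5,2} = \Im R \oplus \ker R^*$, and invoke the inverse function theorem together with freeness of the gauge action. Two tiny remarks that do not affect the substance: the elliptic-regularity claim that $\xi$ is smooth is not quite available since the coefficient $|\psi|^2$ is only of class $W^{5,2}$, but this is harmless because the integration-by-parts identity $\langle R^*R\xi,\xi\rangle = 4\|d\xi\|^2 + \||\psi|\xi\|^2$ holds for $\xi\in W^{6,2}$ without any bootstrapping; and you are right to shrink to a neighbourhood $V\ni(\psi,A)$ rather than take the whole affine space (as the paper's phrasing suggests), since for instance $(-\psi,A) = (\psi,A)\cdot(-1)$ also lies in $(\psi,A)+\ker R^*$, so the unshrunk affine subspace would violate injectivity of $\cG\times S\to \cG S$.
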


 \subsubsection{A perturbation}
 
 In general, there is no reason to expect that the origin will be a regular value for the Seiberg--Witten map.
 However, a family of perturbations just like in \autoref{Sect_ParamTransversality} can be constructed by hand. 
 
 \begin{proposition}
 	The origin is a regular value of the map
 		\begin{gather*}
	 		\cS\cW\colon \cC_{\mathrm{irr}}^{5,2}\times W^{4,2}(\Lambda^2_+T^*M\otimes \R i )\to W^{4,2}(\slS^-)\times W^{4,2}(\Lambda^2_+T^*M\otimes\R i),\\
	 		\cS\cW(\psi, A, \eta) = \bigl ( \slD_A^+\psi,\; F_A^+ - \mu(\psi) -\eta \bigr ).
 		\end{gather*}
 \end{proposition}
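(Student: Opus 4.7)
The plan is to exploit the extra freedom given by the perturbation parameter $\eta$ to reduce the problem to a linear surjectivity statement involving only the Dirac operator and the Clifford multiplication by $\psi$. Fix a point $(\psi, A, \eta)$ with $\cS\cW(\psi, A, \eta) = 0$. The differential is
\begin{equation*}
d\cS\cW_{(\psi,A,\eta)}(\dot\psi, \dot a, \dot\eta) = \Bigl(\slD_A^+\dot\psi + \tfrac12\dot a\cdot\psi,\; d^+\dot a - 2\mu(\psi,\dot\psi) - \dot\eta\Bigr).
\end{equation*}
Given a target $(\phi,\omega) \in W^{4,2}(\slS^-) \oplus W^{4,2}(\Lambda^2_+T^*M\otimes\R i)$, the second coordinate can be hit trivially by choosing $\dot\eta := d^+\dot a - 2\mu(\psi,\dot\psi) - \omega$ once $(\dot\psi,\dot a)$ is fixed. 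So everything reduces to the surjectivity, onto $W^{4,2}(\slS^-)$, of the linear map
\begin{equation*}
L(\dot\psi, \dot a) := \slD_A^+\dot\psi + \tfrac12\dot a\cdot\psi,\qquad L\colon W^{5,2}(\slS^+)\oplus W^{5,2}(T^*M\otimes \R i)\to W^{4,2}(\slS^-).
\end{equation*}

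First I would show $L$ is Fredholm. The operator $\slD_A^+\colon W^{5,2}(\slS^+)\to W^{4,2}(\slS^-)$ is Fredholm by ellipticity on the compact manifold $M$, and the multiplication $C_\psi\colon \dot a\mapsto \tfrac12\dot a\cdot\psi$ is a bounded map $W^{5,2}\to W^{5,2}$ (by the Sobolev multiplication theorem \autoref{Thm_SobolevEmbedding}\ref{It_SobolevMultipl}), hence compact as a map to $W^{4,2}$. Therefore the image of $L$ is closed and of finite codimension. Surjectivity then amounts to showing that the $L^2$-cokernel is trivial, i.e.\ any $\phi \in L^2(\slS^-)$ satisfying $\langle L(\dot\psi,\dot a),\phi\rangle_{L^2}=0$ for all $(\dot\psi,\dot a)$ must vanish.

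Now I would analyse such a $\phi$. Testing against all $\dot\psi$ with $\dot a = 0$ shows that $\phi$ weakly satisfies $\slD_A^-\phi = 0$, so $\phi$ is smooth by elliptic regularity. Testing against all $\dot a \in W^{5,2}(T^*M\otimes \R i)$ with $\dot\psi = 0$ gives the pointwise condition
\begin{equation*}
\mathrm{Re}\,\langle\, i\beta\cdot\psi(x),\;\phi(x)\,\rangle_{\slS^-} = 0\qquad\text{for every }\beta\in T^*_xM,\ x\in M.
\end{equation*}
At any point $x$ where $\psi(x)\neq 0$, Clifford multiplication $\beta\mapsto \beta\cdot\psi(x)$ is injective (because $\beta\cdot\beta\cdot\psi = -|\beta|^2\psi$), hence bijective $T^*_xM\to \slS^-_x$ by equality of real dimensions; multiplication by $i$ is a real-linear bijection on $\slS^-_x$, so the pointwise condition forces $\phi(x) = 0$. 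Thus $\phi$ vanishes identically on the nonempty open set $U := \{\psi\neq 0\}$ (nonempty since $(\psi,A)$ is irreducible).

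The last step, which is the crux of the argument, is to upgrade this to $\phi\equiv 0$ on all of $M$. For this I would invoke the strong unique continuation theorem for Dirac-type operators (Aronszajn's theorem): a solution of an elliptic first-order operator satisfying $\slD_A^-\phi = 0$ that vanishes on a nonempty open set must vanish on the connected component. Assuming $M$ is connected (otherwise apply the argument componentwise, observing that $\psi$ is nonzero on every component by unique continuation applied to $\slD_A^+\psi = 0$ combined with irreducibility, or by inspecting the sign issue more carefully), we conclude $\phi\equiv 0$. Hence $L$ is surjective, and therefore $d\cS\cW_{(\psi,A,\eta)}$ is surjective, which is what we needed. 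The main obstacle in this plan is justifying unique continuation, which is not a formal consequence of ellipticity but a genuine analytic input; everything else is Fredholm bookkeeping and pointwise linear algebra.
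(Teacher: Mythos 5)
Your proposal is correct and follows essentially the same route as the paper's proof: reduce to surjectivity of the spinorial component $L(\dot\psi,\dot a)=\slD_A^+\dot\psi+\tfrac12\dot a\cdot\psi$ by absorbing the curvature equation into $\dot\eta$, test against an $L^2$-cokernel element to get $\slD_A^-\phi=0$ and a pointwise Clifford-multiplication constraint, use pointwise surjectivity of $\beta\mapsto\beta\cdot\psi(x)$ to force $\phi$ to vanish on the open set where $\psi\neq0$, and then invoke Aronszajn's unique continuation theorem. Your added Fredholm bookkeeping and the explicit handling of the $i$-factor are minor elaborations of the same argument rather than a different approach.
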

 \begin{proof}
  The perturbation has been chosen so that 
  \begin{equation*}
  	\pr_2\,\frac {\del \cS\cW}{\del\eta}\colon W^{4,2}(\Lambda^2_+T^*M\otimes\R i)\to W^{4,2}(\Lambda^2_+T^*M\otimes\R i)
  \end{equation*}
  is surjective.
  Hence, it is enough to show that the map 
  \begin{equation*}
  T:=\pr_1\,d_{(\psi, A)}\cS\cW_\eta\colon T_{(\psi, A)}\cC^{5,2}\to W^{4,2}(\slS^-),\qquad (\dot\psi,\, \dot A)\mapsto \slD_A\dot\psi + \frac 12\, \dot A\cdot \psi
  \end{equation*}
  is also surjective for any solution $(\psi, A)$ of the perturbed Seiberg--Witten equations. 
  
  Assume this is not the case.
  Then there is a non-zero vector 
  \begin{equation*}
  	\varphi\in \Im T^\perp:= \bigl\{  \varphi\in W^{4,2}\mid \langle T(\dot\psi,\, \dot A),\, \varphi \rangle_{L^2} =0\quad \forall (\dot\psi,\, \dot A)\,\bigr \}.
  \end{equation*}
  In particular, we have $0=\langle \slD^+_A\dot\psi, \varphi \rangle_{L^2} = \langle \dot\psi, \slD_A^-\varphi \rangle_{L^2}$, which implies
  \begin{equation}
	  \label{Eq_AuxElemInCokerHarm}
	  	\slD_A^-\varphi =0. 
  \end{equation}
  Also, 
  \begin{equation}
	  \label{Eq_AuxL2ConstrAdot}
  	0=\langle T(0,\dot A),\, \varphi \rangle_{L^2}= \langle \dot A\cdot\psi,\, \varphi \rangle_{L^2}
  \end{equation} 
  for all $\dot A\in W^{5,2}(T^*M\otimes\R i)$.

  Since $W^{5,2}\subset C^0$ in dimension four,  $\psi$ is continuous and by assumption does not vanish identically. 
  Hence, there is a point $m\in M$ such that $\psi$ does not vanish on a neighborhood $U$ of $m$. 
  
  Furthermore, notice  that the Clifford multiplication with a fixed non-zero vector $\psi_0\in \slS^+$ is surjective, i.e., the map
  \begin{equation*}
  	\R^4\to\slS^-,\qquad \rv\mapsto \rv\cdot\psi_0
  \end{equation*}
  (here $\slS^\pm$ are thought of as $\Spin^c(4)$--representations) is surjective. 
  This implies the following: if $\varphi$ does not vanish on $U$, then there is some $\dot A$ supported in $U$ such that $\langle \dot A\cdot\psi,\, \varphi \rangle_{L^2}>0$.
  However, this contradicts~\eqref{Eq_AuxL2ConstrAdot} so that we conclude that $\varphi$ must vanish on an open set. 
  Then, by a theorem of Aronszajn \cite{Aronszajn57_UniqueContinuation},  
  \eqref{Eq_AuxElemInCokerHarm} implies that $\varphi$ vanishes identically. 
  This in turn proves the surjectivity of $T$ and finishes the proof of this proposition. 
 \end{proof}

 The deformation complex at a solution of the perturbed Seiberg--Witten equations
 \begin{equation*}
 	\slD_A^+\psi =0,\qquad F_A^+  =\mu(\psi) +\eta
 \end{equation*}
 is given again by \eqref{Eq_SWDefComplex} since the differentials of $SW = \cS\cW_0$ and $\cS\cW_\eta$ coincide. 
 Hence, this  complex is elliptic and therefore the operator
 \begin{equation}
	 \label{Eq_PertDefOp}
 	\bigl ( d_{(\psi, A)}\cS\cW_\eta, \; R_{(\psi, A)}^* \bigr )\colon W^{5, 2}(\slS^+\oplus T^*X\otimes \R i) \to W^{4,2}(\slS^- \oplus \Lambda^2_+ T^*X\otimes \R i)
 \end{equation} 
 is also elliptic by Exercise~\ref{Ex_EllComplexEllOper}.
 Thus, \eqref{Eq_PertDefOp} is Fredholm so that  by  appealing to  \autoref{Thm_AbstrModuliSpaceIsMfld}, we obtain the following result. 
 
 \begin{corollary}
 	\label{Cor_GenericPert}
 	There is a subset $\Eta\subset W^{4,2}(\Lambda^2_+T^*X\otimes \R i )$ of the second category such that for any $\eta\in\Eta$ the space
 	\begin{equation*}
 		\cM^{\textrm{irr}}_\eta:= \bigl \{  (\psi, A)\in\cC^{5,2}\mid \cS\cW(\psi, A,\eta) =0,\quad\psi\not\equiv 0 \,\bigr \}/\cG^{6,2}
 	\end{equation*}
 	is a smooth manifold of dimension 
 	\begin{equation}
	 	\label{Eq_VirtDimModSpace}
 		d=\frac 14\Bigl (  c_1(L_{\det})^2 - 2\,\chi(M) - 3\,\sign(M)  \Bigr ),
 	\end{equation}
 	where $\chi(M)$ and $\sign(M):=b_2^+ - b_2^-$ are the Euler characteristic and the signature of $M$ respectively.
 \end{corollary}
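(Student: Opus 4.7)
The plan is to combine the parametric transversality of Section~\ref{Sect_ParamTransversality} with the abstract moduli-space construction of \autoref{Thm_AbstrModuliSpaceIsMfld}, and then to compute the dimension via the Atiyah--Singer index theorem applied to the elliptic deformation complex.

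First I would note that by the previous proposition $0$ is a regular value of $\cS\cW$, so the universal zero set $\cN:=\cS\cW^{-1}(0)$ is a smooth $\cG^{6,2}$-invariant Banach submanifold of $\cC_{\mathrm{irr}}^{5,2}\times W^{4,2}(\Lambda^2_+T^*M\otimes\R i)$. Working in a local slice for the (free) gauge action on $\cC_{\mathrm{irr}}^{5,2}$, the projection $\pi\colon\cN\to W^{4,2}(\Lambda^2_+T^*M\otimes\R i)$ becomes Fredholm: its vertical derivative is exactly the rolled-up operator~\eqref{Eq_PertDefOp} of the deformation complex, which is elliptic---hence Fredholm---by \autoref{Prop_SWDefComplexElliptic} together with Exercise~\ref{Ex_EllComplexEllOper}. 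The Sard--Smale theorem then supplies a residual subset $\Eta\subset W^{4,2}(\Lambda^2_+T^*M\otimes\R i)$ such that $0$ is a regular value of $\cS\cW_\eta$ for every $\eta\in\Eta$. For such $\eta$, \autoref{Thm_AbstrModuliSpaceIsMfld} applies and shows that $\cM^{\mathrm{irr}}_\eta$ is a smooth manifold whose dimension equals $\ind D_{(\psi,A)}$, where $D_{(\psi,A)}$ is the operator of~\eqref{Eq_PertDefOp}.

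Next I would compute this index. Modulo zero-order (hence compact) terms, the deformation complex~\eqref{Eq_SWDefComplex} splits as the direct sum of the Atiyah complex
\begin{equation*}
0\to\Omega^0(M;\R i)\xrightarrow{\,d\,}\Omega^1(M;\R i)\xrightarrow{\,d^+\,}\Omega^2_+(M;\R i)\to 0
\end{equation*}
and the two-term Dirac complex $\Gamma(\slS^+)\xrightarrow{\slD^+_A}\Gamma(\slS^-)$ placed in degrees $1$ and $2$. A short Hodge-decomposition argument shows that for any three-term elliptic complex the rolled-up operator satisfies $\ind D=-\chi_{\mathrm{complex}}$; combined with additivity of the index under direct sum this gives
\begin{equation*}
\dim\cM^{\mathrm{irr}}_\eta = -\chi(\mathrm{Atiyah}) + \ind_{\R}\slD^+_A.
\end{equation*}
Using $\chi(M)=2-2b_1+b_2$ and $\sign(M)=b_2^+-b_2^-$ one finds $\chi(\mathrm{Atiyah})=b_0-b_1+b_2^+=\tfrac12(\chi(M)+\sign(M))$; the Atiyah--Singer theorem applied to the $\Spin^c$ Dirac operator in dimension four yields $\ind_{\C}\slD^+_A=\tfrac18(c_1(L_{\det})^2-\sign(M))$, so $\ind_{\R}\slD^+_A=\tfrac14(c_1(L_{\det})^2-\sign(M))$. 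Assembling produces~\eqref{Eq_VirtDimModSpace}.

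The hard part will be the index bookkeeping: tracking the sign relating $\ind D$ to the Euler characteristic of a graded three-term complex, and correctly converting from the complex index of the $\Spin^c$ Dirac operator (as delivered by Atiyah--Singer) to the real index relevant for the real deformation operator on the gauge-theoretic configuration space. The other steps are essentially direct appeals to results already established in the notes.
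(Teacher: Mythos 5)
Your proposal is correct and follows essentially the same route as the paper. The paper likewise reduces the corollary to the index computation, deforms the deformation operator $D_{(\psi,A)}$ to the decoupled operator $D_0=\slD^+_A\oplus(d^++d^*)$ via a zero-order (hence compact) perturbation, uses homotopy invariance of the Fredholm index, computes $\ind(d^++d^*)=b_1-b_0-b_2^+=-\tfrac12(\chi+\sign)$ (which is exactly your $-\chi(\mathrm{Atiyah})$), and invokes Atiyah--Singer for $\ind_\C\slD^+_A=\tfrac18(c_1(L_{\det})^2-\sign)$; your phrasing in terms of the Euler characteristic of the split elliptic complex is a cosmetic repackaging of the same bookkeeping, and you doubled the complex index correctly to obtain the real index.
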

 \begin{proof}
 	We only need to prove the formula for the dimension of the moduli space. 
 	To this end the following fact will be useful: The index is a locally constant function on the space of all Fredholm operators. 
 	Equivalently, if $\{ T_t\mid t\in [0,1] \}$ is a one-parameter family of Fredholm operators, then $\ind T_0 = \ind T_1$. 
 	
 	With this understood, consider the operator 
 	\begin{equation}
	 	\label{Eq_SWDeformOperator}
 		D_{(\psi, A)}:= \bigl (d_{(\psi, A)}SW_\eta, \ R_{(\psi, A)}^*  \bigr)\colon T_{(\psi, A)}\cC^{5,2}\to W^{4,2}\bigl ( \slS^-\oplus\ \Lambda^2_+T^*M\otimes\R\, i\ \oplus\ \underline\R\, i\bigr).
 	\end{equation} 
 	Writing
 	\begin{equation}
	 	\label{Eq_D0}
 		D_{(\psi, A)} = 
	 		\begin{pmatrix}
		 		\slD_A^+ &  \\
		 		  & d^+ + d^*
	 		\end{pmatrix}
	 		+ B = D_0 + B,
 	\end{equation}
 	where $B$ is a zero order operator, we see that $D_{(\psi, A)}$ is homotopic through Fredholm operators to $D_0$. 
 	Since $D_0$ decouples, we have 
 	\begin{equation*}
 		\ind D_0 = \ind \slD_A^+ +\ind(d^+ +d^*).
 	\end{equation*}
 	The latter index is easily computed: $\ind(d^+ +d^*) = b_1-b_0 - b_2^+=\frac 12 (\chi + \sign)$.
 	The index of $\slD_A^+$ can be computed by applying the Atiyah--Singer index theorem: 
 	\begin{equation*}
	 	\ind_\C\slD_A^+ = \frac 18\bigl ( c_1(L_{\det})^2-\sign \bigr ).
 	\end{equation*}
 	This yields the result. 
 \end{proof}
 
 \begin{remark}
 	\label{Rem_CompactnessForPerturbedCase}
 	Strictly speaking, at this point we should redo the analysis of the compactness of the Seiberg--Witten moduli space for the perturbed equations. 
 	However, this requires cosmetic changes only. 
 	The reader should have no difficulties to check that this is indeed the case. 
 \end{remark}
 
 \subsubsection{Reducible solutions}
 
 While \autoref{Cor_GenericPert} yields a smooth moduli space, by removing some points we lost an essential property, namely compactness. 
 The following lemma demonstrates how to deal  with this problem.

 \begin{proposition}
 	Assume $b_2^+\ge 1$. 
 	Then there is an affine subspace  $Q\subset W^{4,2}(\Lambda^2_+T^*M\otimes\R\, i)$ of codimension $b_2^+$ with the following property: If $\eta\notin Q$, then there are no reducible solutions of the Seiberg--Witten equations. 
 \end{proposition}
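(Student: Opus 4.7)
The plan is to reduce the statement to an elementary obstruction in Hodge theory. A reducible configuration is one with $\psi\equiv 0$, so the perturbed Seiberg--Witten equations collapse to the single equation $F_A^+=\eta$, and the question becomes: for which $\eta\in W^{4,2}(\Lambda^2_+T^*M\otimes \R i)$ does there exist a $W^{5,2}$ connection $A$ on $P_{\det}$ whose self--dual curvature equals $\eta$?

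First I would fix a smooth reference connection $A_0$, write $A=A_0+a$ with $a\in W^{5,2}(T^*M\otimes \R i)$, and observe that $F_A^+=\eta$ becomes the linear equation
\begin{equation*}
d^+a=\eta-F_{A_0}^+
\end{equation*}
on $a$. Next I would recall the Hodge decomposition for $\R i$--valued self--dual two-forms:
\begin{equation*}
\Om^2_+(M;\R i)=\cH^2_+(M;\R i)\;\oplus\; d^+\bigl(\Om^1(M;\R i)\bigr),
\end{equation*}
where $\cH^2_+$ is the space of harmonic self--dual forms (of real dimension $b_2^+$, since tensoring with $\R i$ does not change the dimension as a real vector space). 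A direct extension in the Sobolev setting shows $d^+\colon W^{5,2}\to W^{4,2}$ has closed image whose $L^2$--orthogonal complement is precisely $\cH^2_+(M;\R i)$. Hence the equation $d^+a=\eta-F_{A_0}^+$ is solvable in $W^{5,2}$ if and only if the harmonic self-dual projection of $\eta-F_{A_0}^+$ vanishes.

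The key observation is now that, although $F_{A_0}^+$ depends on the choice of reference connection, its harmonic projection does not. Indeed, for any two connections $A_0,A_0'$ on $P_{\det}$ the difference $F_{A_0}-F_{A_0'}$ is exact, so the harmonic parts of $F_{A_0}$ and $F_{A_0'}$ coincide and are the unique harmonic representative of the de Rham class $-2\pi i\,c_1(L_{\det})\in H^2(M;\R i)$. Denote by $\om_{h}^+\in\cH^2_+(M;\R i)$ the self--dual projection of this harmonic representative. Then I would define
\begin{equation*}
Q:=\bigl\{\eta\in W^{4,2}(\Lambda^2_+T^*M\otimes \R i)\mid \mathrm{pr}_{\cH^2_+}(\eta)=\om_h^+\bigr\},
\end{equation*}
which is a closed affine subspace of codimension $\dim_\R\cH^2_+(M;\R i)=b_2^+$. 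By construction, $\eta\in Q$ if and only if $F_A^+=\eta$ has a solution $A\in \cA^{5,2}(P_{\det})$, i.e. if and only if a reducible solution of the perturbed Seiberg--Witten equations exists. Therefore whenever $\eta\notin Q$ no reducible solution exists.

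There is no genuinely hard step: the only mildly technical point is the Hodge decomposition in Sobolev classes together with the fact that the image of $d^+$ is closed, which follows from the elliptic estimate for $d^++d^*$ (cf.~the ellipticity of the Atiyah complex used in the proof of \autoref{Prop_SWDefComplexElliptic}). The hypothesis $b_2^+\ge 1$ is used only to ensure that $Q$ is a proper subspace, so that its complement is nonempty (and in fact dense), making the statement nonvacuous for the purpose of \autoref{Rem_CompactnessForPerturbedCase} and the construction of a compact moduli space of irreducibles.
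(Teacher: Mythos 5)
Your argument is correct and is essentially the paper's own proof: both define $Q=F_{A_0}^+ + \operatorname{Im} d^+$ (your formulation in terms of the harmonic projection of $\eta$ is the same affine subspace), and both read off the codimension $b_2^+$ from the Hodge decomposition. You simply spell out the Hodge-theoretic justification that the paper leaves implicit.
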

 \begin{proof}
 	Pick a reference connection $A_0$ on $L_{\det}$ and denote $Q:=F_{A_0}^+ + \Im d^+$, which is a subspace of codimension $b_2^+$. 
 	Clearly, if $\eta\notin Q$, then there are no reducible solutions.
 \end{proof}
 
 \begin{corollary}
 	Assume $b_2^+\ge 1$. 
 	Then for a generic $\eta$ the perturbed Seiberg--Witten moduli space $\cM_\eta$ contains no reducible solutions. \qed
 \end{corollary}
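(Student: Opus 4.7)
The plan is to combine the preceding proposition with an elementary density argument, then intersect with the second-category set from \autoref{Cor_GenericPert} to produce a single generic condition under which $\cM_\eta$ is simultaneously a smooth manifold and free of reducible solutions.

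First, I would verify that $Q := F_{A_0}^+ + \Im d^+$ is a closed affine subspace of $W^{4,2}(\Lambda^2_+T^*M\otimes \R i)$. The closedness of $\Im d^+$ acting $W^{5,2}(T^*M\otimes\R i)\to W^{4,2}(\Lambda^2_+T^*M\otimes\R i)$ follows from Hodge theory for the Atiyah complex: the cokernel of $d^+$ is the finite-dimensional space $\cH^+$ of harmonic self-dual 2-forms of dimension $b_2^+$, and one has the $L^2$-orthogonal decomposition $W^{4,2}(\Lambda^2_+) = \Im d^+ \oplus \cH^+$. In particular, $Q$ is a closed affine subspace of codimension $b_2^+ \ge 1$.

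Next, since $Q$ has positive codimension, its complement $U := W^{4,2}(\Lambda^2_+T^*M\otimes \R i)\setminus Q$ is open and dense. By the preceding proposition, for every $\eta \in U$ the equation $F_A^+ = \mu(0) + \eta = \eta$ has no solutions at all (since the left hand side would have to lie in $Q$), so no reducible configuration $(0, A)$ can solve the perturbed Seiberg--Witten equations.

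Finally, I would intersect $U$ with the subset $\Eta$ of second category from \autoref{Cor_GenericPert}. Since a second-category subset of a Banach space remains of second category after intersection with an open dense subset, $\Eta \cap U$ is itself of second category, in particular dense. For any $\eta$ in this intersection, \autoref{Cor_GenericPert} implies that $\cM_\eta^{\mathrm{irr}}$ is a smooth manifold of the stated dimension, while the previous paragraph guarantees $\cM_\eta = \cM_\eta^{\mathrm{irr}}$. The only genuinely substantive ingredient is the closedness of $\Im d^+$, which is standard Hodge theory on a closed oriented four-manifold; once this is in hand, the argument is purely formal.
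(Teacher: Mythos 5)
Your proposal is correct and is essentially the argument the paper leaves implicit by attaching \verb|\qed| directly to the corollary statement: the preceding proposition produces the codimension-$b_2^+$ affine subspace $Q$, and one observes that the complement of a proper closed affine subspace of a Banach space is open and dense, hence generic. You are right that the closedness of $\Im d^+$ (needed so that the complement of $Q$ is open) follows from Hodge theory for the Atiyah complex; this is worth noting explicitly since the paper does not say it. The one respect in which your argument goes slightly beyond what the corollary literally claims is the intersection with the second-category set $\Eta$ from \autoref{Cor_GenericPert}: the statement as written only asserts the absence of reducibles, not smoothness, so the intersection step is not strictly necessary for the corollary itself. It is, however, exactly what is needed downstream to obtain a smooth moduli space free of reducibles, and the observation that the intersection of a second-category set with an open dense set remains of second category is correct, so including it is harmless and arguably clarifying.
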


 \subsubsection{Orientability of the Seiberg--Witten moduli space}
 
 As we have already seen in the proof of \autoref{Cor_GenericPert}, $D_{(\psi, A)}$ defined by~\eqref{Eq_SWDeformOperator} is homotopic through elliptic operators to $D_0$, which is given by~\eqref{Eq_D0}.
 This can be also used to orient the Seiberg--Witten  moduli space. 
 The key is the following simple observation.
 \begin{lem}
 	Let $\cP$ be a topological space. 
 	If $\{  T_{p,\, 0}\mid p\in \cP\, \}$ and $\{  T_{p,\, 1}\mid p\in \cP\, \}$ are two homotopic families of linear Fredholm maps, then $\det T_0\cong \det T_1$.
 	More precisely, this means that $\det T_1$ is trivial if and only if $\det T_0$ is trivial and a trivialization of $\det T_0$ induces a trivialization of $\det T_1$. 
 	The latter is well-defined up to a multiplication with an everywhere positive function.  
 \end{lem}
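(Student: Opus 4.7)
The plan is to package the homotopy as a single continuous family of Fredholm operators and invoke the construction of the determinant line bundle one level up. Concretely, a homotopy is by definition a continuous family $\{T_{p,t}\}_{(p,t)\in\cP\times[0,1]}$ of Fredholm operators, so the construction described above (with the exact sequence \eqref{Eq_AuxSESDetBundle} and the stabilization trick via a finite-dimensional $V\subset Y$) produces a genuine real line bundle $\det T\to\cP\times[0,1]$ whose restrictions to $\cP\times\{0\}$ and $\cP\times\{1\}$ are canonically $\det T_0$ and $\det T_1$ respectively.

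Next I would invoke homotopy invariance of line bundles. The inclusions $\iota_t\colon\cP\hookrightarrow\cP\times[0,1]$, $p\mapsto(p,t)$, are homotopic (through the obvious straight-line homotopy) and are homotopy equivalences with inverse the projection. Pulling back $\det T$ along these gives naturally isomorphic line bundles, yielding $\det T_0\cong\iota_0^*\det T\cong\iota_1^*\det T\cong\det T_1$. In particular $\det T_0$ is trivial iff $\det T$ is trivial iff $\det T_1$ is trivial.

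For the trivialization statement, suppose $\sigma_0$ is a nowhere-vanishing section of $\det T_0$. Since $\det T\cong\pi^*\det T_0$ for $\pi\colon\cP\times[0,1]\to\cP$ the projection, any trivialization of $\det T_0$ extends to a nowhere-vanishing section $\sigma$ of $\det T$ on $\cP\times[0,1]$ (e.g.\ by pulling back). Restricting to $\cP\times\{1\}$ produces the desired trivialization $\sigma_1$ of $\det T_1$.

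The remaining point is the ambiguity. If $\sigma$ and $\sigma'$ are two nowhere-vanishing extensions of the same $\sigma_0$, their ratio $\sigma'/\sigma$ is a continuous nowhere-vanishing function on $\cP\times[0,1]$ equal to $1$ on $\cP\times\{0\}$. For each fixed $p$, the path $t\mapsto(\sigma'/\sigma)(p,t)$ is continuous, nowhere zero, and equals $1$ at $t=0$, hence is positive throughout by the intermediate value theorem. Evaluating at $t=1$ shows that the two induced trivializations of $\det T_1$ differ by an everywhere positive function on $\cP$, as claimed. The main (and essentially only) subtlety here is the first step: namely that $\det T$ really is a locally trivial line bundle on $\cP\times[0,1]$ rather than just a pointwise family of lines; but this is exactly the content of the stabilization argument built around \eqref{Eq_AuxSESDetBundle}, applied to the parameter space $\cP\times[0,1]$ instead of $\cP$.
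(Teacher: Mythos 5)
Your proof takes exactly the route the paper does: promote the homotopy $\{T_{p,t}\}$ to a single continuous family over $\cP\times[0,1]$, observe that the determinant-line construction then yields a genuine line bundle $\det T$ over $\cP\times[0,1]$, and read off $\det T_0\cong\det T_1$ from its restrictions to the two ends of the cylinder. You merely fill in the steps the paper leaves implicit---the homotopy-equivalence argument identifying $\iota_0^*\det T$ with $\iota_1^*\det T$, and the intermediate-value-theorem step showing that the resulting transition function is everywhere positive---which the paper instead gestures at in a follow-up remark via a connection and parallel transport along $t\mapsto(p,t)$.
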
 
 \begin{proof}
 	Let $\{  T_{p,\, t}\mid (p,t)\in \cP\times [0,1]\, \}$ be a homotopy through linear Fredholm maps. 
 	Then $\det T$ is well-defined over $\cP\times [0,1]$ and restricts to $\det T_0$ and $\det T_1$ on the corresponding components of the boundary.  
 	This implies the statement of this lemma.
 \end{proof}

\begin{remark}
	Let $\det T$ be as in the proof of the lemma above.
	One can construct an isomorphism between $\det T_0$ and $\det T_1$ explicitly by introducing a connection on $\det T$ and taking the parallel transport along the curves $t\mapsto (p, t)$. 
\end{remark} 

With this understood, to orient the Seiberg--Witten moduli space it suffices to check that $\det D_0$ is trivial and pick a trivialization of this bundle. 
Since $D_0$ splits, we have
\begin{equation*}
	\det D_0\cong \det \slD_A^+\otimes \det (d^+ + d^*).
\end{equation*}
Notice that $\slD_A^+$ is a complex linear map.
In particular, both $\ker D_A^+$ and $\coker D_A^+\cong \ker D_A^-$ are complex linear subspaces, hence oriented. 
This implies that the real determinant bundle $\det \slD_A^+$ is trivial.

Furthermore, we have 
\begin{equation*}
	\ker (d^+ + d^*) = H^1_{dR}(M;\R\, i),\qquad \coker (d^+ + d^*) = H^0(M;\R\, i)\oplus H^2_+(M;\R\, i).
\end{equation*}
Hence, by picking an orientation of these cohomology groups, a trivialization of $\det (d^+ + d^*)$  is fixed.
This in turn yields a trivialization of $\det D_0$, hence also of $\det D_{(\psi, A)}$.
Thus, the Seiberg--Witten moduli space is orientable and a choice of orientations of the cohomology groups $H^0(M;\R)$, $H^1(M;\R\, i)$ and $H^2_+(M;\R\, i)$  yields an orientation of the Seiberg--Witten moduli space.
 
\begin{remark}
	Strictly speaking, the orientation of the Seiberg--Witten moduli space we constructed in this section is \emph{not} canonical. 
	First, there are at least two commonly used non-equivalent conventions how to orient complex linear spaces. 
	Namely, if $V$ is a complex linear space and $(\rv_1, \dots, \rv_k)$ is a complex basis of $V$, the underlying real vector space $V_\R$ can be oriented by saying that 
	\begin{equation*}
		(\rv_1,\dots, \rv_k, i\rv_1,\dots,i\rv_k)\qquad \text{or}\qquad 
		(\rv_1, i\rv_1,\dots, \rv_k, i\rv_k)
	\end{equation*}
	determines an orientation of $V_\R$. 
	This yields the opposite orientations in the case $k=\dim_\C V$ is even.
	
	Secondly, orientations of  $H^0(M;\R)$, $H^1(M;\R\, i)$ and $H^2_+(M;\R\, i)$ is again a choice.
	Notice however, that these vector spaces depend on the topological structure of $M$ only.  
\end{remark}

 \subsection{The Seiberg--Witten invariant}
 
 Combining results obtained in the preceding  sections  we arrive at our main result.
 
 \begin{thm}
 	Assume $b_2^+(M)\ge 2$. For any spin$^c$-structure $\sigma\in\cS(M)$ and any generic $\eta$ the perturbed Seiberg--Witten moduli space $\cM_\eta$ is a smooth compact oriented manifold of dimension $d$, which is given by~\eqref{Eq_VirtDimModSpace}. 
 	Moreover, if $\eta_0$ and $\eta_1$ are any two generic perturbations, then $\cM_{\eta_0}$ and $\cM_{\eta_1}$ are oriented-bordant.\qed
 \end{thm}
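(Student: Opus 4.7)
The plan is to assemble into one statement the regularity, compactness, orientability and cobordism-invariance results that have been prepared in the preceding subsections, and then to explain where the hypothesis $b_2^+(M)\ge 2$ actually enters.

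First I would fix a generic perturbation $\eta$ in the sense of Corollary~\ref{Cor_GenericPert}, and simultaneously require $\eta\notin Q$, where $Q$ is the codimension $b_2^+$ affine subspace from the preceding subsection parametrising perturbations that admit reducible solutions. Since $b_2^+\ge 1$, the complement of $Q$ is open and dense, so intersecting with the second-category set of Corollary~\ref{Cor_GenericPert} still leaves a generic set of admissible $\eta$. For such $\eta$ the moduli space $\cM_\eta = \cM_\eta^{\mathrm{irr}}$ consists entirely of irreducible solutions, so the gauge group $\cG^{6,2}$ acts freely, local slices exist, and Corollary~\ref{Cor_GenericPert} (via Theorem~\ref{Thm_AbstrModuliSpaceIsMfld}) gives a smooth manifold structure of dimension $d$ from~\eqref{Eq_VirtDimModSpace}. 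Compactness is a direct transcription of Corollary~\ref{Cor_CompactnessOfTheModuli} to the perturbed equations (cf. Remark~\ref{Rem_CompactnessForPerturbedCase}); the a priori bounds of \autoref{Eq_LemPointwiseEstSpinor} and \autoref{Prop_WkpAprioriEstimForSW} go through with only the obvious modifications because $\eta$ is a fixed smooth background term. For orientability, I would invoke the homotopy~\eqref{Eq_D0} of the deformation operator $D_{(\psi,A)}$ to the decoupled operator $D_0 = \slD_A^+ \oplus (d^++d^*)$, observe that $\det\slD_A^+$ is canonically trivialised by the complex structure on $\ker\slD_A^+$ and $\coker\slD_A^+$, and trivialise $\det(d^++d^*)$ by choosing orientations of $H^0$, $H^1$, $H^2_+$; the homotopy then transports these to a trivialisation of $\det D_{(\psi,A)}$ over $\cM_\eta$.

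For the bordism statement, given two admissible $\eta_0,\eta_1$ I would proceed as in \autoref{Sect_ParamTransversality} and choose a path $\eta_t$, $t\in[0,1]$, in $W^{4,2}(\Lambda^2_+T^*M\otimes \R i)$ joining them. Applying the parametric transversality / Sard--Smale argument to the total map $\cS\cW(\psi,A,\eta_t)$ on $\cC^{5,2}_{\mathrm{irr}}\times\mathrm{Paths}(\eta_0,\eta_1)$ shows that for a generic path the parametrised zero set
\begin{equation*}
\widetilde\cM := \{(\psi,A,t)\mid \cS\cW(\psi,A,\eta_t)=0,\ \psi\not\equiv 0\}/\cG^{6,2}
\end{equation*}
is a smooth oriented $(d+1)$-dimensional manifold with $\partial\widetilde\cM = \cM_{\eta_1}\sqcup \overline{\cM_{\eta_0}}$, the orientation being induced by the same determinant-line argument applied over the path. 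Compactness of $\widetilde\cM$ follows from the same a priori estimates, since the bounds in Lemma~\ref{Eq_LemPointwiseEstSpinor}, Corollary~\ref{Prop_L2AprioriBoundsCurvature} and Proposition~\ref{Prop_WkpAprioriEstimForSW} depend on $\eta_t$ only through a bound on $\|\eta_t\|$, which is uniform for $t\in[0,1]$.

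The main obstacle, and the place where the hypothesis $b_2^+(M)\ge 2$ is actually used, is in ensuring that the path $\eta_t$ can be chosen to avoid the reducible locus. The set $Q$ of perturbations admitting a reducible solution has real codimension $b_2^+$; for a generic smooth path to miss $Q$ one needs $b_2^+\ge 2$, exactly as for a generic curve in a manifold to miss a submanifold of codimension at least $2$. Without this assumption a generic path would cross $Q$ transversely, and $\widetilde\cM$ would pick up a reducible locus, destroying both freeness of the gauge action and (via wall-crossing) the $\eta$-independence of the bordism class. Once that codimension argument is in place, the remainder of the proof is the routine parametric-transversality machinery, the standard compactness analysis adapted to a one-parameter family, and the determinant-line bundle orientation argument already explained for a single $\eta$.
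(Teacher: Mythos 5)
Your proposal is correct and takes essentially the same approach as the paper, which itself gives no separate proof beyond the remark that $b_2^+(M)\ge 2$ forces a generic one-parameter family of perturbations to miss the codimension-$b_2^+$ reducible wall $Q$; you identify exactly that codimension argument as the crux and otherwise assemble the preceding lemmas (Corollary~\ref{Cor_GenericPert}, Corollary~\ref{Cor_CompactnessOfTheModuli}, the determinant-line orientation, and the parametric-transversality cobordism of Section~\ref{Sect_ParamTransversality}) in the intended way.
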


The only thing, which perhaps needs an explanation, is the hypothesis $b_2^+(M)\ge 2$. 
The point is that this ensures that a generic bordism between $\cM_{\eta_0}$ and $\cM_{\eta_1}$ does not contain reducible solutions and therefore is smooth.  

\begin{remark}
	Notice that $\cM_\eta$ depends on the choice of $\sigma$ but this dependence is suppressed in the notations.
\end{remark}

With this understood, we can proceed to the definition of the Seiberg--Witten invariant. 
Pick a point $m_0\in M$ and consider \emph{the based gauge group}  $\cG_0:=\{ g\in W^{6,2}(M;\U(1))\mid g(m_0) =1\, \}$, which fits into the exact sequence
\begin{equation*}
	\{ 1 \}\to \cG_0\to\cG\xrightarrow{\ \ev_{m_0}\ }\U(1)\to \{ 1\},
\end{equation*}
 where $\ev_{m_0}$ is the evaluation at $m_0$. 
 Then the quotient
 \begin{equation*}
 	\hat \cM_\eta:=\{  (\psi, A)\in \cC^{5,2}\mid SW_\eta(\psi, A) =0 \}/\cG_0
 \end{equation*}
 is called \emph{the framed moduli space} and is equipped with a free action of $\U(1)=\cG/\cG_0$. 
 Clearly, the quotient $\hat \cM_\eta/\U(1)$ is just $\cM_\eta$ so that $\hat \cM_\eta$ is a principal $\U(1)$--bundle over $\cM_\eta$. 
Let $\mu$ be the first Chern class of this $\U(1)$--bundle. 
Notice that this $\hat \cM_\eta$ is just a restriction to $\cM_\eta$ of the following bundle: $\cC_{\mathrm{irr}}/\cG_0\to \cC_{\mathrm{irr}}/\cG$. 

\begin{defn}
	The Seiberg--Witten invariant of $M$ is a function $\rs\rw\colon \cS(M)\to \Z$ defined by 
\begin{equation*}
	\rs\rw (\sigma):=
	\begin{cases}
		\bigl \langle [\cM_\eta], \mu^{d/2}\bigr \rangle  &\text{if } d\text{ is even},\\
		0 & \text{if } d\text{ is odd.}
	\end{cases}
\end{equation*}
\end{defn}

\begin{thm}
	The Seiberg--Witten invariant vanishes for all but finitely many spin$^c$ structures.
\end{thm}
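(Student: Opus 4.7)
The goal is to show that, for a fixed generic perturbation $\eta$, only finitely many spin$^c$ structures $\sigma\in\cS(M)$ admit a non-empty moduli space $\cM_\eta$; since $\rs\rw(\sigma)\neq 0$ forces $\cM_\eta(\sigma)\neq\emptyset$, this implies the theorem. The strategy is to bound $c_1(L_{\det,\sigma})$ uniformly in $H^2(M;\R)$ and then invoke discreteness of the integer lattice.

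Suppose $(\psi,A)$ solves the perturbed Seiberg--Witten equations for $\sigma$. The pointwise estimate of \autoref{Eq_LemPointwiseEstSpinor} adapts directly: repeating the Weitzenb\"ock argument with $F_A^+=\mu(\psi)+\eta$ yields, at a maximum of $|\psi|^2$, an inequality of the form $\tfrac14|\psi|^4\le \tfrac14|s_g||\psi|^2 + \tfrac12|\eta||\psi|^2$, producing a $C^0$--bound on $\psi$ depending only on $g$ and $\eta$. Consequently $\|F_A^+\|_{L^2}=\|\mu(\psi)+\eta\|_{L^2}\le C$. Let $h$ denote the $L^2$--harmonic component of $F_A$; since $[h]=[F_A]$ represents (up to the constant $-2\pi i$) the real class $c_1(L_{\det,\sigma})$, and since $h=h^++h^-$ with both summands harmonic, a short computation using $L^2$--orthogonality of the Hodge decomposition together with $dh^+=d^*h^+=0$ shows $\|h^+\|_{L^2}\le\|F_A^+\|_{L^2}\le C$. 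Now combine this one-sided bound with the topological identity
\begin{equation*}
	c_1(L_{\det,\sigma})^2=\tfrac 1{4\pi^2}\bigl(\|h^+\|^2-\|h^-\|^2\bigr)
\end{equation*}
and the dimension formula~\eqref{Eq_VirtDimModSpace}: the inequality $d\ge 0$ forced by $\cM_\eta(\sigma)\ne\emptyset$ gives $c_1(L_{\det,\sigma})^2\ge 2\chi(M)+3\sign(M)$, whence $\|h^-\|^2\le\|h^+\|^2-4\pi^2c_1^2\le C'$. Therefore the $L^2$--norm of the harmonic representative of $c_1(L_{\det,\sigma})$ is uniformly bounded in $\sigma$, so all such classes lie in a fixed bounded ball of $H^2(M;\R)$.

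The assignment $\sigma\mapsto c_1(L_{\det,\sigma})\in H^2(M;\Z)$ takes values in a single coset of the discrete subgroup $2H^2(M;\Z)$ and has fibers that are torsors over the finite $2$--torsion subgroup of $H^2(M;\Z)$; a bounded subset of a discrete subgroup contains only finitely many points, so only finitely many $\sigma\in\cS(M)$ give rise to a non-empty moduli space, completing the proof. The delicate point is that the a priori estimate controls only the self-dual part of $F_A$; bounding the anti-self-dual component of the harmonic representative of $c_1(L_{\det,\sigma})$ requires the topological lower bound on $c_1^2$ supplied by the dimension formula, and without this input the anti-self-dual piece would be a priori unbounded and the finiteness conclusion would fail.
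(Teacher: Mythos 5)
Your proof is correct and follows essentially the same route as the paper: bound $\|F_A^+\|_{L^2}$ a priori, invoke the dimension formula to get $c_1(L_{\det})^2\ge 2\chi+3\,\mathrm{sign}$, deduce a uniform bound on the anti-self-dual part, and conclude via discreteness of the integer lattice in $H^2(M;\R)$. You are somewhat more careful than the paper in two places --- passing to the harmonic representative of $c_1$ before making the $L^2$ comparison, and explicitly noting that $\sigma\mapsto c_1(L_{\det,\sigma})$ has finite fibers (torsors over the $2$-torsion of $H^2(M;\Z)$) --- but these are refinements of the same argument, not a different one.
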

\begin{proof}
	If $\rs\rw(\sigma)\neq 0$, then $\cM_\eta$ is a  smooth manifold of dimension $d\ge 0$. Hence, 
	\begin{equation*}
		c_1(L_{\det})^2\ge 2\,\chi(M) + 3\,\sign(M).
	\end{equation*}
	By \eqref{Eq_AprioriCurvEst}, we have\footnote{At this point we should have used an analogous a priory estimate for the perturbed Seiberg--Witten equations, cf. \autoref{Rem_CompactnessForPerturbedCase}.  However, this does not change the argument.} $\| F_A^-\|^2_{L^2}\le C$, where $C$ depends on the background Riemannian metric and the topology of $M$ only.
	This yields that  $\| F_A\|^2_{L^2}$ is also bounded by a constant, which depends on the background Riemannian metric and the topology of $M$ only.
	This in turn shows that $|c_1(L_{\det})|^2 = |c_1(L_{\det})\,\cup\, \PD(c_1(L_{\det}))| = \frac 1{4\pi^2} \| F_A\|^2_{L^2}\le C$, where $\PD$ stays for the Poincar\'e dual class. 
	This proves the statement of this theorem. 
\end{proof}

\subsubsection{Sample application of the Seiberg--Witten invariant}

The Seiberg--Witten invariant are most well--known for its applications to the topology of smooth four--manifolds. 
I restrict myself just to a list of some sample applications.

\begin{thm}[Witten]
	If $M$ with $b_2^+\ge 2$ admits a metric of positive scalar curvature, then $\rs\rw_M\equiv 0$.\qed
\end{thm}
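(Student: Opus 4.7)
The plan is to use the Weitzenb\"ock formula together with positive scalar curvature to rule out irreducible solutions of the perturbed Seiberg--Witten equations when the perturbation is small, and then invoke $b_2^+\ge 2$ to eliminate reducibles via a generic choice of small perturbation.

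First I would take any solution $(\psi, A)$ of the perturbed equations $\slD_A^+\psi=0$, $F_A^+=\mu(\psi)+\eta$ and apply \autoref{Cor_WeitzenboeckForm} to $\psi\in\Gamma(\slS^+)$:
\begin{equation*}
0 \;=\; \slD_A^-\slD_A^+\psi \;=\; \nabla_A^*\nabla_A\psi + \tfrac{1}{4}s_g\psi + \tfrac{1}{2}F_A^+\cdot\psi.
\end{equation*}
Taking the $L^2$ inner product with $\psi$, substituting $F_A^+=\mu(\psi)+\eta$, and using the pointwise identity $\langle\mu(\psi)\psi,\psi\rangle=\tfrac{1}{2}|\psi|^4$ that appeared in the proof of \autoref{Eq_LemPointwiseEstSpinor}, I obtain
\begin{equation*}
0 \;=\; \|\nabla_A\psi\|_{L^2}^2 \;+\; \tfrac{1}{4}\int_M s_g|\psi|^2 \;+\; \tfrac{1}{4}\int_M |\psi|^4 \;+\; \tfrac{1}{2}\int_M\langle\eta\cdot\psi,\psi\rangle.
\end{equation*}
Since Clifford multiplication by a $2$-form satisfies $|\eta\cdot\psi|\le C|\eta||\psi|$ pointwise, the last term is bounded below by $-C\|\eta\|_{L^\infty}\int_M |\psi|^2$. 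When $s_g$ is positive and $\|\eta\|_{L^\infty}$ is strictly smaller than a fixed multiple of $\min_M s_g$, all four terms are non-negative and the sum can vanish only if $\psi\equiv 0$. Because the Sobolev embedding $W^{4,2}(M)\hookrightarrow C^0(M)$ is continuous in dimension four, it is enough to require $\|\eta\|_{W^{4,2}}$ to be small.

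To conclude, recall that the subspace $Q\subset W^{4,2}(\Lambda^2_+T^*M\otimes\R i)$ carrying reducible solutions has codimension $b_2^+\ge 2$; hence inside any neighborhood of the origin there is a dense open set of perturbations $\eta\notin Q$ satisfying the genericity hypothesis of \autoref{Cor_GenericPert} for which $\cM_\eta$ is a smooth compact oriented manifold of dimension $d$. Choosing such an $\eta$ small enough in $W^{4,2}$, the Weitzenb\"ock argument forbids irreducible solutions while $\eta\notin Q$ forbids reducible ones, so $\cM_\eta=\varnothing$ and consequently $\rs\rw(\sigma)=\langle[\cM_\eta],\mu^{d/2}\rangle=0$ for every spin$^c$ structure $\sigma$. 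The hypothesis $b_2^+\ge 2$ is essential exactly here: it guarantees that the wall $Q$ has codimension at least two so that a path of small generic perturbations avoiding reducibles exists, which is what makes $\rs\rw(\sigma)$ well-defined (independent of $\eta$) and lets us compute it from a single convenient empty moduli space. The only mildly delicate point is confirming that smallness of $\eta$ and genericity can be achieved simultaneously, but this is immediate from openness and density of the generic set together with continuity of the Sobolev embedding.
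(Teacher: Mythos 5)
Your proof is correct and follows exactly the route the paper indicates (the paper itself offers no details, merely remarking that the result ``follows easily from the Weitzenb\"ock formula''). You apply \autoref{Cor_WeitzenboeckForm} to a hypothetical solution, integrate against $\psi$, and observe that for small $\eta$ and $s_g>0$ every term is forced non-negative with the $s_g$-term strictly positive unless $\psi\equiv 0$, so the only possible solutions are reducible; the hypothesis $b_2^+\ge 2$ lets you pick a small generic $\eta\notin Q$ simultaneously excluding reducibles and making $\rs\rw$ well-defined, so $\cM_\eta=\varnothing$ and the invariant vanishes. (Minor remark: in the paper's normalization the $|\psi|^4$-term carries coefficient $\tfrac12$ rather than your $\tfrac14$, but as only positivity is used this is immaterial; likewise the Sard--Smale generic set is second category rather than open, which also does not affect the conclusion since it is still dense and hence meets any ball around $0$.)
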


The proof of this theorem follows easily from the Weitzenb\"ock formula. 

\begin{thm}[Witten]
	Let $M_1$ and $M_2$ be closed four-manifolds both with $b_2^+\ge 1$. 
	Then the Seiberg--Witten invariant of the connected sum $M_1\# M_2$ vanishes.\qed 
\end{thm}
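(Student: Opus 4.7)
The plan is to prove vanishing by a neck--stretching argument: for any spin$^c$ structure $\sigma$ on $M := M_1\# M_2$, I would exhibit a metric and a generic perturbation for which the perturbed Seiberg--Witten moduli space is empty. Since $b_2^+(M)=b_2^+(M_1)+b_2^+(M_2)\ge 2$, the invariant $\rs\rw(M,\sigma)$ is independent of such choices, so this suffices.

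First I would construct a family of Riemannian metrics $g_T$ on $M$ that agree with fixed metrics on the pieces $M_i\setminus B_i$ and contain a long cylindrical neck $N_T:=[-T,T]\times S^3_1$, where $S^3_1$ carries the round metric (which has positive scalar curvature $s_0>0$). The spin$^c$ structure $\sigma$ restricts to spin$^c$ structures $\sigma_i$ on $M_i$. Using the hypothesis $b_2^+(M_i)\ge 1$, I would choose generic perturbations $\eta_i\in\Om^2_+(M_i;\R i)$ supported in the interior of $M_i\setminus B_i$ for which $\cM_{\eta_i}(M_i,\sigma_i)$ contains \emph{no} reducible solutions (the reducible locus in perturbation space is an affine subspace of codimension $b_2^+(M_i)\ge 1$). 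These glue to a perturbation $\eta$ on $M$ that vanishes on the neck.

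The core claim is that for $T$ sufficiently large, $\cM_{\eta,g_T}(M,\sigma)=\emptyset$. Assuming the contrary, pick solutions $(\psi_n,A_n)$ on $(M,g_{T_n},\eta)$ with $T_n\to\infty$. On the neck, $\eta\equiv 0$, so the pointwise argument from Lemma~\ref{Eq_LemPointwiseEstSpinor} applies: at any interior maximum $x_0\in N_{T_n}$ of $|\psi_n|^2$, one would have $|\psi_n|^2(x_0)\le -\tfrac12 s_g(x_0)=-\tfrac12 s_0<0$, which is absurd. Combined with the pointwise identity $\Delta|\psi_n|^2+\tfrac12 s_0|\psi_n|^2+|\psi_n|^4\le 0$ on the neck, a standard comparison for the subsolution of an elliptic operator with positive zeroth--order term forces $|\psi_n|$ to decay exponentially away from $\partial N_{T_n}$; in particular $\psi_n|_{\{0\}\times S^3}\to 0$ in every $W^{k,2}$--norm as $n\to\infty$. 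Invoking the a priori $W^{k,2}$ estimates of Proposition~\ref{Prop_WkpAprioriEstimForSW} and gauge--fixing in the Coulomb slice, a subsequence of $(\psi_n,A_n)$ converges on compact subsets of $M_i\setminus\bar B_i$ to finite--energy solutions $(\psi^i_\infty,A^i_\infty)$ on $M_i\setminus\bar B_i$ equipped with the cylindrical end $[0,\infty)\times S^3_1$.

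The positive scalar curvature on the end, via Corollary~\ref{Cor_WeitzenboeckForm}, forces exponential decay of $|\psi^i_\infty|$ along the cylinder; since $\pi_1(S^3)=1$ the unique flat $\U(1)$--connection on $S^3$ is trivial, and the finite--energy hypothesis yields exponential asymptotic convergence of $A^i_\infty$ to it, so a removal--of--singularities argument extends $(\psi^i_\infty,A^i_\infty)$ to a genuine solution on the closed manifold $(M_i,\eta_i,\sigma_i)$. But the exponential decay of $\psi_n$ through the center of the neck shows that the limit $(\psi^i_\infty,A^i_\infty)$ on at least one $M_i$ must have $\psi^i_\infty$ identically zero along the end, hence identically zero on $M_i$ by unique continuation for $\slD_{A_\infty}$---that is, a reducible solution on $(M_i,\eta_i,\sigma_i)$, contradicting the choice of $\eta_i$. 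Therefore no such $(\psi_n,A_n)$ can exist for large $n$, and the moduli space on $M$ is empty, proving $\rs\rw(M,\sigma)=0$. The main obstacle is the last step: carrying out rigorously the neck--stretched compactness and the removal--of--singularities, in particular transferring the exponential decay of the spinor into exponential decay of $A^i_\infty$ towards the trivial flat connection, so that the cylindrical--end limits genuinely define smooth configurations on the closed $M_i$ to which the no--reducibles property of $\eta_i$ can be applied.
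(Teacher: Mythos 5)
The paper states this theorem with a \qed and gives no proof, so there is no in-text argument to compare you against; but your proposal contains a step that fails, not merely a technical obstacle. The neck-stretching framework you set up---the long PSC neck $[-T,T]\times S^3$, the pointwise estimate of Lemma~\ref{Eq_LemPointwiseEstSpinor}, the bootstrapping of Proposition~\ref{Prop_WkpAprioriEstimForSW}, and the extraction of limits on the two cylindrical-end pieces---is standard and sound. The gap is the final contradiction: it is \emph{not} true that one of the limits $(\psi^i_\infty, A^i_\infty)$ must have $\psi^i_\infty\equiv 0$ on the end. In $M_1$-centered neck coordinates $t'=t+T_n\in[0,2T_n]$ the comparison bound reads $|\psi_n|^2(t')\le C\bigl(e^{-\lambda t'}+e^{-\lambda(2T_n-t')}\bigr)$; letting $n\to\infty$ at fixed $t'$ kills only the second term and leaves $|\psi^1_\infty|^2(t')\le Ce^{-\lambda t'}$, which is exactly the expected exponential decay of \emph{any} finite-energy spinor on a PSC cylindrical end and is fully compatible with $\psi^1_\infty$ being a nonzero irreducible solution over $M_1$ (and likewise for $M_2$). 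So no reducibility is forced, and your core claim that $\cM_{\eta,g_T}=\emptyset$ for $T\gg 0$ is false in general. For instance, take $M_1=M_2=K3$ with the spin$^c$ structure of $c_1=0$ on each factor: both $\cM_{\eta_i}(M_i)$ are nonempty with nonzero invariant, and for a long neck a gluing theorem (whose input is precisely the analytic control you developed) produces a \emph{nonempty} moduli space on $K3\#K3$, even though $\rs\rw(K3\#K3)=0$. Vanishing of the invariant does not require emptiness of the moduli space.

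What actually forces vanishing is a topological, not analytic, mechanism. For $T\gg 0$ the gluing theorem identifies $\cM_\eta(M)$ with $\hat\cM_{\eta_1}\times_{\U(1)}\hat\cM_{\eta_2}$, a free $\U(1)$-bundle $\pi\colon\cM_\eta(M)\to\cM_{\eta_1}(M_1)\times\cM_{\eta_2}(M_2)$ over a base of dimension $d-1$, the $\U(1)$ being the gluing parameter on the separating $S^3$ (it arises because $\pi_1(S^3)=1$, so identifications of the line bundles across the neck differ by a constant unitary). One then checks that the class $\mu$ is pulled back from the base, $\mu=\pi^*\bar\mu$, and therefore $\mu^{d/2}\in\pi^*H^{d}\bigl(\cM_{\eta_1}\times\cM_{\eta_2}\bigr)=0$ since $d>d-1$, giving $\bigl\langle[\cM_\eta],\mu^{d/2}\bigr\rangle=0$. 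Establishing this $\U(1)$-fibration and the pullback property of $\mu$ via the gluing construction is the substantive content of the theorem; your estimates are exactly the necessary input, but they have to be fed into a gluing argument rather than into a non-existence claim.
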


These  vanishing theorems are complemented by the following non-vanishing result, which can be also viewed as the statement about obstructions for the existence of symplectic strutures on closed smooth four-manifolds.  

\begin{thm}[Taubes]
	If $M$ is symplectic and $b_2^+(M)\ge 2$, then $\rs\rw_M\not\equiv 0$. \qed
\end{thm}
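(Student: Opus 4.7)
The plan is to exhibit a distinguished spin$^c$ structure on $M$ for which the Seiberg--Witten invariant is $\pm 1$, using the perturbation technique pioneered by Taubes. First, choose an almost complex structure $J$ compatible with $\omega$ and let $g(\cdot, \cdot) = \omega(\cdot, J\cdot)$ be the associated Riemannian metric. This yields a canonical spin$^c$ structure $\sigma_{can}$ with
\begin{equation*}
	\slS^+ \cong \underline{\C} \oplus K^{-1}, \qquad \slS^- \cong T^{0,1}M,
\end{equation*}
where $K = \Lambda^{2,0}T^*M$ is the canonical line bundle. The determinant line bundle is $L_{\det} = K^{-1}$, and there is a canonical unitary connection $A_0$ on $K^{-1}$ (induced, for example, from the Levi--Civita connection via the Chern connection of a Hermitian metric on $K^{-1}$) whose curvature can be computed explicitly. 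Under the splitting of $\slS^+$ above, the constant section $\psi_0 = (1, 0)$ together with $A_0$ provides a distinguished ``almost solution'' of the Seiberg--Witten equations.

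The key step is to run Corollary~\ref{Cor_GenericPert} with a one--parameter family of perturbations $\eta_r = F_{A_0}^+ - \mu(\psi_0) - \frac{ir}{4}\omega$ for $r \gg 0$. Writing a spinor in $\Gamma(\slS^+)$ as $\psi = (\alpha, \beta)$ with $\alpha \in \Omega^0(M;\C)$ and $\beta \in \Omega^{0,2}(M)$, and setting $A = A_0 + 2a$, the perturbed equations become a system whose pointwise analysis, carried out via the Weitzenb\"ock formula of \autoref{Cor_WeitzenboeckForm}, yields an identity of the form
\begin{equation*}
	\tfrac{1}{2}\Delta\bigl(|\alpha|^2 + |\beta|^2\bigr) + |\nabla_A\psi|^2 + \tfrac{r}{4}\bigl(|\alpha|^2 - 1\bigr)|\alpha|^2 + \tfrac{r}{4}\bigl(1 + |\alpha|^2\bigr)|\beta|^2 + \mathrm{l.o.t.} = 0.
\end{equation*}
Integrating over $M$ and applying the maximum principle to $|\alpha|^2$ gives the a priori bounds $|\alpha|^2 \le 1 + O(r^{-1})$ and $r\|\beta\|_{L^2}^2 \to 0$ as $r \to \infty$. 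Combining these with the elliptic regularity arguments of \autoref{Prop_WkpAprioriEstimForSW} forces, for $r$ sufficiently large, every solution $(\psi, A)$ to lie in a small $C^\infty$--neighborhood of the distinguished configuration $(\psi_0, A_0)$.

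Having localized solutions near the distinguished one, I would then check that $(\psi_0, A_0)$ is the unique gauge equivalence class of solutions and that it is nondegenerate. Transversality follows by analyzing the deformation operator~\eqref{Eq_SWDeformOperator} at $(\psi_0, A_0)$: its highest--order part decouples into $\slD_{A_0}^+$ acting on sections of $\slS^+$ (which, under the identification above, can be shown to equal $\sqrt{2}(\bar\partial + \bar\partial^*)$ on $\Omega^{0,\mathrm{even}}(M)$) together with the Atiyah operator $d^+ + d^*$, and the zero--order perturbation proportional to $r$ has a strict sign on the relevant summands, producing an invertible Fredholm operator of index zero for large $r$. This gives $\cM_{\eta_r}(\sigma_{can}) = \{ \text{one point} \}$ with a definite sign, hence $\rs\rw(\sigma_{can}) = \pm 1$.

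The main obstacle, as usual in Taubes' theory, is the derivation of the uniform $C^0$ and $L^2$ estimates on $(\alpha, \beta, a)$ that depend sharply on the large parameter $r$: the naive Weitzenb\"ock bound of \autoref{Eq_LemPointwiseEstSpinor} only yields $|\psi|^2 \le C$ with $C$ independent of $r$, whereas the argument here requires the much finer bounds $|\alpha|^2 - 1 = O(r^{-1})$ and $|\beta|^2 = O(r^{-1})$ together with the corresponding gradient estimates. Establishing these bootstrap estimates, and verifying that the concentration phenomenon at $r = \infty$ rules out any ``exotic'' solution far from $(\psi_0, A_0)$, is the technical heart of the argument.
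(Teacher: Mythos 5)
The paper states this theorem without proof: it appears in the final ``sample applications'' subsection as one of several results cited from the literature, terminated by a $\qed$ with no argument given. There is therefore no proof of the paper's own to compare your attempt against.

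That said, your sketch correctly reproduces the overall architecture of Taubes' actual argument: the canonical spin$^c$ structure associated to a compatible almost complex structure, with $\slS^+\cong \underline{\C}\oplus K^{-1}$ and $L_{\det}\cong K^{-1}$; the two--component decomposition $\psi=(\alpha,\beta)$ of the positive spinor; the one--parameter family of perturbations proportional to $-ir\omega$; the $r$--weighted Weitzenb\"ock identity and maximum principle giving $|\alpha|^2\to 1$, $r\|\beta\|_{L^2}^2\to 0$; localization of all solutions near the distinguished configuration for $r\gg 0$; and the nondegeneracy argument showing that the linearization is invertible there, so that $\cM_{\eta_r}(\sigma_{can})$ is a single transversely cut--out point with sign $\pm 1$. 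These are exactly the right moving parts, and the identification of $\slD^+_{A_0}$ with $\sqrt{2}(\bar\partial+\bar\partial^*)$ on $\Omega^{0,\mathrm{even}}$ is the correct ``integrable'' computation one feeds into the estimates. Two bookkeeping remarks: $\slS^-$ is usually written $\Lambda^{0,1}T^*M$ rather than its metric dual $T^{0,1}M$; and you should be careful to fix one normalization of $(\alpha,\beta)$ throughout, since writing $\psi_0=(1,0)$ as an exact almost--solution is in tension with the unscaled form of $\eta_r$ you give (with your $\eta_r$, $(\psi_0,A_0)$ solves the curvature equation only when $r=0$, so the distinguished configuration must itself be allowed to drift with $r$).

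The genuine gap, which you flag honestly yourself, is that the whole content of Taubes' theorem is in the uniform--in--$r$ a priori estimates: the $C^0$ bound that the $\U(1)$--invariant Lemma of this paper does deliver is $|\psi|\le C$ with $C$ independent of $r$, but the argument needs the much sharper statements $\bigl||\alpha|^2-1\bigr|=O(r^{-1})$, $|\beta|^2=O(r^{-1})$, the corresponding first--derivative estimates, and the exponential decay of $\beta$ away from the zero set of $\alpha$. None of those are established by the tools assembled in Sections~\ref{Sec_LinEllOp}--\ref{Sect_FredholmMaps}, and without them one cannot rule out sequences of solutions escaping the neighborhood of $(\psi_0,A_0)$ as $r\to\infty$, nor conclude the invertibility of the linearization. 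So what you have is a correct and well--organized roadmap of Taubes' proof, but not a proof; the estimates you defer constitute essentially all of the mathematical content.
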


The results below were originally obtained by other methods, however can be also obtained with the help of the Seiberg--Witten theory.

\begin{thm}[Donaldson]
	There are (many) closed topological four-manifolds, which do not admit a smooth structure.\qed
\end{thm}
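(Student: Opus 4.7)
The plan is to combine Freedman's topological realization theorem with a gauge-theoretic obstruction to smoothness, specifically Donaldson's diagonalizability theorem, which can be derived from a $b_2^+=0$ variant of the Seiberg--Witten machinery developed above. The rough picture: Freedman produces topological 4--manifolds with essentially arbitrary intersection form, while Seiberg--Witten (or equivalently Donaldson) theory forbids most negative--definite forms from arising smoothly. Comparing the two outputs gives the desired non--smoothable manifolds.

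First, I would quote Freedman's theorem: every symmetric unimodular bilinear form $Q$ on a finitely generated free abelian group is the intersection form of some closed simply--connected topological 4--manifold $M_Q$. Second, I would establish (or quote) Donaldson's theorem: if $M$ is a smooth, closed, oriented 4--manifold with negative--definite intersection form, then $Q_M \cong -n\langle 1\rangle$ is diagonal over $\Z$. The Seiberg--Witten argument proceeds as follows. Assume $b_2^+(M)=0$ and fix a spin$^c$ structure $\sigma$ with characteristic element $c = c_1(L_{\det,\sigma})$. The expected dimension of $\cM_\eta$ becomes
\begin{equation*}
	d(c) = \tfrac14\bigl(c^2 - 2\chi(M) - 3\sign(M)\bigr) = \tfrac14\bigl(c^2 + b_2(M)\bigr),
\end{equation*}
using $\sign(M)=-b_2(M)$. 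Because $b_2^+ = 0$ every perturbation $\eta$ admits a (unique, up to gauge) reducible solution, and a compactness/wall--crossing analysis --- analogous to the one used above to rule reducibles out when $b_2^+\ge 1$ --- shows that $d(c)\ge 0$ forces $c^2 \ge -b_2(M)$ for every characteristic $c$. Elkies' lemma then asserts that a negative--definite unimodular lattice in which \emph{every} characteristic vector satisfies $c\cdot c \ge -\operatorname{rk}$ is diagonal, which is the conclusion.

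Third, it suffices to produce negative--definite unimodular lattices that are not diagonalizable over $\Z$: the standard example is $-E_8$, and one obtains infinite families by taking $-E_8^{\oplus k}$ for $k\ge 1$. Freedman's theorem supplies closed simply--connected topological 4--manifolds $M_k$ with intersection form $-E_8^{\oplus k}$; Donaldson's theorem precludes any smooth structure on $M_k$.

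The hard part will be step two, the derivation of Donaldson's diagonalizability theorem from Seiberg--Witten theory, which falls outside the $b_2^+\ge 2$ framework set up in these notes. The analytic backbone (Fredholm theory, elliptic regularity, compactness of $\cM$, the orientation procedure, the dimension formula) carries over essentially unchanged; what must be done by hand is the analysis of the reducible locus, the wall--crossing formula for the algebraic count as $\eta$ crosses the wall $\{\eta \mid F_{A_0}^+ + \eta \in \operatorname{Im} d^+\}$, and the resulting constraint on characteristic classes. All of this is purely local around the unique reducible and amounts to identifying the local Kuranishi model at a reducible solution with a finite--dimensional equivariant obstruction problem; combined with Elkies' lattice lemma it yields diagonalizability and hence the theorem.
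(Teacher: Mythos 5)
The paper states this theorem without proof (the \texttt{\char`\\qed} marks a quoted result), so there is nothing in the text to compare your proposal against directly; I evaluate it on its own terms. Your architecture is the standard one --- Freedman's realization theorem, Donaldson's diagonalizability theorem obtained from a $b_2^+ = 0$ Seiberg--Witten argument together with Elkies' lattice theorem, and then $-E_8^{\oplus k}$ as the witnesses --- and the skeleton is sound. But in Step~2 both key inequalities are reversed and the dimension count has a slip, and as written the derivation would not close.

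For a simply-connected negative-definite $M$ one has $\sign(M) = -b_2(M)$ \emph{and} $\chi(M) = 2 + b_2(M)$, so $d(c) = \tfrac14(c^2 - 2\chi - 3\sign) = \tfrac14\bigl(c^2 + b_2(M) - 4\bigr)$; you dropped the $-4$ contributed by $-2\chi$. More importantly, the $b_2^+ = 0$ compactness argument does not give $c^2 \ge -b_2$; it gives the opposite bound. Set $N := \ind_\C \slD^+_A = \tfrac18\bigl(c^2 + b_2(M)\bigr)$. If $N \ge 1$, a neighbourhood of the unique reducible in the moduli space is a cone on $\CP^{N-1}$, and the complement of that cone is a compact oriented $(2N-1)$-manifold with boundary $\CP^{N-1}$; Stokes' theorem then forces $\int_{\CP^{N-1}}\mu^{N-1} = 0$, contradicting that $\mu$ restricts to the hyperplane class. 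Hence $N \le 0$, i.e.\ $c^2 \le -b_2(M)$ for every characteristic $c$. Elkies' theorem must be quoted with the matching direction: a negative-definite unimodular lattice of rank $n$ in which \emph{every} characteristic satisfies $c\cdot c \le -n$ is $-\Z^n$. (Your version, $c\cdot c \ge -n$ for all $c$, already fails for the diagonal lattice itself: $c = (3,1,\dots,1)$ has $c\cdot c = -n-8$.) With both inequalities turned around the proof goes through. As a side remark, for odd $k$ non-smoothability of $M_k$ already follows from Rokhlin's theorem, since $\sign(M_k) = -8k \not\equiv 0 \pmod{16}$; Donaldson's theorem is what you need for $k$ even, and together they supply the ``many'' promised by the statement.
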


\begin{thm}[Fintushel--Stern]
	There are infinitely many closed four-manifolds, which are all homeomorphic, but pairwise non-difeomorphic.\qed
\end{thm}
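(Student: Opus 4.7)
The plan is to produce, from a single suitable model manifold $X$, an infinite family of smooth four-manifolds that are all homeomorphic to $X$ but whose Seiberg--Witten invariants are pairwise distinct. Since the Seiberg--Witten invariants are diffeomorphism invariants (being defined via a bordism-type class of moduli spaces on the underlying smooth structure), this will yield the conclusion.

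First, I would pick a simply-connected closed smooth four-manifold $X$ with $b_2^+(X)\ge 2$ and with $\rs\rw_X\not\equiv 0$. A concrete choice is the $K3$ surface (or, more generally, an elliptic surface $E(n)$ with $n\ge 2$); such manifolds are symplectic, so Taubes' theorem guarantees non-vanishing of the Seiberg--Witten invariant. Inside $X$ I would select an embedded homologically essential torus $T\subset X$ with trivial normal bundle and such that $\pi_1(X\setminus T)=1$ (for $E(n)$ one may take a smooth fiber of the elliptic fibration). This torus will play the role of the surgery locus.

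Next, for each knot $K\subset S^3$ I would perform the Fintushel--Stern knot surgery: remove a tubular neighborhood $T\times D^2$ of $T$ and glue in $(S^3\setminus N(K))\times S^1$, matching the meridian of $K$ with the boundary of the normal disk to $T$. The resulting closed four-manifold $X_K$ can be shown to be simply-connected (because $\pi_1(X\setminus T)=1$ and the meridian of $K$ normally generates $\pi_1(S^3\setminus N(K))$) and to have the same intersection form as $X$. By Freedman's classification of simply-connected topological four-manifolds, $X_K$ is homeomorphic to $X$ for every $K$.

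The heart of the argument is the computation showing that the Seiberg--Witten invariant changes under this surgery according to the Alexander polynomial of $K$. More precisely, identifying the set $\cS(X_K)$ of spin$^c$ structures suitably with that of $X$, one has a product formula of the form
\begin{equation*}
    \rs\rw_{X_K} = \rs\rw_X \cdot \Delta_K(t),
\end{equation*}
where $t$ corresponds (via Poincar\'e duality) to the class of the surgery torus $T$ and $\Delta_K$ is the symmetrized Alexander polynomial of $K$. This is by far the main obstacle: establishing it requires a gluing/excision argument for solutions of the Seiberg--Witten equations on manifolds split along the three-torus $T^3=\partial(T\times D^2)$, together with an identification of the contribution from $(S^3\setminus N(K))\times S^1$ with the Alexander polynomial (most naturally via the three-dimensional Seiberg--Witten theory on knot complements and its relation to the Alexander invariant). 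Granting this formula, the conclusion is immediate: since $\Delta_K$ takes infinitely many distinct values as $K$ ranges over, say, the torus knots $T(2,2k+1)$, and since $\rs\rw_X\not\equiv 0$, we obtain infinitely many knots $K_1,K_2,\dots$ such that the functions $\rs\rw_{X_{K_j}}\colon \cS(X_{K_j})\to\Z$ are pairwise distinct. Hence the smooth manifolds $\{X_{K_j}\}$ are pairwise non-diffeomorphic while all being homeomorphic to $X$.
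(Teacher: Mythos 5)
Your proposal is a faithful sketch of the actual Fintushel--Stern argument, but the paper does not prove this theorem at all: it is listed among ``sample applications'' with a bare \verb|\qed|, as a citation-level result whose proof lies entirely beyond the scope of these notes. So there is no in-paper proof to compare against; what I can do is assess your outline on its own terms. The overall strategy is right: build manifolds by knot surgery along a homologically essential, simply-connected-complement torus with trivial normal bundle in a fixed $X$ with nonvanishing Seiberg--Witten invariant (e.g.\ $K3 = E(2)$, or $E(n)$ for $n\geq 2$), use Freedman to fix the homeomorphism type, and distinguish the smooth structures via the product formula $\rs\rw_{X_K} = \rs\rw_X\cdot\Delta_K(t)$ with $t$ corresponding to the torus class. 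Two points worth tightening: (1) the gluing in knot surgery must be specified so that not only is the meridian of $K$ glued to the boundary of the normal disk to $T$, but also the Seifert longitude of $K$ is glued to the $S^1$ factor bounding in $X\setminus T$ --- this second condition is needed both to preserve the intersection form and to make the Alexander-polynomial formula come out correctly; (2) you correctly identify the product formula as the crux, but its proof is substantial, resting on a gluing theorem for Seiberg--Witten solutions along the $T^3$ splitting hypersurface and an identification of the relative invariant of $(S^3\setminus N(K))\times S^1$ with the Alexander polynomial (the latter via the Meng--Taubes theorem relating three-dimensional Seiberg--Witten invariants to Milnor torsion). Granting that formula, your conclusion from the distinctness of $\Delta_{T(2,2k+1)}$ is correct, and the argument is sound. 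Since these notes develop neither the gluing theory nor the three-dimensional Seiberg--Witten/Alexander correspondence, this is all necessarily external to the paper --- which is exactly why the author states the result without proof.
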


Put differently, the last two theorems say the following: On a given closed topological four-manifold there may or may not be a smooth structure and if a smooth structure exists, there may well be infinitely many such structures.

\bibliography{references}

\end{document}